\newtheorem{assumption}{Assumption}
\newtheorem{theorem}{Theorem}
\newtheorem{corollary}{Corollary}
\newtheorem{lemma}{Lemma}
\theoremstyle{definition}
\newtheorem{remark}{Remark}
\newtheorem{proposition}{Proposition}
\newtheorem{definition}{Definition}
\def\P{\mathbb{P}}
\def\sgn{\mathrm{sgn}}
\def\P{\mathbb{P}}
\newcommand{\E}{\mathbb E}								
\newcommand{\V}{\mathrm{Var}}							
\renewcommand{\P}{\mathbb{P}}							
\newcommand{\R}{\mathbb{R}}								
\newcommand{\indicator}{\mathbbm 1}						
\newcommand{\iidsim}{\stackrel{\mathrm{i.i.d.}}{\sim}} 	
\newcommand{\indsim}{\stackrel{\mathrm{ind}}{\sim}}		
\newcommand{\convp}{\overset {\mathbb P} \rightarrow}   
\let\oldnl\nl
\newcommand{\nonl}{\renewcommand{\nl}{\let\nl\oldnl}} 
\title{The saddlepoint approximation for averages of \\ conditionally independent random variables\footnote{This work has been incorporated into the manuscript ``The conditional saddlepoint approximation for fast and accurate large-scale hypothesis testing'', which is available at \href{https://arxiv.org/abs/2407.08911}{https://arxiv.org/abs/2407.08911}. We preserve this manuscript on arXiv for those who desire a shorter, self-contained exposition of the conditional saddlepoint approximation.} }
\begin{document}

\author{Ziang Niu, Jyotishka Ray Choudhury, Eugene Katsevich}
\maketitle

\begin{abstract}
	Motivated by the application of saddlepoint approximations to resampling-based statistical tests, we prove that the Lugannani-Rice formula has vanishing relative error when applied to approximate conditional tail probabilities of averages of conditionally independent random variables. In a departure from existing work, this result is valid under only sub-exponential assumptions on the summands, and does not require any assumptions on their smoothness or lattice structure.  	
	The derived saddlepoint approximation result can be directly applied to resampling-based hypothesis tests, including bootstrap, sign-flipping and conditional randomization tests. We exemplify this by providing the first rigorous justification of a saddlepoint approximation for the sign-flipping test of symmetry about the origin, initially proposed in 1955. On the way to our main result, we establish a conditional Berry-Esseen inequality for sums of conditionally independent random variables, which may be of independent interest.
\end{abstract}

\section{Introduction} \label{sec:introduction-2}

\subsection{Saddlepoint approximations to resampling-based tests} \label{sec:saddlepoint-approximations}

Resampling-based hypothesis testing procedures often have superior finite-sample performance to those of their asymptotic counterparts. Examples of these include the sign-flipping test for symmetry around the origin in the one-sample problem \citep{Fisher1935}, permutation tests for equality of distributions in the two-sample problem \citep{Fisher1935,Pitman1937a}, bootstrap-based tests \citep{Efron1994}, and the conditional randomization test for conditional independence \citep{CetL16}. However, such procedures entail a much greater computational burden. This is particularly the case in modern applications where large numbers of hypotheses are tested simultaneously and multiplicity corrections substantially decrease the $p$-value threshold for significance. If $M$ hypotheses are tested, then $O(M)$ resamples are required per test for sufficiently fine-grained $p$-values to meet the significance threshold, leading to a total computational cost of $O(M^2)$ resamples. In genome-wide association studies, for example, $M$ can be in the millions, making resampling-based tests computationally infeasible.

A promising direction for accelerating resampling-based hypothesis testing is via closed-form approximations to tail probabilities of the resampling distribution, circumventing resampling while maintaining the finite-sample accuracy of the test. The most common approach to obtaining such approximations is via the \textit{saddlepoint approximation} (SPA), introduced by \citet{Daniels1954} for approximating densities of sample averages and extended to tail probabilities by \citet{Lugannani1980}. These approximations are accurate for small sample sizes, even for approximating extreme tail probabilities. They are derived by exponentially tilting the distribution of the summands to match the observed value of the test statistic (requiring the solution of a \textit{saddlepoint equation}), and then applying a normal approximation to the tilted distribution. In the context of resampling, the SPA has been applied to sign-flipping tests \citep{Daniels1955}, permutation tests \citep{Robinson1982,Abd2007,Abd-Elfattah2009,Zhou2013}, and the bootstrap \citep{Hinkley1988,Jing1994,Diciccio1994}.

Despite the excellent empirical performance of SPAs for resampling-based tests, their theoretical justification has three key weaknesses. \textbf{First,} existing theory ignores the conditioning on the data inherent in resampling-based tests. Instead, the data are treated as fixed, and results for unconditional SPAs are invoked. Furthermore, existing results on saddlepoint approximations to $p$-values for resampling-based tests do not account for the fact that the tail probability cutoff is a function of the data, rather than fixed. \textbf{Second,} existing SPA theory requires assumptions on either the smoothness \citep{Daniels1954,Lugannani1980,jensen1995saddlepoint,Jing1994} or lattice structure (\cite[Chapter 6.4]{jensen1995saddlepoint} and \cite[Chapter 5]{Kolassa2006}) of the summands. These assumptions need not be satisfied by the summands arising from resampling-based tests. Existing works on SPAs for resampling-based procedures assume that resampling distributions are ``close enough'' to being continuous \citep{Hinkley1988,Daniels1991,Diciccio1994}. \textbf{Third,} existing works on SPAs for resampling-based tests assume without proof that the saddlepoint equation has a solution (a requirement for the construction of the SPA). Taken together, these limitations leave the application of SPAs to resampling-based tests on shaky theoretical ground.
\subsection{Our contributions}

In this paper, \textbf{we address all three of these limitations by establishing a saddlepoint approximation for the conditional tail probability of averages of conditionally independent random variables.} In particular, we consider a triangular array $W_{in}$ of random variables that are independent and mean-zero conditionally on a $\sigma$-algebra $\mathcal F_n$ for each $n$. We prove that the Lugannani-Rice approximation $\hat p_n$ to the conditional tail probability 
\begin{equation} \label{eq:tail-probability}
p_n \equiv \P\left[\left.\frac{1}{n}\sum_{i = 1}^n W_{in} \geq w_n\ \right| \ \mathcal F_n\right]
\end{equation}
has vanishing relative error:
\begin{equation} \label{eq:relative-error}
p_n = \hat p_n \cdot (1 + o_{\P}(1)),
\end{equation}
where $w_n \in \mathcal F_n$ is a sequence of cutoff values such that $w_n \convp 0$ at any rate (Theorem~\ref{thm:unified_unnormalized_moment_conditions}). Showing that the \textit{relative} rather than \textit{absolute} error is small is particularly important, giving accurate estimates even for very small $p$-values. \textbf{We avoid assumptions on the smoothness or lattice structure of the $W_{in}$,} assuming just that these random variables have light enough tails. Under these conditions, we also prove that the saddlepoint equation has a unique solution with probability approaching one. 

Consider a hypothesis test based on the statistic $\smash{T_n \equiv \frac{1}{n}\sum_{i = 1}^n X_{in}}$ and the resampling distribution $\smash{\widetilde T_n \equiv \frac{1}{n}\sum_{i = 1}^n \widetilde X_{in}}$ for some resampling mechanism generating $\smash{\widetilde X_{in} \mid \sigma(X_{1n}, \dots, X_{nn})}$. Setting $\smash{W_{in} \equiv \widetilde X_{in}}$, $w_n \equiv T_n$, and $\mathcal F_n \equiv \sigma(X_{1n}, \dots, X_{nn})$, we find that the $p$-value of this test,
\begin{equation} \label{eq:resampling-p-value}
p_n \equiv \P\left[\left.\frac{1}{n}\sum_{i = 1}^n \widetilde X_{in} \geq \frac{1}{n}\sum_{i = 1}^n X_{in}\ \right|\ X_{1n}, \dots, X_{nn}\right],
\end{equation}
matches the definition~\eqref{eq:tail-probability}. Therefore, the result~\eqref{eq:relative-error} lays a solid mathematical foundation for applying the SPA to a range of resampling-based hypothesis tests with independently resampled summands, including the sign-flipping test, bootstrap-based tests, and the conditional randomization test (but not the permutation test). We exemplify this by providing \textbf{the first rigorous justification of an SPA for the sign-flipping test} of the kind originally proposed by \citet{Daniels1955} (Theorem~\ref{thm:example}). In addition to justifying existing SPAs for resampling methods, our result paves the way for SPAs for newer resampling-based methods like the conditional randomization test, a direction we pursue in a parallel work \citep{Niu2024a}. Beyond the context of resampling-based tests, \textbf{our result gives the first set of conditions for validity of the SPA involving only tail assumptions on the summands.} On the way to our result, we prove a conditional Berry-Esseen inequality for the sum of conditionally independent random variables (Lemma \ref{lem:conditional-berry-esseen}), which may be of independent interest. Finally, we prove the equivalence of two tail probability approximations \citep{Lugannani1980, Robinson1982} under general conditions (Proposition~\ref{prop:equivalence_spa_formula}).

\subsection{Related work}

The literature on SPAs is vast, and has several book-length reviews \citep{jensen1995saddlepoint,Kolassa2006,Butler2007}. We do not attempt to review this literature here, but we at least point to several relevant strands of work. We note that SPAs have been employed for conditional probabilities arising not just in resampling-based procedures but also in the context of inference conditional on sufficient statistics for nuisance parameters \citep{Skovgaard1987, JingRobinson1994, Davison1996, Kolassa2007a}. This direction is related to our work, but distinct in the sense that the summands are independent \textit{before} conditioning, whereas in our work, the summands are independent \textit{after} conditioning. There has also been work on applying the SPA to handle the null distributions of rank-based tests \citep{Froda2000,Robinson2003}, whose discreteness is a challenge as in resampling-based tests. In this line of work, however, there is no conditioning involved. Finally, \citet{Daniels1954} discusses conditions under which the saddlepoint equation has a unique solution, including when the summands $W_{in}$ have compact support and the cutoff $w_n$ falls within the interior of that support.

\subsection{Overview of the paper}

The organization of the remaining sections is as follows. In Section \ref{sec:spa_theory}, we state Theorem~\ref{thm:unified_unnormalized_moment_conditions}, our main theoretical result. In Section~\ref{sec:spa_motivating_example}, we apply Theorem~\ref{thm:unified_unnormalized_moment_conditions} to the sign-flipping test. Next, we sketch the proof of Theorem~\ref{thm:unified_unnormalized_moment_conditions} in Section~\ref{sec:spa_proof}, deferring several lemmas to the appendix. Finally, we conclude with a discussion in Section~\ref{sec:discussion}.

\section{SPA for conditionally independent variables}\label{sec:spa_theory}

Let $\{W_{in}\}_{1 \leq i \leq n, n \geq 1}$ be a triangular array of random variables on a probability space $(\Omega, \mathcal F, \P)$ and let $\mathcal{F}_n \subseteq \mathcal F$ be a sequence of $\sigma$-algebras so that $\E[W_{in}|\mathcal{F}_n] = 0$ for each $(i, n)$ and $\{W_{in}\}_{1 \leq i \leq n}$ are independent conditionally on $\mathcal{F}_n$ for each $n$. For a sequence of cutoff values $w_n \in \mathcal F_n$, we will consider SPAs for the conditional tail probability~\eqref{eq:tail-probability}. We will first introduce tail assumptions on $W_{in}$ (Section~\ref{sec:tail-assumptions}), which will prepare us to introduce the saddlepoint approximation (Section~\ref{sec:saddlepoint-approximation}) and state our main result on its accuracy (Section~\ref{sec:main-results}). We close this section with a number of remarks (Section~\ref{sec:remarks}).

\subsection{Tail assumptions on the summands} \label{sec:tail-assumptions}

Let $W$ be a random variable on $(\Omega, \mathcal F, \P)$ and let $\mathcal G \subseteq \mathcal F$ be a $\sigma$-algebra. In the following two definitions, we extend the definitions of sub-exponential and compactly supported random variables to the conditional setting.

\begin{definition}[CSE distribution]\label{def:cse_distribution}
Consider a random variable $\theta \in \mathcal G$ such that $\theta \geq 0$ almost surely and a constant $\beta > 0$. We say $W|\mathcal{G}$ is \textit{conditionally sub-exponential} (CSE) with parameters $(\theta, \beta)$ if, almost surely,
\begin{align*}
\P\left[|W|\geq t \mid \mathcal{G}\right]\leq \theta\exp(-\beta t),\ \text{for all } t>0.
\end{align*}
We denote this property via $W|\mathcal G \sim \text{CSE}(\theta, \beta)$.
\end{definition}

\begin{definition}[CCS distribution]\label{def:ccs_distribution}
	Consider a random variable $\nu \in \mathcal G$ such that $\nu \geq 0$ almost surely. We say $W|\mathcal{G}$ is \textit{conditionally compactly supported} (CCS) on $[-\nu, \nu]$ if
\begin{align*}
W \in [-\nu,\nu] \quad \text{almost surely}.
\end{align*}
We denote this property via $W|\mathcal G \sim \text{CCS}(\nu)$.
\end{definition}

Now, we impose assumptions on the triangular array $\{W_{in}\}_{1 \leq i \leq n, n \geq 1}$ in terms of these definitions. We assume throughout that Assumption~\ref{assu:cse} or Assumption~\ref{assu:ccs} holds.
\begin{assumption}[CSE condition]\label{assu:cse}
There exist $\theta_n \in \mathcal F_n$ and $\beta > 0$ such that 
\begin{equation}
W_{in}|\mathcal F_n \sim \textnormal{CSE}(\theta_n, \beta) \text{ for all } i, n, \quad \theta_n < \infty \text{ almost surely}, \quad \theta_n = O_{\P}(1).
\end{equation}
\end{assumption}
\begin{assumption}[CCS condition]\label{assu:ccs}
There exist $\nu_{in} \in \mathcal F_n$ such that 
\begin{align}
W_{in}|\mathcal F_n \sim \textnormal{CCS}(\nu_{in}), \quad \nu_{in}<\infty\ \text{almost surely}, \quad \text{and} \quad \frac{1}{n}\sum_{i=1}^n \nu_{in}^4=O_{\P}(1).
\end{align}
\end{assumption}

\subsection{The saddlepoint approximation} \label{sec:saddlepoint-approximation}

The saddlepoint approximation is usually based on the cumulant-generating functions (CGFs) of the summands. For our purposes, we use conditional CGFs:
\begin{equation*}
K_{in}(s) \equiv \log \E[\exp(sW_{in})|\mathcal F_n] \quad \text{and} \quad K_n(s) \equiv \frac{1}{n}\sum_{i = 1}^n K_{in}(s).
\end{equation*}
Either of the assumptions in the previous section guarantees the existence of these CGFs and their derivatives in a neighborhood of the origin.
\begin{lemma}\label{lem:finite_cgf}
Suppose Assumption~\ref{assu:cse} or Assumption~\ref{assu:ccs} holds. Then, there exists a probability-one event $\mathcal A$ and an $\varepsilon > 0$ such that, on $\mathcal A$,
\begin{align}
K_{in}(s) < \infty\quad \text{for any } s\in (-\varepsilon,\varepsilon)\ \text{and}\ \text{for all}\ i \leq n,\ n \geq 1 \label{eq:finite_cgf}
\end{align}
and 
\begin{equation}\label{eq:finite_cgf_derivatives}
	|K_{in}^{(r)}(s)| < \infty\quad \text{for any } s\in (-\varepsilon,\varepsilon)\ \text{and}\ \text{for all}\ i \leq n,\ n \geq 1, \ r \geq 1, r\in\mathbb{N},
\end{equation}
where $K_{in}^{(r)}$ denotes the $r$-th derivative of $K_{in}$.
\end{lemma}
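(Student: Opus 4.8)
The plan is to treat Assumptions~\ref{assu:cse} and~\ref{assu:ccs} separately. In each case I would first bound the conditional moment-generating function $M_{in}(s) \equiv \E[\exp(sW_{in}) \mid \mathcal F_n]$ on a neighborhood of the origin that does not depend on $(i,n)$, and then transfer finiteness and smoothness to $K_{in} = \log M_{in}$ and its derivatives. It is convenient to fix throughout a regular conditional distribution of each $W_{in}$ given $\mathcal F_n$ (available since $\R$ is Polish) and to argue pointwise in $\omega$ on a probability-one event built by amalgamating the almost-sure bounds in Definitions~\ref{def:cse_distribution}--\ref{def:ccs_distribution}.

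Under Assumption~\ref{assu:cse}, I would set $\mathcal A \equiv \bigcap_{n\geq 1}\{\theta_n < \infty\}$, which is probability one as a countable intersection of probability-one events, and take $\varepsilon \equiv \beta$. On $\mathcal A$ the conditional law of $W_{in}$ obeys $\P[|W_{in}| \geq t \mid \mathcal F_n] \leq \theta_n e^{-\beta t}$ for all $t>0$, so for $0<|s|<\beta$ the layer-cake identity gives
\[
M_{in}(s) \;\leq\; \E\bigl[e^{|s|\,|W_{in}|}\mid\mathcal F_n\bigr] \;=\; \int_0^\infty \P\bigl[|W_{in}| > \tfrac{\log u}{|s|}\mid\mathcal F_n\bigr]\,du \;\leq\; 1 + \theta_n\!\int_1^\infty u^{-\beta/|s|}\,du \;=\; 1 + \frac{|s|\,\theta_n}{\beta - |s|} \;<\; \infty ,
\]
and $M_{in}(0)=1$; this proves~\eqref{eq:finite_cgf}, and the bound depends on $(i,n)$ only through the finite quantity $\theta_n$, so it holds simultaneously over all $i\leq n$, $n\geq 1$. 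Under Assumption~\ref{assu:ccs}, I would instead set $\mathcal A \equiv \bigcap_{n\geq 1}\bigcap_{i\leq n}\bigl(\{\nu_{in}<\infty\}\cap\{|W_{in}|\leq \nu_{in}\}\bigr)$, again probability one, and take any $\varepsilon>0$; on $\mathcal A$ the crude bound $M_{in}(s) \leq e^{|s|\nu_{in}}<\infty$ holds for every $s\in\R$, which is stronger than~\eqref{eq:finite_cgf}.

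To get~\eqref{eq:finite_cgf_derivatives}, I would invoke the standard fact that a moment-generating function finite on an open interval is smooth (indeed real-analytic) there, with $M_{in}^{(r)}(s) = \E[W_{in}^{r} e^{sW_{in}} \mid \mathcal F_n] < \infty$: on any compact subinterval of $(-\varepsilon,\varepsilon)$ the $r$-th $s$-derivative of $e^{sW_{in}}$ is dominated, uniformly in $s$, by $|W_{in}|^{r} e^{\gamma|W_{in}|}$ for some $\gamma<\beta$ under Assumption~\ref{assu:cse} (whose conditional expectation is finite because $t^{r}e^{\gamma t}\leq C\,e^{\gamma' t}$ for any fixed $\gamma'\in(\gamma,\beta)$ and $\E[e^{\gamma'|W_{in}|}\mid\mathcal F_n]<\infty$ by the layer-cake bound above), and by $\nu_{in}^{r} e^{|s|\nu_{in}}$ under Assumption~\ref{assu:ccs}, so differentiation under the conditional expectation is legitimate. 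Since $M_{in}(0)=1$ and $M_{in}$ is the conditional expectation of an almost-surely positive random variable, $M_{in}>0$, so $K_{in}=\log M_{in}$ is finite and $C^\infty$ on $(-\varepsilon,\varepsilon)$; differentiating $\log M_{in}$ repeatedly shows each $K_{in}^{(r)}$ is a polynomial in $M_{in}^{(1)},\dots,M_{in}^{(r)}$ divided by a power of $M_{in}$, hence finite.

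I expect the only real subtlety to be the measure-theoretic bookkeeping --- verifying that the per-$(i,n)$ almost-sure tail and support bounds of Definitions~\ref{def:cse_distribution}--\ref{def:ccs_distribution} can be pooled into a single probability-one event $\mathcal A$, independent of $i$, $n$, $s$, and $r$, on which all of the pointwise-in-$\omega$ estimates hold at once (one also intersects $\mathcal A$ with the probability-one event that $M_{in}(\cdot)>0$). The analytic core --- the layer-cake bound and the dominated-convergence justification for differentiating under the integral sign --- is routine.
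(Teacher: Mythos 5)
Your proposal is correct and follows essentially the same route as the paper: fix regular conditional distributions, bound the conditional MGF $M_{in}(s)=\E[\exp(sW_{in})\mid\mathcal F_n]$ on a fixed neighborhood of the origin under each assumption, justify differentiation under the integral by domination, and then pass to $K_{in}=\log M_{in}$ and its derivatives on a single probability-one event. The only differences are minor: the paper treats the CSE case via its equivalence lemma (Lemma~\ref{lem:equivalence_CSE}), yielding $\E[\exp(sW_{in})\mid\mathcal F_n]\le\exp(\lambda_n s^2)$ on $(-\beta/4,\beta/4)$ with $\varepsilon=\beta/8$ (a quantitative bound it reuses later) and bounds $\E[|W_{in}|^p\exp(sW_{in})\mid\mathcal F_n]$ by H\"older plus an induction lemma, whereas you use a direct layer-cake bound on $(-\beta,\beta)$ and direct domination, which suffices here; one small bookkeeping point (which you essentially flag) is that under the CCS case the event to include in $\mathcal A$ is that the regular conditional law is supported in $[-\nu_{in},\nu_{in}]$, rather than the realized event $\{|W_{in}|\le\nu_{in}\}$.
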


The first step of the saddlepoint approximation is to find the solution $\hat s_n$ to the \textit{saddlepoint equation}:
\begin{equation}\label{eq:saddlepoint-equation}
K_n'(s) = w_n.
\end{equation}
In particular, we restrict our attention to solutions in the interval $[-\varepsilon/2, \varepsilon/2]$:
\begin{align*}
S_n\equiv \left\{s\in [-\varepsilon/2,\varepsilon/2]:K_n'(s)=w_n\right\}.
\end{align*}
It is possible, for specific realizations of $K'_n(s)$ and $w_n$, that the set $S_n$ is either empty or contains multiple elements. To make the saddlepoint approximation well-defined in these cases, we define $\hat s_n$ as follows:
\begin{equation}\label{eq:def_s_n}
\hat s_n \equiv 
\begin{cases}
\text{the single element of }S_n & \text{if } |S_n|=1; \\
\frac{\varepsilon}{2}\mathrm{sgn}(w_n) & \text{otherwise}.
\end{cases}
\end{equation}
Note that this definition ensures that $\hat s_n \in [-\varepsilon/2, \varepsilon/2]$.

With the solution to the saddlepoint equation in hand, we define the saddlepoint approximation as follows. If we define
\small
\begin{align}\label{eq:lam_n_r_n_def}
  \lambda_n \equiv \hat s_n\sqrt{nK_n''(\hat s_n)}; \quad r_n \equiv
  \begin{cases}
	\sgn(\hat s_n) \sqrt{2n( \hat s_n w_n - K_n(\hat s_n))} & \text{if } \hat s_n w_n - K_n(\hat s_n)\geq 0;\\
	\mathrm{sgn}(\hat s_n) & \text{otherwise},
  \end{cases}
  \end{align}
\normalsize
then the saddlepoint approximation is given by
\begin{equation}
\widehat{\P}_{\text{LR}}\left[\left.\frac{1}{n}\sum_{i = 1}^n W_{in} \geq w_n\ \right|\ \mathcal{F}_n\right] \equiv 1-\Phi(r_n)+\phi(r_n)\left\{\frac{1}{\lambda_n}-\frac{1}{r_n}\right\}. \label{eq:lugannani-rice}
\end{equation}
When $w_n = 0$, we have $\hat s_n = \lambda_n = r_n = 0$. In this case, we take by convention that $1/0 - 1/0 \equiv 0$ in equation~\eqref{eq:conclusion_saddlepoint_approximation}, so that $\widehat{\P}_{\text{LR}} \equiv 1/2$. We will also use the convention $0/0=1$. The approximation $\widehat{\P}_{\text{LR}}$~\eqref{eq:lugannani-rice} is a direct generalization of the classical Lugannani-Rice formula \citep{Lugannani1980} to the conditional setting.

\subsection{Statement of main results} \label{sec:main-results}

In preparation for the theorem statement, recall the following definitions for a sequence of random variables $W_n$ and probability measures $\P_n$:
\begin{align*}
&W_n = O_{\P_n}(1) &&\text{ if for each } \delta > 0 \text{ there is an } M > 0 \text{ s.t. } \limsup_{n \rightarrow \infty}\P_n[|W_n| > M] < \delta; \\
&W_n = \Omega_{\P_n}(1) &&\text{ if for each } \delta > 0 \text{ there is an } \eta > 0 \text{ s.t. } \limsup_{n \rightarrow \infty}\P_n[|W_n| < \eta] < \delta;\\
&W_n = o_{\P_n}(1) &&\text{ if } \P_n[|W_n| > \eta] \rightarrow 0 \text{ for all } \eta > 0.
\end{align*}

\begin{theorem}\label{thm:unified_unnormalized_moment_conditions}
	Let $W_{in}$ be a triangular array of random variables that are mean-zero and independent for each $n$, conditionally on $\mathcal F_n$. Suppose either Assumption \ref{assu:cse} or Assumption \ref{assu:ccs} holds, and that
	\begin{align}\label{eq:lower_bound_conditional_variance}
		\frac{1}{n}\sum_{i=1}^n \E[W_{in}^2 \mid \mathcal{F}_n]=\Omega_{\P}(1).
	\end{align}
	Let $w_n \in \mathcal F_n$ be a sequence with $w_n \overset{\P} \rightarrow 0$. Then, the saddlepoint equation~\eqref{eq:saddlepoint-equation} has a unique and finite solution $\hat s_n \in [-\varepsilon/2, \varepsilon/2]$ with probability approaching 1 as $n \rightarrow \infty$:
	\begin{equation}
	\lim_{n \rightarrow \infty} \P[|S_n| = 1] = 1.
	\label{eq:unique_solution_in_probability}
	\end{equation}
	Furthermore, the saddlepoint approximation $\widehat{\P}_{\textnormal{LR}}$ to the conditional tail probability~\eqref{eq:tail-probability} defined by equations \eqref{eq:lam_n_r_n_def} and~\eqref{eq:lugannani-rice} has vanishing relative error:
	\begin{align}\label{eq:conclusion_saddlepoint_approximation}
	  \P\left[\left.\frac{1}{n}\sum_{i = 1}^n W_{in} \geq w_n\ \right|\ \mathcal{F}_n\right] = \widehat{\P}_{\textnormal{LR}}\left[\left.\frac{1}{n}\sum_{i = 1}^n W_{in} \geq w_n\ \right|\ \mathcal{F}_n\right](1+o_{\P}(1)).
	\end{align}
\end{theorem}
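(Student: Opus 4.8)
The plan is to combine an exponential-tilting (change of measure) argument with a conditional Berry--Esseen bound, after first recording the analytic facts about the conditional CGF that do the heavy lifting. Beyond Lemma~\ref{lem:finite_cgf}, under either Assumption~\ref{assu:cse} or~\ref{assu:ccs} one obtains uniform-in-$(i,n)$ bounds $\sup_{|s|\le\varepsilon/2}|K_{in}^{(r)}(s)|\le C_{r,in}$ with $\tfrac1n\sum_i C_{r,in}=O_\P(1)$ for $r=1,2,3$ --- for CCS because $|W_{in}|\le\nu_{in}$ persists under tilting, so $|K_{in}^{(r)}(s)|\lesssim\nu_{in}^r$; for CSE because tilting by $e^{sW}$ with $|s|<\beta$ preserves sub-exponentiality, after shrinking $\varepsilon<\beta$ in Lemma~\ref{lem:finite_cgf}. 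In particular $\sup_{|s|\le\varepsilon/2}|K_n''(s)|=O_\P(1)$ and $\sup_{|s|\le\varepsilon/2}|K_n'''(s)|=O_\P(1)$, while $K_n'(0)=0$ and $K_n''(0)=\tfrac1n\sum_i\E[W_{in}^2\mid\mathcal F_n]=\Omega_\P(1)$ by~\eqref{eq:lower_bound_conditional_variance}. For the unique-solution claim~\eqref{eq:unique_solution_in_probability}: $K_n$ is convex, and on the event where~\eqref{eq:lower_bound_conditional_variance} is bounded away from zero not all $W_{in}$ are degenerate, so $K_n''>0$ and $K_n'$ is strictly increasing on $(-\varepsilon,\varepsilon)$, giving uniqueness. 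For existence, $K_n''(s)\ge\tfrac12K_n''(0)$ for $|s|\le\rho_n:=\min\{\varepsilon/2,\,K_n''(0)/(2\sup_{|s|\le\varepsilon/2}|K_n'''(s)|)\}$, and $\rho_n=\Omega_\P(1)$; hence $K_n'(\pm\rho_n)=\pm\Omega_\P(1)$, so $K_n'([-\varepsilon/2,\varepsilon/2])\supseteq[-\delta_n,\delta_n]$ for some $\delta_n=\Omega_\P(1)$, and since $w_n\convp0$ we get $\P[|w_n|\le\delta_n]\to1$, i.e.\ $\P[|S_n|=1]\to1$. The same estimates give $\hat s_n\convp0$, $K_n''(\hat s_n)\convp K_n''(0)=\Omega_\P(1)$, $|\lambda_n|\le C\sqrt n\,|w_n|=o_\P(\sqrt n)$, and $\hat s_nw_n-K_n(\hat s_n)\ge0$ by convexity (so the ``otherwise'' branch of $r_n$ in~\eqref{eq:lam_n_r_n_def} is inactive); note $\hat s_n\in\mathcal F_n$ as a deterministic functional of $K_n'(\cdot)$ and $w_n$.

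On the event $\{|S_n|=1\}$ I would tilt each summand to the saddlepoint: let $\P_{\hat s_n}$ be the conditional law given $\mathcal F_n$ under which $W_{in}$ has density $\propto e^{\hat s_nw}$ (still conditionally independent), so $\E_{\hat s_n}[\tfrac1n\sum_iW_{in}\mid\mathcal F_n]=K_n'(\hat s_n)=w_n$. Writing $S_n=\sum_iW_{in}$, $\sigma_n^2=K_n''(\hat s_n)$ and $Z_n=(S_n-nw_n)/\sqrt{n\sigma_n^2}$ (mean $0$, variance $1$ under $\P_{\hat s_n}$), the change of measure $d\P/d\P_{\hat s_n}=e^{-\hat s_nS_n+nK_n(\hat s_n)}$ together with $\hat s_n(S_n-nw_n)=\lambda_nZ_n$ and $n(K_n(\hat s_n)-\hat s_nw_n)=-r_n^2/2$ yields the exact identity
\begin{equation*}
p_n=e^{-r_n^2/2}\,\E_{\hat s_n}\!\big[\indicator\{Z_n\ge0\}\,e^{-\lambda_nZ_n}\,\big|\,\mathcal F_n\big].
\end{equation*}
By the sign change $W_{in}\mapsto-W_{in}$ one may assume $\hat s_n\ge0$; finiteness of the tilted integral uses $|\hat s_n|\le\varepsilon/2<\beta/2$ under CSE. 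Now apply the conditional Berry--Esseen bound of Lemma~\ref{lem:conditional-berry-esseen} to the \emph{tilted} array: $\sup_x|\P_{\hat s_n}[Z_n\le x\mid\mathcal F_n]-\Phi(x)|\le\Delta_n$ a.s., where by the moment bounds above and $\sigma_n^2=\Omega_\P(1)$ one has $\Delta_n=O_\P(1/\sqrt n)$. Using $\indicator\{Z_n\ge0\}e^{-\lambda_nZ_n}=\lambda_n\int_0^\infty e^{-\lambda_nz}\indicator\{0\le Z_n\le z\}\,dz$ together with the elementary identity $\lambda_n\int_0^\infty e^{-\lambda_nz}(\Phi(z)-\tfrac12)\,dz=e^{\lambda_n^2/2}\bar\Phi(\lambda_n)-\tfrac12$ (with $\bar\Phi=1-\Phi$), this gives
\begin{equation*}
p_n=e^{-r_n^2/2}\Big(e^{\lambda_n^2/2}\bar\Phi(\lambda_n)+R_n\Big),\qquad |R_n|\le 2\Delta_n=O_\P(1/\sqrt n).
\end{equation*}
The leading term $e^{-r_n^2/2}e^{\lambda_n^2/2}\bar\Phi(\lambda_n)$ is a Robinson-type saddlepoint formula, which Proposition~\ref{prop:equivalence_spa_formula} identifies with $\widehat{\P}_{\textnormal{LR}}$ up to a factor $1+o_\P(1)$.

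It remains to show the remainder is negligible relative to the leading term, i.e.\ $R_n/\big(e^{\lambda_n^2/2}\bar\Phi(\lambda_n)\big)=o_\P(1)$; this is the crux. Since $x\mapsto e^{x^2/2}\bar\Phi(x)$ decreases from $\tfrac12$ and obeys Mills' bound $e^{x^2/2}\bar\Phi(x)\ge c/(1+x)$ for $x\ge0$, we get $|R_n|/\big(e^{\lambda_n^2/2}\bar\Phi(\lambda_n)\big)\le 2\Delta_n(1+\lambda_n)/c=O_\P(1/\sqrt n)\cdot(1+o_\P(\sqrt n))=o_\P(1)$, precisely because the hypothesis $w_n\convp0$ forces $\lambda_n=o_\P(\sqrt n)$ even though $\lambda_n$ itself may diverge. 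Combining with Proposition~\ref{prop:equivalence_spa_formula} gives $p_n=\widehat{\P}_{\textnormal{LR}}(1+o_\P(1))$, and the case $w_n=0$ (where $\hat s_n=\lambda_n=r_n=0$, $\widehat{\P}_{\textnormal{LR}}=\tfrac12$, and $p_n=\P[Z_n\ge0\mid\mathcal F_n]=\tfrac12+O_\P(1/\sqrt n)$) is covered by the same computation.

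The main obstacle is thus twofold and intertwined: (i) proving a \emph{conditional} Berry--Esseen inequality whose bound is $O_\P(1/\sqrt n)$ under only the tail Assumptions~\ref{assu:cse}/\ref{assu:ccs} and with a \emph{data-dependent} tilt $\hat s_n\in\mathcal F_n$ --- the place where controlling the tilted conditional third moments really bites --- and (ii) checking that this $1/\sqrt n$ error dominates the leading-term size $\asymp 1/(1+\lambda_n)$ uniformly in the tail depth, which works exactly because $w_n\convp0$. Keeping every statement on a single probability-approaching-one event and disposing of the degenerate branches in~\eqref{eq:def_s_n} and~\eqref{eq:lam_n_r_n_def} is routine bookkeeping by comparison.
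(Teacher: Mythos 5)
Your overall architecture is the same as the paper's: tilt each summand to the saddlepoint, apply a conditional Berry--Esseen bound to the tilted array, untilt to get the exact identity $p_n=e^{-r_n^2/2}\,\E_{\hat s_n}[\indicator\{Z_n\ge 0\}e^{-\lambda_n Z_n}\mid\mathcal F_n]$, replace the expectation by the Gaussian integral $\int_0^\infty e^{-\lambda_n z}\phi(z)\,dz=e^{\lambda_n^2/2}(1-\Phi(\lambda_n))$ with additive error $O_\P(1/\sqrt n)$, and absorb that error because the leading term is of order $1/(1+\lambda_n)$ with $\lambda_n=o_\P(\sqrt n)$ (this is exactly the paper's $r_n/\sqrt n=o_\P(1)$ and $1/(\sqrt n\,U_n)=o_\P(1)$ step). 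Your existence/uniqueness argument for $\hat s_n$ and the convexity observation that $\hat s_n w_n-K_n(\hat s_n)\ge 0$ on the event $K_n'(\hat s_n)=w_n$ are fine, as is the $w_n=0$ case. (Minor slip: $\lambda_n\int_0^\infty e^{-\lambda_n z}(\Phi(z)-\tfrac12)\,dz=e^{\lambda_n^2/2}(1-\Phi(\lambda_n))$, without the extra $-\tfrac12$; your final display uses the correct value.)

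The genuine gap is the last step. Your argument lands on Robinson's formula $e^{(\lambda_n^2-r_n^2)/2}(1-\Phi(\lambda_n))$, and you pass to the Lugannani--Rice formula by citing Proposition~\ref{prop:equivalence_spa_formula}; but that proposition is itself deduced from Theorem~\ref{thm:unified_unnormalized_moment_conditions} (its proof begins ``From \eqref{eq:conclusion_saddlepoint_approximation}, it suffices to prove\ldots''), so the citation is circular. What you actually need is the formula-level equivalence $\widehat{\P}_{\text{LR}}=\widehat{\P}_{\text{R}}\,(1+o_\P(1))$, and that is not free: it requires comparing $r_n$ and $\lambda_n$, i.e.\ proving statements like $\lambda_n/r_n=1+\hat s_nO_\P(1)$, $1/\lambda_n-1/r_n=o_\P(1)$, and $r_n^{-1}(\lambda_n/r_n-1)=o_\P(1)$, via Taylor expansions of $K_n$ and $sK_n'$ at the (random) saddlepoint, plus lower bounds on $\lambda_n e^{\lambda_n^2/2}(1-\Phi(\lambda_n))$ --- the content of the paper's Lemmas~\ref{lem:asym-estimate-lam-r}, \ref{lem:ratio_convergence} and~\ref{lem:ratio_vanish}. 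Your proposal never compares $r_n$ with $\lambda_n$ at all, so this substantive block is missing. (The paper's route avoids the issue by working with the Lugannani--Rice denominator $U_n$ directly, but it still needs the same $r_n$--$\lambda_n$ rate lemmas to control $1/U_n$.) Secondarily, ``one may assume $\hat s_n\ge 0$ by a sign change'' is not a global reduction because the sign of $w_n$ is random: on the event $w_n<0$ the flipped approximation targets $1-p_n$ plus an atom $\P[\tfrac1n\sum_i W_{in}=w_n\mid\mathcal F_n]$, and transferring a relative-error guarantee for a near-zero probability to one for a near-one probability requires bounding the LR denominator away from zero and killing the atom via conditional anti-concentration (the paper's Lemmas~\ref{lem:symmetry} and~\ref{lem:upper_bound_ratio_spa} and statement~\eqref{eq:nondegeneracy}); this is more than bookkeeping, though it is fixable along the lines you indicate.
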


If we set $\mathcal{F}_n \equiv \{\varnothing,\Omega\}$, Theorem~\ref{thm:unified_unnormalized_moment_conditions} reduces to the following variant of the classical \citet{Lugannani1980} result:

\begin{corollary}\label{cor:unconditional_LR_formula}
	Let $W_{in}$ be a triangular array of random variables that are mean-zero and independent for each $n$. Suppose that each $W_{in}$ is sub-exponential with constants $\theta, \beta > 0$, i.e. 
	\begin{equation} \label{eq:subexponential}
	\P[|W_{in}| \geq t] \leq \theta \exp(-\beta t) \quad \text{for all } t > 0.
	\end{equation} 
	Furthermore, suppose that
	\begin{align}  \label{eq:nondegeneracy-unconditional}
	\smash{\liminf_{n \rightarrow \infty}\ \frac{1}{n}\sum_{i = 1}^n \E[W_{in}^2] > 0.}
	\end{align}
	Given a sequence of cutoffs $w_n \rightarrow 0$ as $n \rightarrow \infty$, the saddlepoint equation~\eqref{eq:saddlepoint-equation} (based on the unconditional CGF $K_n$) has a unique solution $\hat s_n$ on $[-\varepsilon/2, \varepsilon/2]$ for all sufficiently large $n$. Furthermore, the unconditional saddlepoint approximation $\widehat{\P}_{\textnormal{LR}}$ (based on the unconditional CGF $K_n$) has vanishing relative error
	\begin{align} \label{eq:conclusion_unconditional_saddlepoint_approximation}
	\P\left[\frac{1}{n}\sum_{i = 1}^n W_{in} \geq w_n\right] = \widehat{\P}_{\textnormal{LR}}\left[\frac{1}{n}\sum_{i = 1}^n W_{in} \geq w_n\right](1+o(1)).
	\end{align}
\end{corollary}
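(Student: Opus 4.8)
The plan is to obtain the corollary as the specialization of Theorem~\ref{thm:unified_unnormalized_moment_conditions} to the trivial filtration $\mathcal{F}_n \equiv \{\varnothing, \Omega\}$. Under this choice all conditional expectations become unconditional, so the conditional CGFs $K_{in}, K_n$ coincide with the ordinary CGFs and the conditional tail probability~\eqref{eq:tail-probability} becomes $\P[\frac1n\sum_{i=1}^n W_{in} \geq w_n]$; moreover every random object appearing in the theorem — the CGF derivatives, the solution set $S_n$, the saddlepoint $\hat s_n$, and the approximation $\widehat{\P}_{\textnormal{LR}}$ — is $\mathcal{F}_n$-measurable and hence a deterministic sequence, while the radius $\varepsilon$ of Lemma~\ref{lem:finite_cgf} may be taken to be a fixed constant (the sub-exponential bound~\eqref{eq:subexponential} forces each $K_{in}$ to be finite on $(-\beta,\beta)$).

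First I would verify that the hypotheses of the corollary imply those of the theorem. The deterministic sub-exponential bound~\eqref{eq:subexponential} is precisely the statement $W_{in}\mid\mathcal{F}_n \sim \textnormal{CSE}(\theta,\beta)$ with the constant $\theta_n \equiv \theta$; this $\theta_n$ lies in $\mathcal{F}_n$, is almost surely finite, and is $O_{\P}(1)$, so Assumption~\ref{assu:cse} holds. The sequence $\frac1n\sum_{i=1}^n \E[W_{in}^2\mid\mathcal{F}_n] = \frac1n\sum_{i=1}^n\E[W_{in}^2]$ is deterministic and nonnegative, and for such a sequence the condition $\Omega_{\P}(1)$ is equivalent to being eventually bounded away from $0$, i.e.\ to $\liminf_n \frac1n\sum_{i=1}^n\E[W_{in}^2] > 0$; hence~\eqref{eq:lower_bound_conditional_variance} follows from~\eqref{eq:nondegeneracy-unconditional}. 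Finally $\E[W_{in}\mid\mathcal{F}_n] = \E[W_{in}] = 0$, and the deterministic cutoff sequence $w_n \to 0$ lies in $\mathcal{F}_n$ and satisfies $w_n \convp 0$.

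It then remains to read the conclusions of the theorem in this deterministic setting. Since $S_n$ is a deterministic subset of $[-\varepsilon/2,\varepsilon/2]$, the event $\{|S_n| = 1\}$ equals $\varnothing$ or $\Omega$, so $\P[|S_n| = 1] \to 1$ forces $|S_n| = 1$ for all sufficiently large $n$; this is the asserted existence of a unique $\hat s_n$ on $[-\varepsilon/2,\varepsilon/2]$. Likewise $p_n$ and $\widehat{\P}_{\textnormal{LR}}$ are deterministic numbers, so the relative-error conclusion $p_n = \widehat{\P}_{\textnormal{LR}}(1 + o_{\P}(1))$ reads $p_n = \widehat{\P}_{\textnormal{LR}}(1 + o(1))$, which is exactly~\eqref{eq:conclusion_unconditional_saddlepoint_approximation}.

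I do not expect a genuine obstacle: the corollary is a bona fide special case of the theorem. The only points needing (routine) care are the equivalences between the stochastic-order notation $O_{\P}(1)$, $\Omega_{\P}(1)$, $o_{\P}(1)$ used in the theorem and the ordinary deterministic notions to which they collapse once $\mathcal{F}_n$ is trivial, together with checking that the fixed constant $\theta$ and the deterministic cutoffs $w_n$ meet the (trivially satisfied) measurability requirements of Assumption~\ref{assu:cse} and of the theorem.
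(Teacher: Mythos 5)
Your proposal is correct and follows exactly the route the paper intends: the corollary is obtained by specializing Theorem~\ref{thm:unified_unnormalized_moment_conditions} to the trivial $\sigma$-algebra $\mathcal F_n=\{\varnothing,\Omega\}$, verifying Assumption~\ref{assu:cse} with $\theta_n\equiv\theta$ and condition~\eqref{eq:lower_bound_conditional_variance} via~\eqref{eq:nondegeneracy-unconditional}, and noting that the stochastic-order statements and the event $\{|S_n|=1\}$ collapse to their deterministic counterparts. Your care with the equivalence of $\Omega_{\P}(1)$ and $\liminf>0$ for deterministic sequences is exactly the (routine) bookkeeping the paper leaves implicit.
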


\subsection{Remarks on Theorem~\ref{thm:unified_unnormalized_moment_conditions} and Corollary~\ref{cor:unconditional_LR_formula}} \label{sec:remarks}

\begin{remark}[Generality and transparency of assumptions]

While most of the existing literature is fragmented based on whether the summands $W_{in}$ are smooth or lattice, Theorem~\ref{thm:unified_unnormalized_moment_conditions} and Corollary~\ref{cor:unconditional_LR_formula} unify these two cases by not making any such assumptions. The price we pay for this generality is that our guarantee~\eqref{eq:conclusion_saddlepoint_approximation} does not come with a rate, as compared to most existing results. Nevertheless, we find it useful to have a single result encompassing a broad range of settings. Our assumptions are not only general but also transparent. Most existing results require complicated and difficult-to-verify conditions, often stated in terms of transforms of $W_{in}$. By contrast, consider for example the assumptions~\eqref{eq:subexponential} (subexponential summands) and~\eqref{eq:nondegeneracy-unconditional} (non-degenerate variance) in Corollary~\ref{cor:unconditional_LR_formula}. These are standard and easy to understand and verify.
\end{remark}

\begin{remark}[Assumptions on the threshold $w_n$]
	The assumption $w_n=o_\P(1)$ in Theorem \ref{thm:unified_unnormalized_moment_conditions} appears to be restrictive at the first glance. However, this assumption allows us to guarantee the existence of a solution the saddlepoint equation beyond the case of compact support \citep{Daniels1954}. On the other hand, the rate of convergence of $w_n$ towards zero can be arbitrary, accommodating for two statistically meaningful regimes: (a) Moderate deviation regime: $w_n=O_{\P}(n^{-\alpha}),\alpha\in (0,1/2)$ and (b) CLT regime: $w_n=O_{\P}(1/\sqrt{n})$. We discuss this further in Remark~\ref{rem:mu_n_convergence} after Theorem \ref{thm:example}.
\end{remark}

\begin{remark}[Relative error guarantee]
Our results~\eqref{eq:conclusion_saddlepoint_approximation} and~\eqref{eq:conclusion_unconditional_saddlepoint_approximation} give explicit relative error guarantees. Even though informal statements of relative error bounds on tail probabilities are common in the literature, bounds are often stated in terms of absolute error instead, omitting rigorous statements of relative error bounds. Relative error bounds are particularly desirable in statistical applications when the true conditional probabilities are small.
\end{remark}

\begin{remark}[Technical challenges]

The proof of Theorem \ref{thm:unified_unnormalized_moment_conditions} presents several technical challenges, requiring us to significantly extend existing proof techniques. One of the most significant challenges is the conditioning, which adds an extra layer of randomness to the problem. For example, the saddlepoint equation~\eqref{eq:saddlepoint-equation} is a random equation, with random solution $\hat s_n$, which we must show exists with probability approaching 1. Furthermore, the cutoff $w_n$ is random, and in particular we must handle cases depending on the realization of the sign of this cutoff. A crucial step in our proof (which follows the general structure of that of \cite{Robinson1982}) is to use the Berry-Esseen inequality, but the extra conditioning requires us to prove a new conditional Berry-Esseen theorem. Another challenge is that we allow $w_n$ to decay to zero at an arbitrary rate, which requires a delicate analysis of the convergence of the SPA formula appearing in the RHS of the result \eqref{eq:conclusion_saddlepoint_approximation}. 
\end{remark}

\begin{remark}[Form of tail probability approximation]

As discussed above, our tail probability approximation~\eqref{eq:lugannani-rice} is a conditional adaptation of the Lugannani-Rice formula \citep{Lugannani1980}, which is the most common tail probability approximation in the literature. Aside from the Lugannani-Rice formula, another tail probability estimate is proposed in \cite{Robinson1982}. In fact, we present an extension of Theorem~\ref{thm:unified_unnormalized_moment_conditions} in Proposition \ref{prop:equivalence_spa_formula} that employs a conditional variant of Robinson's formula:
\begin{equation}
\widehat{\P}_\text{R}\left[\left.\frac{1}{n}\sum_{i = 1}^n W_{in} \geq w_n\ \right|\ \mathcal{F}_n\right] \equiv \exp\left(\frac{\lambda_n^2-r_n^2}{2}\right)(1-\Phi(\lambda_n)).
\label{eq:robinson-formula}
\end{equation}
\begin{proposition}\label{prop:equivalence_spa_formula}
	Under the assumptions of Theorem \ref{thm:unified_unnormalized_moment_conditions},   
	\begin{align}\label{eq:alternative_spa_formula}
		\P\left[\left.\frac{1}{n}\sum_{i = 1}^n W_{in} \geq w_n\ \right|\ \mathcal{F}_n\right]=\widehat{\P}_\text{R}\left[\left.\frac{1}{n}\sum_{i = 1}^n W_{in} \geq w_n\ \right|\ \mathcal{F}_n\right](1+o_\P(1)).
	  \end{align}
\end{proposition}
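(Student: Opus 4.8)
The plan is to reduce everything to Theorem~\ref{thm:unified_unnormalized_moment_conditions}, which already yields $p_n := \P[\frac1n\sum_i W_{in}\ge w_n\mid\mathcal F_n] = \widehat{\P}_{\textnormal{LR}}(1+o_{\P}(1))$; since $\widehat{\P}_{\textnormal{LR}}$ and $\widehat{\P}_{\textnormal{R}}$ are positive, it then suffices to prove that the two closed-form expressions, evaluated at the (random) pair $(\lambda_n,r_n)$, are asymptotically equivalent, and I will in fact show that \emph{each} of them equals $(1-\Phi(r_n))(1+o_{\P}(1))$. The case $w_n=0$ is immediate since then $\hat s_n=\lambda_n=r_n=0$ and both formulas equal $1/2$; for the rest I split on the sign of $w_n$ and present the argument for $w_n\ge 0$. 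I work throughout on the event of probability tending to one on which $|S_n|=1$ (Theorem~\ref{thm:unified_unnormalized_moment_conditions}), so that $\hat s_n$ is the genuine saddlepoint, with $\hat s_n\convp 0$ (a consequence of $w_n\convp 0$, $K_n'(0)=0$, and the non-degeneracy and smoothness of $K_n$ near the origin available from the proof of Theorem~\ref{thm:unified_unnormalized_moment_conditions}); and since $K_n$ is convex with $K_n(0)=0$, we have $\hat s_nw_n-K_n(\hat s_n)=\hat s_nK_n'(\hat s_n)-K_n(\hat s_n)\ge 0$, so $r_n$ is on its principal branch.

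The first step is to record, via a Taylor expansion of $K_n$ about the origin, the joint behavior of $(\hat s_n,\lambda_n,r_n)$. Writing $\sigma_n^2:=K_n''(0)=\frac1n\sum_i\E[W_{in}^2\mid\mathcal F_n]$, which is $\Omega_{\P}(1)$ by~\eqref{eq:lower_bound_conditional_variance} and $O_{\P}(1)$ under Assumption~\ref{assu:cse} or~\ref{assu:ccs}, and using $\sup_{|s|\le\varepsilon/2}(|K_n''(s)|+|K_n'''(s)|)=O_{\P}(1)$, one obtains $\hat s_n=(w_n/\sigma_n^2)(1+O_{\P}(\hat s_n))$ (so $\hat s_n$ shares the sign of $w_n$), together with $r_n^2=n\sigma_n^2\hat s_n^2(1+O_{\P}(\hat s_n))$ and $\lambda_n^2=n\sigma_n^2\hat s_n^2(1+O_{\P}(\hat s_n))$. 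Hence $\lambda_n/r_n\convp 1$, $|\lambda_n-r_n|=O_{\P}(\hat s_n)\,|r_n|$ (so $|\lambda_n-r_n|/(1+\lambda_n\wedge r_n)\convp 0$), $|1/\lambda_n-1/r_n|=|\lambda_n-r_n|/(\lambda_n r_n)=O_{\P}(n^{-1/2})$, and $|r_n|=O_{\P}(\sqrt n\,|w_n|)$. I will also use the elementary identity $\phi(r_n)=\exp\!\big(\tfrac{\lambda_n^2-r_n^2}{2}\big)\phi(\lambda_n)$, which rewrites Robinson's formula as $\widehat{\P}_{\textnormal{R}}=\phi(r_n)(1-\Phi(\lambda_n))/\phi(\lambda_n)=\phi(r_n)/h(\lambda_n)$, where $h:=\phi/(1-\Phi)$ is the standard-normal hazard function.

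For the Lugannani--Rice side, $\widehat{\P}_{\textnormal{LR}}=(1-\Phi(r_n))+\phi(r_n)(1/\lambda_n-1/r_n)$, and the correction term is at most $\phi(r_n)\cdot O_{\P}(n^{-1/2})$ in absolute value; dividing by $1-\Phi(r_n)$ and using the Mills-ratio bound $\phi(r)/(1-\Phi(r))\le 1+|r|$ turns this into $O_{\P}\!\big((1+|r_n|)n^{-1/2}\big)=O_{\P}(|w_n|+n^{-1/2})=o_{\P}(1)$, so $\widehat{\P}_{\textnormal{LR}}=(1-\Phi(r_n))(1+o_{\P}(1))$. For Robinson's side, the rewriting above gives $\widehat{\P}_{\textnormal{R}}/(1-\Phi(r_n))=h(r_n)/h(\lambda_n)$; since $(\log h)'(t)=h(t)-t$ and $0<h(t)-t\le C/(1+t)$ for $t\ge 0$, a direct integration bounds $|\log h(r_n)-\log h(\lambda_n)|\le C\log\!\big(1+\tfrac{|\lambda_n-r_n|}{1+\lambda_n\wedge r_n}\big)=C\log(1+o_{\P}(1))=o_{\P}(1)$, whence $\widehat{\P}_{\textnormal{R}}/(1-\Phi(r_n))\convp 1$. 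Combining the two displays gives $\widehat{\P}_{\textnormal{R}}=\widehat{\P}_{\textnormal{LR}}(1+o_{\P}(1))$ and hence~\eqref{eq:alternative_spa_formula} via Theorem~\ref{thm:unified_unnormalized_moment_conditions}.

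I expect the main obstacle to lie in the Robinson half, specifically in controlling the exponential prefactor $\exp(\tfrac{\lambda_n^2-r_n^2}{2})$: from the estimates above $\lambda_n^2-r_n^2=O_{\P}(n\hat s_n^3)=O_{\P}(n|w_n|^3)$, which need \emph{not} be $o_{\P}(1)$ in the moderate-deviation regime $w_n\asymp n^{-\alpha}$ with $\alpha\le 1/3$, so the prefactor cannot be dismissed on its own and must be analyzed jointly with $1-\Phi(\lambda_n)$ --- precisely what the identity $\phi(r_n)=\exp(\tfrac{\lambda_n^2-r_n^2}{2})\phi(\lambda_n)$ and the hazard-function comparison $h(r_n)/h(\lambda_n)$ accomplish. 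A secondary delicate point is uniformity across the regimes of $r_n$: when $|r_n|=O_{\P}(1)$ the term $1-\Phi(r_n)$ dominates and the corrections vanish additively, whereas when $|r_n|\convp\infty$ that term is exponentially small and the same corrections must be shown to be of smaller \emph{relative} order; both are absorbed by the bounds above only because $|r_n|=O_{\P}(\sqrt n\,|w_n|)$ with $|w_n|=o_{\P}(1)$, which is what renders factors such as $(1+|r_n|)/\sqrt n$ negligible.
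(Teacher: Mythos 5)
Your reduction is the same as the paper's: by Theorem~\ref{thm:unified_unnormalized_moment_conditions} it suffices to show $\widehat{\P}_{\textnormal{R}}=\widehat{\P}_{\textnormal{LR}}(1+o_{\P}(1))$, and on the branch $w_n>0$ your route is a sound variant of the paper's argument. Where the paper controls $h(r_n)/h(\lambda_n)$ for $h(x)\equiv e^{x^2/2}(1-\Phi(x))$ by a Taylor expansion together with the rate conditions \eqref{eq:rate_1}--\eqref{eq:rate_4} (Lemmas~\ref{lem:ratio_convergence} and~\ref{lem:ratio_vanish}), you show instead that each formula equals $(1-\Phi(r_n))(1+o_{\P}(1))$, using a Mills-ratio bound for the Lugannani--Rice correction and an integrated hazard-rate comparison for Robinson's formula; both rest on the same expansions of $(\hat s_n,\lambda_n,r_n)$, and your handling of the cancellation between $\exp\bigl(\tfrac{\lambda_n^2-r_n^2}{2}\bigr)$ and the Gaussian tail factors on the positive side is correct.

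The genuine gap is the case $w_n<0$, which you announce (``I split on the sign of $w_n$'') but never argue, and which cannot be obtained by symmetry or by the same estimates. For $w_n<0$ one has $\hat s_n,\lambda_n,r_n\le 0$, so $1-\Phi(\lambda_n)$ and $1-\Phi(r_n)$ are both close to one and provide no cancellation; then $\widehat{\P}_{\textnormal{R}}/(1-\Phi(r_n))=\exp\bigl(\tfrac{\lambda_n^2-r_n^2}{2}\bigr)\tfrac{1-\Phi(\lambda_n)}{1-\Phi(r_n)}$ is governed entirely by the exponential prefactor, which by your own expansion is $\exp\bigl(\tfrac16 n\hat s_n^3K_n'''(0)+O_{\P}(n\hat s_n^4)\bigr)$ and need not be $1+o_{\P}(1)$ when $|w_n|$ decays more slowly than $n^{-1/3}$ --- exactly the obstacle you flag, but which your hazard device resolves only on the positive side: the bound $0<h(t)-t\le C/(1+t)$ for the normal hazard holds for $t\ge 0$ only, while for $t\to-\infty$ the log-derivative of the hazard grows like $|t|$, so the integrated comparison yields a bound of order $|\lambda_n-r_n|(|r_n|+|\lambda_n|)\asymp n|\hat s_n|^3$, not $o_{\P}(1)$. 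This negative branch is precisely where the paper expends its effort (the reduction to statements \eqref{eq:ratio_convergnece}--\eqref{eq:ratio_vanish} and the sign conditions of Lemma~\ref{lem:additional_properties_r_n_lambda_n}), and it is also the most delicate point there: the paper's Taylor step invokes the bound $|h'(x)|\le 1/x^2$ from Lemma~\ref{lem:Gaussian_tail_estimate}, which is stated for $x>0$, at a point between $r_n$ and $\lambda_n$, so you should not expect the negative case to be a routine mirror image of the positive one. As written, your proposal establishes the claim only on the event $\{w_n\ge 0\}$; the omitted branch is the hard part and requires a genuinely different argument.
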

\noindent In other words, $\widehat{\P}_\text{LR}$~\eqref{eq:lugannani-rice} is equivalent to $\widehat{\P}_\text{R}$~\eqref{eq:robinson-formula} with relative error $o_\P(1)$, linking Robinson's formula to that of Lugannani and Rice. A similar result was proved in the unconditional case by \citet{Kolassa2007}.
\end{remark}

\section{Application: Sign-flipping test}\label{sec:spa_motivating_example}

In this section, we apply Theorem~\ref{thm:unified_unnormalized_moment_conditions} to derive and justify the validity of the Lugannani-Rice SPA for the sign-flipping test. 

\subsection{The sign-flipping test}

Suppose
\begin{equation}
X_{in}=\mu_n+\varepsilon_{in}, \quad \varepsilon_{in} \indsim F_{in}, 
\label{eq:location-model-symmetric-errors}
\end{equation}
where the error distributions $F_{in}$ are symmetric, but potentially distinct and unknown. We are interested in testing 
\begin{equation}
H_{0n}:\mu_n=0 \quad \text{versus} \quad H_{1n}:\mu_n>0
\end{equation}
based on $T_n \equiv \frac{1}{n}\sum_{i=1}^n X_{in}$. Note that the SPA cannot directly be applied to approximate tail probabilities of $T_n$ because the error distributions $F_{in}$ are unknown. Instead, we can approximate the tail probability of $T_n$ by conditioning on the observed data and resampling the signs of the data. In particular, define the resamples
\begin{equation}
\widetilde X_{in} \equiv \pi_{in} X_{in}, \quad \pi_{in} \iidsim \text{Rad}(0.5),
\end{equation}
where $\text{Rad}(0.5)$ denotes the Rademacher distribution placing equal probability mass on $\pm 1$. Due to the assumed symmetry of the distributions $F_{in}$, flipping the signs of $X_{in}$ preserves their distributions under the null hypothesis, guaranteeing finite-sample validity of the resampling-based $p$-value from equation~\eqref{eq:resampling-p-value} \citep{Hemerik2018, Hemerik2019a}. To circumvent the computationally costly resampling inherent in the sign-flipping test, we can obtain an accurate approximation to the $p$-value by applying the SPA to the tail probabilities of the resampling distribution
\begin{equation}
\widetilde T_n \equiv \frac1n \sum_{i=1}^n \widetilde X_{in} \equiv \frac1n \sum_{i=1}^n \pi_{in} X_{in}.
\end{equation}
Such approximations have been proposed before \citep{Daniels1955, Robinson1982, Hinkley1988}, but have not been rigorously justified (see also Section~\ref{sec:saddlepoint-approximations}). We will now apply Theorem~\ref{thm:unified_unnormalized_moment_conditions} to derive and justify the Lugananni-Rice SPA for the sign-flipping test.

\subsection{The SPA for the sign-flipping test}

We derive the saddlepoint approximation $\widehat{\P}_\text{LR}$. Defining $\mathcal F_n \equiv \sigma(X_{1n}, \dots, X_{nn})$, we first calculate the conditional cumulant-generating functions
\begin{align*}
	K_{in}(s)\equiv \log\E\left[\exp(s\widetilde{X}_{in})|\mathcal{F}_n\right]=\log\left(\frac{\exp(sX_{in}) + \exp(-sX_{in})}{2}\right) = \log\cosh(sX_{in})
\end{align*}
and their first two derivatives
\begin{align}\label{eq:tilde_K_in_prime}
	K'_{in}(s)=X_{in}-\frac{2X_{in}}{1+\exp(2sX_{in})} \quad \text{and} \quad K''_{in}(s)=\frac{4X_{in}^2\exp(2sX_{in})}{(1+\exp(2sX_{in}))^2}.
\end{align}
Therefore, the saddlepoint equation \eqref{eq:saddlepoint-equation} reduces to
\begin{align}\label{eq:saddlepoint_equation_example_simplified}
\frac{1}{n}\sum_{i = 1}^n K'_{in}(s) = \frac{1}{n}\sum_{i = 1}^n X_i \quad \Longleftrightarrow \quad \sum_{i=1}^n \frac{X_{in}}{1+\exp(2sX_{in})}=0.
\end{align}
Given a solution $\hat s_n$ to the saddlepoint equation (whose existence and uniqueness is guaranteed by Theorem~\ref{thm:example} below), we can define the quantities $\lambda_n$ and $r_n$ from equation~\eqref{eq:lam_n_r_n_def}:
\begin{equation}
\lambda_n \equiv \hat s_n \sqrt{n K''_{n}(\hat s_n)} = \hat s_n\sqrt{\sum_{i=1}^n\frac{4X_{in}^2\exp(2\hat s_nX_{in})}{(1+\exp(2\hat s_nX_{in}))^2}}
\label{eq:lambda_n_sign_flipping}
\end{equation}
and
\begin{equation}
\begin{split}
r_n \equiv \sgn(\hat s_n) \sqrt{2n( \hat s_n w_n - K_n(\hat s_n))} = \sgn(\hat s_n) \sqrt{2\sum_{i=1}^n (\hat s_n X_{in} - \log \cosh(\hat s_n X_{in}))},
\label{eq:r_n_sign_flipping}
\end{split}
\end{equation}
where we set $r_n \equiv \sgn(\hat s_n)$ when the quantity under the square root is negative. With these definitions, the SPA for the tail probability of interest is
\begin{equation}
\widehat{\P}_{\text{LR}} \left[\left.\frac{1}{n}\sum_{i=1}^n \widetilde{X}_{in} \geq \frac{1}{n}\sum_{i=1}^n X_{in}\ \right|\ \mathcal{F}_n\right] \equiv 1-\Phi(r_n)+\phi(r_n)\left\{\frac{1}{\lambda_n}-\frac{1}{r_n}\right\}.
\label{eq:sign-flipping-spa}
\end{equation}

\subsection{Guarantees for the SPA for the sign-flipping test}

The following theorem (proved in Appendix~\ref{sec:proof_example}) gives sufficient conditions for this saddlepoint approximation to have vanishing relative error.

\begin{theorem}\label{thm:example}
Suppose $X_{in}$ are drawn from the probability model~\eqref{eq:location-model-symmetric-errors}, such that
	\begin{align}
		\liminf_{n\rightarrow\infty}\frac{1}{n}\sum_{i=1}^n \E[\varepsilon_{in}^2]>0;&\label{eq:lower_bound_second_moment} \\
		\text{there exists $\delta>0$ such that }\limsup_{n\rightarrow\infty}\frac{1}{n}\sum_{i=1}^n\E[|\varepsilon_{in}|^{4+\delta}]<\infty;&\label{eq:upper_bound_four_delta_moment} \\
		\mu_n=o(1).&\label{eq:mu_n_convergence}
	\end{align}
	Then, the saddlepoint equation \eqref{eq:saddlepoint_equation_example_simplified} has a unique solution $\hat s_n\in [-1,1]$ with probability approaching 1 as $n\rightarrow \infty$. Furthermore, the tail probability approximation~\eqref{eq:sign-flipping-spa} obtained from equations~\eqref{eq:lambda_n_sign_flipping} and~\eqref{eq:r_n_sign_flipping} has vanishing relative error:
	\small
	\begin{align}\label{eq:conclusion_example}
	\P\left[\frac{1}{n}\sum_{i=1}^n \widetilde{X}_{in}\geq \frac{1}{n}\sum_{i=1}^n X_{in}\mid \mathcal{F}_n\right]=\widehat{\P}_{\textnormal{LR}}\left[\left.\frac{1}{n}\sum_{i=1}^n \widetilde{X}_{in} \geq \frac{1}{n}\sum_{i=1}^n X_{in}\ \right|\ \mathcal{F}_n\right](1+o_{\P}(1)).
	\end{align}
	\normalsize
\end{theorem}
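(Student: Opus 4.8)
The plan is to deduce Theorem~\ref{thm:example} as a corollary of Theorem~\ref{thm:unified_unnormalized_moment_conditions}, applied with the identification $W_{in} \equiv \widetilde X_{in} = \pi_{in} X_{in}$, $w_n \equiv T_n = \frac{1}{n}\sum_{i=1}^n X_{in}$ (which is trivially $\mathcal F_n$-measurable), and $\mathcal F_n \equiv \sigma(X_{1n},\dots,X_{nn})$. The bulk of the work is then to check the hypotheses of that theorem one by one. Since the $\pi_{in}$ are i.i.d.\ Rademacher and independent of $\mathcal F_n$, conditionally on $\mathcal F_n$ the variables $\widetilde X_{1n},\dots,\widetilde X_{nn}$ are independent with $\E[\widetilde X_{in}\mid \mathcal F_n] = X_{in}\,\E[\pi_{in}] = 0$, so the conditional independence and mean-zero requirements hold. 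Moreover $\widetilde X_{in}\in[-|X_{in}|,|X_{in}|]$ almost surely, so the CCS condition (Assumption~\ref{assu:ccs}) holds with $\nu_{in}\equiv |X_{in}|\in\mathcal F_n$; and from $X_{in}=\mu_n+\varepsilon_{in}$, the inequality $(a+b)^4\le 8a^4+8b^4$, Lyapunov's inequality, and assumptions~\eqref{eq:upper_bound_four_delta_moment} and~\eqref{eq:mu_n_convergence}, one gets $\frac{1}{n}\sum_{i=1}^n\E[|X_{in}|^4]=O(1)$, hence $\frac{1}{n}\sum_{i=1}^n\nu_{in}^4=O_{\P}(1)$ by Markov's inequality.

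Next I would verify the nondegeneracy condition~\eqref{eq:lower_bound_conditional_variance}. Because $\pi_{in}^2=1$ we have $\E[\widetilde X_{in}^2\mid\mathcal F_n]=X_{in}^2$, so the task is to show $\frac{1}{n}\sum_{i=1}^n X_{in}^2=\Omega_{\P}(1)$. Writing $X_{in}^2=\varepsilon_{in}^2+2\mu_n\varepsilon_{in}+\mu_n^2$, the terms involving $\mu_n$ are $o_{\P}(1)$ by~\eqref{eq:mu_n_convergence} together with an $L^2$ weak law for $\frac{1}{n}\sum\varepsilon_{in}$ (whose variance is $O(1/n)$ under the uniformly bounded second moments implied by~\eqref{eq:upper_bound_four_delta_moment}), while $\frac{1}{n}\sum_{i=1}^n\varepsilon_{in}^2$ concentrates around $\frac{1}{n}\sum_{i=1}^n\E[\varepsilon_{in}^2]$ since $\V\!\big(\frac{1}{n}\sum\varepsilon_{in}^2\big)\le \frac{1}{n^2}\sum\E[\varepsilon_{in}^4]=O(1/n)$ by the bounded fourth moments. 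Combined with the lower bound~\eqref{eq:lower_bound_second_moment} on $\liminf\frac{1}{n}\sum\E[\varepsilon_{in}^2]$, this delivers~\eqref{eq:lower_bound_conditional_variance}. The same weak law plus~\eqref{eq:mu_n_convergence} also gives $w_n = T_n = \mu_n + \frac{1}{n}\sum\varepsilon_{in}\convp 0$, so the cutoff hypothesis $w_n\convp 0$ is met.

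It then remains to reconcile the solution interval. Since $\E[\exp(s\widetilde X_{in})\mid\mathcal F_n]=\cosh(sX_{in})$ is finite for every $s\in\R$ on the probability-one event that all $X_{in}$ are finite, Lemma~\ref{lem:finite_cgf} holds with any $\varepsilon>0$; choosing $\varepsilon=2$ makes the solution interval $[-\varepsilon/2,\varepsilon/2]$ in Theorem~\ref{thm:unified_unnormalized_moment_conditions} equal to $[-1,1]$, matching the statement, and the specialized forms~\eqref{eq:saddlepoint_equation_example_simplified}, \eqref{eq:lambda_n_sign_flipping}, \eqref{eq:r_n_sign_flipping} follow by substituting $K_{in}(s)=\log\cosh(sX_{in})$ and its derivatives~\eqref{eq:tilde_K_in_prime}. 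Theorem~\ref{thm:unified_unnormalized_moment_conditions} then yields both the existence of a unique solution $\hat s_n\in[-1,1]$ with probability tending to one and the relative-error conclusion~\eqref{eq:conclusion_example}.

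I expect the only genuine work to be the weak-law argument underlying the conditional-variance lower bound~\eqref{eq:lower_bound_conditional_variance} — in particular handling non-identically-distributed summands and the $\mu_n$ cross terms — and the bookkeeping around the $(4+\delta)$-th moment assumption (which is more than enough to run these estimates); everything else is a routine translation, with the existence/uniqueness of the saddlepoint and the relative-error bound entirely inherited from Theorem~\ref{thm:unified_unnormalized_moment_conditions}.
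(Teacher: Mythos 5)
Your proposal is correct and follows essentially the same route as the paper: apply Theorem~\ref{thm:unified_unnormalized_moment_conditions} with $W_{in}=\widetilde X_{in}$, $w_n=\frac1n\sum_i X_{in}$, $\varepsilon=2$, verify the CCS condition with $\nu_{in}=|X_{in}|$, and reduce everything to showing $\frac1n\sum_i X_{in}^2=\Omega_{\P}(1)$ and $\frac1n\sum_i X_{in}^4=O_{\P}(1)$ (the paper's Lemma~\ref{lem:moment_verification}). The only differences are cosmetic: you establish these moment bounds by direct Markov/Chebyshev variance estimates rather than invoking the weak-law lemma (Lemma~\ref{lem:wlln}) as the paper does, and you explicitly check $w_n\convp 0$, which the paper leaves implicit.
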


\begin{remark}
To our knowledge, this is the first rigorous result providing conditions under which an SPA for the sign-flipping test has vanishing relative error, even though SPAs for this test were first proposed almost seventy years ago \citep{Daniels1955}.
\end{remark}

\begin{remark}
	Conditions \eqref{eq:lower_bound_second_moment} and \eqref{eq:upper_bound_four_delta_moment} in Theorem \ref{thm:example} are very mild. Condition \eqref{eq:lower_bound_second_moment} requires the non-degeneracy of the average of the second moment of the noise distribution. Condition \eqref{eq:upper_bound_four_delta_moment} imposes average $4+\delta$ moment condition on the noise distribution. Therefore, the SPA for the sign-flipping test handles very general noise distributions; in particular, no assumption is required on the cumulant generating functions of the noise distributions.
\end{remark}

\begin{remark} \label{rem:mu_n_convergence}
	Condition \eqref{eq:mu_n_convergence} implies that the alternative hypothesis must ``converge'' to the null hypothesis. This is mainly required when proving the existence of the unique solution to the saddlepoint equation \eqref{eq:saddlepoint_equation_example_simplified}. Under the moderation deviation regime ($\mu_n=O(n^{-\alpha}),\alpha\in (0,1/2)$), the power of the test with p-value defined by the LHS of \eqref{eq:conclusion_example} will converge to $1$ under mild conditions. For the CLT regime ($\mu_n=h/\sqrt{n},h\neq 0$), it is known as contiguous local alternative in statistical testing literature. This is a particularly interesting regime because the asymptotic power of the resampling-based test is strictly between $\alpha$ and $1$ where $\alpha$ is the prespecified significance level, under mild conditions. 
\end{remark}

\section{Proof of Theorem~\ref{thm:unified_unnormalized_moment_conditions}} \label{sec:spa_proof}

The high-level structure of our proof is inspired by that of \citet{Robinson1982}: Exponentially tilt the summands, then apply the Berry-Esseen inequality to get a normal approximation after tilting, then tilt back. In this section, we sketch the proof our main result with the help of a sequence of lemmas, whose proofs we defer to Appendix~\ref{sec:lemma_proofs}. 

\subsection{Solving the saddlepoint equation}\label{sec:solution_spa}

First, we state a lemma lower-bounding the second derivative $K''_n(s)$, which will help us guarantee the existence and uniqueness of solutions to the saddlepoint equation~\eqref{eq:saddlepoint-equation}.
\begin{lemma} \label{lem:positive_second_derivative}
Under the assumptions in Theorem \ref{thm:unified_unnormalized_moment_conditions}, the function $K''_n(s)$ is nonnegative on $(-\varepsilon, \varepsilon)$:
\begin{align}
K''_n(s) \geq 0 \quad \text{for all } s \in (-\varepsilon, \varepsilon) \text{ almost surely}. \label{eq:positive_second_derivative}
\end{align}
Furthermore, it is uniformly bounded away from zero on a neighborhood of the origin, in the sense that for each $\delta > 0$, there exist $\eta > 0, s^* \in (0, \varepsilon/2)$ and $N \in \mathbb N_+$ such that 
\begin{equation}
\P\left[\inf_{s \in [-s_*, s_*]} K''_n(s) \geq \eta\right] \geq 1-\delta \quad \text{for all } n \geq N. \label{eq:uniform_lower_bound_second_derivative}
\end{equation}
\end{lemma}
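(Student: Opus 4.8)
The nonnegativity claim~\eqref{eq:positive_second_derivative} is the easy part. By Lemma~\ref{lem:finite_cgf}, on a probability-one event each $K_{in}$ is finite and infinitely differentiable on $(-\varepsilon,\varepsilon)$, so differentiation under the expectation is justified and $K_{in}''(s)$ is the conditional variance of $W_{in}$ under the exponentially tilted measure $d\P_{s,in} \propto e^{sW_{in}}\,d\P(\cdot\mid\mathcal F_n)$. A variance is nonnegative, hence $K_{in}''(s)\geq 0$ for all $s\in(-\varepsilon,\varepsilon)$ and all $i\leq n$, and averaging gives $K_n''(s)\geq 0$. I would phrase this cleanly by noting $K_{in}''(s) = \E_{s,in}[W_{in}^2] - (\E_{s,in}[W_{in}])^2 \geq 0$, where $\E_{s,in}$ denotes expectation under the tilted conditional law.

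For the uniform lower bound~\eqref{eq:uniform_lower_bound_second_derivative}, the idea is to control $K_{in}''(s)$ from below near $s=0$ in terms of $K_{in}''(0) = \E[W_{in}^2\mid\mathcal F_n]$, and then invoke the nondegeneracy hypothesis~\eqref{eq:lower_bound_conditional_variance}. The plan is: (i) fix a realization on the probability-one event of Lemma~\ref{lem:finite_cgf} and pick a small $s_*\in(0,\varepsilon/2)$; (ii) for $|s|\leq s_*$, write the tilted second moment $\E_{s,in}[W_{in}^2] = \E[W_{in}^2 e^{sW_{in}}\mid\mathcal F_n]/\E[e^{sW_{in}}\mid\mathcal F_n]$ and lower-bound it, and upper-bound the subtracted term $(\E_{s,in}[W_{in}])^2$, so that for $s_*$ small enough $K_{in}''(s) \geq \tfrac12 \E[W_{in}^2\mid\mathcal F_n] - (\text{small})$. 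Under the CSE assumption, the tilted moments are uniformly controlled because the tilt $e^{sW_{in}}$ stays in a bounded range in $L^p$ for $|s|$ small (the summands being CSE$(\theta_n,\beta)$ with $\theta_n=O_\P(1)$); under the CCS assumption one uses $|W_{in}|\leq\nu_{in}$ directly with the moment bound $\frac1n\sum\nu_{in}^4 = O_\P(1)$. Then (iii) average over $i$: $\inf_{|s|\leq s_*} K_n''(s) \geq \tfrac12\cdot\frac1n\sum_i\E[W_{in}^2\mid\mathcal F_n] - (\text{remainder that is } o_\P(1) \text{ or controllable})$. Finally (iv) apply $\frac1n\sum_i\E[W_{in}^2\mid\mathcal F_n] = \Omega_\P(1)$ to conclude that with probability at least $1-\delta$ (for $n\geq N$) the infimum exceeds some $\eta>0$.

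The main obstacle is making step (ii) uniform in $i$, $n$, and $s\in[-s_*,s_*]$ while only assuming the \emph{average} moment conditions rather than per-summand uniform bounds. The delicate point is that $s_*$, $\eta$, and $N$ must be chosen depending only on $\delta$ — not on the realization — so I need the ``small'' remainder terms from the tilt to be bounded by quantities like $s_*\cdot\frac1n\sum_i\E[|W_{in}|^3\mid\mathcal F_n]$ or $s_*^2\cdot\frac1n\sum_i\nu_{in}^4$ that are themselves $O_\P(1)$ by the assumptions (possibly invoking a conditional moment bound derived from the CSE/CCS property, analogous to the one used to prove Lemma~\ref{lem:finite_cgf}). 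Concretely I expect to show $K_{in}''(s) \geq \E[W_{in}^2\mid\mathcal F_n](1 - C(\theta_n,\beta)|s|)$ or an additive analogue, average, then choose $s_*$ small enough that the multiplicative factor exceeds $1/2$ on an event of probability $\geq 1-\delta/2$, and combine with the $\Omega_\P(1)$ lower bound on the average conditional variance on another event of probability $\geq 1-\delta/2$.
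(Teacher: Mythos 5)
Your proposal is correct and follows essentially the same route as the paper: nonnegativity is obtained exactly as you say, by identifying $K_{in}''(s)$ with a conditional variance under the tilted law, and the uniform lower bound is obtained by expanding $K_n''$ around $s=0$, controlling the remainder through averaged tilted-moment bounds implied by CSE/CCS (the paper packages these as $K_n'''(0)=O_{\P}(1)$ and $\sup_{s\in(-\varepsilon,\varepsilon)}|K_n''''(s)|=O_{\P}(1)$ in Lemma~\ref{lem:reduced_condition}), invoking $K_n''(0)=\Omega_{\P}(1)$ from the nondegeneracy condition~\eqref{eq:lower_bound_conditional_variance}, and choosing $\eta$, $M$, $s_*$, $N$ via a union bound so that they depend only on $\delta$. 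One small caution: carry out the additive version of your step (ii) rather than the multiplicative bound $K_{in}''(s)\geq \E[W_{in}^2\mid\mathcal F_n]\left(1-C(\theta_n,\beta)|s|\right)$, since the cumulant corrections need not be small relative to an individual summand's variance; the additive Taylor bound $\inf_{|s|\leq s_*}K_n''(s)\geq K_n''(0)-s_*M-\tfrac12 s_*^2 M$ on a high-probability event, which is what the paper uses, is exactly what you need.
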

\noindent This lemma guarantees that the function $K'_n(s)$ is nondecreasing on $(-\varepsilon, \varepsilon)$ and increasing at a positive rate near the origin. To better illustrate the intuition, we refer the reader to Figure \ref{fig:illustration_spsolution}. Since $w_n \convp 0$, this implies that the saddlepoint equation~\eqref{eq:saddlepoint-equation} will have a solution for large enough $n$. 

\begin{figure}[!ht]
	\centering
	\includegraphics[width=1\textwidth]{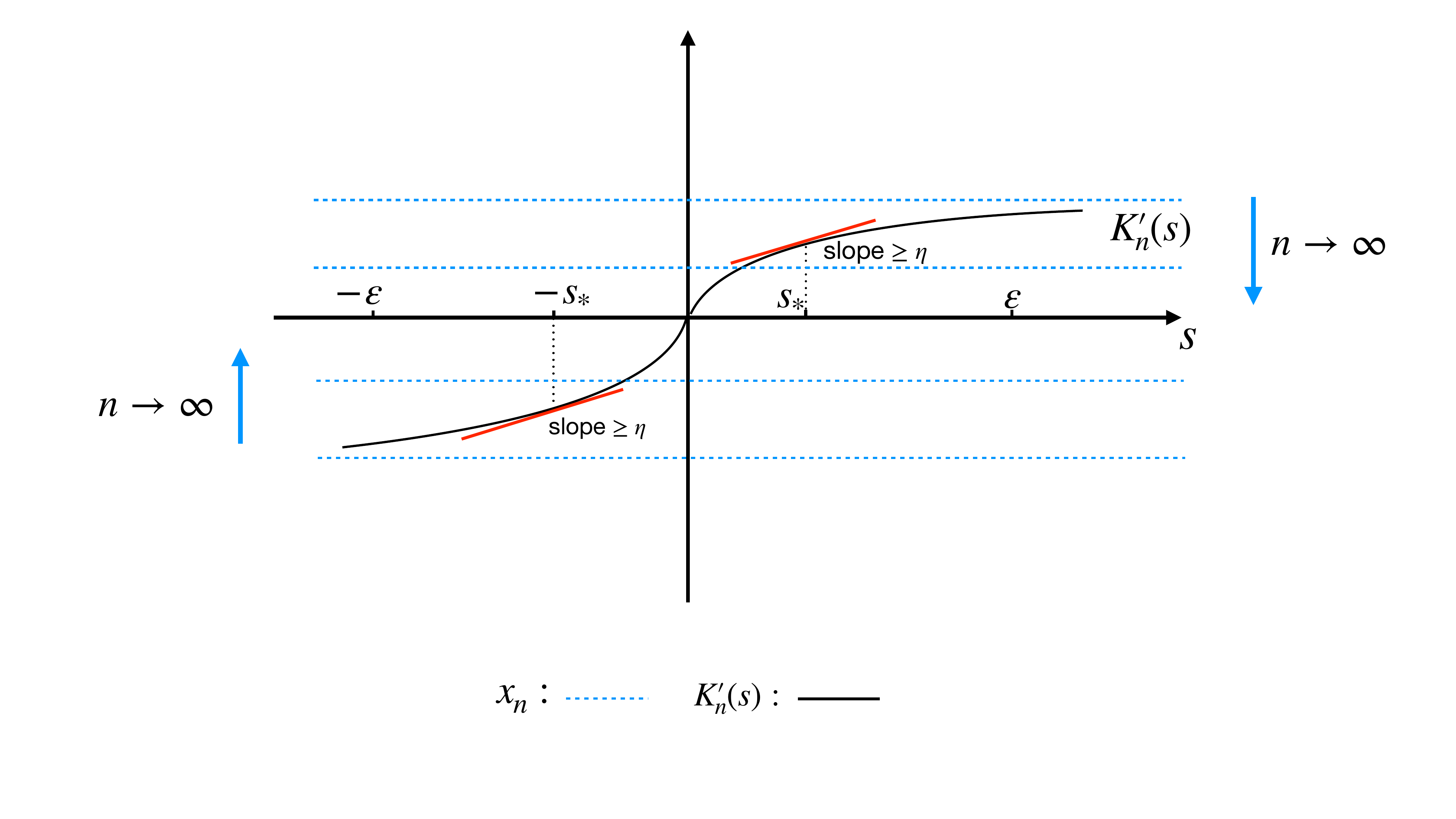}
	\caption{Illustration of the function $K'_n(s)$ for $n$ large. The derivative of the function is nonnegative and strictly positive near the origin.}
	\label{fig:illustration_spsolution}
\end{figure}

To be more precise, fix $\delta > 0$. By Lemma~\ref{lem:positive_second_derivative} and the fact that $w_n \convp 0$, let $\eta > 0$, $s_* \in (0, \varepsilon/2)$ and $N \in \mathbb N_+$ be such that
\begin{equation}
\P\left[\mathcal E_n\right] \geq 1-\delta \quad \text{for all } n \geq N, \quad \text{where} \quad \mathcal E_n \equiv \left\{\inf_{s \in [-s_*, s_*]} K''_n(s) \geq \eta, |w_n| < s_* \eta\right\}.
\end{equation}
On the event $\mathcal E_n \cap \mathcal A$, we can Taylor expand $K'_n(s)$ around $s = 0$ to obtain
\begin{equation}
K'_n(s) =  K'_n(0) + s K''_n(\bar s) = s K''_n(\bar s) \quad \text{for} \quad |\bar s| \in (0, |s|),
\label{eq:K_prime_taylor_expansion}
\end{equation}
where we have used the fact that
\begin{equation}
K'_n(0) = \frac{1}{n}\sum_{i = 1}^n K'_{in}(0) = \frac{1}{n}\sum_{i = 1}^n \E[W_{in} \mid \mathcal F_n] = 0. 
\label{eq:k_n_prime_0_equals_0}
\end{equation}
It follows from the Taylor expansion~\eqref{eq:K_prime_taylor_expansion} that, for $n \geq N$, we have
\begin{equation*}
K'_n(-s_*) \leq -s_* \eta < w_n < s_* \eta \leq K'_n(s_*).
\end{equation*}
By the continuity of $K'_n(s)$ on the event $\mathcal A$ (Lemma~\ref{lem:finite_cgf}), the intermediate value theorem implies that for each $n \geq N$, there exists a solution $\hat s_n \in (-s_*, s_*)$ to the saddlepoint equation~\eqref{eq:saddlepoint-equation} on the event $\mathcal E_n \cap \mathcal A$. Furthermore, for each $n \geq N$, this solution is unique on $\mathcal E_n \cap \mathcal A$ because $K'_n(s)$ is strictly increasing on $[-s_*, s_*]$ and nondecreasing on the entire interval $[-\varepsilon/2, \varepsilon/2]$. Hence, we have shown that, for arbitrary $\delta > 0$, we have
\begin{equation}
\liminf_{n \rightarrow \infty} \P[|S_n| = 1] \geq \liminf_{n \rightarrow \infty} \P[\mathcal E_n \cap \mathcal A] \geq 1 - \delta.
\end{equation}
Letting $\delta \rightarrow 0$ implies the first claim~\eqref{eq:unique_solution_in_probability} of Theorem~\ref{thm:unified_unnormalized_moment_conditions}. 

The following lemma records three properties of the saddlepoint $\hat s_n$, which will be useful in the remainder of the proof:
\begin{lemma} \label{lem:saddlepoint_properties}
The saddlepoint $\hat s_n$ satisfies the following properties:
\begin{align}
\sgn(\hat s_n)=\sgn(w_n)\ \text{almost surely} ;\label{eq:sign_property_s_n}  \\
\hat s_n \convp 0; \label{eq:hat_s_n_convergence} \\
K''_n(\hat s_n) = \Omega_{\P}(1). \label{eq:hat_s_n_second_derivative}
\end{align}
\end{lemma}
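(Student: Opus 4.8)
The plan is to read off all three properties \eqref{eq:sign_property_s_n}--\eqref{eq:hat_s_n_second_derivative} from ingredients already established: the monotonicity and local positivity of $K_n''$ from Lemma~\ref{lem:positive_second_derivative}, the identity $K_n'(0)=0$ from \eqref{eq:k_n_prime_0_equals_0}, the explicit definition \eqref{eq:def_s_n} of $\hat s_n$, and $w_n\convp 0$. In fact \eqref{eq:hat_s_n_convergence} and \eqref{eq:hat_s_n_second_derivative} come essentially for free by revisiting the existence/uniqueness argument of Section~\ref{sec:solution_spa} but with a neighborhood of shrinking radius rather than a fixed one.

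For the sign property \eqref{eq:sign_property_s_n}, I would work on the probability-one event $\mathcal A$ of Lemma~\ref{lem:finite_cgf}, on which $K_n'$ is continuous and, by Lemma~\ref{lem:positive_second_derivative}, nondecreasing on $[-\varepsilon/2,\varepsilon/2]$ with $K_n'(0)=0$. Split according to \eqref{eq:def_s_n}: if $|S_n|\neq 1$ then $\hat s_n=\tfrac{\varepsilon}{2}\sgn(w_n)$ and $\sgn(\hat s_n)=\sgn(w_n)$ is immediate; if $|S_n|=1$ then $K_n'(\hat s_n)=w_n$, and since $K_n'$ is nondecreasing with $K_n'(0)=0$, the cases $w_n>0$, $w_n<0$, $w_n=0$ force $\hat s_n>0$, $\hat s_n<0$, $\hat s_n=0$ respectively (e.g. if $w_n>0$ and $\hat s_n\leq 0$ then $K_n'(\hat s_n)\leq K_n'(0)=0<w_n$, a contradiction).

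For \eqref{eq:hat_s_n_convergence} and \eqref{eq:hat_s_n_second_derivative}, I would fix $\delta>0$ and an arbitrary target radius $\rho\in(0,\varepsilon/2)$. Applying Lemma~\ref{lem:positive_second_derivative} and then replacing $s_*$ by $\min(s_*,\rho)$ --- which only increases $\inf_{|s|\leq s_*}K_n''(s)$ --- yields $\eta>0$, $s_*\in(0,\rho]$ and $N$ with $\P[\inf_{|s|\leq s_*}K_n''(s)\geq\eta]$ arbitrarily close to $1$ for large $n$; combining with $w_n\convp 0$, the event $\mathcal E_n=\{\inf_{|s|\leq s_*}K_n''(s)\geq\eta,\ |w_n|<s_*\eta\}$ has $\P[\mathcal E_n]\geq 1-\delta$ for $n\geq N$. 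On $\mathcal E_n\cap\mathcal A$, the Taylor argument of Section~\ref{sec:solution_spa} gives $K_n'(-s_*)\leq -s_*\eta<w_n<s_*\eta\leq K_n'(s_*)$, so by strict monotonicity of $K_n'$ on $[-s_*,s_*]$ and monotonicity on all of $[-\varepsilon/2,\varepsilon/2]$ there is exactly one solution, lying in $(-s_*,s_*)\subseteq(-\rho,\rho)$; hence $|S_n|=1$, $|\hat s_n|<\rho$, and $K_n''(\hat s_n)\geq\inf_{|s|\leq s_*}K_n''(s)\geq\eta$. Thus $\P[|\hat s_n|\geq\rho]\leq\delta$ and $\P[K_n''(\hat s_n)<\eta]\leq\delta$ for all $n\geq N$; letting $\delta\downarrow 0$ gives \eqref{eq:hat_s_n_convergence}, while the second bound (with $\eta=\eta(\delta)$) is precisely the definition of $\Omega_\P(1)$ and gives \eqref{eq:hat_s_n_second_derivative}.

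Every step is short, so there is no single hard obstacle; the points that need care are bookkeeping ones. For \eqref{eq:sign_property_s_n} one must not forget the degenerate branches $|S_n|\neq 1$ and $w_n=0$ of the definition \eqref{eq:def_s_n}. For \eqref{eq:hat_s_n_convergence}--\eqref{eq:hat_s_n_second_derivative} one must check that the solution produced inside $(-s_*,s_*)$ is the \emph{unique} solution in all of $[-\varepsilon/2,\varepsilon/2]$, so that it genuinely coincides with $\hat s_n$ --- this is exactly where global monotonicity of $K_n'$, and not merely its local strict monotonicity, is invoked.
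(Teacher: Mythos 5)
Your proposal is correct and follows essentially the same route as the paper: the sign property is read off from the monotonicity of $K_n'$ together with $K_n'(0)=0$ and the fallback branch of \eqref{eq:def_s_n}, while \eqref{eq:hat_s_n_convergence} and \eqref{eq:hat_s_n_second_derivative} are obtained by rerunning the existence/uniqueness argument built on Lemma~\ref{lem:positive_second_derivative} and $w_n\convp 0$. The only cosmetic difference is that you shrink the interval radius $s_*$ to $\min(s_*,\rho)$, whereas the paper instead shrinks the threshold on $|w_n|$ and deduces $|\hat s_n|\leq |w_n|/\eta\leq\gamma$; both yield the same conclusion, and your attention to the global-monotonicity step ensuring the local solution coincides with $\hat s_n$ matches the paper's reasoning.
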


\subsection{Decomposing based on the sign of $w_n$}\label{sec:sign_decomposition}

Since $w_n$ is random, it can have uncertainty on the sign. This is a technical challenge since the uncertain sign of $w_n$ will also make the signs of $\lambda_n,r_n$ uncertain. We observe that the desired result~\eqref{eq:conclusion_saddlepoint_approximation} is implied by the following three statements, which decompose the problem based on the sign of $w_n$:
\begin{align}
\indicator(w_n > 0)\left(\frac{\P\left[\frac1n \sum_{i = 1}^n W_{in} \geq w_n \mid \mathcal F_n\right]}{1-\Phi(r_n)+\phi(r_n)\left\{\frac{1}{\lambda_n}-\frac{1}{r_n}\right\}}-1\right) \convp 0; \label{eq:positive_w_n} \\
\indicator(w_n < 0)\left(\frac{\P\left[\frac1n \sum_{i = 1}^n W_{in} \geq w_n \mid \mathcal F_n\right]}{1-\Phi(r_n)+\phi(r_n)\left\{\frac{1}{\lambda_n}-\frac{1}{r_n}\right\}}-1\right) \convp 0; \label{eq:negative_w_n} \\
\P\left[\frac1n \sum_{i = 1}^n W_{in} \geq 0 \mid \mathcal F_n \right] \convp \frac12. \label{eq:zero_w_n} 
\end{align}
In the next two subsections, we verify statements~\eqref{eq:negative_w_n} and~\eqref{eq:zero_w_n}, respectively. This will leave just the statement~\eqref{eq:positive_w_n}. 

\subsubsection{Verifying statement~\eqref{eq:negative_w_n}}

Before verifying statement~\eqref{eq:negative_w_n}, we state a lemma on the properties of the quantities $\lambda_n$ and $r_n$ that are necessary for proving the statement:
\begin{lemma}\label{lem:additional_properties_r_n_lambda_n}
Under the assumptions of Theorem \ref{thm:unified_unnormalized_moment_conditions}, $r_n$ and $\lambda_n$ are almost surely finite:
	\begin{align}
		|r_n|<\infty, |\lambda_n|<\infty,\text{ a.s.} \tag{Finite} \label{eq:finitness_r_n_lambda_n}
	\end{align}
Furthermore, the signs of $w_n$, $r_n$, and $\lambda_n$ have the following relationships:
	\begin{align}
		w_n > 0 \Rightarrow \lambda_n \geq 0, r_n \geq 0,\text{ a.s.}; \tag{Sign1} \label{eq:sign_1} \\
		\P[w_n>0\text{ and }\lambda_nr_n = 0]\rightarrow 0; \tag{Sign2} \label{eq:same_sign_condition} \\
		\P[\hat s_n\neq 0\text{ and }\lambda_nr_n = 0]\rightarrow 0; \tag{Sign3} \label{eq:same_sign_condition_sn} \\
		r_n < 0\Rightarrow \lambda_n\leq 0,\ \lambda_n < 0 \Rightarrow r_n \leq 0,\text{ a.s.} \tag{Sign4} \label{eq:sign_condition_r_lambda}
	\end{align}
Finally, $r_n$ and $\lambda_n$ satisfy the following convergence statements:
	\begin{align}
		\frac{1}{\lambda_n}-\frac{1}{r_n}=o_{\P}(1); \tag{Rate1} \label{eq:rate_1} \\ 
		\frac{\lambda_n}{r_n}-1=o_{\P}(1); \tag{Rate2} \label{eq:rate_2} \\ 
		\indicator(r_n>0,\lambda_n>0)\frac{1}{r_n}\left(\frac{\lambda_n}{r_n}-1\right)=o_{\P}(1); \tag{Rate3} \label{eq:rate_3} \\ 
		\indicator(\lambda_n\neq 0)\frac{1}{\lambda_n}\left(\frac{r_n}{\lambda_n}-1\right)=o_{\P}(1); \tag{Rate4} \label{eq:rate_4} \\ 
		\frac{r_n}{\sqrt{n}}=o_{\P}(1)\tag{Rate5}. \label{eq:rate_r}
	\end{align}
\end{lemma}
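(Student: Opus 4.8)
The plan is to split the statement into two parts: the finiteness and the sign relations \eqref{eq:finitness_r_n_lambda_n}, \eqref{eq:sign_1}, \eqref{eq:sign_condition_r_lambda} are immediate bookkeeping consequences of the definitions together with Lemmas~\ref{lem:finite_cgf}--\ref{lem:saddlepoint_properties}, while the two remaining sign statements and all five rate statements will be read off from a single high-probability ``good event'' on which the conditional CGF is two-sidedly controlled near the origin. For the easy claims, note that $\hat s_n\in[-\varepsilon/2,\varepsilon/2]\subset(-\varepsilon,\varepsilon)$, so on the probability-one event $\mathcal A$ of Lemma~\ref{lem:finite_cgf} all of $K_n(\hat s_n),K_n'(\hat s_n),K_n''(\hat s_n)$ are finite and, by \eqref{eq:positive_second_derivative}, $K_n''(\hat s_n)\ge 0$; hence $|\lambda_n|<\infty$ and $|r_n|<\infty$ a.s., which is \eqref{eq:finitness_r_n_lambda_n}. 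In every case $r_n$ has the form $\sgn(\hat s_n)\,c_n$ with $c_n\ge 0$ (either $c_n=\sqrt{2n(\hat s_n w_n-K_n(\hat s_n))}$ or $c_n=1$), while $\lambda_n=\sgn(\hat s_n)\,|\hat s_n|\sqrt{nK_n''(\hat s_n)}$; combined with $\sgn(\hat s_n)=\sgn(w_n)$ a.s.\ (Lemma~\ref{lem:saddlepoint_properties}) this gives \eqref{eq:sign_1} ($w_n>0\Rightarrow\hat s_n>0\Rightarrow\lambda_n\ge 0,r_n\ge 0$) and \eqref{eq:sign_condition_r_lambda} ($r_n<0$ or $\lambda_n<0$ both force $\hat s_n<0$, whence the other quantity is $\le 0$).

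\textbf{The good event.} Fix $\delta>0$. Using (i) $\P[|S_n|=1]\to 1$ (proved in Section~\ref{sec:solution_spa}), (ii) $\mathcal A$, (iii) $\hat s_n\convp 0$, (iv) the uniform lower bound \eqref{eq:uniform_lower_bound_second_derivative}, and (v) a uniform upper bound on the CGF derivatives near the origin established in the appendix under Assumption~\ref{assu:cse}/\ref{assu:ccs}, namely $\sup_{|t|\le|\hat s_n|}\bigl(K_n''(t)+|K_n'''(t)|\bigr)=O_\P(1)$, I would choose $\eta>0$, $C<\infty$, $s_*\in(0,\varepsilon/2)$ and $N$ so that
\[
\mathcal G_n \equiv \{|S_n|=1\}\cap\mathcal A\cap\Bigl\{|\hat s_n|\le s_*,\ \inf_{|t|\le s_*}K_n''(t)\ge\eta,\ \sup_{|t|\le|\hat s_n|}\bigl(K_n''(t)+|K_n'''(t)|\bigr)\le C\Bigr\}
\]
satisfies $\P[\mathcal G_n]\ge 1-\delta$ for $n\ge N$. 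On $\mathcal G_n$, $\hat s_n$ solves the saddlepoint equation, so $\hat s_n w_n-K_n(\hat s_n)=\hat s_n K_n'(\hat s_n)-K_n(\hat s_n)=\int_0^{\hat s_n}tK_n''(t)\,dt=:g(\hat s_n)$ (using $K_n(0)=0$ and $\tfrac{d}{ds}[sK_n'(s)-K_n(s)]=sK_n''(s)$); in particular $g(\hat s_n)\ge 0$, so the second branch of $r_n$ is inactive on $\mathcal G_n$ and, using $\eta\le K_n''(t)\le C$ for $|t|\le|\hat s_n|\le s_*$,
\[
r_n^2 = 2n\,g(\hat s_n),\qquad n\eta\,\hat s_n^2 \;\le\; \min(r_n^2,\lambda_n^2) \;\le\; \max(r_n^2,\lambda_n^2) \;\le\; nC\,\hat s_n^2 \qquad\text{on } \mathcal G_n .
\]
Hence on $\mathcal G_n\cap\{\hat s_n\ne 0\}$ both $\lambda_n$ and $r_n$ are nonzero, and by the sign bookkeeping above they share the same sign; since $\P[\mathcal G_n^c]\to 0$ and $\{w_n>0\}\subseteq\{\hat s_n\ne 0\}$ a.s., this yields \eqref{eq:same_sign_condition} and \eqref{eq:same_sign_condition_sn}, and also \eqref{eq:rate_r}, since $r_n^2/n=2g(\hat s_n)\le C\hat s_n^2=o_\P(1)$ on $\mathcal G_n$.

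\textbf{A single Taylor expansion for the rates.} On $\mathcal G_n\cap\{\hat s_n\ne 0\}$, expanding $g$ to second order about $0$ (valid since $g(0)=g'(0)=0$ and $g''(s)=K_n''(s)+sK_n'''(s)$ on $(-\varepsilon,\varepsilon)$) gives $g(\hat s_n)=\tfrac12\hat s_n^2\bigl(K_n''(\xi_n)+\xi_n K_n'''(\xi_n)\bigr)$ for some $\xi_n$ between $0$ and $\hat s_n$, so that $r_n^2=n\hat s_n^2\bigl(K_n''(\xi_n)+\xi_n K_n'''(\xi_n)\bigr)$ and, by the mean value theorem for $K_n''$ together with $|K_n'''|\le C$ on $[-|\hat s_n|,|\hat s_n|]$,
\[
\bigl|r_n^2-\lambda_n^2\bigr| = n\hat s_n^2\,\bigl|K_n''(\xi_n)-K_n''(\hat s_n)+\xi_n K_n'''(\xi_n)\bigr| \;\le\; 2Cn\,|\hat s_n|^3 .
\]
Dividing by $\min(r_n^2,\lambda_n^2)\ge n\eta\hat s_n^2$ gives $\bigl|r_n^2/\lambda_n^2-1\bigr|\vee\bigl|\lambda_n^2/r_n^2-1\bigr|\le 2C|\hat s_n|/\eta=o_\P(1)$, and since $r_n,\lambda_n$ have the same sign there, taking square roots gives $\bigl|r_n/\lambda_n-1\bigr|\vee\bigl|\lambda_n/r_n-1\bigr|\le 2C|\hat s_n|/\eta$ on $\mathcal G_n\cap\{\hat s_n\ne 0\}$. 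This proves \eqref{eq:rate_2}, once we add that on $\{\hat s_n=0\}$ one has $\lambda_n=r_n=0$ and the convention $0/0=1$ makes $\lambda_n/r_n-1=0$. The remaining rates follow by dividing the estimate $|r_n/\lambda_n-1|\le 2C|\hat s_n|/\eta$ (or its $\lambda_n/r_n$ analogue) by $|\lambda_n|$ or $|r_n|$ and using $\min(|\lambda_n|,|r_n|)\ge\sqrt{n\eta}\,|\hat s_n|$ on $\mathcal G_n$: e.g.\ $\bigl|\tfrac1{\lambda_n}-\tfrac1{r_n}\bigr|=\tfrac{|r_n/\lambda_n-1|}{|r_n|}\le\tfrac{2C}{\eta^{3/2}\sqrt n}\to 0$ on $\mathcal G_n\cap\{\hat s_n\ne 0\}$, while on $\{\hat s_n=0\}$, which a.s.\ coincides with $\{\lambda_n=0\}\cap\{r_n=0\}$ up to an event of vanishing probability by \eqref{eq:same_sign_condition_sn}, the convention $1/0-1/0=0$ applies; this gives \eqref{eq:rate_1}, and the same division restricted to the events $\{r_n>0,\lambda_n>0\}$ and $\{\lambda_n\ne 0\}$ (both of which force $\hat s_n\ne 0$) gives \eqref{eq:rate_3} and \eqref{eq:rate_4}. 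Letting $\delta\downarrow 0$ completes the argument.

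\textbf{Main obstacle.} The delicate point is that $w_n$, and with it $\hat s_n$, may tend to $0$ at an arbitrary rate, so $\lambda_n$ and $r_n$ need not be bounded away from $0$; thus \eqref{eq:rate_1}, \eqref{eq:rate_3}, \eqref{eq:rate_4} are differences or quotients of quantities that can diverge, and cannot be controlled termwise. The plan circumvents this by never estimating $1/\lambda_n$ or $1/r_n$ in isolation: the cubic bound $|r_n^2-\lambda_n^2|=O_\P(n|\hat s_n|^3)$ against the quadratic lower bound $\min(r_n^2,\lambda_n^2)=\Omega(n\hat s_n^2)$ leaves every residual carrying a spare factor $|\hat s_n|/|\lambda_n|\le(n\eta)^{-1/2}\to 0$. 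A second, more routine, obstacle---deferred to the appendix---is establishing the two-sided control $\eta\le K_n''\le C$ and $|K_n'''|\le C$ near the origin with probability tending to one under the bare tail Assumptions~\ref{assu:cse}/\ref{assu:ccs}; under the CCS condition the summands are not uniformly bounded, so one must exploit that $\hat s_n$ shrinks fast enough to absorb $\max_i\nu_{in}$.
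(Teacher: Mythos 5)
Your proposal is correct, and its quantitative heart is the same as the paper's: compare $r_n^2=2n(\hat s_n w_n-K_n(\hat s_n))$ and $\lambda_n^2=n\hat s_n^2K_n''(\hat s_n)$ via Taylor expansion around $0$, obtaining a cubic discrepancy $O(n|\hat s_n|^3)$ against a quadratic lower bound of order $n\hat s_n^2$, so that every ratio carries a spare factor $|\hat s_n|$ and every reciprocal a spare $1/(\sqrt n\,|\hat s_n|)$. The packaging differs in two ways. First, the paper proves a separate auxiliary result (Lemma~\ref{lem:asym-estimate-lam-r}) entirely in $O_{\P}$/$o_{\P}$ bookkeeping with random coefficients, and it handles the possibility $K_n'(\hat s_n)\neq w_n$ by an explicit indicator decomposition, using $\indicator(K_n'(\hat s_n)\neq w_n)=o_{\P}(1)$; you instead absorb $\{|S_n|=1\}$, the lower bound \eqref{eq:uniform_lower_bound_second_derivative}, and upper bounds on the CGF derivatives into one good event $\mathcal G_n$ with $\P[\mathcal G_n]\geq 1-\delta$, on which all estimates are deterministic — a cleaner bookkeeping that also delivers \eqref{eq:same_sign_condition}, \eqref{eq:same_sign_condition_sn} and \eqref{eq:sqrt_part_r_n}-type statements for free. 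Second, the paper expands $K_n$ and $sK_n'(s)$ to fourth order, using $K_n'''(0)=O_{\P}(1)$ \eqref{eq:third_cgf_derivative_bound} and $\sup_s|K_n''''(s)|=O_{\P}(1)$ \eqref{eq:fourth_cgf_derivative_bound}, whereas you expand $g(s)=sK_n'(s)-K_n(s)=\int_0^s tK_n''(t)\,dt$ to second order, which requires the uniform bound $\sup_{|t|\leq s_*}|K_n'''(t)|=O_{\P}(1)$; note this is not literally what the appendix states, but it follows in one line from Lemma~\ref{lem:reduced_condition} via $|K_n'''(t)|\leq|K_n'''(0)|+|t|\sup_{s\in(-\varepsilon,\varepsilon)}|K_n''''(s)|$, so you should record that step rather than cite it as already established. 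Two trivial further remarks: $\{\hat s_n=0\}$ is contained exactly (not merely up to vanishing probability) in $\{\lambda_n=r_n=0\}$, so no appeal to \eqref{eq:same_sign_condition_sn} is needed there; and your closing comment about the CCS case is off the mark — the paper handles it by the almost-sure bound $\E_{in,s}[W_{in}^4\mid\mathcal F_n]\leq\nu_{in}^4$ for $|s|<1$, with no interplay between $\hat s_n$ and $\max_i\nu_{in}$ required.
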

\noindent Now we claim that if the statement~\eqref{eq:positive_w_n} holds, then we can derive the statement~\eqref{eq:negative_w_n} by symmetry:

\begin{lemma} \label{lem:symmetry}
Suppose the assumptions of Theorem~\ref{thm:unified_unnormalized_moment_conditions} hold and imply statement~\eqref{eq:positive_w_n}. We can apply the theorem to the triangular array $\widetilde W_{in} \equiv -W_{in}$ and set of cutoffs $\widetilde w_n \equiv -w_n$ to obtain that 
\begin{equation*}
\indicator(\widetilde w_n > 0)\left(\frac{\P\left[\frac1n \sum_{i = 1}^n \widetilde W_{in} \geq \widetilde w_n \mid \mathcal F_n\right]}{1-\Phi(\widetilde r_n)+\phi(\widetilde r_n)\left\{\frac{1}{\widetilde \lambda_n}-\frac{1}{\widetilde r_n}\right\}}-1\right) \convp 0,
\end{equation*}
where $\widetilde r_n = -r_n$ and $\widetilde \lambda_n = -\lambda_n$. Then under conditions \eqref{eq:finitness_r_n_lambda_n}, \eqref{eq:sign_1}, \eqref{eq:sign_condition_r_lambda} and \eqref{eq:rate_1}, the following convergence statement holds:
\begin{align}
\indicator(w_n < 0)\left(\frac{\P\left[\frac1n \sum_{i = 1}^n W_{in} \geq w_n \mid \mathcal F_n\right]}{1-\Phi(r_n)+\phi(r_n)\left\{\frac{1}{\lambda_n}-\frac{1}{r_n}\right\}}-1\right) \convp 0.
\end{align}
\end{lemma}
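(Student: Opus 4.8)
Write $\overline W_n\equiv\frac1n\sum_{i=1}^n W_{in}$. The plan is to deduce~\eqref{eq:negative_w_n} from the already-available statement~\eqref{eq:positive_w_n} by reflecting the whole problem: apply Theorem~\ref{thm:unified_unnormalized_moment_conditions} to $\widetilde W_{in}\equiv -W_{in}$ with cutoffs $\widetilde w_n\equiv -w_n$, translate the resulting positive-cutoff statement back into the original variables, and then absorb the one genuine discrepancy that reflection creates, namely that the event $\{\overline W_n\ge w_n\}$ reflects to the \emph{non-strict} event $\{\overline W_n\le w_n\}$ rather than to $\{\overline W_n>w_n\}$. First I would check that the reflected array satisfies the hypotheses of Theorem~\ref{thm:unified_unnormalized_moment_conditions}: negation preserves conditional mean-zeroness and conditional independence, it preserves $\textnormal{CSE}(\theta_n,\beta)$ and $\textnormal{CCS}(\nu_{in})$ because Definitions~\ref{def:cse_distribution}--\ref{def:ccs_distribution} involve only $|W_{in}|$, it preserves~\eqref{eq:lower_bound_conditional_variance} because $(-W_{in})^2=W_{in}^2$, and $\widetilde w_n=-w_n\in\mathcal F_n$ with $\widetilde w_n\convp 0$; since the implication posited in the lemma is a general consequence of these hypotheses, it applies to the reflected array and yields~\eqref{eq:positive_w_n} for it. The identities $\widetilde r_n=-r_n$, $\widetilde\lambda_n=-\lambda_n$ recorded in the lemma follow from $\widetilde K_{in}(s)=\log\E[\exp(-sW_{in})\mid\mathcal F_n]=K_{in}(-s)$, whence $\widetilde K_n'(s)=-K_n'(-s)$ and $\widetilde K_n''(s)=K_n''(-s)$; the solution set of the reflected saddlepoint equation is thus $-S_n$ and, checking both branches of~\eqref{eq:def_s_n}, the reflected saddlepoint is $-\hat s_n$, so plugging into~\eqref{eq:lam_n_r_n_def} with $(-\hat s_n)\widetilde w_n=\hat s_n w_n$, $\widetilde K_n(-\hat s_n)=K_n(\hat s_n)$, $\widetilde K_n''(-\hat s_n)=K_n''(\hat s_n)\ge0$ (the last by Lemma~\ref{lem:positive_second_derivative}) gives $\widetilde\lambda_n=-\lambda_n$ and $\widetilde r_n=-r_n$ in both branches of the definition of $r_n$.

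Next I would translate. Using $\Phi(-x)=1-\Phi(x)$, $\phi(-x)=\phi(x)$ and the reflection identities, the reflected Lugannani--Rice denominator equals $D_2\equiv\Phi(r_n)-\phi(r_n)\{\tfrac1{\lambda_n}-\tfrac1{r_n}\}$, which together with the original denominator $D_1\equiv 1-\Phi(r_n)+\phi(r_n)\{\tfrac1{\lambda_n}-\tfrac1{r_n}\}$ satisfies the key identity $D_1+D_2=1$. Also $\indicator(\widetilde w_n>0)=\indicator(w_n<0)$ and $\{\tfrac1n\sum_i\widetilde W_{in}\ge\widetilde w_n\}=\{\overline W_n\le w_n\}$. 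By Lemma~\ref{lem:additional_properties_r_n_lambda_n} (finiteness~\eqref{eq:finitness_r_n_lambda_n}, the sign relations~\eqref{eq:sign_1} applied to the reflected array, \eqref{eq:same_sign_condition_sn}, \eqref{eq:sign_condition_r_lambda}), with probability tending to one $r_n,\lambda_n$ are finite, nonzero and $\le0$ on $\{w_n<0\}$, so $D_1,D_2$ are well-defined there; on the complementary event the factor $\indicator(w_n<0)$ multiplies a bounded quantity, making it $o_\P(1)$. Hence the reflected form of~\eqref{eq:positive_w_n} reads: on $\{w_n<0\}$, $\P[\overline W_n\le w_n\mid\mathcal F_n]=D_2\,(1+o_\P(1))$.

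It remains to pass from $\le$ to $\ge$, which is the only step beyond bookkeeping and the main obstacle. Put $C\equiv\P[\overline W_n=w_n\mid\mathcal F_n]\ge0$; then $\P[\overline W_n\ge w_n\mid\mathcal F_n]=1-\P[\overline W_n<w_n\mid\mathcal F_n]=1-\P[\overline W_n\le w_n\mid\mathcal F_n]+C$, so with $D_1+D_2=1$ and the previous display, $\P[\overline W_n\ge w_n\mid\mathcal F_n]-D_1=C+o_\P(1)$ on $\{w_n<0\}$. On that event $r_n\le0$, hence $1-\Phi(r_n)\ge\tfrac12$, while $\phi(r_n)\{\tfrac1{\lambda_n}-\tfrac1{r_n}\}=o_\P(1)$ by~\eqref{eq:rate_1}; thus $D_1\ge\tfrac12-o_\P(1)$ is bounded away from zero with probability tending to one, and $\indicator(w_n<0)\big(\P[\overline W_n\ge w_n\mid\mathcal F_n]/D_1-1\big)=\indicator(w_n<0)(C+o_\P(1))/D_1$. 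The proof thus reduces to the anti-concentration bound $C=o_\P(1)$: the Lugannani--Rice formula is blind to atoms of $\overline W_n$, so pure algebraic reflection cannot distinguish $\{\ge w_n\}$ from $\{>w_n\}$, and we must step outside it. I would get $C=o_\P(1)$ from the conditional Berry--Esseen inequality (Lemma~\ref{lem:conditional-berry-esseen}): under Assumption~\ref{assu:cse} or~\ref{assu:ccs}, $\frac1n\sum_i\E[|W_{in}|^3\mid\mathcal F_n]=O_\P(1)$ (in the CCS case via $\frac1n\sum_i\nu_{in}^3\le(\frac1n\sum_i\nu_{in}^4)^{3/4}$), and, with $\sigma_n^2\equiv\frac1n\sum_i\E[W_{in}^2\mid\mathcal F_n]=\Omega_\P(1)$, the Berry--Esseen remainder $\rho_n\equiv\big(\frac1n\sum_i\E[|W_{in}|^3\mid\mathcal F_n]\big)/(\sqrt n\,\sigma_n^3)$ is $O_\P(n^{-1/2})=o_\P(1)$, so $\sup_x\big|\P[\overline W_n\le x\mid\mathcal F_n]-\Phi(\sqrt n\,x/\sigma_n)\big|=o_\P(1)$; since $\Phi$ is continuous this forces $\sup_x\P[\overline W_n=x\mid\mathcal F_n]=o_\P(1)$, and in particular $C\le\sup_x\P[\overline W_n=x\mid\mathcal F_n]=o_\P(1)$. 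Combining, $\indicator(w_n<0)\big(\P[\overline W_n\ge w_n\mid\mathcal F_n]/D_1-1\big)\convp 0$, which is~\eqref{eq:negative_w_n}.
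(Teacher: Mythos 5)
Your proposal is correct and follows essentially the same route as the paper: reflect the array to invoke the positive-cutoff statement, translate back via $\Phi(-x)=1-\Phi(x)$ (the paper multiplies by the ratio of the two denominators, controlled in Lemma~\ref{lem:upper_bound_ratio_spa}, which is equivalent to your $D_1+D_2=1$ bookkeeping with $D_1\geq \tfrac12-o_{\P}(1)$), and then absorb the atom $\P[\frac1n\sum_i W_{in}=w_n\mid\mathcal F_n]=o_{\P}(1)$. The only cosmetic difference is that you obtain the atom bound from the conditional Berry--Esseen inequality under $\P$, whereas the paper gets it from the conditional CLT nondegeneracy statement~\eqref{eq:nondegeneracy} in Lemma~\ref{lem:conditional_CLT_W_n}; both are valid under the stated assumptions.
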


\subsubsection{Verifying statement~\eqref{eq:zero_w_n}}

To prove the statement~\eqref{eq:zero_w_n}, we first state a conditional central limit theorem:
\begin{lemma}[\cite{Niu2022a}] \label{lem:conditional-clt}
	Consider a sequence of $\sigma$-algebras $\mathcal F_n$ and probability measures $\P_n$. Let $W_{in}$ be a triangular array of random variables, such that for each $n$, $W_{in}$ are independent conditionally on $\mathcal F_n$ under $\P_n$. Let
	\begin{equation*}
		S_n^2 \equiv \frac1n\sum_{i = 1}^n \V_{\P_n}[W_{in} \mid \mathcal F_n].
	\end{equation*} 
	If $\V_{\P_n}[W_{in} \mid \mathcal F_n] < \infty$ almost surely for each $i$ and $n$, and for some $\delta > 0$ we have
	\begin{equation}\label{eq:conditional-lyapunov}
		n^{-\delta/2}\frac{1}{S_n^{2+\delta}} \frac{1}{n}\sum_{i = 1}^n \E_{\P_n}[|W_{in}-\E_{\P_n}[W_{in}|\mathcal{F}_n]|^{2+\delta} \mid \mathcal{F}_n] \overset{\P_n} \rightarrow 0,
	\end{equation}
	then 
	\begin{equation*}
		\sup_{z\in\mathbb{R}}\left|\P_n\left[\sqrt{\frac{n}{S^2_n}} \frac1n\sum_{i = 1}^n (W_{in} - \E_{\P_n}[W_{in} \mid \mathcal{F}_n])\leq z \mid \mathcal F_n\right]-\Phi(z)\right| \overset{\P_n} \rightarrow 0.
	\end{equation*}
\end{lemma}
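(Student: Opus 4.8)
The plan is to deduce Lemma~\ref{lem:conditional-clt} directly from the conditional Berry--Esseen inequality (Lemma~\ref{lem:conditional-berry-esseen}), mirroring exactly how the classical Berry--Esseen bound yields the unconditional Lyapunov central limit theorem. All quantities are to be understood under $\P_n$, but since nothing in the argument uses a fixed measure this causes no extra difficulty.

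First I would make two harmless reductions. Replacing $W_{in}$ by $\bar W_{in} \equiv W_{in} - \E_{\P_n}[W_{in}\mid\mathcal F_n]$ changes neither $S_n^2$ nor the Lyapunov ratio in~\eqref{eq:conditional-lyapunov} nor the random variable whose conditional law we study, so we may assume the summands are conditionally mean-zero. Next I would reduce to $\delta \in (0,1]$: applying Lyapunov's inequality conditionally to interpolate the exponent $2+\delta'$ between $2$ and $2+\delta$, and then Hölder's inequality to the average over $i$, gives, for any $0 < \delta' \le \delta$,
\[
\frac1n\sum_{i=1}^n \E_{\P_n}[|\bar W_{in}|^{2+\delta'}\mid\mathcal F_n] \;\le\; (S_n^2)^{1-\delta'/\delta}\Bigl(\tfrac1n\sum_{i=1}^n \E_{\P_n}[|\bar W_{in}|^{2+\delta}\mid\mathcal F_n]\Bigr)^{\delta'/\delta},
\]
from which a short computation shows the Lyapunov ratio of~\eqref{eq:conditional-lyapunov} with exponent $2+\delta'$ is at most the $(\delta'/\delta)$-th power of the one with exponent $2+\delta$; hence~\eqref{eq:conditional-lyapunov} for some $\delta$ implies it for $\min(\delta,1)$, and we may assume $\delta \in (0,1]$. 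On the $\mathcal F_n$-measurable event $\{S_n^2 = 0\}$ all conditional variances vanish and the conditional law of the normalized sum is degenerate, so the statement is vacuous there; I would work throughout on $\{S_n^2 > 0\}$.

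On $\{S_n^2 > 0\}$ the random variable $Z_n \equiv \sqrt{n/S_n^2}\,\tfrac1n\sum_{i=1}^n \bar W_{in}$ has conditional mean $0$ and conditional variance exactly $1$, and the $\bar W_{in}$ remain conditionally independent. Applying the conditional Berry--Esseen inequality (Lemma~\ref{lem:conditional-berry-esseen}) with $2+\delta$ conditional moments yields, almost surely,
\[
\sup_{z\in\R}\bigl|\P_n[Z_n \le z \mid \mathcal F_n] - \Phi(z)\bigr| \;\le\; C_\delta\,\frac{\sum_{i=1}^n \E_{\P_n}[|\bar W_{in}|^{2+\delta}\mid\mathcal F_n]}{\bigl(\sum_{i=1}^n \V_{\P_n}[\bar W_{in}\mid\mathcal F_n]\bigr)^{(2+\delta)/2}} \;=\; C_\delta\, n^{-\delta/2}\,\frac{1}{S_n^{2+\delta}}\,\frac1n\sum_{i=1}^n \E_{\P_n}[|\bar W_{in}|^{2+\delta}\mid\mathcal F_n],
\]
where the last equality uses $\sum_i\V_{\P_n}[\bar W_{in}\mid\mathcal F_n] = nS_n^2$. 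By hypothesis~\eqref{eq:conditional-lyapunov}, the right-hand side converges to $0$ in $\P_n$-probability, hence so does the left-hand side, which is precisely the claimed conclusion.

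The substantive content is therefore entirely in Lemma~\ref{lem:conditional-berry-esseen}, which we take as given; the derivation above is essentially bookkeeping (the centering, the reduction to $\delta \le 1$, and the algebraic matching of the Berry--Esseen ratio with the normalization $n^{-\delta/2}S_n^{-(2+\delta)}$ in~\eqref{eq:conditional-lyapunov}). If one had to prove the conditional Berry--Esseen bound from scratch, the main obstacle would be regularity/measurability rather than a new probabilistic idea: one runs Esseen's smoothing inequality conditionally, using that the conditional characteristic function $\E_{\P_n}[e^{\mathrm{i}tZ_n}\mid\mathcal F_n] = \prod_{i}\E_{\P_n}[e^{\mathrm{i}t\bar W_{in}/\sqrt{nS_n^2}}\mid\mathcal F_n]$ factorizes by conditional independence, while verifying that the conditional sup-distance and the conditional moments are $\mathcal F_n$-measurable and that the classical constant $C_\delta$ can be taken uniform over realizations of $\mathcal F_n$.
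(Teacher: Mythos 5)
First, note that the paper itself does not prove Lemma~\ref{lem:conditional-clt}: it is imported from \cite{Niu2022a}. So your derivation has to stand on its own, and your bookkeeping is mostly fine — the centering step, the interpolation reduction to $\delta\in(0,1]$ (the H\"older/Lyapunov computation does give that the ratio in \eqref{eq:conditional-lyapunov} with exponent $2+\delta'$ is bounded by the $(\delta'/\delta)$-th power of the one with exponent $2+\delta$), and the algebra identifying the Berry--Esseen ratio with $n^{-\delta/2}S_n^{-(2+\delta)}\tfrac1n\sum_i\E_{\P_n}[|\bar W_{in}|^{2+\delta}\mid\mathcal F_n]$ are all correct.

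The genuine gap is the pivotal citation. What you invoke — an almost-sure inequality $\sup_z|\P_n[Z_n\le z\mid\mathcal F_n]-\Phi(z)|\le C_\delta\,n^{-\delta/2}S_n^{-(2+\delta)}\tfrac1n\sum_i\E_{\P_n}[|\bar W_{in}|^{2+\delta}\mid\mathcal F_n]$ on $\{S_n^2>0\}$ — is \emph{not} Lemma~\ref{lem:conditional-berry-esseen}. That lemma is a third-moment statement whose hypotheses are $S_n^2=\Omega_{\P_n}(1)$ and an $O_{\P_n}(1)$ bound on average third central moments, with an $O_{\P_n}(1)$-type conclusion; neither hypothesis is implied by \eqref{eq:conditional-lyapunov} (the Lyapunov ratio can vanish while $S_n^2\convp 0$ or the third moments blow up, and for $\delta<1$ the third moments need not even exist — your downward reduction to $\delta\le 1$ cannot be run upward to $\delta=1$). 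So to make the step rigorous you must re-prove the conditional bound itself: rerun the regular-conditional-distribution argument used in the paper's proof of Lemma~\ref{lem:conditional-berry-esseen}, but with the classical uniform Berry--Esseen/Lyapunov-type bound for $2+\delta$ moments ($0<\delta\le1$) in place of the third-moment result of Lemma~\ref{lem:berry-esseen}, and then keep only the $\mathcal F_n$-measurable event version of the inequality. You flag this in your closing paragraph, so the missing ingredient is acknowledged but not supplied; as written, the proof does not go through by citing the paper's lemmas. A second, smaller point: the event $\{S_n^2=0\}$ is not ``vacuous'' — on it the conditional law of the (conventionally defined) normalized sum is degenerate and the sup-distance to $\Phi$ is of order $1/2$ — so to conclude convergence in probability you must argue that $\P_n[S_n^2=0]\to 0$ (or adopt a convention making \eqref{eq:conditional-lyapunov} rule this event out), parallel to the step $\P[S_n\le 0]\to 0$ in the paper's proof of Lemma~\ref{lem:conditional-berry-esseen}.
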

\noindent We can apply this result to the variables $W_{in}$ to get the following convergence statements:
\begin{lemma}\label{lem:conditional_CLT_W_n}
	Suppose the conditions in Theorem \ref{thm:unified_unnormalized_moment_conditions} hold. Then we have 
	\begin{align}\label{eq:conditional_uniform_CLT}
		\sup_{t\in\mathbb{R}}\left|\P\left[\sqrt{\frac{n}{K_n''(0)}}\frac1n\sum_{i=1}^n W_{in}\leq t|\mathcal{F}_n\right]-\Phi(t)\right|\convp 0.
	\end{align}
	Moreover, for any sequence $y_n\in\mathcal{F}_n$, we know 
	\begin{align}\label{eq:nondegeneracy}
		\P\left[\sqrt{\frac{n}{K_n''(0)}}\frac1n\sum_{i=1}^n W_{in}=y_n|\mathcal{F}_n\right]\convp 0.
	\end{align}
\end{lemma}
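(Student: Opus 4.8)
The plan is to obtain both displays from the conditional central limit theorem of Lemma~\ref{lem:conditional-clt}, applied with $\P_n = \P$. The first observation is that the normalization there coincides with ours: since $\E[W_{in}\mid\mathcal F_n]=0$, we have $K_{in}''(0)=\V[W_{in}\mid\mathcal F_n]=\E[W_{in}^2\mid\mathcal F_n]$, so that $S_n^2 := \frac1n\sum_{i=1}^n\V[W_{in}\mid\mathcal F_n]=K_n''(0)$. Hence~\eqref{eq:conditional_uniform_CLT} is \emph{verbatim} the conclusion of Lemma~\ref{lem:conditional-clt}, and the only thing to check is the conditional Lyapunov condition~\eqref{eq:conditional-lyapunov} for some $\delta>0$. (The standing hypothesis $\V[W_{in}\mid\mathcal F_n]<\infty$ a.s. will follow from the moment bounds below, and $S_n^2>0$ with probability approaching $1$ by~\eqref{eq:lower_bound_conditional_variance}, so the normalization makes sense on the relevant events.)

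To check~\eqref{eq:conditional-lyapunov} I would control the conditional absolute moments using the tail assumptions. Under Assumption~\ref{assu:cse}, integrating the CSE tail bound gives $\E[|W_{in}|^{2+\delta}\mid\mathcal F_n]=\int_0^\infty(2+\delta)t^{1+\delta}\P[|W_{in}|\ge t\mid\mathcal F_n]\,dt\le\Gamma(3+\delta)\,\theta_n\beta^{-(2+\delta)}$, so taking $\delta=1$ yields $\frac1n\sum_{i=1}^n\E[|W_{in}|^{3}\mid\mathcal F_n]\le 6\beta^{-3}\theta_n=O_{\P}(1)$ since $\theta_n=O_{\P}(1)$. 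Under Assumption~\ref{assu:ccs} we have $|W_{in}|\le\nu_{in}$ a.s., so taking $\delta=2$ gives $\frac1n\sum_{i=1}^n\E[|W_{in}|^{4}\mid\mathcal F_n]\le\frac1n\sum_{i=1}^n\nu_{in}^4=O_{\P}(1)$. In either case $\frac1n\sum_{i=1}^n\E[|W_{in}|^{2+\delta}\mid\mathcal F_n]=O_{\P}(1)$ for the corresponding $\delta$. Combining this with~\eqref{eq:lower_bound_conditional_variance}, which says precisely that $S_n^2=\Omega_{\P}(1)$ and hence $S_n^{-(2+\delta)}=O_{\P}(1)$, the Lyapunov ratio in~\eqref{eq:conditional-lyapunov} equals the deterministic null sequence $n^{-\delta/2}$ times an $O_{\P}(1)$ quantity, so it is $o_{\P}(1)$. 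Lemma~\ref{lem:conditional-clt} then delivers~\eqref{eq:conditional_uniform_CLT}.

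For the anti-concentration statement~\eqref{eq:nondegeneracy}, write $V_n := \sqrt{n/K_n''(0)}\,\frac1n\sum_{i=1}^n W_{in}$ with conditional CDF $F_n(t):=\P[V_n\le t\mid\mathcal F_n]$. For any fixed $\epsilon>0$, the inclusion $\{V_n=y_n\}\subseteq\{y_n-\epsilon<V_n\le y_n\}$ together with $\Phi(y_n)-\Phi(y_n-\epsilon)\le\epsilon/\sqrt{2\pi}$ (as $\phi\le(2\pi)^{-1/2}$, uniformly in $y_n$) gives, almost surely,
\[
\P[V_n=y_n\mid\mathcal F_n]\;\le\;F_n(y_n)-F_n(y_n-\epsilon)\;\le\;2\sup_{t\in\mathbb R}\bigl|F_n(t)-\Phi(t)\bigr|+\frac{\epsilon}{\sqrt{2\pi}}.
\]
By~\eqref{eq:conditional_uniform_CLT} the supremum term is $o_{\P}(1)$; since $\epsilon>0$ was arbitrary (formally: given $\eta>0$, choose $\epsilon<\eta\sqrt{2\pi}/2$ and bound the probability that the right-hand side exceeds $\eta$ using that the supremum term is $o_{\P}(1)$), we conclude $\P[V_n=y_n\mid\mathcal F_n]=o_{\P}(1)$, which is~\eqref{eq:nondegeneracy}.

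Overall this is an application of the cited conditional CLT plus a standard sandwich; the only genuine work is translating the sub-exponential and compact-support assumptions into the Lyapunov moment bound and invoking~\eqref{eq:lower_bound_conditional_variance} to keep the conditional variance away from zero. The step that really uses the strength of Lemma~\ref{lem:conditional-clt} — rather than a pointwise CLT — is the anti-concentration bound, where the supremum over $t$ is what lets us handle the random and potentially ill-behaved location $y_n$; this is the subtlest point, though I would not call it a serious obstacle.
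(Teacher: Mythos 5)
Your proof is correct, and its overall architecture matches the paper's: both establish \eqref{eq:conditional_uniform_CLT} by invoking the conditional CLT of Lemma~\ref{lem:conditional-clt} with $S_n^2=K_n''(0)$, and both deduce \eqref{eq:nondegeneracy} by sandwiching the point probability between CDF increments and using the Lipschitz continuity of $\Phi$ together with the uniform convergence just proved. The only place you diverge is the verification of the Lyapunov condition \eqref{eq:conditional-lyapunov}: the paper first reduces the conditional third moment to a fourth moment via Lemma~\ref{lem:moment_dominance}, then expresses the fourth central moment through the cumulant identities $K''''_{in}(0)+3(K''_{in}(0))^2$ and appeals to the reduced CGF conditions \eqref{eq:second_cgf_derivative_bound}, \eqref{eq:fourth_cgf_derivative_bound} and \eqref{eq:lower_bound_variance}; you instead work directly from the raw assumptions, integrating the CSE tail bound to get $\frac1n\sum_i\E[|W_{in}|^3\mid\mathcal F_n]\leq 6\beta^{-3}\theta_n=O_\P(1)$ under Assumption~\ref{assu:cse} (taking $\delta=1$) and using $|W_{in}|\leq\nu_{in}$ with $\delta=2$ under Assumption~\ref{assu:ccs}, then combining with \eqref{eq:lower_bound_conditional_variance}. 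Your route is more elementary and self-contained for this lemma, while the paper's route reuses the reduced conditions of Lemma~\ref{lem:reduced_condition} that it needs elsewhere in the main proof anyway (notably at tilted points $s=\hat s_n$, where your direct tail integration would not suffice); for the present statement, which only involves $s=0$, both arguments are equally valid.
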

\noindent Setting $t=0$ and $y_n=0$ in Lemma \ref{lem:conditional_CLT_W_n}, we obtain 
\begin{equation*}
\begin{split}
	\P\left[\frac{1}{n}\sum_{i=1}^n W_{in}\geq  0|\mathcal{F}_n\right] &= 1 - \P\left[\frac{1}{n}\sum_{i=1}^n W_{in} \leq 0|\mathcal{F}_n\right] + \P\left[\frac{1}{n}\sum_{i=1}^n W_{in} = 0|\mathcal{F}_n\right] \\
	&\convp 1 - \frac{1}{2} + 0 = \frac12,
\end{split}
\end{equation*}
which verifies \eqref{eq:zero_w_n}. 

\subsection{Conditional Berry-Esseen bound on tilted summands}

It remains to prove the statement~\eqref{eq:positive_w_n} regarding the conditional tail probability $\P\left[\frac1n \sum_{i = 1}^n W_{in} \geq w_n \mid \mathcal F_n\right]$. This tail probability can be approximated using the conditional central limit theorem (Lemma~\ref{lem:conditional-clt}). However, the central limit theorem is insufficiently accurate in the tails of the distribution. To overcome this challenge, we apply a normal approximation after exponential tilting, as is common in saddlepoint approximations \citep{Robinson1982, Reid1988a}. The idea is to consider a probability distribution $\P_n$ over the space such that 
\begin{equation}
\frac{1}{n}\sum_{i = 1}^n \E_{\P_n}[W_{in} \mid \mathcal F_n] = w_n.  
\label{eq:tilted-mean}
\end{equation}
Under such $\P_n$, the distribution of $\frac1n \sum_{i = 1}^n W_{in}$ can be approximated as a normal with conditional mean $w_n$, allowing us to avoid approximating extreme tail probabilities. We can then undo the exponential tilting to approximate the desired tail probability under the original measure $\P$.

\subsubsection{Exponential tilting}

Given tilting parameter $s$, define a new probability measure $\P_{n,s}$ on the measurable space $(\Omega, \mathcal F)$ via 
\begin{align}\label{eq:tilted_measure}
    \frac{\mathrm{d}\P_{n,s}}{\mathrm{d}\P}\equiv \prod_{i = 1}^n\frac{\exp(s W_{in})}{\E[\exp(s W_{in})|\mathcal F_n]}.
\end{align}
We employ a variant of tilting measure \eqref{eq:tilted_measure} based on a random tilting parameter $s_n \in \mathcal F_n$ that satisfies the criterion $\P[s_n \in (-\varepsilon, \varepsilon)] = 1$. The following lemma presents some properties of the tilted measure $\P_{n,s_n}$:

\begin{lemma} \label{lem:tilted_measure_properties}
	First, events in $\mathcal F_n$ are preserved under $\P_{n,s_n}$:
	\begin{equation}
	\P_{n,s_n}[A_n] = \P[A_n] \quad \text{for all} \quad A_n \in \mathcal F_n.
	\label{eq:preserving_measurable_events}
	\end{equation}
	It follows that any random variable measurable with respect to $\mathcal F_n$ has the same distribution under $\P_{n,s_n}$ as under $\P$. Second, the random variables $\{W_{in}\}_{1 \leq i \leq n}$ are independent conditionally on $\mathcal F_n$ under $\P_{n,s_n}$. Third, on the event $\mathcal A$, the conditional mean and variance of $W_{in}$ under $\P_{n,s_n}$ are given by the first two derivatives of the conditional cumulant generating function $K_{in}$:
	\begin{align}
	\E_{n, s_n}[W_{in} \mid \mathcal{F}_n]=K'_{in}(s_n) \text{ and } \V_{n, s_n}[W_{in} \mid \mathcal{F}_n]=K''_{in}(s_n) \text{ for all }i\leq n, n\geq 1
	\label{eq:conditional_moments}
	\end{align}
	almost surely.
\end{lemma}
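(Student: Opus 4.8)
All three assertions flow from one observation: the likelihood ratio
\[
L_n \equiv \frac{\mathrm d\P_{n,s_n}}{\mathrm d\P} = \prod_{i=1}^n \frac{\exp(s_n W_{in})}{\E[\exp(s_n W_{in}) \mid \mathcal F_n]}
\]
satisfies $\E[L_n \mid \mathcal F_n] = 1$ almost surely. The plan is to establish this first and then read off the three claims. To see that $L_n$ is well defined, I would use that $\P[s_n \in (-\varepsilon,\varepsilon)] = 1$ together with Lemma~\ref{lem:finite_cgf}: on the probability-one event $\mathcal A$ the normalizers $\E[\exp(s_n W_{in})\mid\mathcal F_n] = \exp(K_{in}(s_n))$ are finite and strictly positive, so $L_n$ is a well-defined nonnegative random variable. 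Since $s_n$ and each $\exp(K_{in}(s_n))$ are $\mathcal F_n$-measurable they factor out of $\E[\,\cdot\mid\mathcal F_n]$, and conditional independence of the $W_{in}$ under $\P$ factors the conditional expectation across $i$ (freezing the $\mathcal F_n$-measurable parameter $s_n$), leaving $\E[L_n\mid\mathcal F_n] = \prod_{i=1}^n \E[\exp(s_n W_{in})\mid\mathcal F_n]\,/\,\exp(K_{in}(s_n)) = 1$. In particular $\E[L_n]=1$, so $\P_{n,s_n}$ is a genuine probability measure.

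Given this, claim~\eqref{eq:preserving_measurable_events} is immediate: for $A_n\in\mathcal F_n$,
\[
\P_{n,s_n}[A_n] = \E[\indicator_{A_n} L_n] = \E\big[\indicator_{A_n}\,\E[L_n\mid\mathcal F_n]\big] = \E[\indicator_{A_n}] = \P[A_n],
\]
whence any $\mathcal F_n$-measurable random variable has the same law under $\P_{n,s_n}$ as under $\P$. For the conditional-independence claim I would invoke the change-of-measure rule for conditional expectations, which since $\E[L_n\mid\mathcal F_n]=1$ reduces to $\E_{n,s_n}[Y\mid\mathcal F_n] = \E[Y L_n \mid \mathcal F_n]$ for integrable $Y$. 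Applying this to $Y = \prod_i g_i(W_{in})$ with bounded measurable $g_i$, pulling out the $\mathcal F_n$-measurable factors, and factoring across $i$ by conditional independence under $\P$ gives
\[
\E_{n,s_n}\Big[\prod_{i=1}^n g_i(W_{in}) \;\Big|\; \mathcal F_n\Big] = \prod_{i=1}^n \frac{\E[g_i(W_{in})\exp(s_n W_{in})\mid\mathcal F_n]}{\exp(K_{in}(s_n))} = \prod_{i=1}^n \E_{n,s_n}[g_i(W_{in})\mid\mathcal F_n],
\]
the last equality by the same change-of-measure rule applied to a single coordinate. By the standard characterization of conditional independence (factorization over products of bounded functions), this is exactly conditional independence of $\{W_{in}\}_{1\le i\le n}$ given $\mathcal F_n$ under $\P_{n,s_n}$.

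Finally, for~\eqref{eq:conditional_moments} I would take $g_i(w)=w$ and $g_i(w)=w^2$ in the identity above to get, on $\mathcal A$,
\[
\E_{n,s_n}[W_{in}\mid\mathcal F_n] = \frac{\E[W_{in}\exp(s_n W_{in})\mid\mathcal F_n]}{\exp(K_{in}(s_n))}, \qquad \E_{n,s_n}[W_{in}^2\mid\mathcal F_n] = \frac{\E[W_{in}^2\exp(s_n W_{in})\mid\mathcal F_n]}{\exp(K_{in}(s_n))},
\]
and then identify the right-hand sides with $K'_{in}(s_n)$ and $K''_{in}(s_n)+K'_{in}(s_n)^2$. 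I expect this identification to be the only step requiring genuine care, and I would carry it out pathwise on $\mathcal A$: for fixed $\omega\in\mathcal A$ the map $s\mapsto K_{in}(s)(\omega)$ is smooth on $(-\varepsilon,\varepsilon)$, with $K'_{in}(s) = \E[W_{in}e^{sW_{in}}\mid\mathcal F_n]/e^{K_{in}(s)}$ and the analogous second-derivative formula, obtained by differentiating under the conditional-expectation integral — legitimate because Lemma~\ref{lem:finite_cgf} yields finiteness of the conditional moment-generating function (and its derivatives) throughout $(-\varepsilon,\varepsilon)$, supplying the needed domination. Substituting the $\mathcal F_n$-measurable value $s=s_n(\omega)$ into these pathwise identities and invoking the substitution ("freezing") property of conditional expectation for $\mathcal F_n$-measurable parameters, the two displays above have right-hand sides $K'_{in}(s_n)$ and $K''_{in}(s_n)+K'_{in}(s_n)^2$; subtracting gives $\V_{n,s_n}[W_{in}\mid\mathcal F_n] = K''_{in}(s_n)$. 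Thus the only real obstacle is the interplay of differentiation under the integral sign with the substitution of a random tilting parameter; everything else is bookkeeping with $\mathcal F_n$-measurability and conditional independence under $\P$.
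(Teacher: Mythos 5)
Your proposal is correct and follows essentially the same route as the paper: show the likelihood ratio has conditional expectation one given $\mathcal F_n$ (which yields \eqref{eq:preserving_measurable_events}), factor the tilted conditional law across $i$ using conditional independence under $\P$ and $\mathcal F_n$-measurability of $s_n$ and the normalizers, and identify the tilted conditional moments with $K'_{in}(s_n)$ and $K''_{in}(s_n)$ pathwise by differentiating the conditional CGF on $(-\varepsilon,\varepsilon)$ and then freezing $s=s_n$. The paper packages the last step as Lemma~\ref{lem:tilted_moment} (resting on Lemma~\ref{lem:existence_derivative_CGF} and the regular-conditional-distribution formulation), which is exactly the differentiation-under-the-integral-plus-substitution argument you sketch.
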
 

It follows from equation~\eqref{eq:conditional_moments} that, almost surely,
\begin{equation*}
	\frac{1}{n}\sum_{i = 1}^n \E_{n, s_n}[W_{in} \mid \mathcal F_n] = K'_n(s_n) \quad \text{and} \quad \frac{1}{n}\sum_{i = 1}^n \V_{n, s_n}[W_{in} \mid \mathcal F_n] = K''_n(s_n).
\end{equation*} 
To ensure the property~\eqref{eq:tilted-mean}, it suffices to take $\P_n \equiv \P_{n, \hat s_n}$, where $\hat s_n$ is the solution to the saddlepoint equation~\eqref{eq:saddlepoint-equation}. Therefore, our next step is to construct a normal approximation for the average $\frac1n \sum_{i = 1}^n W_{in}$ under the sequence of tilted probability measures $\P_{n, \hat s_n}$.

\subsubsection{Conditional Berry-Esseen}

It turns out that rate of the normal approximation is important to obtain a relative error bound, so we use the conditional Berry-Esseen theorem rather than the central limit theorem on the tilted summands.
\begin{lemma}[Conditional Berry-Esseen theorem]\label{lem:conditional-berry-esseen}
	Suppose $W_{1n},\ldots,W_{nn}$ are independent random variables conditional on $\mathcal{F}_n$, under $\P_n$. If
	\begin{align}
	S_n^2 \equiv \frac{1}{n}\sum_{i=1}^n \V_{\P_n}[W_{in} \mid \mathcal{F}_n] = \Omega_{\P_n}(1) \label{eq:variance-bounded-below}
	\end{align}
	and
	\begin{align}
	\frac{1}{n}\sum_{i=1}^n \E_{\P_n}[|W_{in}-\E_{\P_n}[W_{in}|\mathcal{F}_n]|^3|\mathcal{F}_n]=O_{\P_n}(1), \label{eq:third-moment-bound}
	\end{align}
	then
	\begin{align*}
		\sqrt{n}\sup_{t\in\mathbb{R}}\left|\P_n\left[\sqrt{\frac{n}{S_n^2}}\frac1n\sum_{i=1}^n (W_{in}-\E_{\P_n}[W_{in}|\mathcal{F}_n])\leq t|\mathcal{F}_n\right]-\Phi(t)\right|=O_{\P_n}(1).
	\end{align*}
\end{lemma}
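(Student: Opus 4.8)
The plan is to reduce the conditional statement to an event-by-event application of the classical (unconditional) Berry–Esseen inequality for independent, non-identically distributed summands. The key observation is that, conditionally on $\mathcal F_n$, the $W_{in}$ are genuinely independent under $\P_n$, so for almost every realization $\omega$ of the conditioning variables we are in exactly the setting of the classical Berry–Esseen theorem applied to the (deterministic, after conditioning) distributions of $W_{1n}, \dots, W_{nn}$. The classical result (in its Lyapunov/third-moment form) states that for independent, mean-zero random variables $V_1, \dots, V_n$ with $\sum_i \V[V_i] = n S_n^2$ and $\rho_n \equiv \frac{1}{n}\sum_i \E|V_i|^3$, one has $\sup_t |\P[(\sum_i V_i)/\sqrt{n S_n^2} \le t] - \Phi(t)| \le C \rho_n / (\sqrt{n}\, S_n^3)$ for an absolute constant $C$. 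Applying this conditionally on $\mathcal F_n$ — with $V_i$ the centered variable $W_{in} - \E_{\P_n}[W_{in}\mid\mathcal F_n]$, which is independent across $i$ given $\mathcal F_n$ — gives, almost surely,
\begin{equation*}
\sqrt{n}\,\sup_{t\in\mathbb R}\left|\P_n\!\left[\sqrt{\tfrac{n}{S_n^2}}\tfrac1n\sum_{i=1}^n(W_{in}-\E_{\P_n}[W_{in}\mid\mathcal F_n])\le t\ \Big|\ \mathcal F_n\right]-\Phi(t)\right| \le C\,\frac{\frac1n\sum_{i=1}^n\E_{\P_n}[|W_{in}-\E_{\P_n}[W_{in}\mid\mathcal F_n]|^3\mid\mathcal F_n]}{S_n^3}.
\end{equation*}

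Next I would argue that the right-hand side is $O_{\P_n}(1)$. This follows by combining the two hypotheses: the numerator $\frac1n\sum_i\E_{\P_n}[|W_{in}-\E_{\P_n}[W_{in}\mid\mathcal F_n]|^3\mid\mathcal F_n]$ is $O_{\P_n}(1)$ by assumption~\eqref{eq:third-moment-bound}, and the denominator $S_n^3 = (S_n^2)^{3/2}$ is $\Omega_{\P_n}(1)$ by assumption~\eqref{eq:variance-bounded-below} (since $S_n^2 = \Omega_{\P_n}(1)$ implies $S_n^2$ is bounded away from zero with probability approaching $1$, hence so is $S_n^3$). A ratio of an $O_{\P_n}(1)$ quantity over an $\Omega_{\P_n}(1)$ quantity is $O_{\P_n}(1)$ — this is a routine stochastic-boundedness manipulation: for any $\delta>0$, pick $M$ with $\limsup_n \P_n[\text{numerator}>M]<\delta/2$ and $\eta$ with $\limsup_n\P_n[S_n^3<\eta]<\delta/2$, so that the ratio exceeds $M/\eta$ with $\limsup$ probability less than $\delta$. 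Chaining this bound with the conditional Berry–Esseen display above yields the claim.

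I should also note one technical point to handle carefully: the conditional law of $(W_{1n},\dots,W_{nn})$ given $\mathcal F_n$ should be realized via a regular conditional distribution so that ``apply the classical theorem on the event $\{\mathcal F_n = \omega\}$'' is rigorous; on the $\P_n$-null set where the regular conditional distribution misbehaves, or where $S_n^2 = 0$ or a conditional third moment is infinite, the supremum in question is trivially bounded (it is at most $1$, or we are outside a probability-one event), and such null sets do not affect an $O_{\P_n}(1)$ statement. The main obstacle — and it is a mild one — is precisely this measure-theoretic bookkeeping around regular conditional distributions and the exceptional null sets; the probabilistic content is a direct conditional invocation of the classical Berry–Esseen bound, and the moment hypotheses are exactly tuned so that the resulting random bound is stochastically bounded. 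Since we only need an $O_{\P_n}(1)$ conclusion (not a fixed constant), we have considerable slack, and no delicate tracking of the Berry–Esseen constant is required.
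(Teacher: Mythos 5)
Your proposal is correct and follows essentially the same route as the paper: realize the conditional law via a regular conditional distribution, apply the classical third-moment Berry--Esseen bound pointwise in $\omega$, and then combine the hypotheses (numerator $O_{\P_n}(1)$, denominator $\Omega_{\P_n}(1)$, plus $\P_n[S_n=0]\to 0$) to get the $O_{\P_n}(1)$ conclusion. The only detail you fold into ``bookkeeping'' that the paper treats explicitly is passing from the fixed-$t$ almost-sure identification of the conditional CDF with its RCD version to the supremum over all $t$, which the paper handles by a finite quantile discretization of $\Phi$ and monotonicity before letting the mesh go to zero.
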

Now, we wish to apply the Lemma~\ref{lem:conditional-berry-esseen} to the triangular array $\{W_{in}\}_{1 \leq i \leq n, n \geq 1}$ under the sequence of tilted probability measures $\P_{n, \hat s_n}$. The following lemma shows that the requisite conditions are satisfied.
\begin{lemma} \label{lem:conditional_clt_assumptions}
	Under the assumptions of Theorem~\ref{thm:unified_unnormalized_moment_conditions}, the conditions~\eqref{eq:variance-bounded-below} and~\eqref{eq:third-moment-bound} are satisfied by the sequence of probability measures $\P_n \equiv \P_{n, \hat s_n}$.
\end{lemma}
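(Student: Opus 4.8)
To establish Lemma~\ref{lem:conditional_clt_assumptions}, the plan is to verify the two hypotheses~\eqref{eq:variance-bounded-below} and~\eqref{eq:third-moment-bound} of Lemma~\ref{lem:conditional-berry-esseen} for the array $\{W_{in}\}$ under the measures $\P_n\equiv\P_{n,\hat s_n}$, using two facts repeatedly: first, by~\eqref{eq:conditional_moments} the tilted conditional moments are $\E_{n,\hat s_n}[W_{in}\mid\mathcal F_n]=K_{in}'(\hat s_n)$ and $\V_{n,\hat s_n}[W_{in}\mid\mathcal F_n]=K_{in}''(\hat s_n)$; and second, by~\eqref{eq:preserving_measurable_events} every $\mathcal F_n$-measurable random variable has the same law under $\P_{n,\hat s_n}$ as under $\P$, so any $O_\P(1)$, $\Omega_\P(1)$, or ``high probability'' statement about such a quantity transfers verbatim to $\P_{n,\hat s_n}$. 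For~\eqref{eq:variance-bounded-below}, the quantity of interest equals $\frac1n\sum_{i=1}^nK_{in}''(\hat s_n)=K_n''(\hat s_n)$, which is $\mathcal F_n$-measurable and is $\Omega_\P(1)$ by~\eqref{eq:hat_s_n_second_derivative} of Lemma~\ref{lem:saddlepoint_properties}; hence it is $\Omega_{\P_{n,\hat s_n}}(1)$.

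For~\eqref{eq:third-moment-bound} I would first reduce the centered to the uncentered third moment. Setting $m_{in}\equiv\E_{n,\hat s_n}[|W_{in}|^3\mid\mathcal F_n]$, conditional Jensen gives $|K_{in}'(\hat s_n)|^3=|\E_{n,\hat s_n}[W_{in}\mid\mathcal F_n]|^3\leq m_{in}$, and $|a-b|^3\leq4(|a|^3+|b|^3)$ then yields $\E_{n,\hat s_n}[\,|W_{in}-K_{in}'(\hat s_n)|^3\mid\mathcal F_n]\leq8m_{in}$, so it suffices to show $\frac1n\sum_{i=1}^n m_{in}=O_{\P_{n,\hat s_n}}(1)$. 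Under Assumption~\ref{assu:ccs} this is immediate, since exponential tilting is absolutely continuous and so $W_{in}\in[-\nu_{in},\nu_{in}]$ still holds $\P_{n,\hat s_n}$-almost surely; thus $m_{in}\leq\nu_{in}^3$, and $\frac1n\sum_{i=1}^n\nu_{in}^3\leq\bigl(\frac1n\sum_{i=1}^n\nu_{in}^4\bigr)^{3/4}=O_\P(1)$ by the power-mean inequality and Assumption~\ref{assu:ccs}; being $\mathcal F_n$-measurable, this is $O_{\P_{n,\hat s_n}}(1)$.

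The substantive case is Assumption~\ref{assu:cse}, where I would show the exponential tilt does not destroy the sub-exponential tail. Because $\hat s_n\convp0$, the $\mathcal F_n$-measurable event $\mathcal B_n\equiv\{|\hat s_n|\leq\beta/2\}\cap\mathcal A$ has probability tending to $1$ under $\P$, hence also under $\P_{n,\hat s_n}$. On $\mathcal B_n$, the per-coordinate normalization in~\eqref{eq:tilted_measure} satisfies $\E[\exp(\hat s_nW_{in})\mid\mathcal F_n]=\exp(K_{in}(\hat s_n))\geq\exp(\hat s_n\E[W_{in}\mid\mathcal F_n])=1$ by conditional Jensen and $\E[W_{in}\mid\mathcal F_n]=0$, while integrating the CSE tail bound of Assumption~\ref{assu:cse} gives $\E[\exp(|\hat s_n|\,|W_{in}|)\indicator(|W_{in}|\geq t)\mid\mathcal F_n]\leq\frac{\beta\theta_n}{\beta-|\hat s_n|}e^{(|\hat s_n|-\beta)t}\leq2\theta_n e^{-\beta t/2}$. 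Combining, $\P_{n,\hat s_n}[|W_{in}|\geq t\mid\mathcal F_n]\leq2\theta_n e^{-\beta t/2}$ on $\mathcal B_n$ for all $t>0$, so integrating yields $m_{in}\lesssim\theta_n/\beta^3$ there; averaging and using $\theta_n=O_\P(1)$ from Assumption~\ref{assu:cse} (which is $\mathcal F_n$-measurable, hence $O_{\P_{n,\hat s_n}}(1)$) gives $\frac1n\sum_{i=1}^n m_{in}=O_{\P_{n,\hat s_n}}(1)$.

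The main obstacle will be this last, CSE step: one must control the tilted conditional tail of $W_{in}$ uniformly in $i\leq n$, which requires the lower bound on the random normalization $\exp(K_{in}(\hat s_n))$ together with the fact that the random tilting parameter $\hat s_n$ stays safely inside the interval of exponential integrability (handled by restricting to $\{|\hat s_n|\leq\beta/2\}$, whose probability tends to $1$ since $\hat s_n\convp0$). A recurring subtlety throughout is that the stochastic-boundedness facts we have are stated under $\P$ but needed under $\P_{n,\hat s_n}$; this is bridged each time by~\eqref{eq:preserving_measurable_events} once one checks the relevant quantity is $\mathcal F_n$-measurable. Everything else is routine moment bookkeeping.
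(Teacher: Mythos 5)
Your proposal is correct, but it takes a noticeably different route from the paper for the third-moment condition. For \eqref{eq:variance-bounded-below} the two arguments are essentially the same fact: the paper Taylor-expands $K_n''(\hat s_n)=K_n''(0)+\hat s_nK_n'''(0)+\tfrac12\hat s_n^2K_n''''(\tilde s_n)$ and uses \eqref{eq:lower_bound_variance} with $\hat s_n\convp 0$, while you cite the already-established conclusion \eqref{eq:hat_s_n_second_derivative} of Lemma~\ref{lem:saddlepoint_properties}; both are legitimate (no circularity, since that lemma does not rely on Lemma~\ref{lem:conditional_clt_assumptions}). For \eqref{eq:third-moment-bound} the paper does not touch the tails at all: it upgrades the third to the fourth centered moment via Lemma~\ref{lem:moment_dominance}, writes $\frac1n\sum_i\E_{n,\hat s_n}[(W_{in}-\E_{n,\hat s_n}[W_{in}])^4\mid\mathcal F_n]=\frac1n\sum_i\{K''''_{in}(\hat s_n)+3(K''_{in}(\hat s_n))^2\}$ using \eqref{eq:conditional_moments}, and invokes the uniform-in-$s$ derivative bounds \eqref{eq:second_cgf_derivative_bound} and \eqref{eq:fourth_cgf_derivative_bound} of Lemma~\ref{lem:reduced_condition}, so the proof is two lines once that machinery is in place. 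You instead bound the tilted conditional law directly, splitting CSE/CCS: in the CCS case support preservation plus the power-mean inequality, and in the CSE case the Jensen lower bound $\E[\exp(\hat s_nW_{in})\mid\mathcal F_n]\geq 1$ together with integrating the tail to show the tilt preserves sub-exponentiality; your computation is correct (and in fact the event restriction $\{|\hat s_n|\leq\beta/2\}$ is superfluous in the CSE case, since there $\varepsilon=\beta/8$ and $|\hat s_n|\leq\varepsilon/2$ deterministically). In effect you re-derive the content of Lemma~\ref{lem:reduced_condition} specialized to $s=\hat s_n$; what your route buys is a self-contained, elementary argument that also makes explicit the transfer of stochastic boundedness from $\P$ to $\P_{n,\hat s_n}$ via \eqref{eq:preserving_measurable_events}, a point the paper leaves implicit, whereas the paper's route buys brevity by reusing bounds it needs elsewhere anyway.
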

\noindent Noting from equation~\eqref{eq:conditional_moments} that $S_n^2 = K''_n(\hat s_n)$, we conclude from the conditional Berry-Esseen theorem that
\begin{equation}
\begin{split}
&\sqrt{n}\sup_{t\in\mathbb{R}}\left|\P_n\left[\sqrt{\frac{n}{K''_n(\hat s_n)}}\left(\frac1n\sum_{i=1}^n W_{in}-K'_n(\hat s_n)\right)\leq t|\mathcal{F}_n\right]-\Phi(t)\right| \\
&\quad \equiv \sqrt{n}\sup_{t\in\mathbb{R}}\left|\P_n\left[\widetilde Z_n \leq t \mid \mathcal{F}_n\right]-\Phi(t)\right| \\
&\quad= O_{\P_n}(1),
\end{split}
\end{equation}
where we have denoted by $\widetilde Z_n$ the quantity converging to the standard normal distribution. Note that $\widetilde Z_n$ is not exactly the same as 
\begin{equation}
Z_n\equiv \sqrt{\frac{n}{K_n''(\hat s_n)}}\left(\frac{1}{n}\sum_{i=1}^n W_{in}-w_n\right),
\label{eq:Z_n}
\end{equation}
since it is possible that $K'_n(\hat s_n) \neq w_n$. Since the probability of this event is tending to zero~\eqref{eq:unique_solution_in_probability}, we find that
\begin{equation*}
\begin{split}
&\sqrt{n}\sup_{t\in\mathbb{R}}\left|\P_{n}[Z_n\leq t|\mathcal{F}_n]-\Phi(t)\right| \\
&\quad= \indicator(K'_n(\hat s_n) = w_n)\sqrt{n}\sup_{t\in\mathbb{R}}\left|\P_{n}[\widetilde Z_n\leq t|\mathcal{F}_n]-\Phi(t)\right| \\
&\quad \quad + \indicator(K'_n(\hat s_n) \neq w_n)\sqrt{n}\sup_{t\in\mathbb{R}}\left|\P_{n}[Z_n\leq t|\mathcal{F}_n]-\Phi(t)\right| \\
&\quad \leq \sqrt{n}\sup_{t\in\mathbb{R}}\left|\P_{n}[\widetilde Z_n\leq t|\mathcal{F}_n]-\Phi(t)\right| + \indicator(K'_n(\hat s_n) \neq w_n)\sqrt{n} \\
&\quad = O_{\P_n}(1) + o_{\P_n}(1) = O_{\P_n}(1).
\end{split}
\end{equation*}
By conclusion~\eqref{eq:preserving_measurable_events} from Lemma~\ref{lem:tilted_measure_properties} and the measurability with respect to $\mathcal F_n$ of the quantity $\sqrt{n}\sup_{t\in\mathbb{R}}\left|\P_{n}[Z_n\leq t|\mathcal{F}_n]-\Phi(t)\right|$, it follows that
\begin{equation}
\sqrt{n}\sup_{t\in\mathbb{R}}\left|\P_{n}[Z_n\leq t|\mathcal{F}_n]-\Phi(t)\right| = O_{\P}(1).
\label{eq:Z_n_convergence}
\end{equation}
Therefore, we have provided a normal approximation for the average $\frac1n \sum_{i = 1}^n W_{in}$ under the sequence of tilted probability measures $\P_{n, \hat s_n}$. Next, we undo the exponential tilting to approximate the desired tail probability under the original measure $\P$. 

\subsection{Gaussian integral approximation after tilting back}\label{sec:reduction_to_Gaussian_integral}

\subsubsection{Tilting back to the original measure}

The following lemma helps connect the tilted measure to the original one, allowing us to interchange the order of the tilting and the conditioning:
\begin{lemma} \label{lem:tilting_back}
\begin{equation}
\P\left[\left.\frac{1}{n}\sum_{i = 1}^n W_{in} \geq w_n\ \right|\ \mathcal{F}_n\right] = \E_{n, \hat s_n}\left[\indicator\left(\frac{1}{n}\sum_{i = 1}^n W_{in} \geq w_n\right)\frac{d\P}{d\P_{n, \hat s_n}} \mid \mathcal{F}_n\right], \label{eq:tilting_back}
\end{equation} 
where $\hat s_n$ is the solution to the saddlepoint equation~\eqref{eq:def_s_n} for each $n$.
\end{lemma}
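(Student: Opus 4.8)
The plan is to prove~\eqref{eq:tilting_back} as a conditional change-of-measure identity, arguing directly from the defining property of conditional expectation. Throughout I restrict to the probability-one event $\mathcal A$ of Lemma~\ref{lem:finite_cgf} and use that, by the definition~\eqref{eq:def_s_n}, $\hat s_n \in [-\varepsilon/2,\varepsilon/2]\subset(-\varepsilon,\varepsilon)$ almost surely; together with Lemma~\ref{lem:finite_cgf} this shows that the Radon--Nikodym derivative $d\P_{n,\hat s_n}/d\P = \prod_{i=1}^n \exp(\hat s_n W_{in})/\E[\exp(\hat s_n W_{in})\mid\mathcal F_n]$ is, almost surely, a finite product of finite and strictly positive factors. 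Consequently $\P$ and $\P_{n,\hat s_n}$ are mutually absolutely continuous, the density $d\P/d\P_{n,\hat s_n}$ appearing in~\eqref{eq:tilting_back} is $\P$-almost surely equal to the reciprocal $\prod_{i=1}^n \E[\exp(\hat s_n W_{in})\mid\mathcal F_n]/\exp(\hat s_n W_{in})$, and the change-of-variables identity $\E[Y] = \E_{n,\hat s_n}[\,Y\,d\P/d\P_{n,\hat s_n}\,]$ holds for every nonnegative $\mathcal F$-measurable random variable $Y$.

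First I would fix an arbitrary $A\in\mathcal F_n$ and apply this change-of-variables identity to the nonnegative random variable $Y \equiv \indicator_A\,\indicator(\frac1n\sum_{i=1}^n W_{in}\geq w_n)$, which gives
\begin{align*}
&\E_{n,\hat s_n}\!\left[\indicator_A\,\indicator\!\left(\frac1n\sum_{i=1}^n W_{in}\geq w_n\right)\frac{d\P}{d\P_{n,\hat s_n}}\right] \\
&\qquad = \P\!\left[A\cap\left\{\frac1n\sum_{i=1}^n W_{in}\geq w_n\right\}\right] = \E\!\left[\indicator_A\,\P\!\left[\frac1n\sum_{i=1}^n W_{in}\geq w_n \mid \mathcal F_n\right]\right].
\end{align*}
On the other hand, writing $R_n \equiv \E_{n,\hat s_n}[\indicator(\frac1n\sum_{i=1}^n W_{in}\geq w_n)\,(d\P/d\P_{n,\hat s_n}) \mid \mathcal F_n]$, which is $\mathcal F_n$-measurable, I would pull the $\mathcal F_n$-measurable factor $\indicator_A$ inside and use the tower property under $\P_{n,\hat s_n}$ to see that the left-hand side above equals $\E_{n,\hat s_n}[\indicator_A R_n]$; and since $R_n$ is $\mathcal F_n$-measurable, property~\eqref{eq:preserving_measurable_events} of Lemma~\ref{lem:tilted_measure_properties} gives $\E_{n,\hat s_n}[\indicator_A R_n] = \E[\indicator_A R_n]$.

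Combining these two facts yields $\E[\indicator_A R_n] = \E[\indicator_A\,\P(\frac1n\sum_{i=1}^n W_{in}\geq w_n \mid \mathcal F_n)]$ for every $A\in\mathcal F_n$. Since $R_n$ and $\P(\frac1n\sum_{i=1}^n W_{in}\geq w_n \mid \mathcal F_n)$ are both $\mathcal F_n$-measurable and bounded, hence $\P$-integrable, the agreement of their $\P$-integrals over all of $\mathcal F_n$ forces $R_n = \P(\frac1n\sum_{i=1}^n W_{in}\geq w_n \mid \mathcal F_n)$ $\P$-almost surely, which is precisely~\eqref{eq:tilting_back}. One could equally invoke the conditional Bayes formula: it requires the same two inputs, namely mutual absolute continuity and the identity $\E_{n,\hat s_n}[\,d\P/d\P_{n,\hat s_n} \mid \mathcal F_n] = 1$, the latter being a short computation from the conditional independence of the $W_{in}$ and the $\mathcal F_n$-measurability of $\hat s_n$.

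The step demanding the most care is the first one: one must check that $d\P_{n,\hat s_n}/d\P$ really is a $\P$-density (nonnegative with unit $\P$-expectation) despite the tilting parameter $\hat s_n$ being itself random---which follows from the conditional independence of the $W_{in}$ and the $\mathcal F_n$-measurability of $\hat s_n$, exactly as in the verification that $\P_{n,s_n}$ is a probability measure---and that it is $\P$-almost surely strictly positive, so that the reciprocal $d\P/d\P_{n,\hat s_n}$ appearing in~\eqref{eq:tilting_back} is well defined. Note that this argument never uses that $\hat s_n$ solves the saddlepoint equation, so~\eqref{eq:tilting_back} holds on the entire probability space; everything after the first step is routine bookkeeping with the tower property and~\eqref{eq:preserving_measurable_events}.
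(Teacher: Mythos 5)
Your proof is correct and follows essentially the same route as the paper: a change of measure between $\P$ and $\P_{n,\hat s_n}$, the tower property under the tilted measure, pulling out the $\mathcal F_n$-measurable indicator, and invoking property~\eqref{eq:preserving_measurable_events} of Lemma~\ref{lem:tilted_measure_properties} before identifying the conditional expectation via agreement of integrals over all $A\in\mathcal F_n$. The extra care you take with finiteness and strict positivity of the Radon--Nikodym derivative only makes explicit what the paper leaves implicit.
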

\noindent To evaluate the right-hand side of equation~\eqref{eq:tilting_back}, we first note that
\begin{equation*}
\begin{split}
\frac{d\P}{d\P_{n, \hat s_n}} &= \prod_{i = 1}^n \frac{\E[\exp(\hat s_n W_{in})|\mathcal F_n]}{\exp(\hat s_n W_{in})} \\
&= \exp\left(n\left(K_n(\hat s_n) - \hat s_n \frac{1}{n}\sum_{i = 1}^n W_{in}\right)\right) \\
&= \exp\left(n(K_n(\hat s_n) - \hat s_n w_n) - \hat s_n \sqrt{nK''_n(\hat s_n)} \sqrt{\frac{n}{K''_n(\hat s_n)}} \left(\frac{1}{n}\sum_{i = 1}^n W_{in} - w_n\right)\right) \\
&\equiv \exp\left(-\frac12 r_n^2 - \lambda_n Z_n \right),
\end{split}
\end{equation*}
recalling $\lambda_n$ and $r_n$ from equation~\eqref{eq:lam_n_r_n_def} and $Z_n$~\eqref{eq:Z_n} the quantity converging to normality~\eqref{eq:Z_n_convergence}. This allows us to rewrite the probability of interest as
\begin{equation}
\begin{split}
\P\left[\frac1n \sum_{i = 1}^n W_{in} \geq w_n \mid \mathcal F_n\right] &= \E_{n, \hat s_n}\left[\indicator(Z_n \geq 0)\exp\left(-\frac12 r_n^2 - \lambda_n Z_n \right) \mid \mathcal F_n\right] \\
&= \exp\left(-\frac12 r_n^2\right)\E_{n, \hat s_n}\left[\indicator(Z_n \geq 0)\exp\left(-\lambda_n Z_n \right) \mid \mathcal F_n\right].
\end{split}
\end{equation}
Therefore, we have
\begin{equation}\label{eq:Dn_Un_Def}
\frac{\P\left[\frac1n \sum_{i = 1}^n W_{in} \geq w_n \mid \mathcal F_n\right]}{1-\Phi(r_n)+\phi(r_n)\left\{\frac{1}{\lambda_n}-\frac{1}{r_n}\right\}} = \frac{\E_{n, \hat s_n}\left[\indicator(Z_n \geq 0)\exp\left(-\lambda_n Z_n \right) \mid \mathcal F_n\right]}{\exp\left(\frac12 r_n^2\right)\left(1-\Phi(r_n)+\phi(r_n)\left\{\frac{1}{\lambda_n}-\frac{1}{r_n}\right\}\right)} \equiv \frac{D_n}{U_n}.
\end{equation}
Hence, we have simplified the desired statement~\eqref{eq:positive_w_n} to 
\begin{equation}
\indicator(w_n > 0) \left(\frac{D_n}{U_n} - 1\right) \convp 0.
\label{eq:simplified_desired_statement}
\end{equation}

\subsubsection{Reduction to a Gaussian integral approximation}

Next, we exploit the convergence of $Z_n$ to normality~\eqref{eq:Z_n_convergence} to replace the numerator $D_n$ with a Gaussian integral:
\begin{lemma}\label{lem:Gaussian_integral_approximation_additive_error}
	For sequences $Z_n$ and $\lambda_n$ of random variables, we have
	\begin{equation}\label{eq:additive_bound_Gaussian_integral}
	\begin{split}
		&\indicator(\lambda_n \geq 0)\left|\E_{\P_n}\left[\indicator(Z_n \geq 0)\exp\left(- \lambda_n Z_n \right) \mid \mathcal{F}_n\right] - \int_0^\infty \exp(-\lambda_n z)\phi(z)dz\right|\\
		&\quad \leq 2\indicator(\lambda_n \geq 0)\sup_{t\in\mathbb{R}}\left|\P_n\left[Z_n\leq t|\mathcal{F}_n \right]-\Phi(t)\right|.
	\end{split}
	\end{equation}
	almost surely.
\end{lemma}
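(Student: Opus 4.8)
The plan is to recognize \eqref{eq:additive_bound_Gaussian_integral} as a concrete instance of the classical estimate $\left|\int g\,dF_n - \int g\,d\Phi\right| \le \mathrm{TV}(g)\cdot\sup_t|F_n(t)-\Phi(t)|$ for a function $g$ of bounded variation, applied to $g(z) = \indicator(z\ge 0)\exp(-\lambda_n z)$. On the event $\{\lambda_n\ge 0\}$ this $g$ has total variation exactly $2$ (a unit jump upward at the origin, followed by a monotone decay from $1$ down to $0$ on $[0,\infty)$), which explains the constant $2$. Rather than invoke this abstractly, I would make the argument self-contained through a Fubini/layer-cake computation.

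First I would reduce to the event $\{\lambda_n\ge 0\}$, since off this event both sides of \eqref{eq:additive_bound_Gaussian_integral} vanish. On $\{\lambda_n\ge 0\}$, because $\lambda_n$ is $\mathcal{F}_n$-measurable we may work conditionally on $\mathcal{F}_n$ and treat $\lambda_n$ as a nonnegative constant; the degenerate case $\lambda_n = 0$ reduces directly to $\left|\P_n[Z_n\ge 0\mid\mathcal{F}_n] - \tfrac12\right|\le\sup_t|F_n(t)-\Phi(t)|$ using continuity of $\Phi$, so assume $\lambda_n>0$. Write $F_n(t)\equiv\P_n[Z_n\le t\mid\mathcal{F}_n]$ and $\Delta_n\equiv\sup_{t\in\mathbb{R}}|F_n(t)-\Phi(t)|$. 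The key identity is the layer-cake representation $\indicator(z\ge 0)e^{-\lambda_n z} = \lambda_n\int_0^\infty e^{-\lambda_n u}\indicator(0\le z\le u)\,du$, valid pointwise for $\lambda_n>0$. Taking conditional expectations and applying the conditional Tonelli theorem (the integrand is nonnegative, and $\lambda_n,e^{-\lambda_n u}$ pull outside $\E_{\P_n}[\cdot\mid\mathcal{F}_n]$ by $\mathcal{F}_n$-measurability of $\lambda_n$) gives
\[
\E_{\P_n}\!\left[\indicator(Z_n\ge 0)e^{-\lambda_n Z_n}\mid\mathcal{F}_n\right] = \lambda_n\int_0^\infty e^{-\lambda_n u}\,\P_n[0\le Z_n\le u\mid\mathcal{F}_n]\,du,
\]
and the same manipulation applied to a standard normal variable in place of $Z_n$ yields $\int_0^\infty e^{-\lambda_n z}\phi(z)\,dz = \lambda_n\int_0^\infty e^{-\lambda_n u}\bigl(\Phi(u)-\Phi(0)\bigr)\,du$. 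Subtracting, the left-hand side of \eqref{eq:additive_bound_Gaussian_integral} (before the indicator) equals $\left|\lambda_n\int_0^\infty e^{-\lambda_n u}\bigl(\P_n[0\le Z_n\le u\mid\mathcal{F}_n] - (\Phi(u)-\Phi(0))\bigr)\,du\right|$.

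The remaining step is to bound the bracketed integrand uniformly in $u$. Since $\P_n[0\le Z_n\le u\mid\mathcal{F}_n] = F_n(u)-F_n(0^-)$ and $\Phi(u)-\Phi(0) = \Phi(u)-\Phi(0^-)$ by continuity of $\Phi$, the bracket equals $\bigl(F_n(u)-\Phi(u)\bigr) - \bigl(F_n(0^-)-\Phi(0)\bigr)$, whose absolute value is at most $2\Delta_n$; here $|F_n(0^-)-\Phi(0)|\le\Delta_n$ follows by letting $t\uparrow 0$ in $|F_n(t)-\Phi(t)|\le\Delta_n$. Plugging this in and using $\lambda_n\int_0^\infty e^{-\lambda_n u}\,du = 1$ gives the claimed bound $2\Delta_n$, and restoring the factor $\indicator(\lambda_n\ge 0)$ completes the proof. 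The argument is essentially mechanical; the only points demanding care are the possible atom of $Z_n$ at the origin — one must use $F_n(0^-)=\P_n[Z_n<0\mid\mathcal{F}_n]$ rather than $F_n(0)$, since $\indicator(Z_n\ge 0)$ includes $\{Z_n=0\}$ — and the bookkeeping of $\mathcal{F}_n$-measurability when pulling $\lambda_n$ outside the conditional expectation and the Fubini swap.
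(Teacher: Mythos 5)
Your proof is correct and follows essentially the same route as the paper's: the layer-cake identity $\indicator(z\ge 0)e^{-\lambda_n z}=\lambda_n\int_0^\infty e^{-\lambda_n u}\indicator(0\le z\le u)\,du$ is just the Fubini form of the integration by parts that the paper performs on the conditional distribution function, and both arguments then bound the resulting integrand by the Kolmogorov distance and use $\lambda_n\int_0^\infty e^{-\lambda_n u}\,du=1$ to produce the constant $2$. The only (harmless) differences are that you treat the possible atom of $Z_n$ at zero explicitly via $F_n(0^-)$, while the paper makes the conditioning rigorous by fixing a regular conditional distribution and handling the supremum over $t$ with a discretization argument---the same measurability bookkeeping you defer to at the end.
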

\noindent We would like to combine the result of this lemma with the convergence of $Z_n$ to normality~\eqref{eq:Z_n_convergence} to reduce the desired statement~\eqref{eq:simplified_desired_statement} to a Gaussian integral approximation. Before doing so, we first state a result that we will use to show that the difference between $D_n$ and the Gaussian integral $\int_0^\infty \exp(-\lambda_n z)\phi(z)dz$ is negligible, even after dividing by $U_n$:
\begin{lemma}\label{lem:relative_error_Berry_Esseen_bound}
Under conditions \eqref{eq:finitness_r_n_lambda_n}, \eqref{eq:sign_condition_r_lambda} and \eqref{eq:rate_1}, we have 
\begin{align}
r_n\geq 0 \Rightarrow U_n \neq 0 \text{ almost surely};\ \P[r_n<0\text{ and }U_n=0]\rightarrow0. \label{eq:U_n_r_n}
\end{align}
Under conditions \eqref{eq:rate_1}, \eqref{eq:rate_2} and \eqref{eq:rate_r}, we have
\begin{align}
	\frac{\indicator(r_n\geq 0)}{\sqrt{n}U_n} =o_{\P}(1). \label{eq:U_n_rate}
\end{align}
\end{lemma}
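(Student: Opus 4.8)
\emph{Proof proposal.} The plan is to first put $U_n$ into a transparent closed form and then settle both assertions by a case analysis on the signs of $r_n,\lambda_n$ and the magnitude of $r_n$. The key computation: since $\phi(r_n)=\tfrac{1}{\sqrt{2\pi}}e^{-r_n^2/2}$ and, substituting $t=r_n+z$, $1-\Phi(r_n)=\tfrac{1}{\sqrt{2\pi}}e^{-r_n^2/2}\int_0^\infty e^{-r_nz-z^2/2}\,dz$, the definition~\eqref{eq:Dn_Un_Def} of $U_n$ reads, for $r_n\neq 0$,
\begin{equation*}
U_n=\frac{1}{\sqrt{2\pi}}\Bigl(M(r_n)+\frac{1}{\lambda_n}-\frac{1}{r_n}\Bigr),\qquad M(x)\equiv\frac{1-\Phi(x)}{\phi(x)}=\int_0^\infty e^{-xz-z^2/2}\,dz,
\end{equation*}
while $U_n=\tfrac12$ when $r_n=0$ by convention. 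The only analytic inputs I would use are the elementary Mills-ratio bounds $\tfrac{x}{1+x^2}<M(x)<\tfrac1x$ for $x>0$ --- whence $M(x)\ge\tfrac{1}{2(x+1)}$ and $0<\tfrac1x-M(x)<\tfrac{1}{x(1+x^2)}$ --- and $M(x)>M(0)=\sqrt{\pi/2}$ for $x<0$, i.e.\ $e^{x^2/2}(1-\Phi(x))=M(x)/\sqrt{2\pi}>\tfrac12$.

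For the sign statements~\eqref{eq:U_n_r_n} I would argue by cases using~\eqref{eq:sign_condition_r_lambda}; the events $\{r_n\neq 0,\lambda_n=0\}$ (on which $U_n$ is formally infinite, hence nonzero) have vanishing probability, so one may take $\lambda_n\neq 0$ whenever $r_n\neq 0$. If $r_n=0$ then $U_n=\tfrac12\neq 0$; if $r_n>0$ then~\eqref{eq:sign_condition_r_lambda} forces $\lambda_n\ge 0$, and when $0<\lambda_n\le r_n$ the term $\tfrac1{\lambda_n}-\tfrac1{r_n}$ is nonnegative so $U_n\ge M(r_n)/\sqrt{2\pi}>0$. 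The delicate case is $\lambda_n>r_n>0$: there $U_n=0$ would force $M(r_n)=\tfrac1{r_n}-\tfrac1{\lambda_n}=\bigl|\tfrac1{\lambda_n}-\tfrac1{r_n}\bigr|$, which by~\eqref{eq:rate_1} is $o_\P(1)$, forcing $r_n\to\infty$ on that event; but then $\tfrac1{\lambda_n}=\tfrac1{r_n}-M(r_n)<\tfrac{1}{r_n(1+r_n^2)}$, so $\lambda_n/r_n>1+r_n^2\to\infty$, contradicting~\eqref{eq:rate_2}. Making this quantitative --- split $\{\lambda_n>r_n>0,\,U_n=0\}$ according to $r_n<R$ (use~\eqref{eq:rate_1}) or $r_n\ge R$ (use~\eqref{eq:rate_2}), then let $R\to\infty$ --- gives $\P[r_n\ge 0,\,U_n=0]\to 0$. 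For $r_n<0$, \eqref{eq:sign_condition_r_lambda} gives $\lambda_n\le 0$ and $U_n=e^{r_n^2/2}(1-\Phi(r_n))+\tfrac1{\sqrt{2\pi}}\bigl(\tfrac1{\lambda_n}-\tfrac1{r_n}\bigr)>\tfrac12+\tfrac1{\sqrt{2\pi}}\bigl(\tfrac1{\lambda_n}-\tfrac1{r_n}\bigr)$, so $\{r_n<0,\,U_n=0\}\subseteq\{\,|\tfrac1{\lambda_n}-\tfrac1{r_n}|\ge\sqrt{\pi/2}\,\}$, which has probability $\to 0$ by~\eqref{eq:rate_1}; together these give~\eqref{eq:U_n_r_n}.

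For the quantitative bound~\eqref{eq:U_n_rate} I would lower-bound $|U_n|$ on $\{r_n>0\}$ by $|U_n|\ge\tfrac1{\sqrt{2\pi}}\bigl(M(r_n)-\bigl|\tfrac1{\lambda_n}-\tfrac1{r_n}\bigr|\bigr)$ and show $\bigl|\tfrac1{\lambda_n}-\tfrac1{r_n}\bigr|(r_n+1)=o_\P(1)$: the term $\bigl|\tfrac1{\lambda_n}-\tfrac1{r_n}\bigr|$ is $o_\P(1)$ by~\eqref{eq:rate_1}, while $\bigl|\tfrac1{\lambda_n}-\tfrac1{r_n}\bigr|\,r_n=\bigl|\tfrac{r_n}{\lambda_n}-1\bigr|=\bigl|\tfrac{\lambda_n}{r_n}-1\bigr|\cdot\tfrac{r_n}{\lambda_n}=o_\P(1)$ by~\eqref{eq:rate_2}. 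Since $M(r_n)\ge\tfrac{1}{2(r_n+1)}$, with probability tending to one $\bigl|\tfrac1{\lambda_n}-\tfrac1{r_n}\bigr|\le\tfrac12 M(r_n)$, so $|U_n|\ge M(r_n)/(2\sqrt{2\pi})\ge\tfrac{1}{4\sqrt{2\pi}(r_n+1)}$; with $U_n=\tfrac12$ on $\{r_n=0\}$ this yields $\indicator(r_n\ge 0)\sqrt{n}\,|U_n|\ge\indicator(r_n\ge 0)\tfrac{\sqrt{n}}{4\sqrt{2\pi}(r_n+1)}$ with probability tending to one, and since $r_n=o_\P(\sqrt n)$ by~\eqref{eq:rate_r} the right-hand side diverges in probability --- equivalently $\indicator(r_n\ge 0)/(\sqrt n\,U_n)=o_\P(1)$, which is~\eqref{eq:U_n_rate}.

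The hard part throughout is the regime $\lambda_n>r_n>0$: there the correction $\phi(r_n)\bigl(\tfrac1{\lambda_n}-\tfrac1{r_n}\bigr)$ is negative and, a priori, comparable in magnitude to the Gaussian tail $1-\Phi(r_n)$, so both ruling out exact cancellation in~\eqref{eq:U_n_r_n} and securing the sharp $1/(r_n+1)$ lower bound behind~\eqref{eq:U_n_rate} hinge on the precise rate at which $\lambda_n/r_n\to 1$ in~\eqref{eq:rate_2}, which ultimately encodes $\hat s_n\convp 0$.
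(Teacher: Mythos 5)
Your proof of the second claim \eqref{eq:U_n_rate} is correct and is essentially the paper's argument in repackaged form: the paper splits at $r_n=1$ and shows $\indicator(r_n\geq 1)/(r_nU_n)=O_{\P}(1)$ and $\indicator(r_n\in[0,1))/U_n=O_{\P}(1)$ (Lemma~\ref{lem:convergence_rate_denominator_relative_error}, using the lower bound of Lemma~\ref{lem:lower_bound_Gaussian} together with \eqref{eq:rate_1}--\eqref{eq:rate_2}) and then invokes \eqref{eq:rate_r}; your single bound, that with probability tending to one $|U_n|$ is bounded below by a constant multiple of $1/(1+r_n)$, packages the same ingredients. Likewise your treatment of the cases $r_n=0$, $r_n>0\geq\lambda_n$ or $0<\lambda_n\leq r_n$, and $r_n<0$ in \eqref{eq:U_n_r_n} coincides with the paper's.

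The gap is in the remaining case $\lambda_n>r_n>0$ of \eqref{eq:U_n_r_n}. The lemma asserts the pathwise implication ``$r_n\geq 0\Rightarrow U_n\neq 0$ almost surely'', and it is this almost-sure version that is used downstream: in the proof of Lemma~\ref{lem:upper_bound_Gaussian_integral} the identity $\int_0^\infty e^{-\lambda_n z}\phi(z)\,\mathrm{d}z=U_n+R_n$ is multiplied through by $\indicator(r_n>0,\lambda_n>0)/U_n$ almost surely, which needs $U_n\neq 0$ on the whole event $\{r_n>0\}$, not merely with probability tending to one. Your argument in this regime delivers only $\P[U_n=0,\ 0<r_n<\lambda_n]\to 0$, and moreover it invokes \eqref{eq:rate_2}, which is not among the hypotheses listed for \eqref{eq:U_n_r_n} (the lemma assumes only \eqref{eq:finitness_r_n_lambda_n}, \eqref{eq:sign_condition_r_lambda}, \eqref{eq:rate_1} for that claim). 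The paper instead argues pathwise: by Lemma~\ref{lem:R_n_formula}, $U_n=\int_0^\infty e^{-\lambda_n z}\phi(z)\,\mathrm{d}z-R_n$, and it claims $R_n<0$ on $\{0<r_n<\lambda_n\}$ via the remainder formula of Lemma~\ref{lem:Gaussian_tail_estimate}, so that $U_n>0$ for every such realization — no rate conditions and no exceptional event. So, as written, your proposal proves a strictly weaker statement than the lemma. One caution if you try to reproduce the paper's pathwise step: the two-term expansion $\phi(x)x^{-1}(1-x^{-2})$ is a lower bound for $1-\Phi(x)$ when $x>0$, which makes the integrand defining $R_n$ positive rather than negative, and your own Mills-ratio observation (that $M(r)+1/\lambda-1/r$ can vanish for suitable $\lambda>r>0$) shows exact cancellation cannot be excluded for arbitrary pairs; so ruling out $U_n=0$ pathwise in this regime is genuinely more delicate than either your asymptotic argument or a naive reading of the paper's sign claim suggests.
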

\noindent Therefore, we have
\begin{align*}
&\indicator(w_n > 0) \left|\frac{D_n}{U_n} - \frac{\int_0^\infty \exp(-\lambda_n z)\phi(z)dz}{U_n}\right| \\
&\quad \leq \indicator(r_n \geq 0, \lambda_n \geq 0) \left|\frac{D_n}{U_n} - \frac{\int_0^\infty \exp(-\lambda_n z)\phi(z)dz}{U_n}\right| && \text{by}~\eqref{eq:sign_1} \\
&\quad\leq \frac{2\indicator(r_n \geq 0, \lambda_n \geq 0) \sup_{t\in\mathbb{R}}\left|\P_n\left[Z_n \leq t \mid \mathcal F_n\right] - \Phi(t)\right|}{|U_n|} && \text{by Lemma}~\ref{lem:Gaussian_integral_approximation_additive_error} \\
&\quad\leq \frac{\indicator(r_n \geq 0)}{\sqrt{n}|U_n|}O_{\P}(1) && \text{by}~\eqref{eq:Z_n_convergence} \\
&\quad= o_{\P}(1)O_{\P}(1) && \text{by}~\eqref{eq:U_n_rate}\\
&\quad= o_{\P}(1),
\end{align*}
Therefore, it suffices to show that
\begin{equation}
\indicator(w_n > 0)\left(\frac{\int_0^\infty \exp(-\lambda_n z)\phi(z)dz}{U_n}-1\right) = o_{\P}(1). \label{eq:final_step_Gaussian_integral}
\end{equation}
By statements~\eqref{eq:sign_1}, \eqref{eq:same_sign_condition}, and~\eqref{eq:U_n_r_n}, it follows that it suffices to show the Gaussian integral approximation
\begin{align}
	\indicator(r_n> 0,\lambda_n> 0)\left(\frac{\int_0^\infty \exp(-\lambda_n z)\phi(z)dz}{U_n}-1\right)=o_{\P}(1),
\label{eq:gaussian_integral_approximation}
\end{align}
which is stated in the following lemma:
\begin{lemma}\label{lem:final_result_except_lam_0}
Under conditions~\eqref{eq:finitness_r_n_lambda_n}, \eqref{eq:rate_1}, \eqref{eq:rate_2}, and~\eqref{eq:rate_3}, the Gaussian integral approximation~\eqref{eq:gaussian_integral_approximation} holds.
\end{lemma}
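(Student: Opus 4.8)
The plan is to put both the numerator and the denominator of the ratio in~\eqref{eq:gaussian_integral_approximation} into closed form, and then reduce the stated convergence in probability to a deterministic limit via the subsequence principle. Completing the square in the exponent gives the elementary identity $\int_0^\infty \exp(-\lambda z)\phi(z)\,dz = \exp(\lambda^2/2)(1-\Phi(\lambda))$ for $\lambda \ge 0$, so on the event $\{\lambda_n>0\}$ the numerator equals $\exp(\lambda_n^2/2)(1-\Phi(\lambda_n))$, which is finite and strictly positive by~\eqref{eq:finitness_r_n_lambda_n}. For the denominator, substituting $\exp(r_n^2/2)\phi(r_n)=1/\sqrt{2\pi}$ into the definition of $U_n$ from~\eqref{eq:Dn_Un_Def} gives
\begin{equation*}
U_n = \exp(r_n^2/2)\bigl(1-\Phi(r_n)\bigr) + \frac{1}{\sqrt{2\pi}}\Bigl(\frac{1}{\lambda_n}-\frac{1}{r_n}\Bigr).
\end{equation*}
Thus it suffices to show that, on the event $\{r_n>0,\lambda_n>0\}$ (and trivially otherwise), the ratio of these two expressions converges to $1$ in probability; equivalently, this is the assertion that on the relevant event the Robinson formula~\eqref{eq:robinson-formula} and the Lugannani--Rice formula~\eqref{eq:lugannani-rice} agree to first order.

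I would then invoke the subsequence principle for convergence in probability, which reduces the claim to the following deterministic assertion: for any sequences $r_n,\lambda_n>0$ satisfying $1/\lambda_n-1/r_n\to 0$, $\lambda_n/r_n\to 1$ and $(\lambda_n/r_n-1)/r_n\to 0$ --- the almost-sure statements extracted from~\eqref{eq:rate_1}, \eqref{eq:rate_2}, \eqref{eq:rate_3} along a suitable subsequence --- the ratio of the two closed forms tends to $1$. Passing to a further subsequence along which $r_n\to r_\infty\in[0,\infty]$, and hence $\lambda_n\to r_\infty$ as well, I would verify this by cases on $r_\infty$. If $r_\infty\in(0,\infty)$, the numerator and the first summand of $U_n$ both converge to $\exp(r_\infty^2/2)(1-\Phi(r_\infty))>0$ while the second summand of $U_n$ vanishes, so $U_n$ is eventually positive and the ratio tends to $1$. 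If $r_\infty=0$, the numerator and the first summand of $U_n$ both tend to $1/2$, and the second summand of $U_n$ is exactly the quantity appearing in~\eqref{eq:rate_1}, which vanishes; again $U_n$ is eventually positive and the ratio tends to $1$.

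The case $r_\infty=\infty$ is the main obstacle and is where the precise form of the rate conditions is used. From the two-sided Mills bound $(x^{-1}-x^{-3})\phi(x)\le 1-\Phi(x)\le x^{-1}\phi(x)$ I would write $1-\Phi(x)=\tfrac{\phi(x)}{x}(1+\varepsilon(x))$ with $|\varepsilon(x)|\le x^{-2}$ for $x$ large. This expands the numerator as $\tfrac{1}{\sqrt{2\pi}\,\lambda_n}(1+\varepsilon(\lambda_n))$ with $\varepsilon(\lambda_n)\to 0$. For the denominator, inserting the same expansion into the first summand of $U_n$ and then using the exact algebraic cancellation $\tfrac{1}{r_n}+\bigl(\tfrac{1}{\lambda_n}-\tfrac{1}{r_n}\bigr)=\tfrac{1}{\lambda_n}$ yields $U_n=\tfrac{1}{\sqrt{2\pi}}\bigl(\tfrac{1}{\lambda_n}+\tfrac{\varepsilon(r_n)}{r_n}\bigr)=\tfrac{1}{\sqrt{2\pi}\,\lambda_n}\bigl(1+\tfrac{\lambda_n\varepsilon(r_n)}{r_n}\bigr)$, where $\bigl|\tfrac{\lambda_n\varepsilon(r_n)}{r_n}\bigr|\le\tfrac{\lambda_n}{r_n}\,r_n^{-2}\to 0$ by~\eqref{eq:rate_2} together with $r_n\to\infty$ (equivalently, one may control this term using~\eqref{eq:rate_3}). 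Hence the ratio equals $(1+o(1))/(1+o(1))\to 1$, completing the case analysis and the subsequence argument, and therefore establishing~\eqref{eq:gaussian_integral_approximation}. I expect the delicate point to be organizing the error terms in the $r_\infty=\infty$ case so that the cancellation $\tfrac1{r_n}+(\tfrac1{\lambda_n}-\tfrac1{r_n})=\tfrac1{\lambda_n}$ is manifest; conditions~\eqref{eq:rate_1}--\eqref{eq:rate_3} appear to be calibrated precisely to make this step succeed.
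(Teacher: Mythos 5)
Your proposal is correct, but it takes a genuinely different route from the paper. The paper never evaluates the two sides in closed form against each other directly with Mills-type expansions; instead it writes $\int_0^\infty e^{-\lambda_n y}\phi(y)\,dy = U_n + R_n$ via an exact integration-by-parts identity (Lemma~\ref{lem:R_n_formula}), bounds $|R_n|$ by $O_n \equiv \frac{|r_n-\lambda_n|}{\sqrt{2\pi}}\bigl(\frac{1}{r_n^2}+\frac{1}{\lambda_n^2}\bigr)$ using the tail estimate of Lemma~\ref{lem:Gaussian_tail_estimate} (Lemma~\ref{lem:upper_bound_Gaussian_integral}), and then controls $R_n/U_n$ by splitting on $r_n\ge 1$ versus $r_n\in(0,1)$ with the $1/U_n$ rate bounds of Lemma~\ref{lem:convergence_rate_denominator_relative_error}, using \eqref{eq:rate_2} and \eqref{eq:rate_3} to show $r_nO_n=o_\P(1)$ and $O_n=o_\P(1)$. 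You instead write both numerator and $U_n$ in closed form (your identities $\int_0^\infty e^{-\lambda z}\phi(z)dz=e^{\lambda^2/2}(1-\Phi(\lambda))$ and $U_n=e^{r_n^2/2}(1-\Phi(r_n))+\frac{1}{\sqrt{2\pi}}(\frac{1}{\lambda_n}-\frac{1}{r_n})$ are both correct), reduce by the subsequence principle to deterministic sequences, and split on $r_\infty\in\{0\}\cup(0,\infty)\cup\{\infty\}$, using the two-sided Mills bound with the exact cancellation $\frac{1}{r_n}+(\frac{1}{\lambda_n}-\frac{1}{r_n})=\frac{1}{\lambda_n}$ in the unbounded case; your error term $\lambda_n\varepsilon(r_n)/r_n$ is indeed $O(\lambda_n/r_n^3)\to 0$, and eventual positivity of $U_n$ (needed for the ratio to be well defined, the role played in the paper by \eqref{eq:U_n_r_n} and Lemma~\ref{lem:convergence_rate_denominator_relative_error}) comes out of your case analysis along each subsequence. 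Two observations: your argument in fact uses only \eqref{eq:finitness_r_n_lambda_n}, \eqref{eq:rate_1} and \eqref{eq:rate_2} --- condition \eqref{eq:rate_3} is not actually needed once the cancellation is exploited, since $\lambda_n/r_n^3\to 0$ already follows from \eqref{eq:rate_2} and $r_n\to\infty$ --- so you prove a slightly stronger statement; and what the paper's route buys instead is a direct $O_\P/o_\P$ calculus without the compactness/subsequence reduction, reusing lemmas (the $R_n$ identity and the $1/U_n$ bounds) that are needed elsewhere in the proof of Theorem~\ref{thm:unified_unnormalized_moment_conditions}, whereas your route is more elementary and self-contained.
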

\noindent This completes the proof Theorem~\ref{thm:unified_unnormalized_moment_conditions}.

\section{Discussion} \label{sec:discussion}

In this paper, we presented a general sufficient condition to guarantee the saddlepoint equation to have a unique solution with probability tending $1$, and for the Lugannani-Rice saddlepoint approximation to be valid for conditionally independent but not necessarily identically distributed random variables. This result is applicable for general (conditional) distributions, placing no assumptions on their smoothness. Our results lay a solid mathematical foundation for existing applications of the saddlepoint approximation to resampling-based tests, including those based on the bootstrap and sign-flipping. Furthermore, they pave the way for applications of the saddlepoint approximation to new resampling-based tests, such as the conditional randomization test. We pursue the latter application in a parallel work \citep{Niu2024a}.

The current work has several limitations, pointing the way to future work. First, our results require conditionally independent summands, which excludes permutation tests. Second, the requirement of our results for the cutoff $w_n$ to converge to $0$ excludes the interesting case when the cutoff is bounded away from zero, i.e. $w_n=\Omega_p(1)$, which is known as the large deviation regime. Third, our results apply only to the sample average statistic but not to more complex statistics, such as studentized averages, for which saddlepoint approximations have been explored \citep{Daniels1991,Jing1994,Jing2004}. Fourth, our results give approximations for tail probabilities (corresponding to hypothesis tests) but not for quantiles (corresponding to confidence interval construction). In future work, it would be interesting to extend our results in these directions.

\section{Acknowledgments}

We are very grateful to John Kolassa, who provided valuable feedback on an earlier version of this paper. This work was partially support by NSF DMS-2113072 and NSF DMS-2310654.

\printbibliography

\clearpage

\appendix







\section{Preliminaries on regular conditional distribution}\label{sec:RCD_preliminary}

To better understand the argument involving conditional distribution, we briefly discuss the basic definition of regular conditional distribution. Let $\mathcal{B}(\mathbb{R}^n)$ be the Borel $\sigma$-algebra on $\mathbb{R}^n$ and $\Omega,\mathcal{F}_n$ be the sample space and a sequence of $\sigma$-algebras. For any $n\in\mathbb{N}_+,\kappa_n:\Omega\times \mathcal{B}(\mathbb{R}^n)$ is a regular conditional distribution of $W_n\equiv (W_{1n},\ldots,W_{nn})^\top$ given $\mathcal{F}_n$ if 
\begin{align*}
	\omega\mapsto \kappa_n(\omega,B) \text{ is measurable with respect to $\mathcal{F}_n$ for any fixed $B\in\mathcal{B}(\mathbb{R}^n)$};
	&
	\\
	B\mapsto \kappa_n(\omega,B) \text{ is a probability measure on }(\mathbb{R}^n,\mathcal{B}(\mathbb{R}^n));
	&
	\\
	\kappa_n(\omega,B)=\P[(W_{1n},\ldots,W_{nn})\in B|\mathcal{F}_n](\omega),\text{ for almost all }\omega\in\Omega\text{ and all }B\in\mathcal{B}(\mathbb{R}^n).&
\end{align*}
The following lemma from \cite[Theorem 8.37]{Lista2017} ensures that the general existence of $\kappa_{n}$.
\begin{lemma}[Theorem 8.37 in \cite{Lista2017}]\label{lem:Klenke_Thm_8.37}
  Suppose $(\Omega,\mathcal{G},\P)$ is the Probability triple. Let $\mathcal{F}\subset \mathcal{G}$ be a sub-$\sigma$-algebra. Let $Y$ be a random variable with values in a Borel space $(E,\mathcal{E})$ (for example, $E$ is Polish, $E=\mathbb{R}^d$). Then there exists a regular conditional distribution $\kappa_{Y,\mathcal{F}}$ of $Y$ given $\mathcal{F}$.
\end{lemma}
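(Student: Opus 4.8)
The plan is to reproduce the classical construction of a regular conditional distribution through conditional distribution functions, reducing first to real-valued targets via the Borel-space isomorphism. Since $(E,\mathcal E)$ is a Borel space there is a bi-measurable bijection $\varphi\colon E\to B_0$ onto some $B_0\in\mathcal B(\R)$; put $\xi\equiv\varphi\circ Y$, a real-valued random variable on $(\Omega,\mathcal G,\P)$. If I can build a regular conditional distribution $\kappa$ of $\xi$ given $\mathcal F$, I recover the one for $Y$ as follows: since $\kappa(\omega,B_0)=\P[\xi\in B_0\mid\mathcal F](\omega)=1$ off a $\P$-null set $N_0\in\mathcal F$, I set $\kappa_{Y,\mathcal F}(\omega,A)\equiv\kappa(\omega,\varphi(A))$ for $\omega\notin N_0$ (valid because $\varphi(A)=(\varphi^{-1})^{-1}(A)\in\mathcal B(\R)$ and $\varphi$ is injective, so $\varphi$ pushes measures on $B_0$ to measures on $E$) and $\kappa_{Y,\mathcal F}(\omega,\cdot)\equiv\delta_{y_0}$ for $\omega\in N_0$ and a fixed $y_0\in E$ (the case $E=\varnothing$ being vacuous). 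The three defining properties of an RCD transfer directly, using $\{\varphi(Y)\in\varphi(A)\}=\{Y\in A\}$ for the conditional-distribution identity.

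For the real-valued case I construct a conditional distribution function. For each $q\in\mathbb{Q}$ fix an $\mathcal F$-measurable version $G_q$ of $\P[\xi\le q\mid\mathcal F]$. Using only countably many almost-sure statements, there is a single $\P$-null set $N\in\mathcal F$ off which $q\mapsto G_q(\omega)$ is nondecreasing on $\mathbb{Q}$, $\lim_{q\to+\infty}G_q(\omega)=1$, and $\lim_{q\to-\infty}G_q(\omega)=0$ (the limits by monotone convergence along countable sequences $q\to\pm\infty$). For $\omega\notin N$ define $F(x,\omega)\equiv\inf_{q\in\mathbb{Q},\,q>x}G_q(\omega)$, and for $\omega\in N$ let $F(\cdot,\omega)$ be any fixed distribution function, say $\Phi$. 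Then for every $\omega$ the map $x\mapsto F(x,\omega)$ is a genuine distribution function — nondecreasing and right-continuous with limits $0$ and $1$ at $\mp\infty$ — and for every fixed $x$ the map $\omega\mapsto F(x,\omega)$ is $\mathcal F$-measurable, being a countable infimum of $\mathcal F$-measurable maps. Let $\kappa(\omega,\cdot)$ be the Lebesgue--Stieltjes probability measure with distribution function $F(\cdot,\omega)$.

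It remains to verify the two nontrivial defining properties of $\kappa$. For $\mathcal F$-measurability of $\omega\mapsto\kappa(\omega,B)$, the collection $\mathcal D\equiv\{B\in\mathcal B(\R):\omega\mapsto\kappa(\omega,B)\text{ is }\mathcal F\text{-measurable}\}$ contains every half-line $(-\infty,x]$, since $\kappa(\omega,(-\infty,x])=F(x,\omega)$, and it is a Dynkin system; as the half-lines form a generating $\pi$-system, the $\pi$--$\lambda$ theorem gives $\mathcal D=\mathcal B(\R)$. For the conditional-distribution identity, the collection $\mathcal D'\equiv\{B\in\mathcal B(\R):\kappa(\cdot,B)=\P[\xi\in B\mid\mathcal F]\text{ a.s.}\}$ again contains all half-lines — on $(-\infty,x]$ one has $F(x,\omega)=\lim_{q\downarrow x,\,q\in\mathbb{Q}}G_q(\omega)$ a.s., which equals $\P[\xi\le x\mid\mathcal F]$ a.s. by right-continuity of $t\mapsto\P[\xi\le t\mid\mathcal F]$ in $L^1$ — and $\mathcal D'$ is a Dynkin system (using countable additivity of both $\P[\,\cdot\,\mid\mathcal F]$ and $\kappa(\omega,\cdot)$), so $\mathcal D'=\mathcal B(\R)$ as well.

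The main obstacle is the construction in the second paragraph: extracting a \emph{single} exceptional $\P$-null set off which $F(\cdot,\omega)$ is simultaneously monotone, has the correct limits, and — after the right-continuous regularization — is a bona fide distribution function whose induced measure still represents the conditional law on half-lines. This is precisely where the separability built into the notion of a Borel space is essential, entering through the countability of $\mathbb{Q}$; the first and third paragraphs are then routine reductions and monotone-class verifications.
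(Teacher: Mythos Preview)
Your proof is correct and follows the classical construction of regular conditional distributions: reduce to $\R$-valued random variables via the Borel isomorphism, build a conditional distribution function by regularizing versions of $\P[\xi\le q\mid\mathcal F]$ over $q\in\mathbb Q$, and verify the RCD axioms by a $\pi$--$\lambda$ argument. The paper, however, does not prove this lemma at all; it is stated purely as a citation of Theorem~8.37 in Klenke's textbook and used as a black box throughout the appendix. So there is nothing in the paper to compare your argument against---you have supplied a complete proof where the authors chose to invoke an external reference.
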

\noindent Result from \cite[Theorem 8.38]{Lista2017} guarantees that the conditional expectation and the integral of measurable function with respect to regular conditional distribution are almost surely same.

\begin{lemma}[Modified version of Theorem 8.38 in \cite{Lista2017}]\label{lem:Klenke_Thm_8.38}
  Let $X$ be a random variable $(\Omega,\mathcal{G},\mathbb{P})$ with values in a Borel space $(E,\mathcal{E})$. Let $\mathcal{F}\subset \mathcal{G}$ be a $\sigma$-algebra and let $\kappa_{X,\mathcal{F}}$ be a version of regular conditional distribution of $X$ given $\mathcal{F}$. Further, let $f:E\rightarrow\mathbb{R}$ be measurable and $\E[|f(X)|]<\infty$. Then we can define a version of the conditional expectation of $f(X)$ given $\mathcal{F}$ as:
  \begin{align*}
    \E[f(X)|\mathcal{F}](\omega)=\int f(x)\mathrm{d}\kappa_{X,\mathcal{F}}(\omega,x),\ \forall \omega \in\Omega.
  \end{align*}
\end{lemma}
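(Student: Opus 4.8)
The plan is to run the standard measure-theoretic induction: prove the identity first for indicator functions, extend it to simple functions by linearity, pass to nonnegative measurable functions by monotone convergence, and reach general integrable $f$ via $f = f^+ - f^-$. At each stage the two objects to compare are the map $\omega\mapsto\int f\,\mathrm{d}\kappa_{X,\mathcal{F}}(\omega,\cdot)$, which must be shown to be $\mathcal{F}$-measurable, and the conditional expectation $\E[f(X)\mid\mathcal{F}]$; the goal is to show they agree $\P$-almost surely, which is exactly the meaning of ``a version of the conditional expectation''.

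First I would take $f = \indicator_B$ for $B\in\mathcal{E}$: then $\int\indicator_B\,\mathrm{d}\kappa_{X,\mathcal{F}}(\omega,\cdot) = \kappa_{X,\mathcal{F}}(\omega,B)$, which is $\mathcal{F}$-measurable in $\omega$ and $\P$-almost surely equal to $\P[X\in B\mid\mathcal{F}]$ directly from the defining properties of the regular conditional distribution $\kappa_{X,\mathcal{F}}$. Linearity of the integral against the fixed-$\omega$ probability measure $\kappa_{X,\mathcal{F}}(\omega,\cdot)$ together with linearity of conditional expectation then covers simple $f$. For nonnegative measurable $f$, I would choose simple functions $f_\ell\uparrow f$ and, for each fixed $\omega$, invoke the monotone convergence theorem for the measure $\kappa_{X,\mathcal{F}}(\omega,\cdot)$; this simultaneously yields $\int f_\ell\,\mathrm{d}\kappa_{X,\mathcal{F}}(\omega,\cdot)\uparrow\int f\,\mathrm{d}\kappa_{X,\mathcal{F}}(\omega,\cdot)$ and exhibits the limit as a pointwise limit of $\mathcal{F}$-measurable functions, hence $\mathcal{F}$-measurable, while on the other side the conditional monotone convergence theorem gives $\E[f_\ell(X)\mid\mathcal{F}]\uparrow\E[f(X)\mid\mathcal{F}]$ a.s.; passing $\ell\to\infty$ in the simple-function identity yields the claim for nonnegative $f$. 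Finally, for integrable $f$ I would write $f = f^+ - f^-$, note $\E[f^\pm(X)]\leq\E[|f(X)|]<\infty$, and use the tower property to get $\E\big[\int f^\pm\,\mathrm{d}\kappa_{X,\mathcal{F}}(\omega,\cdot)\big] = \E[f^\pm(X)]<\infty$, so that $\int f^\pm\,\mathrm{d}\kappa_{X,\mathcal{F}}(\omega,\cdot)<\infty$ for $\P$-a.e.\ $\omega$; subtracting the two nonnegative-case identities then gives the result.

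I do not expect a genuine obstacle: the statement is classical (essentially \cite[Theorem 8.38]{Lista2017}) and the existence of $\kappa_{X,\mathcal{F}}$ is already supplied by Lemma~\ref{lem:Klenke_Thm_8.37}. The only points needing care are bookkeeping. First, the regular conditional distribution pins down $\kappa_{X,\mathcal{F}}(\cdot,B)$ only up to a $\P$-null set depending on $B$, so each step produces an almost-sure, not an everywhere, identity; the ``$\forall\,\omega\in\Omega$'' in the statement is therefore to be read up to a $\P$-null set, concretely after discarding the null set on which $\int|f|\,\mathrm{d}\kappa_{X,\mathcal{F}}(\omega,\cdot) = \infty$, where the integral may be assigned any fixed value without affecting measurability or the almost-sure identity. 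Second, in the last step one must ensure the difference $\int f^+\,\mathrm{d}\kappa_{X,\mathcal{F}}(\omega,\cdot) - \int f^-\,\mathrm{d}\kappa_{X,\mathcal{F}}(\omega,\cdot)$ does not meet the indeterminate form $\infty - \infty$, which the tower-property finiteness computation above rules out for $\P$-a.e.\ $\omega$.
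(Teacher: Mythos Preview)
Your proposal is correct and follows the standard monotone-class argument. Note, however, that the paper does not actually prove this lemma: it is stated as a citation of \cite[Theorem 8.38]{Lista2017} and used as a black box, so there is no paper proof to compare against. Your argument is precisely the textbook route one would expect for this result.
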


\section{A version of conditional expectation}

In this paper, we will use regular conditional distribution (RCD) to fix a version of the conditional expectations used in the paper. Suppose $\kappa_{in}:\Omega\times \mathcal{B}(\mathbb{R})$ is a version of the RCD of $W_{in}$ given $\mathcal{F}_n$ and $\kappa_{n}:\Omega\times \mathcal{B}(\mathbb{R}^n)$ is a version of the RCD of $W_n\equiv (W_{1n},\ldots,W_{nn})^\top$ given $\mathcal{F}_n$. The conditional independent assumption can be formulated as 
\begin{align*}
	\kappa_{n}(\omega,B)=\prod_{i=1}^n \kappa_{in}(\omega,B_i),\ B=B_1\times\ldots\times B_n,\ \forall B_1,\ldots,B_n\in \mathcal{B}(\mathbb{R}),\ \forall \omega\in \Omega.
\end{align*}
Define the tilted RCD:
\begin{align*}
	\frac{\mathrm{d}\kappa_{in,s}(\omega,x)}{\mathrm{d}\kappa_{in}(\omega,x)}\equiv \frac{\exp(sx)}{\int \exp(sx)\mathrm{d}\kappa_{in}(\omega,x)},\ \frac{\mathrm{d}\kappa_{n,s}(\omega,x)}{\mathrm{d}\kappa_{n}(\omega,x)}\equiv\prod_{i=1}^n \frac{\exp(sx)}{\int \exp(sx)\mathrm{d}\kappa_{in}(\omega,x)},\ \forall \omega\in\Omega.
\end{align*}
Consider the measure $\P_{n,s}$ defined in \eqref{eq:tilted_measure} and define measure $\P_{in,s}$:
\begin{align*}
	\frac{\mathrm{d}\P_{in,s}}{\mathrm{d}\P} \equiv \frac{\exp(s W_{in})}{\E[\exp(s W_{in})|\mathcal F_n]}.
\end{align*}
For any measurable function $f:\mathbb{R}\mapsto\mathbb{R}$ and $g:\mathbb{R}^n\mapsto\mathbb{R}$, we define the conditional expectation under original measure $\P$, tilted measure $\P_{in,s}$ and $\P_{n,s}$ respectively as
\begin{align}
	\E[f(W_{in})|\mathcal{F}_n](\omega)
	&\label{eq:def_conditional_expectation}
	\equiv \int f(x)\mathrm{d}\kappa_{in}(\omega,x),\ \forall \omega\in\Omega,\\ 
	\E_{in,s}[f(W_{in})|\mathcal{F}_n](\omega)
	&\label{eq:def_conditional_expectation_tilted}
	\equiv\int f(x)\frac{\exp(sx)}{\int \exp(sx)\mathrm{d}\kappa_{in}(\omega,x)}\mathrm{d}\kappa_{in}(\omega,x),\ \forall \omega\in\Omega,\\
	\E_{n,s}[g(W_{n})|\mathcal{F}_n](\omega)
	&\label{eq:def_conditional_expectation_tilted_product}
	\equiv\int g(y)\left(\prod_{i=1}^n \frac{\exp(sy_i)}{\int \exp(sy_i)\mathrm{d}\kappa_{in}(\omega,y_i)}\right)\mathrm{d}\kappa_{n}(\omega,y),\ \forall \omega\in\Omega
\end{align}
where $x\in\mathbb{R},y\in\mathbb{R}^n$. We refer the guarantee of existence of RCD and the validity of the above definition for conditional expectation to Theorem \ref{lem:Klenke_Thm_8.37} and \ref{lem:Klenke_Thm_8.38}.

\section{Equivalent definition of CSE distribution}\label{sec:equivalence_tail_probability}

In this section, we prove the equivalence of the definition of the CSE distribution (Definition \ref{def:cse_distribution}) and a bound on CGF of the conditional distribution of $W_{in}|\mathcal{F}_n$.

\begin{lemma}[Equivalence of the definition of CSE distribution]\label{lem:equivalence_CSE}
	The following two statements are equivalent:
	\begin{enumerate}
		\item there exists positive parameters $(\lambda_{in},\gamma)$ with $\lambda_{in}\in\mathcal{F}_n$ and constant $\lambda$ such that 
		\begin{align}\label{eq:cse_cgf}
			\P\left[\mathcal{B}_1\right]=1,\ \mathcal{B}_1\equiv \left\{\E[\exp(sW_{in})|\mathcal{F}_n]\leq \exp(\lambda_{in}s^2),\ \forall s\in \left(-\frac{1}{\gamma},\frac{1}{\gamma}\right)\right\}.
		\end{align}
		\item there exists positive parameters $(\theta_{in},\beta)$ with $\theta_{in}\in\mathcal{F}_n$ and constant $\beta$ such that 
		\begin{align}\label{eq:cse_tail}
			\P\left[\mathcal{B}_2\right]=1,\ \mathcal{B}_2\equiv  \left\{\P\left[|W_{in}|\geq t|\mathcal{F}_n\right]\leq \theta_{in}\exp(-\beta t),\ \forall t>0\right\}.
		\end{align}
	\end{enumerate}
	In particular, the suppose condition \eqref{eq:cse_tail} holds, then we can choose $(\lambda_{in},\gamma)$ in \eqref{eq:cse_cgf} as
	\begin{align*}
		\lambda_{in}= \frac{\sqrt{6!4^6}(1+\theta_{in})}{24\beta^2}+\frac{16(1+\theta_{in})}{\beta^2},\ \gamma=\frac{4}{\beta}.
	\end{align*}
\end{lemma}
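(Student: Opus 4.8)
The plan is to remove the conditioning up front and reduce both implications to deterministic facts about a single probability measure on $\R$, after which each direction is a conditional version of the textbook sub-exponential $\leftrightarrow$ sub-Gaussian-MGF equivalence.

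\textbf{Reduction via regular conditional distributions.} I would fix a version $\kappa_{in}$ of the RCD of $W_{in}$ given $\mathcal F_n$ (Appendix~\ref{sec:RCD_preliminary}) and, using the conventions~\eqref{eq:def_conditional_expectation} together with Lemma~\ref{lem:Klenke_Thm_8.38}, read the events $\mathcal B_1$ and $\mathcal B_2$ as statements about $\mu\equiv\kappa_{in}(\omega,\cdot)$: with $M(s)\equiv\int e^{sx}\,\mathrm d\mu(x)=\E[e^{sW_{in}}\mid\mathcal F_n](\omega)$ and $\mu(\{|x|\ge t\})=\P[|W_{in}|\ge t\mid\mathcal F_n](\omega)$. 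Off a fixed $\P$-null set one additionally has $\int x\,\mathrm d\mu=\E[W_{in}\mid\mathcal F_n](\omega)=0$ by the standing mean-zero assumption, which is exactly what makes a purely quadratic bound on $M$ possible near the origin. Any parameter constructed below is a deterministic function of $\theta_{in}$ (resp.\ $\lambda_{in}$), hence automatically $\mathcal F_n$-measurable, so it suffices to establish the bounds pointwise in $\omega$.

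\textbf{Direction \eqref{eq:cse_cgf} $\Rightarrow$ \eqref{eq:cse_tail}.} This is a Chernoff argument. For $t>0$ and $0<s<1/\gamma$, Markov gives $\mu(\{x\ge t\})\le e^{-st}M(s)\le\exp(\lambda s^2-st)$ with $\lambda=\lambda_{in}(\omega)$; optimizing in two regimes (take $s=t/(2\lambda)$ for small $t$, and $s\uparrow1/\gamma$ for large $t$) yields $\mu(\{x\ge t\})\le e^{\lambda/\gamma^2}e^{-t/\gamma}$, and the left tail is identical since $\lambda s^2$ is even in $s$. Summing the two tails and setting $\beta\equiv1/\gamma$, $\theta_{in}\equiv2e^{\lambda_{in}/\gamma^2}$ gives~\eqref{eq:cse_tail}.

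\textbf{Direction \eqref{eq:cse_tail} $\Rightarrow$ \eqref{eq:cse_cgf} with the stated constants.} First I would turn the tail bound into moment bounds by the layer-cake identity, $\int|x|^k\,\mathrm d\mu\le\theta_{in}(\omega)\,k!/\beta^k$ for $k\ge1$, and likewise $\int|x|^ke^{c|x|}\,\mathrm d\mu<\infty$ for $0\le c<\beta$ (so $M$ is finite on the window $\gamma=4/\beta$ in the first place). Then, for $|s|<1/\gamma=\beta/4$, Taylor expand $e^{sx}=1+sx+\tfrac12(sx)^2+R_3(sx)$ with $|R_3(u)|\le\tfrac{|u|^3}{6}e^{|u|}$, integrate against $\mu$ (the linear term drops out by mean-zero), bound the quadratic term by a multiple of $s^2(1+\theta_{in})/\beta^2$, and control the remainder by Cauchy--Schwarz,
\begin{equation*}
\int|x|^3e^{|sx|}\,\mathrm d\mu\le\Big(\int x^6\,\mathrm d\mu\Big)^{1/2}\Big(\int e^{2|sx|}\,\mathrm d\mu\Big)^{1/2}\le\frac{\sqrt{6!\,\theta_{in}}}{\beta^3}\,(1+\theta_{in})^{1/2},
\end{equation*}
where the last factor is a crude layer-cake estimate that is manageable precisely because the window is $\beta/4$ rather than $\beta/2$ — this is the source of both the $\sqrt{6!}$ (sixth moment) and the powers of $4$ in the stated $\lambda_{in}$. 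Finally, using $|s|^3\le(\beta/4)s^2$ on the window to absorb the cubic remainder into an $s^2$ term and collecting, one gets $M(s)\le1+\Lambda s^2\le e^{\Lambda s^2}$ for a quantity $\Lambda$ bounded by $\frac{\sqrt{6!\,4^6}(1+\theta_{in})}{24\beta^2}+\frac{16(1+\theta_{in})}{\beta^2}$; since~\eqref{eq:cse_cgf} asks only for an upper bound on $M$, it is harmless to record this (deliberately non-tight) value as $\lambda_{in}$.

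\textbf{Expected main obstacle.} The analytic content is routine; the care goes into the measure-theoretic bookkeeping — fixing versions of every conditional expectation and probability through $\kappa_{in}$, discarding the right null sets, and checking that the constructed parameters are $\mathcal F_n$-measurable — so that the quantifiers ``for all $s$'' / ``for all $t$'' genuinely hold off a \emph{single} null set as $\mathcal B_1$, $\mathcal B_2$ demand, and, in the quantitative direction, into propagating the layer-cake and Cauchy--Schwarz estimates on the narrow window $|s|<\beta/4$ carefully enough to stay under the claimed $\lambda_{in}$. A minor point to dispatch first is that $M(s)$ (equivalently the conditional CGF) is even finite on the window, which in the \eqref{eq:cse_tail} $\Rightarrow$ \eqref{eq:cse_cgf} direction is itself a consequence of the tail bound.
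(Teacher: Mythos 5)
Your proposal is correct and follows essentially the same route as the paper's proof: a Chernoff bound for \eqref{eq:cse_cgf} $\Rightarrow$ \eqref{eq:cse_tail}, and for the converse a third-order Taylor expansion of the conditional MGF on the window $|s|<\beta/4$, with the linear term killed by conditional mean-zero and the cubic remainder controlled via Cauchy--Schwarz against the sixth moment and an exponential moment, all carried out pointwise through the regular conditional distribution with the same measurability/null-set bookkeeping. The only (harmless) difference is in implementation detail: you derive the moment and exponential-moment bounds directly by layer-cake integration of the tail (and take $s\uparrow 1/\gamma$ in the Chernoff step), whereas the paper fixes $s=1/(2\gamma)$ and obtains its moment bounds via a truncation/Fatou argument and the power series of $\E[\exp(\beta|W_{in}|/4)\mid\mathcal{F}_n]$; both land below the stated, deliberately non-tight $\lambda_{in}$.
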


\subsection{Proof of Lemma \ref{lem:equivalence_CSE}}

\begin{proof}[Proof of Lemma \ref{lem:equivalence_CSE}]
	We prove two directions separately. 
	\paragraph{$1\Rightarrow 2$:}
	For any $\omega\in \mathcal{B}_1$, we know by definition \eqref{eq:def_conditional_expectation} that
	\begin{align*}
		\E[\exp(sW_{in})|\mathcal{F}_n](\omega)=\int \exp(sx)\mathrm{d}\kappa_{in}(\omega,x)<\infty,\ \forall s\in \left(-\frac{1}{\gamma},\frac{1}{\gamma}\right).
	\end{align*}
	Then the Chernoff bound gives 
	\begin{align*}
		\int \indicator(x\geq t)\mathrm{d}\kappa_{in}(\omega,x)\leq \int \exp\left(\frac{x}{2\gamma}\right)\mathrm{d}\kappa_{in}(\omega,x)\exp\left(-\frac{t}{2\gamma}\right).
	\end{align*}
	Applying a similar argument to $\indicator(-x\geq t)$, we conclude 
	\begin{align*}
		\int \indicator(|x|\geq t)\mathrm{d}\kappa_{in}(\omega,x)\leq \left(\int \exp\left(\frac{x}{2\gamma}\right)+\exp\left(\frac{-x}{2\gamma}\right)\mathrm{d}\kappa_{in}(\omega,x)\right)\cdot \exp\left(-\frac{t}{2\gamma}\right).
	\end{align*}
	Thus we know 
	\begin{align*}
		\P\left[|W_{in}|\geq t|\mathcal{F}_n\right]\leq \left(\E\left[\exp(W_{in}/(2\gamma))|\mathcal{F}_n\right]+\E\left[\exp(-W_{in}/(2\gamma))|\mathcal{F}_n\right]\right)\cdot \exp(-t/(2\gamma)).
	\end{align*}
	Thus $\P[\mathcal{B}_2]=1$ with
	\begin{align*}
		\theta_{in}=\E\left[\exp(W_{in}/(2\gamma))|\mathcal{F}_n\right]+\E\left[\exp(-W_{in}/(2\gamma))|\mathcal{F}_n\right],\ \beta=\frac{1}{2\gamma}.
	\end{align*}

	\paragraph{$2\Rightarrow 1$:}

	Fix a constant $a>0$ and $T>0$. For any $\omega\in \mathcal{B}_2$, we have 
	\begin{align*}
		&
		\E\left[\exp(a|W_{in}|)\indicator(\exp(a|W_{in}|)\leq \exp(aT))|\mathcal{F}_n\right](\omega)\\
		&
		=\int \exp(a|x|)\indicator(\exp(a|x|)\leq \exp(aT))\mathrm{d}\kappa_{in}(\omega,x)\\
		&
		\leq \int \min \{\exp(a|x|),\exp(aT)\}\mathrm{d}\kappa_{in}(\omega,x)\\
		&
		=\int \int_0^{e^{aT}}\indicator(\exp(a|x|)\geq t)\mathrm{d}t\mathrm{d}\kappa_{in}(\omega,x)\\
		&
		=\int_0^{e^{aT}}\int \indicator(\exp(a|x|)\geq t)\mathrm{d}\kappa_{in}(\omega,x)\mathrm{d}t\\
		&
		\leq 1+ \int_1^{e^{aT}}\int \indicator(|x|\geq \log(t)/a)\mathrm{d}\kappa_{in}(\omega,x)\mathrm{d}t
	\end{align*}
	where the second last equality is due to Fubini's theorem. Then by the definition of $\mathcal{B}_2$, we obtain 
	\begin{align*}
		\E\left[\exp(a|W_{in}|)\indicator(\exp(a|W_{in}|)\leq \exp(aT))|\mathcal{F}_n\right](\omega)
		&
		\leq 1+\theta_{in}(\omega)\int_1^{e^{aT}}e^{-(\beta\log(t))/a}\mathrm{d}t\\
		&
		=1+\theta_{in}(\omega)\int_{1}^{e^{aT}}t^{-\beta/a}\mathrm{d}t.
	\end{align*}
	For $a\in [0,\beta/2]$, we have 
	\begin{align*}
		\E\left[\exp(a|W_{in}|)\indicator(\exp(a|W_{in}|)\leq \exp(aT))|\mathcal{F}_n\right](\omega)
		&
		=1+\theta_{in}(\omega)\frac{1}{1-\beta/a}t^{1-\beta/a}\bigg|_{1}^{e^{aT}}\\
		&
		\leq 1+\theta_{in}(\omega)\frac{1}{\beta/a - 1}(1-e^{(aT-\beta T)})\\
		&
		\leq 1+\theta_{in}(\omega).
	\end{align*}
	Then by Fatou's lemma, we have for any $a\in [0,\beta/2]$,
	\begin{align}
		\E\left[\exp(a|W_{in}|)|\mathcal{F}_n\right](\omega)
		&\nonumber
		=\int \exp(a|x|)\mathrm{d}\kappa_{in}(\omega,x)\\
		&\label{eq:upper_bound_cgf}
		\leq \liminf_{T\rightarrow\infty}\int \exp(a|x|)\indicator(e^{a|x|}\leq e^{aT})\mathrm{d}\kappa_{in}(\omega,x)\leq 1+\theta_{in}(\omega).
	\end{align}
	Then by Taylor's expansion, for any $|s|\leq \beta/4$
	\begin{align*}
		\E[\exp(sW_{in})|\mathcal{F}_n](\omega)
		&
		=\int \exp(sx)\mathrm{d}\kappa_{in}(\omega,x)\\
		&
		=\int 1+sx+\frac{s^2x^2}{2}+\frac{s^3x^3}{6}\exp(y(x))\mathrm{d}\kappa_{in}(\omega,x),\ |y(x)|\leq  |sx|.
	\end{align*}
	Then the assumption $\E[W_{in}|\mathcal{F}_n]=0$ implies for $|s|\leq \beta/4$,
	\begin{align}
		\E[\exp(sW_{in})|\mathcal{F}_n](\omega)
		&\nonumber
		\leq 1+\frac{s^2\E[W_{in}^2|\mathcal{F}_n](\omega)}{2}+\frac{s^2\beta}{24}\int |x|^3\exp(y(x))\mathrm{d}\kappa_{in}(\omega,x)\\
		&\nonumber
		\leq 1+\frac{s^2\E[W_{in}^2|\mathcal{F}_n](\omega)}{2}+\frac{s^2\beta}{24}\int |x|^3\exp(|sx|)\mathrm{d}\kappa_{in}(\omega,x)\\
		&\nonumber
		\leq 1+\frac{s^2\E[W_{in}^2|\mathcal{F}_n](\omega)}{2}+\frac{s^2\beta}{24}\sqrt{\E[W_{in}^6|\mathcal{F}_n](\omega)\E[\exp(2|sW_{in}|)|\mathcal{F}_n](\omega)}\\
		&\nonumber
		\leq 1+\left(\frac{\beta \sqrt{1+\theta_{in}(\omega)}\sqrt{\E[W_{in}^6|\mathcal{F}_n](\omega)}}{12}+\E[W_{in}^2|\mathcal{F}_n](\omega)\right)\frac{s^2}{2}\\
		&\label{eq:cse_bound}
		\leq \exp\left(\left(\frac{\beta \sqrt{1+\theta_{in}(\omega)}\sqrt{\E[W_{in}^6|\mathcal{F}_n](\omega)}}{12}+\E[W_{in}^2|\mathcal{F}_n](\omega)\right)\frac{s^2}{2}\right).
	\end{align}
	where the second inequality is due to $|y(x)|\leq|sx|$, the third inequality is due to Cauchy-Schwarz inequality, the fourth inequality is due to conclusion \eqref{eq:upper_bound_cgf} and the last inequality is due to the inequality $\exp(x)\geq 1+x$ for any $x\in\mathbb{R}$. Now by Fubini's theorem, we have 
	\begin{align*}
		\E[\exp(s|W_{in}|)|\mathcal{F}_n]=1+\sum_{k=1}^{\infty}\frac{s^k}{k!}\E[|W_{in}|^k|\mathcal{F}_n],\ \forall s\in [0,\beta/2].
	\end{align*}
	Then by setting $s=\beta/4$ and conclusion \eqref{eq:upper_bound_cgf}, we have
	\begin{align*}
		\E[W_{in}^2|\mathcal{F}_n]
		&
		\leq \frac{2!4^2}{\beta^2}\E[\exp(\beta|W_{in}|/4)|\mathcal{F}_n]\leq \frac{2!4^2}{\beta^2}(1+\theta_{in}),\\
		\E[W_{in}^6|\mathcal{F}_n]
		&
		\leq \frac{6!4^6}{\beta^6}\E[\exp(\beta|W_{in}|/4)|\mathcal{F}_n]\leq \frac{6!4^6}{\beta^6}(1+\theta_{in})
	\end{align*}
	Then choosing 
	\begin{align*}
		\lambda_{in}= \frac{\sqrt{6!4^6}(1+\theta_{in})}{24\beta^2}+\frac{16(1+\theta_{in})}{\beta^2}\geq \frac{\beta\sqrt{1+\theta_{in}}\sqrt{\E[W_{in}^6|\mathcal{F}_n]}}{24} +\frac{\E[W_{in}^2|\mathcal{F}_n]}{2},\ \gamma=\frac{4}{\beta},
	\end{align*}
	so that by bound \eqref{eq:cse_bound}, we obtain
	\begin{align*}
		\E[\exp(sW_{in})|\mathcal{F}_n](\omega)\leq \exp(\lambda_{in}(\omega)s^2),\ \forall s\in \left(-\frac{1}{\gamma},\frac{1}{\gamma}\right).
	\end{align*}
\end{proof}

\section{Auxiliary lemmas}\label{sec:auxiliary-lemmas}

\begin{lemma}[\cite{Niu2022a}, Corollary 6] \label{lem:wlln} 
	Let $W_{in}$ be a triangular array of random variables, such that $W_{in}$ are independent for each $n$. If for some $\kappa > 0$ we have
	\begin{equation}
		\frac{1}{n^{1+\delta}}\sum_{i=1}^n\E[|W_{in}|^{1+\kappa}] \rightarrow 0,
		\label{eq:one-plus-kappa-moment-condition}
	\end{equation}
	then 
	\begin{equation*}
		\frac{1}{n} \sum_{i = 1}^n (W_{in} - \E[W_{in}]) \convp 0.
	\end{equation*}
\end{lemma}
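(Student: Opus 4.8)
The plan is to prove this weak law by the classical truncation argument, which reduces the statement to Chebyshev's inequality applied to a bounded modification of the summands. First I would center: set $X_{in} \equiv W_{in} - \E[W_{in}]$, which are again independent within each row and mean-zero, and note that by Jensen's inequality and the $c_r$-inequality, $\E[|X_{in}|^{1+\kappa}] \leq 2^{1+\kappa}\,\E[|W_{in}|^{1+\kappa}]$, so that the moment hypothesis~\eqref{eq:one-plus-kappa-moment-condition} passes to the array $\{X_{in}\}$. It then suffices to show $n^{-1}\sum_{i=1}^n X_{in} \convp 0$. It is convenient to work with $1+\kappa \leq 2$; when $\kappa > 1$ one first passes to the exponent $\min(1+\kappa,2)$, transferring the moment condition via Lyapunov's inequality together with Jensen's inequality in the index $i$.

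Second, I would truncate the centered summands at the scale dictated by the normalization in~\eqref{eq:one-plus-kappa-moment-condition} (at level $b_n \equiv n$ in the natural reading of the hypothesis): define $Y_{in} \equiv X_{in}\indicator(|X_{in}| \leq b_n)$, $S_n \equiv \sum_{i=1}^n X_{in}$, and $T_n \equiv \sum_{i=1}^n Y_{in}$. Three estimates then do all the work. (i) The truncation rarely bites: by Markov's inequality, $\P[S_n \neq T_n] \leq \sum_{i=1}^n \P[|X_{in}| > b_n] \leq b_n^{-(1+\kappa)}\sum_{i=1}^n \E[|X_{in}|^{1+\kappa}] \to 0$. (ii) The centered truncated sum is small in $L^2$: since $|X_{in}|^2 \leq b_n^{1-\kappa}|X_{in}|^{1+\kappa}$ on $\{|X_{in}|\leq b_n\}$, independence gives $\V[T_n] = \sum_{i=1}^n \V[Y_{in}] \leq \sum_{i=1}^n \E[Y_{in}^2] \leq b_n^{1-\kappa}\sum_{i=1}^n\E[|X_{in}|^{1+\kappa}]$, so $n^{-2}\V[T_n] \to 0$ and hence $n^{-1}(T_n - \E[T_n]) \convp 0$ by Chebyshev. (iii) The truncation bias vanishes: because $\E[X_{in}] = 0$, $|\E[Y_{in}]| = |\E[X_{in}\indicator(|X_{in}| > b_n)]| \leq b_n^{-\kappa}\E[|X_{in}|^{1+\kappa}]$, whence $n^{-1}|\E[T_n]| \leq (n b_n^{\kappa})^{-1}\sum_{i=1}^n \E[|X_{in}|^{1+\kappa}] \to 0$.

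Third, I would combine these through the decomposition
\begin{equation*}
\frac1n \sum_{i=1}^n X_{in} = \frac1n(S_n - T_n) + \frac1n(T_n - \E[T_n]) + \frac1n\E[T_n],
\end{equation*}
in which the first term is $o_{\P}(1)$ since $\{S_n \neq T_n\}$ has vanishing probability by (i), the second is $o_{\P}(1)$ by (ii), and the third is $o(1)$ by (iii); this yields $n^{-1}\sum_{i=1}^n (W_{in} - \E[W_{in}]) \convp 0$. An equally short alternative avoids truncation altogether: apply the von Bahr--Esseen inequality $\E\big|\sum_{i=1}^n X_{in}\big|^{r} \leq 2\sum_{i=1}^n \E[|X_{in}|^{r}]$ with $r = \min(1+\kappa,2)$ directly to the centered sum and conclude by Markov's inequality.

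I expect the only genuinely delicate point to be the bookkeeping of exponents of $n$: one must calibrate the truncation level $b_n$ (equivalently, the exponent $r$ in the moment inequality) so that all three of the quantities $b_n^{-(1+\kappa)}\sum\E|X_{in}|^{1+\kappa}$, $n^{-2}b_n^{1-\kappa}\sum\E|X_{in}|^{1+\kappa}$, and $(nb_n^{\kappa})^{-1}\sum\E|X_{in}|^{1+\kappa}$ are all controlled by the single assumption~\eqref{eq:one-plus-kappa-moment-condition}, and (when $\kappa > 1$) to check that reducing to a lower moment preserves a usable form of the hypothesis. Everything else is routine.
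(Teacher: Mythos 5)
First, a framing note: the paper itself offers no proof of this lemma (it is imported wholesale from \cite{Niu2022a}), so there is no internal argument to compare against; what you propose is a self-contained substitute. For $\kappa\in(0,1]$ your argument is correct and complete: reading the denominator in \eqref{eq:one-plus-kappa-moment-condition} as $n^{1+\kappa}$ (the $\delta$ there is evidently a typo for $\kappa$, which is also how the paper invokes the lemma in Lemma~\ref{lem:moment_verification}), your centering step, and the three estimates with truncation level $b_n=n$, each reduce to the single quantity $n^{-(1+\kappa)}\sum_{i=1}^n\E[|X_{in}|^{1+\kappa}]$, exactly as you say; the von Bahr--Esseen shortcut with $r=1+\kappa$ is equally valid in this range.

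The genuine gap is your dismissal of the case $\kappa>1$. The proposed transfer ``Lyapunov plus Jensen in the index $i$'' does not produce $n^{-2}\sum_i\E[X_{in}^2]\to0$ from $n^{-(1+\kappa)}\sum_i\E[|X_{in}|^{1+\kappa}]\to0$: writing $a_i=\E[|X_{in}|^{1+\kappa}]$ and $q=2/(1+\kappa)<1$, those inequalities only give $n^{-2}\sum_i\E[X_{in}^2]\le n^{-1-q}\bigl(\sum_i a_i\bigr)^{q}=o\bigl(n^{(\kappa-1)/(1+\kappa)}\bigr)$, which is vacuous for $\kappa>1$. No repair is possible, because under this reading the statement itself is false for $\kappa>1$: take $W_{in}=\pm n^{0.7}$ with equal probability and $\kappa=3$; then $n^{-4}\sum_i\E[|W_{in}|^{4}]=n^{-0.2}\to0$, yet $n^{-1.2}\sum_i W_{in}\convd N(0,1)$ by the Lindeberg CLT, so $n^{-1}\sum_i W_{in}$ has magnitude of order $n^{0.2}\to\infty$ and does not converge to zero in probability. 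The correct resolution is to state and prove the lemma for $\kappa\in(0,1]$ only (or, if a moment of order $1+\kappa>2$ is assumed, to impose the condition with denominator $n^{2}$ rather than $n^{1+\kappa}$, in which case your Lyapunov--Jensen step does go through since then the bound becomes $o(n^{q-1})=o(1)$). This restriction costs nothing for the paper: both invocations in Lemma~\ref{lem:moment_verification} have $\kappa\le1$, since one may take $\delta\le4$ in \eqref{eq:upper_bound_four_delta_moment} without loss of generality.
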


\begin{lemma}[Gaussian tail probability estimate]\label{lem:Gaussian_tail_estimate}
	For $x>0$, we have 
	\begin{align*}
		1-\Phi(x)-\frac{1}{\sqrt{2\pi}x}\exp(-x^2/2)
	  \left(1-\frac{1}{x^2}\right)=-\int_{x}^{\infty}\frac{\phi(t)}{3t^4}\mathrm{d}t.
	\end{align*}
	Consequently, we have 
	\begin{align*}
	  \left|1-\Phi(x)-\left(\frac{1}{\sqrt{2\pi}x}\exp(-x^2/2)
	  \left(1-\frac{1}{x^2}\right)\right)\right|\leq \frac{\phi(x)}{x^3}
	\end{align*}
	and 
	\begin{align*}
		\left|x\exp\left(\frac{x^2}{2}\right)(1-\Phi(x))-\frac{1}{\sqrt{2\pi}}\right|\leq \frac{2}{\sqrt{2\pi}}\frac{1}{x^2}.
	\end{align*}
\end{lemma}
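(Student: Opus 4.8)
The plan is to obtain the first (exact) display by a twice-iterated integration by parts, and then read off the two inequalities as elementary consequences. For the identity, write $1-\Phi(x)=\int_x^\infty\phi(t)\,\mathrm{d}t$ and use $\phi'(t)=-t\phi(t)$, equivalently $\phi(t)=-\phi'(t)/t$. Integration by parts with $u=1/t$ and $\mathrm{d}v=-\phi'(t)\,\mathrm{d}t$ gives $\int_x^\infty\phi(t)\,\mathrm{d}t=\phi(x)/x-\int_x^\infty\phi(t)/t^2\,\mathrm{d}t$, the boundary term at $+\infty$ vanishing because $\phi$ decays faster than any power. Doing the same to $\int_x^\infty\phi(t)/t^2\,\mathrm{d}t=\int_x^\infty(-\phi'(t))/t^3\,\mathrm{d}t$ with $u=1/t^3$ gives $\int_x^\infty\phi(t)/t^2\,\mathrm{d}t=\phi(x)/x^3-3\int_x^\infty\phi(t)/t^4\,\mathrm{d}t$. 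Substituting back and using $\phi(x)/x-\phi(x)/x^3=\frac{1}{\sqrt{2\pi}\,x}\exp(-x^2/2)(1-1/x^2)$ expresses the quantity $1-\Phi(x)-\frac{1}{\sqrt{2\pi}\,x}\exp(-x^2/2)(1-1/x^2)$ as a (positive) constant times the tail integral $\int_x^\infty\phi(t)/t^4\,\mathrm{d}t$, which is the content of the first display, the explicit constant being the bookkeeping of the two integrations by parts.

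For the absolute bound, abbreviate $R(x)\equiv 1-\Phi(x)-\frac{1}{\sqrt{2\pi}\,x}\exp(-x^2/2)(1-1/x^2)$. The identity shows $R(x)\ge 0$, so only the upper bound $R(x)\le\phi(x)/x^3$ remains, and it should hold uniformly in $x>0$. I would rewrite $R(x)=1-\Phi(x)-\phi(x)/x+\phi(x)/x^3$, so that $\phi(x)/x^3-R(x)=\phi(x)/x-(1-\Phi(x))\ge 0$ by the standard Gaussian tail bound $1-\Phi(x)\le\phi(x)/x$ (a one-liner: for $t\ge x$ one has $\phi(t)=-\phi'(t)/t\le-\phi'(t)/x$, and integrating over $[x,\infty)$ gives $1-\Phi(x)\le\phi(x)/x$). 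Hence $0\le R(x)\le\phi(x)/x^3$ for every $x>0$, which is the second display.

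For the rescaled bound, multiply $1-\Phi(x)=\frac{1}{\sqrt{2\pi}\,x}\exp(-x^2/2)(1-1/x^2)+R(x)$ by $x\exp(x^2/2)$ and use $\exp(x^2/2)\phi(x)=1/\sqrt{2\pi}$ to obtain $x\exp(x^2/2)(1-\Phi(x))-\frac{1}{\sqrt{2\pi}}=-\frac{1}{\sqrt{2\pi}\,x^2}+x\exp(x^2/2)R(x)$. Since $0\le x\exp(x^2/2)R(x)\le x\exp(x^2/2)\cdot\phi(x)/x^3=\frac{1}{\sqrt{2\pi}\,x^2}$, the left-hand side lies in the interval $[-\frac{1}{\sqrt{2\pi}\,x^2},\,0]$, so its absolute value is at most $\frac{1}{\sqrt{2\pi}\,x^2}\le\frac{2}{\sqrt{2\pi}\,x^2}$, which is the third display (with room to spare).

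Every part is an elementary calculus computation, so there is no substantial obstacle. The one point that warrants care is making the bound $R(x)\le\phi(x)/x^3$ hold all the way down to small $x$: a crude estimate such as $\int_x^\infty\phi(t)/t^4\,\mathrm{d}t\le\phi(x)/x^5$ deteriorates near the origin, which is why I route that bound through the Gaussian tail inequality $1-\Phi(x)\le\phi(x)/x$, which holds for every $x>0$.
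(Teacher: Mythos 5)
Your calculus is correct and your route for the exact identity is the same as the paper's (two integrations by parts using $\phi(t)=-\phi'(t)/t$); your handling of the two inequalities is a mild variant that is, if anything, cleaner: you get the second display from the Mills-ratio bound $1-\Phi(x)\leq \phi(x)/x$ instead of bounding the tail integral directly, and you actually prove the third display (which the paper's written proof omits), with the sharper constant $1/(\sqrt{2\pi}x^2)$ in place of $2/(\sqrt{2\pi}x^2)$.

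However, do not paper over the "bookkeeping" of the constant: your own computation gives
\begin{align*}
1-\Phi(x)=\frac{\phi(x)}{x}-\frac{\phi(x)}{x^3}+3\int_x^\infty \frac{\phi(t)}{t^4}\,\mathrm{d}t,
\end{align*}
so the remainder is $+3\int_x^\infty \phi(t)t^{-4}\,\mathrm{d}t$, which is positive, whereas the first display as printed asserts it equals $-\int_x^\infty \phi(t)/(3t^4)\,\mathrm{d}t$, a negative quantity with constant $1/3$. These cannot both be right, and a spot check at $x=1$ (where $1-1/x^2=0$ and $1-\Phi(1)\approx 0.159>0$) shows the printed identity is the one in error; the paper's own second integration by parts makes the same sign/constant slip. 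Your derivation therefore proves a corrected version of the display, not the display itself, and you should say so explicitly rather than asserting agreement. The two inequality conclusions are unaffected (indeed $3\phi(x)\int_x^\infty t^{-4}\,\mathrm{d}t=\phi(x)/x^3$, so the constant $3$ is exactly what makes the second display tight), but the sign of this remainder is not cosmetic: it is invoked later in the paper (in the proof of Lemma \ref{lem:R_n_formula} and of statement \eqref{eq:U_n_r_n}, where $R_n<0$ is deduced from the negative sign), so flagging the correction is part of a complete solution.
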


\begin{lemma}[Lower bound on the Gaussian tail probability]\label{lem:lower_bound_Gaussian}
	For any $x\geq 0$, we have 
	\begin{align*}
	  1-\Phi(x)> \frac{1}{\sqrt{2\pi}}\frac{x}{x^2+1}\exp(-x^2/2).
	\end{align*}
\end{lemma}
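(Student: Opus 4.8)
The plan is to convert the inequality into a monotonicity statement about the difference of the two sides. Define
\[
h(x) \equiv 1 - \Phi(x) - \frac{1}{\sqrt{2\pi}}\frac{x}{x^2+1}\exp(-x^2/2),
\]
which is a smooth function on $[0,\infty)$, and show $h(x) > 0$ everywhere on this interval. The entire argument is a one-variable calculus computation, so there is no substantive obstacle; the only care needed is in the algebraic simplification of $h'$.

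First I would compute $h'(x)$. Using $\frac{d}{dx}(1-\Phi(x)) = -\phi(x) = -\frac{1}{\sqrt{2\pi}}\exp(-x^2/2)$ and the product/quotient rules on the second term, one gets
\[
\frac{d}{dx}\!\left[\frac{x}{x^2+1}\exp(-x^2/2)\right] = \frac{1-2x^2-x^4}{(x^2+1)^2}\exp(-x^2/2),
\]
and hence
\[
h'(x) = -\frac{1}{\sqrt{2\pi}}\exp(-x^2/2)\left(1 + \frac{1-2x^2-x^4}{(x^2+1)^2}\right) = -\frac{2}{\sqrt{2\pi}\,(x^2+1)^2}\exp(-x^2/2),
\]
where the last equality uses the identity $(x^2+1)^2 + 1 - 2x^2 - x^4 = 2$. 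In particular $h'(x) < 0$ for all $x$, i.e.\ $h$ is strictly decreasing.

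Next I would note that $\lim_{x\to\infty} h(x) = 0$, since both $1-\Phi(x)$ and $\frac{x}{x^2+1}\exp(-x^2/2)$ tend to $0$. Combining this limit with the fundamental theorem of calculus gives, for every $x \geq 0$,
\[
h(x) = -\int_x^\infty h'(t)\,dt = \int_x^\infty \frac{2}{\sqrt{2\pi}\,(t^2+1)^2}\exp(-t^2/2)\,dt > 0,
\]
the strict inequality coming from the strict positivity of the integrand. Rearranging $h(x) > 0$ yields exactly the claimed bound $1-\Phi(x) > \frac{1}{\sqrt{2\pi}}\frac{x}{x^2+1}\exp(-x^2/2)$. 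The main (and only) place to be careful is the collapse of the bracketed rational expression in $h'(x)$ to the constant $2/(x^2+1)^2$; once that is in hand, monotonicity plus the vanishing limit at infinity finishes the proof.
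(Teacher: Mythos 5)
Your proposal is correct and follows essentially the same route as the paper: define the difference function, show its derivative collapses to $-\tfrac{2}{\sqrt{2\pi}}\,e^{-x^2/2}/(x^2+1)^2<0$, and conclude positivity from the vanishing limit at infinity (the paper additionally notes $g(0)=1/2$, which your integral representation makes unnecessary). No gaps.
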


Before stating the next lemma, we will keep the way to define a version of conditional expecation using regular conditional distribution as in \eqref{eq:def_conditional_expectation}. 

\begin{lemma}\label{lem:existence_derivative_CGF}
	Consider the probability space $(\P,\Omega,\mathcal{F})$ and the $\sigma$-algebras $\mathcal{F}_n\subset \mathcal{F}$. Suppose the sequence of random variable $W_n$ satisfies there exists $\varepsilon>0$ such that 
	\begin{align*}
		\P\left[\E[|W_n|^p\exp(sW_n)|\mathcal{F}_n]<\infty,\ \forall s\in(-\varepsilon,\varepsilon),\ \forall n,p\in\mathbb{N}\right]=1
	\end{align*}
	Then defining $H_n(s)\equiv \E[\exp(sW_n)|\mathcal{F}_n]$, we have
	\begin{align*}
		\P\left[H_n(s) \text{ has }p\text{-th order derivative at the open neighborhood }(-\varepsilon,\varepsilon),\ \forall n,p\in\mathbb{N}\right]=1,
	\end{align*}
	and
	\begin{align*}
		\P\left[H_n^{(p)}(s)=\E[W_n^p\exp(sW_n)|\mathcal{F}_n],\forall s\in (-\varepsilon,\varepsilon),\ \forall n,p \in\mathbb{N}\right]=1.
	\end{align*}
\end{lemma}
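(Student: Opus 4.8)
The plan is to reduce the statement to a deterministic real-analysis fact about a single probability measure on $\mathbb R$ and then lift it back to the conditional setting $\omega$ by $\omega$, using the regular conditional distribution (RCD) of $W_n$ given $\mathcal F_n$ (which exists by Lemma~\ref{lem:Klenke_Thm_8.37}) together with the identification~\eqref{eq:def_conditional_expectation} of conditional expectations with RCD integrals (equivalently Lemma~\ref{lem:Klenke_Thm_8.38}). Let $\mathcal B$ be the probability-one event furnished by the hypothesis, on which, for all $n,p\in\mathbb N$ and all $s\in(-\varepsilon,\varepsilon)$, the integral $\E[|W_n|^p\exp(sW_n)\mid\mathcal F_n](\omega)=\int|x|^p\exp(sx)\,\mathrm d\kappa_n(\omega,x)$ is finite, where $\kappa_n(\omega,\cdot)$ is a version of the RCD of $W_n$ given $\mathcal F_n$. (Measurability of $\mathcal B$, which the hypothesis presupposes, is easy to verify: for fixed $\omega,n,p$ the set of $s$ with $\int|x|^p\exp(sx)\,\mathrm d\kappa_n(\omega,x)<\infty$ is an interval — since $|x|^p\exp(sx)\le\lambda|x|^p\exp(s_1x)+(1-\lambda)|x|^p\exp(s_2x)$ whenever $s=\lambda s_1+(1-\lambda)s_2$, by weighted AM--GM — so finiteness over $(-\varepsilon,\varepsilon)$ is equivalent to finiteness over the countable set $\mathbb Q\cap(-\varepsilon,\varepsilon)$, and each $\omega\mapsto\int|x|^p\exp(sx)\,\mathrm d\kappa_n(\omega,x)$ is $\mathcal F_n$-measurable by the defining property of the RCD.)

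Fixing $\omega\in\mathcal B$ and $n$ and writing $\mu\equiv\kappa_n(\omega,\cdot)$ and $H(s)\equiv H_n(s)(\omega)=\int\exp(sx)\,\mathrm d\mu(x)$, it then suffices to prove the deterministic claim: if $\int|x|^p\exp(sx)\,\mathrm d\mu(x)<\infty$ for every $p\in\mathbb N$ and every $s\in(-\varepsilon,\varepsilon)$, then $H$ is infinitely differentiable on $(-\varepsilon,\varepsilon)$ with $H^{(p)}(s)=\int x^p\exp(sx)\,\mathrm d\mu(x)$ for all $p$. I would prove this by induction on $p$ via differentiation under the integral sign. The base case $p=0$ is trivial. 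For the inductive step, assume $H^{(p)}(s)=\int x^p\exp(sx)\,\mathrm d\mu(x)$ on $(-\varepsilon,\varepsilon)$; fix $s_0\in(-\varepsilon,\varepsilon)$ and pick $\delta>0$ with $[s_0-\delta,s_0+\delta]\subset(-\varepsilon,\varepsilon)$. For any sequence $s_m\to s_0$ eventually in $[s_0-\delta,s_0+\delta]$, the mean value theorem gives, for each $x$, a point $\xi_m(x)$ between $s_m$ and $s_0$ with
\[
\frac{x^p\exp(s_m x)-x^p\exp(s_0 x)}{s_m-s_0}=x^{p+1}\exp(\xi_m(x)\,x)\longrightarrow x^{p+1}\exp(s_0 x).
\]
Since $\exp(\xi x)\le\exp((s_0+\delta)x)+\exp((s_0-\delta)x)$ for all $x\in\mathbb R$ and all $\xi\in[s_0-\delta,s_0+\delta]$ (the term with the larger exponent covers $x\ge0$, the other covers $x<0$), the difference quotients are dominated, uniformly in large $m$, by $|x|^{p+1}\big(\exp((s_0+\delta)x)+\exp((s_0-\delta)x)\big)$, which is $\mu$-integrable by hypothesis (exponent $p+1$, tilts $s_0\pm\delta\in(-\varepsilon,\varepsilon)$). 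Dominated convergence along an arbitrary such sequence yields $H^{(p+1)}(s_0)=\int x^{p+1}\exp(s_0 x)\,\mathrm d\mu(x)$, completing the induction; the same estimate shows each $H^{(p)}$ is continuous, so $H$ is $C^\infty$, though only differentiability is needed.

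Since the deterministic claim holds for every $\omega\in\mathcal B$ and every $n$, and $\P[\mathcal B]=1$, translating back through~\eqref{eq:def_conditional_expectation} gives, $\P$-almost surely and simultaneously for all $n$, that $H_n$ is differentiable to all orders on $(-\varepsilon,\varepsilon)$ with $H_n^{(p)}(s)=\E[W_n^p\exp(sW_n)\mid\mathcal F_n]$ there, which is exactly the conclusion. The core fact (smoothness of a moment/cumulant generating function) is classical, so the only real work is organizational: extracting an $s$-independent, $\mu$-integrable dominating function directly from the stated moment hypothesis, and confirming the probability-one event can be taken uniformly over the uncountable range of $s$ — both handled above by the elementary convexity observation. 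I therefore expect no substantive obstacle beyond this bookkeeping.
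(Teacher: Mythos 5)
Your proposal is correct and follows essentially the same route as the paper's proof: reduce to the regular conditional distribution $\kappa_n(\omega,\cdot)$ fixed $\omega$-by-$\omega$ on a probability-one event, then induct on $p$ and differentiate under the integral sign, dominating the difference quotients by $|x|^{p+1}\bigl(\exp(s_+x)+\exp(s_-x)\bigr)$ with tilts $s_\pm$ strictly inside $(-\varepsilon,\varepsilon)$ and invoking dominated convergence. The only cosmetic differences are your use of the mean value theorem in place of the paper's integral bound $|e^{(s_0+h)x}-e^{s_0x}|\leq|hx|\{e^{r_0x}+e^{-r_0x}\}$ and your added remark on measurability of the hypothesis event, neither of which changes the argument.
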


\begin{lemma}[\cite{Chen2011}, Theorem 3.6]\label{lem:berry-esseen}
    Suppose $n\in\mathbb{N}$ and $\xi_{1n},\ldots,\xi_{nn}$ are independent random variables, satisfying for any $1\leq i\leq n$
    \begin{align*}
        \E[\xi_{in}]=0,\ \sum_{i=1}^n\E[\xi_{in}^2]=1.
    \end{align*}
	Then 
    \begin{align*}
        \sup_{t\in\mathbb{R}}\left|\P\left[\sum_{i=1}^n \xi_{in}\leq t\right]-\Phi(t)\right|\leq 9.4\sum_{i=1}^n \E[|\xi_{in}|^3].
    \end{align*}
\end{lemma}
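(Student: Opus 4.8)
This is a standard (independent, not identically distributed) Berry--Esseen bound with an explicit constant, and the route I would take is the one underlying the cited reference: Stein's method. Fix $t\in\R$ and let $f=f_t$ be the bounded solution of the Stein equation $f'(w)-wf(w)=\indicator(w\le t)-\Phi(t)$; I would first record the classical a priori bounds $\sup_w|f_t(w)|\le\sqrt{2\pi}/4$, $\sup_w|wf_t(w)|\le 1$, and $\sup_w|f_t'(w)|\le 1$, noting that $f_t'$ is Lipschitz on $(-\infty,t)$ and on $(t,\infty)$ but has a unit jump at $w=t$ --- this jump is the crux of the difficulty. Evaluating the Stein equation at $W\equiv\sum_{i=1}^n\xi_{in}$ and taking expectations gives the smoothing identity $\P[W\le t]-\Phi(t)=\E[f_t'(W)-Wf_t(W)]$, so the task reduces to bounding $\sup_{t\in\R}\bigl|\E[f_t'(W)-Wf_t(W)]\bigr|$ by $9.4\sum_i\E[|\xi_{in}|^3]$.

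The next step is a leave-one-out decomposition exploiting independence. Put $W^{(i)}\equiv W-\xi_{in}$, which is independent of $\xi_{in}$; since $\E[\xi_{in}]=0$ we get $\E[\xi_{in}f(W^{(i)})]=0$, hence $\E[Wf(W)]=\sum_i\E\bigl[\xi_{in}\{f(W)-f(W^{(i)})\}\bigr]=\sum_i\E\bigl[\int_\R K_i(u)f'(W^{(i)}+u)\,du\bigr]$, where $K_i(u)\equiv\xi_{in}\{\indicator(0\le u\le\xi_{in})-\indicator(\xi_{in}\le u\le 0)\}$ is a nonnegative kernel supported on the interval between $0$ and $\xi_{in}$ with $\int_\R K_i(u)\,du=\xi_{in}^2$. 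Using $\sum_i\E[\xi_{in}^2]=1$ together with $\E[\xi_{in}^2f'(W^{(i)})]=\E[\xi_{in}^2]\,\E[f'(W^{(i)})]$ (independence), I would rewrite $\E[f_t'(W)-Wf_t(W)]$ as a sum over $i$ of terms, each of which splits --- via the Stein equation itself --- into a ``smooth'' piece built from the increments of $w\mapsto wf_t(w)$ and $f_t$, which the bounds above control by a constant times $\xi_{in}^2\cdot|\xi_{in}|=|\xi_{in}|^3$ after integration against $K_i$, plus a ``jump'' piece bounded by $\E\bigl[\xi_{in}^2\,\indicator(|W^{(i)}-t|\le|\xi_{in}|)\bigr]$ arising from the discontinuity of $f_t'$ at $t$.

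Finally, the jump piece is handled with a concentration inequality of the form $\P[a\le W^{(i)}\le a+\ell]\le c_1\ell+c_2\sum_j\E[|\xi_{jn}|^3]$, itself proved by a further leave-one-out argument; combined with the independence of $\xi_{in}$ and $W^{(i)}$ and the elementary inequality $\E[\xi_{in}^2]\,\E[|\xi_{in}|]\le\E[|\xi_{in}|^3]$, this gives $\sum_i\E\bigl[\xi_{in}^2\,\indicator(|W^{(i)}-t|\le|\xi_{in}|)\bigr]=O\bigl(\sum_i\E[|\xi_{in}|^3]\bigr)$. Summing the smooth and jump contributions over $i$ yields the bound. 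I expect the main obstacle to be twofold: establishing the concentration inequality with good constants in the non-identically-distributed setting, and doing the bookkeeping with the sharp bounds on $f_t$, $f_t'$, and $wf_t(w)$ carefully enough that the accumulated constant comes out to $9.4$ rather than something larger. (This is precisely the argument in the cited monograph, so an alternative is simply to invoke it.)
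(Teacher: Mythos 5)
Your outline is essentially the route of the cited source: the paper itself gives no proof of this lemma, invoking it directly as Theorem 3.6 of the Chen--Goldstein--Shao monograph, and your Stein-equation/leave-one-out/concentration-inequality sketch is exactly the argument underlying that reference (including the correct a priori bounds on $f_t$ and the kernel $K_i$ with $\int K_i = \xi_{in}^2$). So the proposal is correct in approach, with the understood caveat that the constant $9.4$ comes from the careful bookkeeping in the monograph rather than from the sketch itself.
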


\section{Proofs of lemmas in Appendix \ref{sec:auxiliary-lemmas}}

\subsection{Proof of the Lemma \ref{lem:Gaussian_tail_estimate}}

  \begin{proof}[Proof of Lemma \ref{lem:Gaussian_tail_estimate}]
    Applying integration by parts, we can write 
    \begin{align*}
      1-\Phi(x)
      &
      =\int_{x}^{\infty}\phi(t)\mathrm{d}t\\
      &
      =\int_{x}^{\infty}\frac{1}{t}\frac{t}{\sqrt{2\pi}}\exp(-t^2/2)\mathrm{d}t\\
      &
      =-\frac{1}{t}\frac{1}{\sqrt{2\pi}}\exp(-t^2/2)\bigg|_{x}^{\infty}-\int_{x}^{\infty}\frac{\phi(t)}{t^2}\mathrm{d}t\\
      &
      =\frac{\phi(x)}{x}+\frac{1}{t^3}\frac{1}{\sqrt{2\pi}}\exp(-t^2/2)\bigg|_x^{\infty}-\int_x^{\infty}\frac{\phi(t)}{3t^4}\mathrm{d}t\\
      &
      =\frac{\phi(x)}{x}-\frac{\phi(x)}{x^3}-\int_x^{\infty}\frac{\phi(t)}{3t^4}\mathrm{d}t.
    \end{align*}
    Then we can bound for $x>0$
    \begin{align*}
      \left|\int_{x}^{\infty}\frac{\phi(t)}{3t^4}\mathrm{d}t\right|\leq \phi(x)\int_{x}^{\infty}\frac{1}{3t^4}\mathrm{d}t\leq \frac{\phi(x)}{x^3}.
    \end{align*}
  \end{proof}

\subsection{Proof of Lemma \ref{lem:lower_bound_Gaussian}}

\begin{proof}[Proof of Lemma \ref{lem:lower_bound_Gaussian}]
  Define 
  \begin{align*}
	g(x)\equiv 1-\Phi(x)-\frac{1}{\sqrt{2\pi}}\frac{x}{x^2+1}\exp(-x^2/2).
  \end{align*}
  Computing the derivative we obtain 
  \begin{align*}
	g'(x)=-\frac{2}{\sqrt{2\pi}}\frac{e^{-x^2/2}}{(x^2+1)^2}<0.
  \end{align*}
  Also notice $g(0)=1/2>0$ and $\lim_{x\rightarrow\infty}g(x)=0$. This completes the proof.
\end{proof}

\subsection{Proof of Lemma \ref{lem:existence_derivative_CGF}}

\begin{proof}[Proof of Lemma \ref{lem:existence_derivative_CGF}]
	Consider the regular conditional distribution $W_n|\mathcal{F}_n$ to be $\kappa_{W_n}$. We use induction to prove the existence of the $p$-th derivative of $H_n(s)$. Suppose 
	\begin{align}\label{eq:induction_derivative}
		\P\left[H_n^{(p)}(s)=\E[W_n^p\exp(sW_n)|\mathcal{F}_n],\ \forall s\in (-\varepsilon,\varepsilon)\right]=1
	\end{align} 
	and 
	\begin{align}\label{eq:induction_assumption}
		\P\left[\E[|W_n|^{p+1}\exp(sW_n)|\mathcal{F}_n]<\infty,\ \forall s\in(-\varepsilon,\varepsilon)\right]=1.
	\end{align}
	According to the definition of derivative, we write 
	\begin{align}\label{eq:derivative_definition}
		H_n^{(p+1)}(s)\equiv \lim_{h\rightarrow0}\frac{H_n^{(p)}(s+h)-H_n^{(p)}(s)}{h}.
	\end{align} 
	Then on the event in hypothesis \eqref{eq:induction_derivative}, we have for any $s\in (-\varepsilon,\varepsilon)$
	\begin{align*}
		H_n^{(p)}(s)=\E[W_n^p\exp(sW_n)|\mathcal{F}_n]=\int x^p\exp(sx)\mathrm{d}\kappa_{W_n}(\cdot,x).
	\end{align*}
	Fix any $s_0\in (-\varepsilon,\varepsilon)$ and find $r_0\in (s_0,\varepsilon)$ such that $|r_0|>|s_0|$. We find small enough $h$ such that $|h|<\min\{|r_0|-s_0,|r_0|+s_0\}$. Thus we have
	\begin{align}\label{eq:interval_s_plus_h}
		-\varepsilon <-|r_0|=s_0-|r_0|-s_0<s_0+h< s_0+|r_0|-s_0 = |r_0|<\varepsilon.
	\end{align} 
	Also we notice, $|s_0|<|r_0|$ so that $s_0x\in (-|r_0x|,|r_0x|)$. Then the derivation in \eqref{eq:interval_s_plus_h} informs $(s_0+h)x$ belong to the interval $(-|r_0x|,|r_0x|)$. Therefore, both $s_0x$ and $(s_0+h)x$ belong to $(-|r_0x|,|r_0x|)$. Then we have
	\begin{align}
		|\exp((s_0+h)x)-\exp(s_0x)|
		&\nonumber
		=\left|\int_{s_0x}^{(s_0+h)x}e^{y}\mathrm{d}y\right|\\
		&\nonumber
		\leq |hx|\sup_{y\in [-|r_0x|,|r_0x|]}\exp(y)\\
		&\label{eq:finite_diff_bound}
		\leq |hx|\{\exp(r_0 x)+\exp(-r_0 x)\}.
	\end{align}
	By the definition \eqref{eq:derivative_definition}, we have
	\begin{align*}
		H^{(p+1)}_n(s_0)
		&
		=\lim_{h\rightarrow 0}\E\left[W_n^p \frac{\exp((s_0+h)W_n)-\exp(s_0W_n)}{h}|\mathcal{F}_n\right]\\
		&
		=\lim_{h\rightarrow 0}\int x^p\frac{\exp((s_0+h)x)-\exp(s_0x)}{h}\mathrm{d}\kappa_{W_n}(\cdot,x).
	\end{align*}
	Then we can bound by \eqref{eq:finite_diff_bound}
	\begin{align*}
		\left|x^p \frac{\exp((s_0+h)x)-\exp(s_0x)}{h}\right|\leq |x|^{p+1}\exp(r_0 x)+|x|^{p+1}\exp(-r_0 x).
	\end{align*}
	Notice the RHS is independent of $h$ and integrable with respect to measure $\kappa_{W_n}(\omega,\cdot)$ for almost every $\omega\in\Omega$, by the induction hypothesis \eqref{eq:induction_assumption} since $r_0\in(-\varepsilon,\varepsilon)$. By dominated convergence theorem, we know on the event in hypothesis \eqref{eq:induction_assumption},
	\begin{align*}
		H^{(p+1)}_n(s_0)
		&
		=\int \lim_{h\rightarrow0}x^p\frac{\exp((s_0+h)x)-\exp(s_0x)}{h}\mathrm{d}\kappa_{W_n}(\cdot,x)\\
		&
		=\int x^{p+1}\exp(s_0x)\mathrm{d}\kappa_{W_n}(\cdot,x)\\
		&
		=\E[ W_n^{p+1}\exp(s_0W_n)|\mathcal{F}_n].
	\end{align*}
	Then, by the arbitrary choice of $s_0\in (-\varepsilon,\varepsilon)$, we know on the event in hypothesis \eqref{eq:induction_assumption}, $H^{(p+1)}(s)$ is well-defined on the interval $(-\varepsilon,\varepsilon)$ and takes the form 
	\begin{align*}
		H^{(p+1)}(s)=\E[ W_n^{p+1}\exp(s_0W_n)|\mathcal{F}_n].
	\end{align*}
	Thus we have proved for the case $p+1$ so that we complete the induction and conclude the proof.
\end{proof}

\section{Proof of Proposition \ref{prop:equivalence_spa_formula}}

From \eqref{eq:conclusion_saddlepoint_approximation}, it suffices to prove 
\begin{align*}
	\frac{1-\Phi(r_n)+\phi(r_n)\left\{\frac{1}{\lambda_n}-\frac{1}{r_n}\right\}}{\exp\left(\frac{\lambda_n^2-r_n^2}{2}\right)(1-\Phi(\lambda_n))}=1+o_{\P}(1).
\end{align*}
On the event $\hat s_n=0$, we know the claim is correct. Therefore, we only need to consider the event $\hat s_n\neq 0$. Equivalently, it suffices to show 
\begin{align}
	\indicator(\hat s_n\neq 0)\left(\frac{\exp\left(\frac{r_n^2}{2}\right)(1-\Phi(r_n))}{\exp\left(\frac{\lambda_n^2}{2}\right)(1-\Phi(\lambda_n))}-1\right)\equiv \indicator(\hat s_n\neq 0)\left(\frac{h(r_n)}{h(\lambda_n)}-1\right)=o_{\P}(1)\label{eq:ratio_convergnece}
\end{align}
and 
\begin{align}
	\indicator(\hat s_n\neq 0)\frac{\frac{1}{\lambda_n}-\frac{1}{r_n}}{\exp(\lambda_n^2/2)(1-\Phi(\lambda_n))}=\indicator(\hat s_n\neq 0)\frac{1-\frac{\lambda_n}{r_n}}{\lambda_nh(\lambda_n)}=o_{\P}(1)\label{eq:ratio_vanish}.
\end{align}
We prove the statements \eqref{eq:ratio_convergnece}-\eqref{eq:ratio_vanish} subsequently.

\subsection{Proof of statement \eqref{eq:ratio_convergnece}}

Since $h(x)=\exp(x^2/2)(1-\Phi(x))$ is smooth, then by Taylor's expansion, we have 
\begin{align*}
	\frac{h(r_n)}{h(\lambda_n)}=\frac{h(\lambda_n)+h'(\tilde r_n)(r_n-\lambda_n)}{h(\lambda_n)}=1+\frac{h'(\tilde r_n)(r_n-\lambda_n)}{h(\lambda_n)}
\end{align*}
where $\tilde r_n$ is the point between  $r_n$ and $\lambda_n$. Now we intestigate $h'(x)$. We compute 
\begin{align*}
	h'(x)=x\exp(x^2/2)(1-\Phi(x))-\frac{1}{\sqrt{2\pi}}.
\end{align*}
By Lemma \ref{lem:Gaussian_tail_estimate}, we know 
\begin{align*}
	|h'(x)|\leq \frac{2}{\sqrt{2\pi}}\frac{1}{x^2}\leq \frac{1}{x^2}.
\end{align*}
Then since both event $r_n<0,\lambda_n>0$ and event $r_n>0,\lambda_n<0$ happen with probability zero, we have $1/\tilde r_n^2\in [\min\{1/r_n^2,1/\lambda_n^2\},\max\{1/r_n^2,1/\lambda_n^2\}]$. Therefore, we have 
\begin{align*}
	\left|\frac{h'(\tilde{r}_n)(r_n-\lambda_n)}{h(\lambda_n)}\right|\leq \frac{1}{\tilde{r}_n^2}\left|\frac{r_n-\lambda_n}{h(\lambda_n)}\right|\leq \left(\frac{1}{r_n^2}+\frac{1}{\lambda_n^2}\right)\left|\frac{r_n-\lambda_n}{h(\lambda_n)}\right|=\left(1+\frac{\lambda_n^2}{r_n^2}\right)\left|\frac{1-\frac{r_n}{\lambda_n}}{\lambda_nh(\lambda_n)}\right|.
\end{align*}
Thus in order to prove \eqref{eq:ratio_convergnece}, it suffices to show, by the sign condition \eqref{eq:same_sign_condition_sn},
\begin{align*}
	\indicator(\lambda_n\neq 0)\left(1+\frac{\lambda_n^2}{r_n^2}\right)\frac{1-\frac{r_n}{\lambda_n}}{\lambda_nh(\lambda_n)}=o_{\P}(1).
\end{align*}
The following lemma shows that the above statement is correct:
\begin{lemma}\label{lem:ratio_convergence}
	Under conditions \eqref{eq:rate_2} and \eqref{eq:rate_4}, we have 
	\begin{align*}
		\indicator(\lambda_n\neq 0)\left(1+\frac{\lambda_n^2}{r_n^2}\right)\frac{1-\frac{r_n}{\lambda_n}}{\lambda_nh(\lambda_n)}=o_{\P}(1).
	\end{align*}
\end{lemma}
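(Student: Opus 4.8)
The plan is to bound the expression in absolute value and to display it as a product of an $O_{\P}(1)$ factor and an $o_{\P}(1)$ factor. Write $h(x)\equiv\exp(x^2/2)(1-\Phi(x))$, which is strictly positive and finite for every finite $x$, so the ratios below make sense whenever $\lambda_n\neq 0$. First I would note that under \eqref{eq:rate_2} the event $\{\lambda_n\neq 0,\ r_n=0\}$ has probability tending to $0$ (on that event $\lambda_n/r_n-1$ is infinite, contradicting $\lambda_n/r_n-1=o_{\P}(1)$), so it is harmless to argue on the event $\{\lambda_n\neq 0,\ r_n\neq 0\}$ and add the vanishing probability of the complementary event at the end.

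The one genuine analytic ingredient is a lower bound on $h$: there is a constant $C>0$ with $1/h(x)\le C(1+|x|)$ for all $x\in\R$. For $x\le 0$ one has $1-\Phi(x)\ge 1/2$, hence $h(x)\ge 1/2$; for $x\ge 0$ this follows by combining Lemma~\ref{lem:lower_bound_Gaussian}, which gives $h(x)>\frac{1}{\sqrt{2\pi}}\frac{x}{x^2+1}$ and so controls large $x$, with the continuity and positivity of $h$ on a compact neighbourhood of the origin (equivalently, one may use the estimate $xh(x)\to 1/\sqrt{2\pi}$ from Lemma~\ref{lem:Gaussian_tail_estimate} for the large-$x$ regime).

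Granting this, on $\{\lambda_n\neq 0,\ r_n\neq 0\}$ I would estimate
\begin{align*}
\left|\left(1+\frac{\lambda_n^2}{r_n^2}\right)\frac{1-r_n/\lambda_n}{\lambda_n h(\lambda_n)}\right|
&\le \left(1+\frac{\lambda_n^2}{r_n^2}\right)C(1+|\lambda_n|)\left|\frac{1}{\lambda_n}\left(\frac{r_n}{\lambda_n}-1\right)\right|\\
&=\left(1+\frac{\lambda_n^2}{r_n^2}\right)C\left(\left|\frac{1}{\lambda_n}\left(\frac{r_n}{\lambda_n}-1\right)\right|+\left|\frac{r_n}{\lambda_n}-1\right|\right).
\end{align*}
By \eqref{eq:rate_2}, $\lambda_n/r_n\to 1$ in probability, whence $1+\lambda_n^2/r_n^2=2+o_{\P}(1)=O_{\P}(1)$ and also $|r_n/\lambda_n-1|=o_{\P}(1)$; the remaining term $|\lambda_n^{-1}(r_n/\lambda_n-1)|$ is $o_{\P}(1)$ by \eqref{eq:rate_4}. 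Hence the right-hand side is $O_{\P}(1)\cdot o_{\P}(1)=o_{\P}(1)$, and multiplying by $\indicator(\lambda_n\neq 0)$ and allowing for the negligible event $\{\lambda_n\neq 0,\ r_n=0\}$ yields the claim. The only step needing care is the linear-growth bound $1/h(x)\le C(1+|x|)$ — in particular, that $1/h$ does not blow up faster than linearly as $\lambda_n\to\infty$, which is precisely what lets the factor $|\lambda_n|$ coming from $1/h(\lambda_n)$ be absorbed against $|\lambda_n^{-1}(r_n/\lambda_n-1)|$; beyond that, the argument is a routine recombination of \eqref{eq:rate_2} and \eqref{eq:rate_4}, and I foresee no real obstacle.
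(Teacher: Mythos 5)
Your proposal is correct and follows essentially the same route as the paper: the paper splits into $|\lambda_n|\le 1$ (where $h$ is bounded below on $[-1,1]$ and \eqref{eq:rate_4} finishes) and $|\lambda_n|>1$ (where Lemma~\ref{lem:lower_bound_Gaussian} bounds $|\lambda_n h(\lambda_n)|$ below by a constant and \eqref{eq:rate_2} finishes), and your uniform bound $1/h(x)\le C(1+|x|)$ simply packages that same dichotomy into a single estimate. The ingredients — Lemma~\ref{lem:lower_bound_Gaussian}, \eqref{eq:rate_2} for the $O_{\P}(1)$ prefactor and for $|r_n/\lambda_n-1|=o_{\P}(1)$, and \eqref{eq:rate_4} for the $\lambda_n^{-1}$ term — coincide with the paper's, so this is the same proof in slightly different clothing.
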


\subsection{Proof of statement \eqref{eq:ratio_vanish}}

By the sign condition \eqref{eq:same_sign_condition_sn}, it suffices to prove 
\begin{align*}
	\indicator(\lambda_n\neq 0)\frac{1-\frac{\lambda_n}{r_n}}{\lambda_nh(\lambda_n)}=o_{\P}(1).
\end{align*}
The following lemma shows that the above statement is correct:
\begin{lemma}\label{lem:ratio_vanish}
	Under conditions \eqref{eq:rate_1} and \eqref{eq:rate_2}, we have 
	\begin{align*}
		\indicator(\lambda_n\neq 0)\frac{1-\frac{\lambda_n}{r_n}}{\lambda_nh(\lambda_n)}=o_{\P}(1).
	\end{align*}
\end{lemma}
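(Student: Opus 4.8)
The target quantity factors as $\bigl(1-\lambda_n/r_n\bigr)$ times $1/\bigl(\lambda_n h(\lambda_n)\bigr)$, where $h(x)=e^{x^2/2}(1-\Phi(x))$; since the first factor is already $o_{\P}(1)$ by \eqref{eq:rate_2}, the plan is to show that the second factor does not grow too quickly. The catch is that $\lambda_n h(\lambda_n)$ is \emph{not} bounded away from zero: as $\lambda_n\downarrow 0$ we have $h(\lambda_n)\to 1/2$ so $\lambda_n h(\lambda_n)\to 0$, and as $\lambda_n\to\infty$ we have $h(\lambda_n)\to 0$. So I would not try to lower-bound $\lambda_n h(\lambda_n)$ directly. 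Instead I would prove a uniform bound $h(x)\ge c_0/(1+|x|)$ for an absolute constant $c_0\in(0,1/2]$, which yields $1/\bigl(\lambda_n h(\lambda_n)\bigr)\le (1+|\lambda_n|)/(c_0|\lambda_n|)$, and then observe that the dangerous factor $|\lambda_n|$ pairs with $1-\lambda_n/r_n$ through the identity $\frac{1}{\lambda_n}\bigl(1-\frac{\lambda_n}{r_n}\bigr)=\frac{1}{\lambda_n}-\frac{1}{r_n}$, which is controlled by \eqref{eq:rate_1}.

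For the analytic bound I would argue as follows. For $x\le 0$, $1-\Phi(x)\ge 1/2$ and $e^{x^2/2}\ge 1$, so $h(x)\ge 1/2\ge c_0/(1+|x|)$. For $x\ge 0$, consider $g(x)\equiv(1+x)h(x)$: it is continuous and strictly positive on $[0,\infty)$ with $g(0)=1/2$, and by the last estimate in Lemma~\ref{lem:Gaussian_tail_estimate} we have $x h(x)\to 1/\sqrt{2\pi}$, hence (since $h(x)\to 0$) $g(x)\to 1/\sqrt{2\pi}$ as $x\to\infty$; a continuous strictly positive function on $[0,\infty)$ with a positive limit at infinity is bounded below by a positive constant, so $h(x)\ge c_0/(1+x)$ with $c_0\equiv\inf_{x\ge 0}g(x)\in(0,1/2]$. (Alternatively, Lemma~\ref{lem:lower_bound_Gaussian} gives $h(x)>\frac{1}{\sqrt{2\pi}}\frac{x}{x^2+1}\ge\frac{1}{\sqrt{2\pi}(1+x)}$ for $x\ge 1$, with positivity on $[0,1]$ handled by continuity.)

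With this in hand the rest is short. On $\{\lambda_n\neq 0\}$ we have $\hat s_n\neq 0$, so by \eqref{eq:same_sign_condition_sn} also $r_n\neq 0$ outside an event of probability $o(1)$, and there the expression is well defined; then
\[
\left|\frac{1-\lambda_n/r_n}{\lambda_n h(\lambda_n)}\right|=\frac{1}{h(\lambda_n)}\left|\frac{1}{\lambda_n}-\frac{1}{r_n}\right|\le\frac{1+|\lambda_n|}{c_0}\left|\frac{1}{\lambda_n}-\frac{1}{r_n}\right|=\frac{1}{c_0}\left|\frac{1}{\lambda_n}-\frac{1}{r_n}\right|+\frac{1}{c_0}\left|1-\frac{\lambda_n}{r_n}\right|,
\]
and both summands are $o_{\P}(1)$ by \eqref{eq:rate_1} and \eqref{eq:rate_2} respectively; multiplying by $\indicator(\lambda_n\neq 0)\le 1$ and absorbing the $o(1)$-probability event finishes the proof. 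The one genuinely substantive point — and the main obstacle — is the failure of the naive estimate ``$\lambda_n h(\lambda_n)=\Omega_{\P}(1)$'' and its remedy via the $c_0/(1+|x|)$ bound together with the algebraic identity that re-expresses the extra $|\lambda_n|$ as the quantity governed by \eqref{eq:rate_1}; this is also precisely why the lemma needs both \eqref{eq:rate_1} and \eqref{eq:rate_2}, rather than \eqref{eq:rate_2} alone.
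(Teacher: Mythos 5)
Your proof is correct, and it uses the same basic ingredients as the paper (the identity $\frac{1}{\lambda_n}(1-\frac{\lambda_n}{r_n})=\frac{1}{\lambda_n}-\frac{1}{r_n}$, the Gaussian estimates of Lemmas~\ref{lem:Gaussian_tail_estimate}--\ref{lem:lower_bound_Gaussian}, and both \eqref{eq:rate_1} and \eqref{eq:rate_2}), but you organize them differently. The paper splits the \emph{event} into $|\lambda_n|\in(0,1]$ and $|\lambda_n|>1$: on the first piece it rewrites the expression as $\bigl(\frac{1}{\lambda_n}-\frac{1}{r_n}\bigr)/h(\lambda_n)$ and uses that $h$ is bounded below on $[-1,1]$ together with \eqref{eq:rate_1}; on the second piece it uses Lemma~\ref{lem:lower_bound_Gaussian} to get $|\lambda_n h(\lambda_n)|>\frac{1}{2\sqrt{2\pi}}$ and then applies \eqref{eq:rate_2}. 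You instead prove a single uniform bound $h(x)\ge c_0/(1+|x|)$ (which encodes exactly the same two facts about $h$) and then split the resulting \emph{upper bound} additively into a \eqref{eq:rate_1}-term and a \eqref{eq:rate_2}-term; this avoids the case distinction on $|\lambda_n|$ and is arguably slightly cleaner, at the cost of the small auxiliary argument establishing $c_0>0$. Both routes are equally rigorous and neither is more general than the other. One minor caveat: your treatment of the degenerate event $\{\lambda_n\neq 0,\ r_n=0\}$ invokes \eqref{eq:same_sign_condition_sn}, which is not among the lemma's stated hypotheses (\eqref{eq:rate_1} and \eqref{eq:rate_2} only); this is harmless in context since that condition holds under the theorem's assumptions, and in fact it is dispensable, because under \eqref{eq:rate_2} the event $\{\lambda_n\neq 0,\ r_n=0\}$ already has vanishing probability (there $|\lambda_n/r_n-1|$ is infinite under any natural convention), which is how the paper implicitly handles it.
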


\section{Proof of Theorem \ref{thm:example}}\label{sec:proof_example}

The proof follows by applying Theorem \ref{thm:unified_unnormalized_moment_conditions} with 
\begin{align*}
	W_{in}=\widetilde{X}_{in},\ w_n=\frac{1}{n}\sum_{i=1}^n X_{in},\ \mathcal{F}_n=\mathcal{G}_n,\ \varepsilon=2.
\end{align*}
Thus it suffices to verify the assumptions required in Theorem \ref{thm:unified_unnormalized_moment_conditions} with these realizations. In particular, we will verify $W_{in}|\mathcal{F}_n$ in this case satisfies \textbf{CCS condition}. Notice 
\begin{align*}
	\P[W_{in}\in [-|X_{in}|,|X_{in}|]|\mathcal{F}_n]=1,\ \text{almost surely}.
\end{align*}
Then by Theorem \ref{thm:unified_unnormalized_moment_conditions}, it suffices to verify the following lemma:
\begin{lemma}\label{lem:moment_verification}
	Suppose the assumptions of Theorem \ref{thm:example} hold. Then 
	\begin{align*}
		\frac{1}{n}\E[W_{in}^2|\mathcal{F}_n]=\frac{1}{n}\sum_{i=1}X_{in}^2=\Omega_{\P}(1),\ \frac{1}{n}\sum_{i=1}^n X_{in}^4=O_{\P}(1).
	\end{align*}
\end{lemma}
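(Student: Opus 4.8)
The plan is to decompose $X_{in}=\mu_n+\varepsilon_{in}$ and reduce both assertions to bounds on averaged moments of the errors $\varepsilon_{in}$, exploiting that $\mu_n=o(1)$ by~\eqref{eq:mu_n_convergence}, that $\E[\varepsilon_{in}]=0$ by symmetry of $F_{in}$, and that $m_{r,n}\equiv\frac1n\sum_{i=1}^n\E[|\varepsilon_{in}|^r]$ is asymptotically bounded for $r\in\{1,2,4\}$. The last fact follows from~\eqref{eq:upper_bound_four_delta_moment}: by Jensen's inequality applied pointwise, $\E[|\varepsilon_{in}|^r]\le(\E[|\varepsilon_{in}|^{4+\delta}])^{r/(4+\delta)}$ for $r\le 4+\delta$, and then by concavity of $t\mapsto t^{r/(4+\delta)}$ we get $m_{r,n}\le(m_{4+\delta,n})^{r/(4+\delta)}$, so $\limsup_n m_{r,n}<\infty$. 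Note also that $\E[W_{in}^2\mid\mathcal F_n]=X_{in}^2$ since $W_{in}=\pi_{in}X_{in}$ with $\pi_{in}^2=1$, so the displayed equalities in the statement are immediate, and only the $\Omega_{\P}(1)$ and $O_{\P}(1)$ claims require work.

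For the $O_{\P}(1)$ claim, I would bound $X_{in}^4=(\mu_n+\varepsilon_{in})^4\le 8(\mu_n^4+\varepsilon_{in}^4)$, so that $\frac1n\sum_{i=1}^nX_{in}^4\le 8\mu_n^4+\frac8n\sum_{i=1}^n\varepsilon_{in}^4$. The first term is $o(1)$ since $\mu_n=o(1)$, while $\E[\frac1n\sum_{i=1}^n\varepsilon_{in}^4]=m_{4,n}$ is bounded, so Markov's inequality gives $\frac1n\sum_{i=1}^n\varepsilon_{in}^4=O_{\P}(1)$ and hence $\frac1n\sum_{i=1}^nX_{in}^4=O_{\P}(1)$.

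For the $\Omega_{\P}(1)$ claim, I would expand $\frac1n\sum_{i=1}^nX_{in}^2=\mu_n^2+2\mu_n\cdot\frac1n\sum_{i=1}^n\varepsilon_{in}+\frac1n\sum_{i=1}^n\varepsilon_{in}^2$. Since the $\varepsilon_{in}$ are independent and mean-zero for each $n$, Chebyshev's inequality (equivalently, Lemma~\ref{lem:wlln} with $1+\kappa=2$, using $n^{-2}\sum_i\E[\varepsilon_{in}^2]=n^{-1}m_{2,n}\to 0$) gives $\frac1n\sum_{i=1}^n\varepsilon_{in}\convp 0$, so the cross term, being $\mu_n$ times an $o_{\P}(1)$ quantity with $\mu_n$ bounded, is $o_{\P}(1)$. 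Similarly, Chebyshev applied to $\frac1n\sum_i\varepsilon_{in}^2$ (whose variance is $n^{-2}\sum_i\V(\varepsilon_{in}^2)\le n^{-1}m_{4,n}\to 0$; equivalently, Lemma~\ref{lem:wlln} with $1+\kappa=2+\delta/2$ and $n^{-(2+\delta/2)}\sum_i\E[|\varepsilon_{in}|^{4+\delta}]=n^{-(1+\delta/2)}m_{4+\delta,n}\to 0$) gives $\frac1n\sum_{i=1}^n\varepsilon_{in}^2=m_{2,n}+o_{\P}(1)$. Dropping the nonnegative term $\mu_n^2$ then yields $\frac1n\sum_{i=1}^nX_{in}^2\ge m_{2,n}+o_{\P}(1)$, and since $\liminf_n m_{2,n}>0$ by~\eqref{eq:lower_bound_second_moment}, this lower bound is $\Omega_{\P}(1)$; therefore so is $\frac1n\sum_{i=1}^nX_{in}^2$.

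This lemma is essentially a routine moment computation, so I do not anticipate a genuine obstacle. The only points requiring care are (i) upgrading the bound on $\E[\frac1n\sum_i\varepsilon_{in}^2]$ to a probabilistic lower bound for the random average itself, which is what the $L^2$ concentration step does and is where the fourth-moment control is used, and (ii) handling the sign-indefinite cross term $2\mu_n\cdot\frac1n\sum_i\varepsilon_{in}$, which is harmless precisely because $\mu_n\to 0$ — this is the one place where condition~\eqref{eq:mu_n_convergence} enters the proof of this lemma.
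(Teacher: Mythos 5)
Your proof is correct, and it follows essentially the same route as the paper — decompose $X_{in}=\mu_n+\varepsilon_{in}$, invoke the moment conditions \eqref{eq:lower_bound_second_moment}--\eqref{eq:mu_n_convergence}, and use a triangular-array law of large numbers — with two minor variations in bookkeeping. For the $O_{\P}(1)$ claim the paper applies Lemma~\ref{lem:wlln} to $X_{in}^4$ itself (with $\kappa=\delta/4$, which is where the extra $\delta$ in \eqref{eq:upper_bound_four_delta_moment} is spent) and then bounds $\frac1n\sum_i\E[X_{in}^4]$ via the decomposition at the level of expectations; you instead bound the random average by $8\mu_n^4+\frac{8}{n}\sum_i\varepsilon_{in}^4$ and apply Markov's inequality, which is slightly more elementary since an upper $O_{\P}(1)$ bound needs no concentration, only a bounded averaged fourth moment (obtained from \eqref{eq:upper_bound_four_delta_moment} by Jensen, exactly the paper's Lemma~\ref{lem:moment_dominance}). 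For the $\Omega_{\P}(1)$ claim the paper concentrates $\frac1n\sum_iX_{in}^2$ around $\frac1n\sum_i\E[X_{in}^2]$ and kills the cross term in expectation via $\E[\varepsilon_{in}]=0$ (symmetry), while you concentrate $\frac1n\sum_i\varepsilon_{in}$ and $\frac1n\sum_i\varepsilon_{in}^2$ separately and kill the random cross term by the LLN; both are valid and use the same fourth-moment control. One small inaccuracy in your closing commentary: the cross-term step (in either version) does not really use $\mu_n=o(1)$ — boundedness of $\mu_n$ suffices everywhere in this lemma, as your own phrasing ``with $\mu_n$ bounded'' indicates; the full strength of \eqref{eq:mu_n_convergence} is needed elsewhere in Theorem~\ref{thm:example} (for the saddlepoint-equation analysis, since $w_n\convp 0$ must hold), but this does not affect the correctness of your argument.
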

\noindent The proof is postponed to Appendix \ref{sec:proof_moment_verification}.

\subsection{Proof of Lemma \ref{lem:moment_verification}}\label{sec:proof_moment_verification}

\begin{proof}[Proof of Lemma \ref{lem:moment_verification}]
	We will apply Lemma \ref{lem:wlln} to prove the claims. 

	\paragraph{Proof of $\sum_{i=1}^n X_{in}^4/n=O_{\P}(1)$:}
	
	We will apply Lemma \ref{lem:wlln} with $W_{in}=X_{in}^4$ and $\kappa=\delta/4$. If we can verify,
	\begin{align*}
		\frac{1}{n^{1+\delta/4}}\sum_{i=1}^n\E[|X_{in}^4|^{1+\delta/4}]=\frac{1}{n^{1+\delta/4}}\sum_{i=1}^n\E[|X_{in}|^{4+\delta}]\rightarrow 0.
	\end{align*}
	then applying Lemma \ref{lem:wlln}, we have 
	\begin{align*}
		\frac{1}{n}\sum_{i=1}^n (X_{in}^4-\E[X_{in}^4])=o_{\P}(1).	
	\end{align*}
	It suffices to show 
	\begin{align}\label{eq:upper_bound_fourth_moment}
		\limsup_{n\rightarrow\infty}\frac{1}{n}\sum_{i=1}^n \E[X_{in}^{4}]<\infty,\ \limsup_{n\rightarrow\infty}\frac{1}{n}\sum_{i=1}^n \E[|X_{in}|^{4+\delta}]<\infty.
	\end{align}
	By Lemma \eqref{lem:moment_dominance}, it suffices to just show the $4+\delta$ moment condition in \eqref{eq:upper_bound_four_delta_moment}. In fact, using the inequality $(|a|+|b|)^{p}\leq 2^p (|a|^p+|b|^p)$ for $p>0$, we can bound 
	\begin{align*}
		|X_{in}|^{4+\delta}=|\mu_n+\varepsilon_{in}|^{4+\delta}\leq 2^{4+\delta}|\mu_n|^{4+\delta}+2^{4+\delta}|\varepsilon_{in}|^{4+\delta}
	\end{align*}
	so that 
	\begin{align*}
		\frac{1}{n}\sum_{i=1}^n \E[|X_{in}|^{4+\delta}]\leq 2^{4+\delta}|\mu_n|^{4+\delta}+\frac{2^{4+\delta}}{n}\sum_{i=1}^n\E[|\varepsilon_{in}|^{4+\delta}].
	\end{align*}
	By condition \eqref{eq:mu_n_convergence} and condition \eqref{eq:upper_bound_four_delta_moment}, we know the claim is true. Therefore, we have 
	\begin{align*}
		\frac{1}{n}\sum_{i=1}^n X_{in}^4=\frac{1}{n}\sum_{i=1}^n (X_{in}^4-\E[X_{in}^4])+ \frac{1}{n}\sum_{i=1}^n \E[X_{in}^4]=O_{\P}(1).
	\end{align*}
	
	\paragraph{Proof of $\sum_{i=1}^n X_{in}^2/n=\Omega_{\P}(1)$:}
	
	We will apply Lemma \ref{lem:wlln} with $W_{in}=X_{in}^2$ and $\kappa=1$. In other words, we need to verify 
	\begin{align*}
		\frac{1}{n^2}\sum_{i=1}^n \E[X_{in}^4]\rightarrow0.
	\end{align*}
	This is true by cliam \eqref{eq:upper_bound_fourth_moment} that $\limsup_{n\rightarrow\infty}\sum_{i=1}^n \E[X_{in}^4]/n<\infty$. Therefore applying Lemma \ref{lem:wlln}, we have 
	\begin{align*}
		\frac{1}{n}\sum_{i=1}^n (X_{in}^2-\E[X_{in}^2])=o_{\P}(1).
	\end{align*}
	Thus in order to prove $\sum_{i=1}^n X_{in}^2/n=\Omega_{\P}(1)$, it suffices to show $\liminf_{n\rightarrow\infty}\sum_{i=1}^n \E[X_{in}^2]/n>0$. Indeed,
	\begin{align*}
		\frac{1}{n}\sum_{i=1}^n \E[X_{in}^2]=\mu_n^2+\frac{2\mu_n}{n}\sum_{i=1}^n \E[\varepsilon_{in}]+\frac{1}{n}\sum_{i=1}^n \E[\varepsilon_{in}^2]\geq \frac{1}{n}\sum_{i=1}^n \E[\varepsilon_{in}^2].
	\end{align*}
	where the second inequality is true because $\E[\varepsilon_{in}]=0$ due to the symmetric distribution assumption. By condition \eqref{eq:lower_bound_second_moment}, we know 
	\begin{align*}
		\liminf_{n\rightarrow\infty}\frac{1}{n}\sum_{i=1}^n \E[X_{in}^2]\geq \liminf_{n\rightarrow\infty}\frac{1}{n}\sum_{i=1}^n \E[\varepsilon_{in}^2]>0.
	\end{align*}
	Therefore 
	\begin{align*}
		\frac{1}{n}\sum_{i=1}^n X_{in}^2=\frac{1}{n}\sum_{i=1}^n (X_{in}^2-\E[X_{in}^2])+\frac{1}{n}\sum_{i=1}^n\E[X_{in}^2]=\Omega_{\P}(1).
	\end{align*}
\end{proof}

\section{Proofs of supporting lemmas for Theorem \ref{thm:unified_unnormalized_moment_conditions}} \label{sec:lemma_proofs}

In this section, we first state two lemmas that reduce the condition of Theorem \ref{thm:unified_unnormalized_moment_conditions} to several conditions on the CGF. Then we prove the supporting lemmas for Theorem \ref{thm:unified_unnormalized_moment_conditions} based on the reduced conditions.

\begin{lemma}\label{lem:reduced_condition}
	Suppose Assumption \ref{assu:cse} or Assumption \ref{assu:ccs} holds. Then, the following statements hold:
	\begin{align}
		\sup_{s \in (-\varepsilon, \varepsilon)} \frac{1}{n}\sum_{i = 1}^n (K''_{in}(s))^2 = O_{\P}(1); \label{eq:second_cgf_derivative_bound} \\  
		\frac{1}{n}\sum_{i = 1}^n K'''_{in}(0) = O_{\P}(1); \label{eq:third_cgf_derivative_bound} \\  
		\sup_{s \in (-\varepsilon, \varepsilon)}\left|\frac{1}{n}\sum_{i = 1}^n K''''_{in}(s)\right| = O_{\P}(1). \label{eq:fourth_cgf_derivative_bound}
	\end{align}
\end{lemma}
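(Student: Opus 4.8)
The plan is to write each derivative $K_{in}^{(r)}(s)$ in terms of the conditional moments
\[
m_r(s) \equiv \frac{\E[W_{in}^r\exp(sW_{in})\mid\mathcal{F}_n]}{\E[\exp(sW_{in})\mid\mathcal{F}_n]},
\]
i.e.\ as the $r$-th conditional cumulant of $W_{in}$ under the tilted law $\P_{n,s}$: concretely $K_{in}''(s)=m_2(s)-m_1(s)^2$, $K_{in}'''(s)=m_3(s)-3m_2(s)m_1(s)+2m_1(s)^3$, $K_{in}''''(s)=m_4(s)-4m_3(s)m_1(s)-3m_2(s)^2+12m_2(s)m_1(s)^2-6m_1(s)^4$, and in particular $K_{in}'''(0)=\E[W_{in}^3\mid\mathcal{F}_n]$ since $\E[W_{in}\mid\mathcal{F}_n]=0$. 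On the probability-one event $\mathcal{A}$ of Lemma~\ref{lem:finite_cgf}, Lemma~\ref{lem:existence_derivative_CGF} justifies differentiating under the conditional expectation and the existence of all these moments. Jensen's inequality gives $\E[\exp(sW_{in})\mid\mathcal{F}_n]\ge\exp(s\,\E[W_{in}\mid\mathcal{F}_n])=1$, so $|m_r(s)|\le\E[|W_{in}|^r\exp(sW_{in})\mid\mathcal{F}_n]$, and the whole lemma reduces to a uniform (in $s\in(-\varepsilon,\varepsilon)$ and $i\le n$) bound on these conditional absolute moments of order $r\le4$, combined with the corresponding averaged control from Assumption~\ref{assu:cse} or~\ref{assu:ccs}.

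Under Assumption~\ref{assu:ccs}, $\P_{n,s}\ll\P$ and $\nu_{in}\in\mathcal{F}_n$ force $|W_{in}|\le\nu_{in}$ also $\P_{n,s}$-almost surely, so $|m_r(s)|\le\nu_{in}^r$ for every $s$; the cumulant formulas then give $(K_{in}''(s))^2\le C\nu_{in}^4$, $|K_{in}'''(0)|\le\nu_{in}^3\le1+\nu_{in}^4$, and $|K_{in}''''(s)|\le C\nu_{in}^4$, all uniformly in $s$, so averaging and $\frac1n\sum_i\nu_{in}^4=O_{\P}(1)$ yield all three claims. Under Assumption~\ref{assu:cse}, the decisive step is to take $\varepsilon$ small — say $\varepsilon\le\beta/4$, consistent with Lemma~\ref{lem:finite_cgf} — so that a polynomial prefactor can be absorbed into the sub-exponential tail: from $x^r\le(4r/(e\beta))^re^{(\beta/4)x}$ for $x\ge0$ we get $|W_{in}|^r\exp(sW_{in})\le(4r/(e\beta))^r\exp((\beta/2)|W_{in}|)$ whenever $|s|<\varepsilon$, and then $\E[\exp(a|W_{in}|)\mid\mathcal{F}_n]\le1+\theta_n$ for $a\in[0,\beta/2]$ — which is exactly~\eqref{eq:upper_bound_cgf} from the proof of Lemma~\ref{lem:equivalence_CSE}, specialized to $\theta_{in}=\theta_n$ — gives $|m_r(s)|\le c_r(1+\theta_n)$ for $r\le4$, $|s|<\varepsilon$, and all $i$. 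Since $1+\theta_n\ge1$, the cumulant formulas then bound $(K_{in}''(s))^2$, $|K_{in}'''(0)|$ and $|K_{in}''''(s)|$ all by $C(1+\theta_n)^4$ uniformly in $s$ and $i$, and since $\theta_n=O_{\P}(1)$ the three averages are $O_{\P}(1)$. In both cases $\frac1n\sum_iK_{in}'''(0)=O_{\P}(1)$ follows from $\frac1n\sum_i|K_{in}'''(0)|=O_{\P}(1)$.

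The remaining points are routine: (i) Lemma~\ref{lem:existence_derivative_CGF} needs all conditional moments $\E[|W_{in}|^r\exp(sW_{in})\mid\mathcal{F}_n]$ to be finite on $\mathcal{A}$, which Lemma~\ref{lem:finite_cgf} supplies from the CSE/CCS assumptions; (ii) to obtain the $\sup_{s\in(-\varepsilon,\varepsilon)}$ statements one notes that each $K_{in}^{(r)}$ is continuous on $\mathcal{A}$, so the suprema equal suprema over rationals and are genuine random variables, and the uniform-in-$s$ envelopes pass straight through the sums. I expect the only real obstacle to be the CSE uniform moment bound: one must pick $\varepsilon$ small relative to $\beta$ so that the polynomial factor is dominated by the sub-exponential decay, and — crucially — land on an envelope governed by the common, $i$-independent parameter $\theta_n$, since this is what makes $\frac1n\sum_i(1+\theta_n)^4$ collapse to $O_{\P}(1)$ rather than an uncontrolled average. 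With that envelope secured, the averaging step is immediate.
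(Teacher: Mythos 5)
Your argument is correct, and its skeleton matches the paper's: express the derivatives of $K_{in}$ through moments of the exponentially tilted conditional law (the paper's Lemma~\ref{lem:tilted_moment}), lower-bound the tilting denominator by $1$ via conditional Jensen, invoke Lemma~\ref{lem:existence_derivative_CGF} for differentiation under the conditional expectation, and split into the CSE and CCS cases. The execution differs in two respects. First, the paper reduces all three claims to the single sufficient condition $\sup_{s\in(-\varepsilon,\varepsilon)}\frac1n\sum_{i}\E_{in,s}[W_{in}^4\mid\mathcal F_n]=O_\P(1)$ (its display~\eqref{eq:sufficient_condition_cumulant}), using the variance-versus-second-moment bound, Jensen, and Lemma~\ref{lem:moment_dominance}; you instead bound every tilted raw moment $m_r(s)$, $r\le 4$, and assemble explicit cumulant-to-moment formulas --- both assemblies work. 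Second, and more substantively, in the CSE case the paper verifies its fourth-moment condition via H\"older with exponents $(3,3/2)$, which forces it to control a conditional $12$th moment through the CGF bound of Lemma~\ref{lem:equivalence_CSE}; your pointwise domination $x^r e^{sx}\le (4r/(e\beta))^r e^{(\beta/2)|x|}$ for $|s|\le\beta/4$, combined with $\E[\exp(a|W_{in}|)\mid\mathcal F_n]\le 1+\theta_n$ for $a\in[0,\beta/2]$, is more elementary, avoids high-order moments, and lands directly on the $i$-independent envelope $c_r(1+\theta_n)$ that makes the averaging immediate. Two minor points to tidy: the bound $\E[\exp(a|W_{in}|)\mid\mathcal F_n]\le1+\theta_n$ is the display~\eqref{eq:upper_bound_cgf} inside the proof of Lemma~\ref{lem:equivalence_CSE} rather than a stated result, so either cite it as such or rederive it in one line from the tail bound ($\E[e^{a|W_{in}|}\mid\mathcal F_n]\le 1+a\theta_n\int_0^\infty e^{-(\beta-a)t}\,\mathrm{d}t\le 1+\theta_n$ for $a\le\beta/2$); and your requirement $\varepsilon\le\beta/4$ is consistent with the paper, which fixes $\varepsilon=\beta/8$ in the CSE case of Lemma~\ref{lem:finite_cgf} (and $\varepsilon=1$ in the CCS case, where your compact-support bound holds for every $s$ anyway).
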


\begin{lemma}\label{lem:reduced_variance_condition}
	Suppose Assumption \ref{assu:cse} or Assumption \ref{assu:ccs} holds. Then condition \eqref{eq:lower_bound_conditional_variance} implies 
	\begin{align}\label{eq:lower_bound_variance}
		\frac{1}{n}\sum_{i=1}^n K_{in}''(0)=\Omega_{\P}(1).
	\end{align}
\end{lemma}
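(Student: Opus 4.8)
The plan is to reduce the claim to the essentially tautological identity $K_{in}''(0) = \E[W_{in}^2 \mid \mathcal F_n]$, valid almost surely for every $(i,n)$, so that the left-hand side of~\eqref{eq:lower_bound_variance} is literally a version of the left-hand side of~\eqref{eq:lower_bound_conditional_variance}. In other words, unlike the companion Lemma~\ref{lem:reduced_condition}, there is no real analytic content here beyond correctly invoking the regular-conditional-distribution machinery of Appendices~\ref{sec:RCD_preliminary}--\ref{sec:equivalence_tail_probability}.

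To make the identity precise I would first invoke Lemma~\ref{lem:finite_cgf}: under Assumption~\ref{assu:cse} or~\ref{assu:ccs} there is a probability-one event $\mathcal A$ and an $\varepsilon > 0$ on which $H_{in}(s) \equiv \E[\exp(sW_{in}) \mid \mathcal F_n]$ is finite and strictly positive for all $s \in (-\varepsilon, \varepsilon)$ and $i \leq n$, $n \geq 1$, with all derivatives finite there. Next I would check the hypothesis of Lemma~\ref{lem:existence_derivative_CGF}, namely that $\E[|W_{in}|^p \exp(sW_{in}) \mid \mathcal F_n] < \infty$ for all $p$ and all $s \in (-\varepsilon,\varepsilon)$ almost surely: under the CCS condition this is immediate from $|W_{in}| \leq \nu_{in} < \infty$ a.s., and under the CSE condition it follows from the conditional exponential tail bound of Definition~\ref{def:cse_distribution} (equivalently, from the conditional CGF bound of Lemma~\ref{lem:equivalence_CSE}), after possibly shrinking $\varepsilon$. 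Lemma~\ref{lem:existence_derivative_CGF} then supplies, on a probability-one event, $H_{in}^{(p)}(s) = \E[W_{in}^p \exp(sW_{in}) \mid \mathcal F_n]$ for all $p$ and $s \in (-\varepsilon,\varepsilon)$; in particular $H_{in}(0) = 1$, $H_{in}'(0) = \E[W_{in} \mid \mathcal F_n] = 0$ by the mean-zero assumption, and $H_{in}''(0) = \E[W_{in}^2 \mid \mathcal F_n]$.

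Applying the chain rule to $K_{in} = \log H_{in}$, legitimate since $H_{in} > 0$ and twice differentiable on the above events, gives $K_{in}''(s) = H_{in}''(s)/H_{in}(s) - (H_{in}'(s)/H_{in}(s))^2$, hence $K_{in}''(0) = \E[W_{in}^2 \mid \mathcal F_n]$ almost surely for every $(i,n)$. Summing over $i \leq n$ and dividing by $n$ yields $\frac{1}{n}\sum_{i=1}^n K_{in}''(0) = \frac{1}{n}\sum_{i=1}^n \E[W_{in}^2 \mid \mathcal F_n]$ a.s., so~\eqref{eq:lower_bound_variance} is exactly the hypothesis~\eqref{eq:lower_bound_conditional_variance}. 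I do not expect a genuine obstacle; the only care needed is the bookkeeping of the intersected almost-sure events and the verification of the hypothesis of Lemma~\ref{lem:existence_derivative_CGF}, which is the single place where the distinction between the CSE and CCS assumptions enters.
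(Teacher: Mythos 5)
Your proposal is correct and follows essentially the same route as the paper: the paper's proof simply cites conclusion~\eqref{eq:second_moment_relationship} of Lemma~\ref{lem:tilted_moment} at $s=0$ (together with Lemma~\ref{lem:finite_cgf} and the mean-zero assumption) to get $K_{in}''(0)=\E[W_{in}^2\mid\mathcal F_n]$ almost surely, which is exactly the identity you derive by hand via Lemma~\ref{lem:existence_derivative_CGF} (i.e.\ Lemma~\ref{lem:finite_cgf_moments}) and the chain rule for $K_{in}=\log H_{in}$. The only difference is that you unpack the contents of Lemma~\ref{lem:tilted_moment} rather than invoking it, so no substantive comparison is needed.
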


\subsection{Proof of Lemma \ref{lem:finite_cgf}}

We prove claim \eqref{eq:finite_cgf} and \eqref{eq:finite_cgf_derivatives} separately. 

\paragraph{Proof of claim \eqref{eq:finite_cgf}:}

We consider two cases: CSE distribution and CCS distribution.

\paragraph{Case 1: CSE distribution}

By Lemma \ref{lem:equivalence_CSE}, we know 
\begin{align*}
	\P\left[K_{in}(s)\leq \lambda_ns^2,\ \forall s\in \left(-\frac{1}{\gamma},\frac{1}{\gamma}\right)\right]=1
\end{align*}
where 
\begin{align*}
	\lambda_n\equiv \frac{\sqrt{6!4^6}(1+\theta_{n})}{24\beta^2}+\frac{16(1+\theta_{n})}{\beta^2},\ \gamma=\frac{4}{\beta}.
\end{align*}
Since $\theta_n<\infty$ almost surely, we know condition \eqref{eq:finite_cgf} holds with $\varepsilon=1/(2\gamma)=\beta/8$.

\paragraph{Case 2: CCS distribution}

By the definition of CCS distribution and the definition of regular conditional distribution, we have 
\begin{align*}
	\P\left[\mathrm{Supp}(\kappa_{in}(\omega,\cdot))\in [-\nu_{in}(\omega),\nu_{in}(\omega)]\right]=1.
\end{align*}
Then we have for almost every $\omega\in\Omega$,
\begin{align*}
	\E[\exp(sW_{in})|\mathcal{F}_n](\omega)=\int \exp(sx)\mathrm{d}\kappa_{in}(\omega,x)\leq \exp(\nu_{in})<\infty,\ \forall s\in (-1,1)
\end{align*}
where the last inequality is due to the assumption $\nu_{in}<\infty$ almost surely. Therefore, condition \eqref{eq:finite_cgf} holds with $\varepsilon=1$.

\paragraph{Proof of claim \eqref{eq:finite_cgf_derivatives}:}

By Lemma \ref{lem:existence_derivative_CGF}, it suffices to prove the following lemma.
\begin{lemma}\label{lem:finite_cgf_moments}
	On the event $\mathcal{A}$, 
	\begin{align*}
		\E[|W_{in}|^p\exp(sW_{in})|\mathcal{F}_n]<\infty,\ \forall s\in (-\varepsilon,\varepsilon),\ \forall i\in\{1,\ldots,n\}, n\geq 1\text{ and } p\in \mathbb{N}.
	\end{align*}
\end{lemma}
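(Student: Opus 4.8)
\textbf{Proof proposal for Lemma~\ref{lem:finite_cgf_moments}.} The plan is to argue separately in the two cases covered by our standing assumptions, working throughout with the regular conditional distributions $\kappa_{in}(\omega,\cdot)$ and the integral representation $\E[|W_{in}|^p\exp(sW_{in})\mid\mathcal F_n](\omega)=\int|x|^p\exp(sx)\,\mathrm d\kappa_{in}(\omega,x)$ from equation~\eqref{eq:def_conditional_expectation}. Since there are only countably many pairs $(i,n)$, I will take $\mathcal A$ to be the (probability-one) intersection over all $i\le n$, $n\ge 1$ of the almost-sure events used below (the support containment in the CCS case, and the finiteness $\theta_n<\infty$ together with the tail/CGF bounds from Lemma~\ref{lem:equivalence_CSE} in the CSE case), so that the desired bounds hold simultaneously for all $i,n$. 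The case $p=0$ is exactly claim~\eqref{eq:finite_cgf}, already established, so I only need $p\ge 1$.

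In the CCS case (Assumption~\ref{assu:ccs}), on $\mathcal A$ we have $\mathrm{Supp}(\kappa_{in}(\omega,\cdot))\subseteq[-\nu_{in}(\omega),\nu_{in}(\omega)]$ with $\nu_{in}(\omega)<\infty$, so the continuous function $x\mapsto|x|^p\exp(sx)$ is bounded on the support, giving
\begin{equation*}
\int|x|^p\exp(sx)\,\mathrm d\kappa_{in}(\omega,x)\le \nu_{in}(\omega)^p\exp\!\big(|s|\,\nu_{in}(\omega)\big)<\infty
\end{equation*}
for every $s\in(-1,1)=(-\varepsilon,\varepsilon)$, matching the choice of $\varepsilon$ from the proof of claim~\eqref{eq:finite_cgf} in this case.

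In the CSE case (Assumption~\ref{assu:cse}), I will use the elementary inequality $|x|^p\le (p/(e\delta))^p\exp(\delta|x|)$ for all $x\in\mathbb R$ and any $\delta>0$, obtained by maximizing $x\mapsto x^p e^{-\delta x}$ over $x\ge 0$. Hence $|x|^p\exp(sx)\le (p/(e\delta))^p\exp\big((|s|+\delta)|x|\big)$. Recall from the proof of claim~\eqref{eq:finite_cgf} that here $\varepsilon=\beta/8$; so fix $|s|<\beta/8$ and take $\delta=\beta/8$, whence $|s|+\delta<\beta/4\le\beta/2$. Now invoke conclusion~\eqref{eq:upper_bound_cgf} from the proof of Lemma~\ref{lem:equivalence_CSE}: on $\mathcal A$, for every $a\in[0,\beta/2]$ we have $\int\exp(a|x|)\,\mathrm d\kappa_{in}(\omega,x)\le 1+\theta_n(\omega)<\infty$. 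Combining,
\begin{equation*}
\int|x|^p\exp(sx)\,\mathrm d\kappa_{in}(\omega,x)\le \Big(\tfrac{p}{e\delta}\Big)^p\int\exp\!\big((|s|+\delta)|x|\big)\,\mathrm d\kappa_{in}(\omega,x)\le \Big(\tfrac{p}{e\delta}\Big)^p\big(1+\theta_n(\omega)\big)<\infty,
\end{equation*}
which is the claim.

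I do not expect a substantive obstacle: the only care required is bookkeeping, namely defining $\mathcal A$ as a countable intersection of probability-one events so the finiteness is simultaneous in $(i,n)$, and quoting the same $\varepsilon$ ($\beta/8$ in the CSE case, $1$ in the CCS case) that was fixed in the proof of claim~\eqref{eq:finite_cgf} so that Lemma~\ref{lem:existence_derivative_CGF} then applies to yield claim~\eqref{eq:finite_cgf_derivatives}. The mildly delicate point is choosing the slack $\delta$ in the CSE case small enough that $|s|+\delta$ stays within the range where~\eqref{eq:upper_bound_cgf} is available; taking $\delta=\beta/8$ suffices and keeps the argument uniform over all $p$ and all $i,n$.
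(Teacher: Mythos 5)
Your proof is correct, but it takes a genuinely different route from the paper's. The paper gives a single, case-free argument that relies only on the already-established claim~\eqref{eq:finite_cgf}: for $s_0\in(-\varepsilon,\varepsilon)$ pick $a_0\in(1,\varepsilon/|s_0|)$ and apply H\"older's inequality with exponents $a_0/(a_0-1)$ and $a_0$, so that $\E[|W_{in}|^p e^{s_0 W_{in}}\mid\mathcal F_n]$ is bounded by a product of a conditional moment of $|W_{in}|$ (finite on $\mathcal A$ by Lemma~\ref{lem:finite_moment}, which is extracted from the power series of $\E[e^{s|W_{in}|}\mid\mathcal F_n]$) and the conditional MGF evaluated at $a_0 s_0\in(-\varepsilon,\varepsilon)$ (finite on $\mathcal A$ by \eqref{eq:finite_cgf}). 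You instead split into the CCS and CSE cases and absorb the polynomial factor into an exponential via $|x|^p\le (p/(e\delta))^p e^{\delta|x|}$, then invoke the exponential-moment bound~\eqref{eq:upper_bound_cgf} from inside the proof of Lemma~\ref{lem:equivalence_CSE} (CCS being trivial); your choice $\delta=\beta/8$ indeed keeps $|s|+\delta$ within the range $[0,\beta/2]$ where \eqref{eq:upper_bound_cgf} holds, and your construction of $\mathcal A$ as a countable intersection of the per-$(i,n)$ probability-one events is legitimate since $\mathcal A$ is whatever event the proof of Lemma~\ref{lem:finite_cgf} fixes. The trade-off: your argument is more elementary and yields explicit bounds such as $(p/(e\delta))^p(1+\theta_n)$ and $\nu_{in}^p e^{|s|\nu_{in}}$, which would matter if a quantitative statement were needed, but it re-opens the two assumptions and depends on internals of Lemma~\ref{lem:equivalence_CSE} and on the case-specific value of $\varepsilon$; the paper's H\"older route is assumption-agnostic, needing only finiteness of the conditional CGF on a neighborhood of the origin, so it would survive unchanged if the tail assumptions were replaced by any other condition implying \eqref{eq:finite_cgf}.
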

\noindent Proof of Lemma \ref{lem:finite_cgf_moments} is postponed to Appendix \ref{sec:proof_finite_cgf_moments}.

\subsection{Proof of Lemma~\ref{lem:positive_second_derivative}}

The claim~\eqref{eq:positive_second_derivative} holds because by equation~\eqref{eq:conditional_moments}, on the event $\mathcal A$ we have, for each $s \in (-\varepsilon, \varepsilon)$,
\begin{equation*}
K''_n(s) = \frac{1}{n}\sum_{i = 1}^n K''_{in}(s) = \frac{1}{n}\sum_{i = 1}^n \V_{n, s}[W_{in} \mid \mathcal F_n] \geq 0.
\end{equation*}

Next, we verify claim~\eqref{eq:uniform_lower_bound_second_derivative}. To this end, fix $\delta > 0$. By assumptions~\eqref{eq:lower_bound_variance}, \eqref{eq:third_cgf_derivative_bound}, and~\eqref{eq:fourth_cgf_derivative_bound}, there exist $\eta, M > 0$ and $N \geq 1$ be such that for all $n \geq N$,
\begin{equation*}
\P[K''_n(0) < 2 \eta ] < \delta/3, \quad \P[|K'''_{n}(0)| > M] < \delta/3, \quad \P\left[\sup_{s \in (-\varepsilon, \varepsilon)}|K''''_{n}(s)| > M\right] < \delta/3. 
\end{equation*}
Define
\begin{equation*}
s_* \equiv \min(\eta/(2M), \sqrt{\eta/M}, \varepsilon/2).
\end{equation*}
On the event $\mathcal A$, Lemma~\ref{lem:finite_cgf} guarantees that we can we Taylor expand $K_n''(s)$ around $s = 0$ to obtain
\begin{equation*}
K_n''(s) = K_n''(0) + sK_n'''(0) + \frac12 s^2 K_n''''(\bar s)
\end{equation*}
for some $|\bar s|\leq |s|$. Therefore, for all $n \geq N$, we have
\begin{equation*}
\begin{split}
1-\delta &< \P\left[\mathcal A, K''_n(0) \geq 2 \eta, |K'''_{n}(0)| \leq M, \sup_{s \in (-\varepsilon, \varepsilon)}|K''''_{n}(s)| \leq M  \right] \\
&\leq \P\left[\inf_{s \in [-s_*, s_*]}K''_n(s) \geq 2\eta - s_* M - \frac12 s_*^2 M \geq \eta\right],
\end{split}
\end{equation*}
which verifies the claim~\eqref{eq:uniform_lower_bound_second_derivative} and completes the proof.

\subsection{Proof of Lemma~\ref{lem:saddlepoint_properties}}

\paragraph{Proof of \eqref{eq:sign_property_s_n}:}

Suppose $|S_n|=1$. Because $K'_n$ is almost surely nondecreasing on $(-\varepsilon, \varepsilon)$~\eqref{eq:positive_second_derivative} and $K'_n(0) = 0$~\eqref{eq:k_n_prime_0_equals_0}, the identity $K'_n(\hat s_n) = w_n$ implies that $\mathrm{sgn}(\hat s_n)=\mathrm{sgn}(w_n)$. When $|S_n|\neq 1$, by the definition of $\hat s_n$~\eqref{eq:def_s_n}, we have $\mathrm{sgn}(\hat s_n)=\mathrm{sgn}(w_n)$. This completes the proof.

\paragraph{Proof of \eqref{eq:hat_s_n_convergence}:}

Fix $\gamma, \delta > 0$. By Lemma~\ref{lem:positive_second_derivative}, there $\eta > 0$, $s_* \in (0, \varepsilon/2)$, and $N \in \mathbb N_+$ such that
\begin{equation*}
\P\left[\inf_{s \in [-s_*, s_*]} K''_n(s) \geq \eta\right] \geq 1 - \delta/2 \quad \text{for all} \quad n \geq N.
\end{equation*}
By increasing $N$ if necessary, the fact that $w_n \convp 0$ implies that
\begin{equation*}
\P[|w_n| \leq \eta \min(\gamma, s_*)] \geq 1 - \delta/2  \quad \text{for all} \quad n \geq N.
\end{equation*}
Define the event 
\begin{equation*}
\mathcal E'_n \equiv \left\{\inf_{s \in [-s_*, s_*]} K''_n(s) \geq \eta, |w_n| \leq \eta \min(\gamma, s_*)\right\}
\end{equation*}
On the event $\mathcal E'_n \cap \mathcal A$, the Taylor expansion~\eqref{eq:K_prime_taylor_expansion} gives
\begin{equation*}
|K'_n(s)| \geq |s| \eta \quad \text{for all} \quad s \in [-s_*, s_*] \text{ and all } n \geq N.
\end{equation*}  
Hence, $|w_n| \leq \eta s_* \leq \min(-K'_n(-s_*), K'_n(s_*))$, implying $w_n \in [K'_n(-s_*), K'_n(s_*)]$, so the saddlepoint equation has a solution $\hat s_n$ such that $K'_n(\hat s_n) = w_n$ and $|\hat s_n| \leq s_*$. Therefore, on the event $\mathcal E'_n \cap \mathcal A$, we have
\begin{equation*}
|\hat s_n| \eta \leq |K'_n(\hat s_n)| = |w_n| \leq \eta \gamma \quad \Longrightarrow |\hat s_n| \leq \gamma.
\end{equation*}
It follows that
\begin{equation*}
\P[|\hat s_n| \leq \gamma] \geq \P[\mathcal E'_n \cap \mathcal A] \geq 1 - \delta \quad \text{for all} \quad n \geq N,
\end{equation*}
which shows that $\hat s_n \convp 0$, as desired.

\paragraph{Proof of \eqref{eq:hat_s_n_second_derivative}:} By the argument following the statement of Lemma~\ref{lem:positive_second_derivative}, for any $\delta$ there is an $\eta > 0$ and $N \in \mathbb N_+$ such that $\P[K''_n(\hat s_n) \geq \eta] \geq 1-\delta$ for all $n \geq N$. This shows that $K''_n(\hat s_n) = \Omega_{\P}(1)$, as desired.

\subsection{Proof of Lemma \ref{lem:additional_properties_r_n_lambda_n}}

\begin{proof}[Proof of Lemma \ref{lem:additional_properties_r_n_lambda_n}]
We prove the claims separately.

\paragraph{Verification of \eqref{eq:finitness_r_n_lambda_n}.}

Since $w_n\in (-\infty,\infty)$, together with Lemma \ref{lem:finite_cgf} guaranteeing that $K_n(s),K_n'(s),K_n''(s)\in (-\infty,\infty),\forall s\in (-\varepsilon,\varepsilon)$ almost surely and definition of $\hat s_n$ such that $|\hat s_n|<\varepsilon$, we have 
\begin{align*}
	\lambda_n^2=|n\hat s_n K_n''(\hat s_n)|<n\varepsilon|K_n''(\hat s_n)|<\infty,\ r_n^2\leq \max\left\{1,|2n(\hat s_n w_n-K_n(\hat s_n))|\right\}<\infty.
\end{align*}

\paragraph{Verification of \eqref{eq:sign_1}.} This claim follows from conclusion~\eqref{eq:sign_property_s_n} of Lemma~\ref{lem:saddlepoint_properties} and the definitions of $r_n$ and $\lambda_n$ in equation~\eqref{eq:lam_n_r_n_def}.

\paragraph{Verification of \eqref{eq:sign_condition_r_lambda}.}

This is true by definition of $r_n$ and $\lambda_n$. This completes the proof.

\paragraph{Verification of \eqref{eq:same_sign_condition}, \eqref{eq:same_sign_condition_sn}, \eqref{eq:rate_1}, \eqref{eq:rate_2}, \eqref{eq:rate_3}, \eqref{eq:rate_4} and \eqref{eq:rate_r}:}

We present a useful lemma.
\begin{lemma}[Asymptotic estimate of $\lambda_n$ and $r_n$]\label{lem:asym-estimate-lam-r}
	Under the assumptions of Theorem \ref{thm:unified_unnormalized_moment_conditions}, the followings are true
	\begin{align}
		\frac{r_n^2}{n}=o_{\P}(1);& \label{eq:r_n_over_n_rate}\\
		\frac{\lambda_n}{r_n}=1+\hat s_nO_{\P}(1); & \label{eq:asym-estimate-ratio-lam-r}\\
		\frac{r_n}{\lambda_n}=1+\hat s_nO_{\P}(1); & \label{eq:asym-estimate-ratio-r-lam}\\
		\frac{1}{\lambda_n}-\frac{1}{r_n}=o_{\P}(1); & \label{eq:asym-estimate-diff-lam-r}\\
		\indicator(r_n>0,\lambda_n>0)\frac{1}{r_n}\left(\frac{\lambda_n}{r_n}-1\right)=o_{\P}(1); & \label{eq:asym-estimate-diff-lam-r-multiplication}\\
		\indicator(\lambda_n\neq 0)\frac{1}{\lambda_n}\left(\frac{r_n}{\lambda_n}-1\right)=o_{\P}(1); & \label{eq:asym-estimate-diff-r-lam-multiplication}\\
		\P[w_n>0 \text{ and }\lambda_n r_n\leq 0]\rightarrow0. & \label{eq:same_sign_condition_w_n}\\
		\P[\hat s_n\neq 0 \text{ and }\lambda_n r_n\leq 0]\rightarrow0. & \label{eq:same_sign_condition_s_n}
	\end{align}
	\end{lemma}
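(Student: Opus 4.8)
The plan is to reduce all eight estimates to a single two-term Taylor expansion of $K_n$ about the origin, carried out on a high-probability ``good event'', and then read each conclusion off. On the probability-one event $\mathcal A$ of Lemma~\ref{lem:finite_cgf}, $K_n$ is smooth on $(-\varepsilon,\varepsilon)$ with $K_n(0)=K_n'(0)=0$ (using~\eqref{eq:k_n_prime_0_equals_0}) and $K_n''\ge 0$ there (Lemma~\ref{lem:positive_second_derivative}). Writing $V_n\equiv K_n''(0)$, Lemma~\ref{lem:reduced_variance_condition} gives $V_n=\Omega_{\P}(1)$; Cauchy--Schwarz with~\eqref{eq:second_cgf_derivative_bound} gives $\sup_{|s|<\varepsilon}K_n''(s)\le\sup_{|s|<\varepsilon}\big(\tfrac1n\sum_i(K_{in}''(s))^2\big)^{1/2}=O_{\P}(1)$, so $V_n=O_{\P}(1)$; and expanding $K_n'''$ about $0$ with~\eqref{eq:third_cgf_derivative_bound}--\eqref{eq:fourth_cgf_derivative_bound} gives $\sup_{|s|\le\varepsilon/2}|K_n'''(s)|=O_{\P}(1)$. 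Fixing $\delta>0$, I let $\mathcal G_n$ be the intersection of $\mathcal A$, the event $\{|S_n|=1\}$ (probability $\to 1$ by~\eqref{eq:unique_solution_in_probability}), the events $\{\sup_{|s|<\varepsilon}K_n''(s)\le M\}$, $\{\sup_{|s|\le\varepsilon/2}|K_n'''(s)|\le M\}$, $\{V_n\ge\eta\}$ for fixed $M,\eta>0$, and $\{|\hat s_n|\le\rho\}$ for a small fixed $\rho$ (using $\hat s_n\convp 0$); then $\P[\mathcal G_n]\ge 1-\delta$ for all large $n$.

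On $\mathcal G_n$, Taylor-expanding $K_n$ and $K_n'$ about $0$ and using the saddlepoint identity $K_n'(\hat s_n)=w_n$ gives
\[
\hat s_n w_n - K_n(\hat s_n) = \tfrac12\hat s_n^2 V_n(1+\hat s_n O_{\P}(1)), \qquad K_n''(\hat s_n) = V_n(1+\hat s_n O_{\P}(1)),
\]
where the $O_{\P}(1)$ remainders are ratios of third derivatives to $V_n$ (times $\tfrac16$ or $\tfrac12$), genuinely bounded on $\mathcal G_n$ once $\rho$ is small. Since $V_n\ge\eta$ and $|\hat s_n|\le\rho$, the first quantity is $\ge\tfrac14\hat s_n^2 V_n\ge 0$, so the first case in the definition~\eqref{eq:lam_n_r_n_def} of $r_n$ applies, and
\[
\lambda_n^2 = n\hat s_n^2 K_n''(\hat s_n) = n\hat s_n^2 V_n(1+\hat s_n O_{\P}(1)), \qquad r_n^2 = 2n(\hat s_n w_n - K_n(\hat s_n)) = n\hat s_n^2 V_n(1+\hat s_n O_{\P}(1)),
\]
with $\sgn(\lambda_n)=\sgn(r_n)=\sgn(\hat s_n)$ (using $K_n''(\hat s_n)>0$). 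Then~\eqref{eq:r_n_over_n_rate} follows since $r_n^2/n=\hat s_n^2 V_n(1+o_{\P}(1))=o_{\P}(1)$; statements~\eqref{eq:same_sign_condition_w_n} and~\eqref{eq:same_sign_condition_s_n} follow because on $\mathcal G_n$ both $\lambda_n$ and $r_n$ are strictly positive (resp. negative) whenever $\hat s_n$ --- equivalently $w_n$, by~\eqref{eq:sign_property_s_n} --- is positive (resp. negative), so the offending events lie in $\mathcal G_n^c$; and dividing the two displays gives $\lambda_n/r_n=(1+\hat s_n O_{\P}(1))^{1/2}=1+\hat s_n O_{\P}(1)$, which is~\eqref{eq:asym-estimate-ratio-lam-r}, with~\eqref{eq:asym-estimate-ratio-r-lam} following by taking reciprocals.

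For the remaining rate statements the key point is that $\hat s_n$ cancels: on $\mathcal G_n\cap\{\hat s_n\ne 0\}$, $\hat s_n/\lambda_n=(nK_n''(\hat s_n))^{-1/2}=O_{\P}(n^{-1/2})$ and $\hat s_n/r_n=O_{\P}(n^{-1/2})$ by~\eqref{eq:hat_s_n_second_derivative}. Then $\tfrac1{\lambda_n}-\tfrac1{r_n}=\tfrac1{\lambda_n}(1-\tfrac{\lambda_n}{r_n})=\tfrac{\hat s_n}{\lambda_n}O_{\P}(1)=o_{\P}(1)$ (the convention $1/0-1/0\equiv 0$ covering $\{\hat s_n=0\}$), which is~\eqref{eq:asym-estimate-diff-lam-r}; likewise $\indicator(r_n>0,\lambda_n>0)\tfrac1{r_n}(\tfrac{\lambda_n}{r_n}-1)=\indicator(\cdots)\tfrac{\hat s_n}{r_n}O_{\P}(1)=o_{\P}(1)$ gives~\eqref{eq:asym-estimate-diff-lam-r-multiplication}, and $\indicator(\lambda_n\ne 0)\tfrac1{\lambda_n}(\tfrac{r_n}{\lambda_n}-1)=\indicator(\cdots)\tfrac{\hat s_n}{\lambda_n}O_{\P}(1)=o_{\P}(1)$ gives~\eqref{eq:asym-estimate-diff-r-lam-multiplication}. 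In each case, on $\mathcal G_n^c$ the quantity is almost surely finite by~\eqref{eq:finitness_r_n_lambda_n} together with the conventions $1/0-1/0\equiv 0$ and $0/0\equiv 1$, so multiplying by $\indicator(\mathcal G_n^c)$ with $\P[\mathcal G_n^c]\to 0$ produces an $o_{\P}(1)$ term; combining the two cases and sending $\delta\to 0$ closes all eight claims.

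The hard part will be the two-term Taylor expansion behind the twin identities for $\lambda_n^2$ and $r_n^2$ --- in particular, verifying that each remainder equals $\hat s_n$ times a uniformly $O_{\P}(1)$ quantity, which is exactly what the derivative bounds~\eqref{eq:second_cgf_derivative_bound}--\eqref{eq:fourth_cgf_derivative_bound} together with the variance lower bound~\eqref{eq:lower_bound_variance} provide. The rest is bookkeeping: carving out the good event $\mathcal G_n$ on which signs and the positivity of the radicand $\hat s_n w_n-K_n(\hat s_n)$ are controlled, tracking the ``otherwise'' branch of~\eqref{eq:lam_n_r_n_def} and the $\hat s_n=0$ convention, and arranging the cancellation of $\hat s_n$ in the rate statements so that no rate of convergence for $\hat s_n$ itself is needed.
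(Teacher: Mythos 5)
Your proposal is correct and follows essentially the same route as the paper's proof: both rest on the twin Taylor expansions giving $r_n^2 = n\hat s_n^2\left(K_n''(0)+\hat s_n O_{\P}(1)\right)$ and $\lambda_n^2 = n\hat s_n^2\left(K_n''(0)+\hat s_n O_{\P}(1)\right)$, controlled by the derivative bounds \eqref{eq:second_cgf_derivative_bound}--\eqref{eq:fourth_cgf_derivative_bound}, the variance lower bound \eqref{eq:lower_bound_variance}, $\hat s_n \convp 0$, and $K_n''(\hat s_n)=\Omega_{\P}(1)$, from which all eight claims are read off exactly as you describe. The only difference is bookkeeping: the paper carries explicit indicator terms for the event $K_n'(\hat s_n)\neq w_n$ through the expansion, while you absorb that (together with the sign and positivity-of-the-radicand issues) into a single high-probability good event and dispose of its complement by the vanishing-probability indicator argument, which is an equivalent and equally valid way to close the argument.
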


	\paragraph{Verification of \eqref{eq:same_sign_condition}:}

	\eqref{eq:same_sign_condition_w_n} verifies \eqref{eq:same_sign_condition}.

	\paragraph{Verification of \eqref{eq:same_sign_condition_sn}:}

	\eqref{eq:same_sign_condition_s_n} verifies \eqref{eq:same_sign_condition_sn}.

	\paragraph{Verification of \eqref{eq:rate_1}:}
	
	\eqref{eq:asym-estimate-diff-lam-r} verifies \eqref{eq:rate_1}.

	\paragraph{Verification of \eqref{eq:rate_2}:} 
	
	Since $\hat s_n\convp 0$, we know \eqref{eq:asym-estimate-ratio-lam-r} implies 
	\begin{align*}
		\frac{\lambda_n}{r_n}=1+o_{\P}(1)
	\end{align*}
	which verifies \eqref{eq:rate_2}. 

	\paragraph{Verification of \eqref{eq:rate_3}:}

	\eqref{eq:asym-estimate-diff-lam-r-multiplication} verifies \eqref{eq:rate_3}.

	\paragraph{Verification of \eqref{eq:rate_4}:}

	\eqref{eq:asym-estimate-diff-r-lam-multiplication} verifies \eqref{eq:rate_4}.

	\paragraph{Verification of \eqref{eq:rate_r}:}

	\eqref{eq:r_n_over_n_rate} verifies \eqref{eq:rate_r}.

\end{proof}

\subsection{Proof of Lemma~\ref{lem:symmetry}}

We can apply the theorem to the triangular array $\widetilde W_{in} \equiv -W_{in}$ and set of cutoffs $\widetilde w_n \equiv -w_n$, since the theorem assumptions are invariant to the signs of $W_{in}$ and $x_{in}$. Therefore, we get the result
\begin{equation*}
\indicator(\widetilde w_n > 0)\left(\frac{\P\left[\frac1n \sum_{i = 1}^n \widetilde W_{in} \geq \widetilde w_n \mid \mathcal F_n\right]}{1-\Phi(\widetilde r_n)+\phi(\widetilde r_n)\left\{\frac{1}{\widetilde \lambda_n}-\frac{1}{\widetilde r_n}\right\}}-1\right) \convp 0,
\end{equation*}
where we claim that $\widetilde r_n = -r_n$ and $\widetilde \lambda_n = -\lambda_n$. To see this, we define
\begin{align*}
	\widetilde{K}_{in}(s)\equiv \log\E\left[\exp(s\widetilde W_{in})|\mathcal{F}_n\right],\ \widetilde{K}_n(s)\equiv \frac{1}{n}\sum_{i=1}^n\widetilde{K}_{in}(s) = \frac{1}{n}\sum_{i=1}^n K_{in}(-s)=K_n(-s).
\end{align*}
Then, consider the saddlepoint equation for $\widetilde w_n$:
\begin{align}\label{eq:saddlepoint-equation-negative-xn}
	\widetilde{K}'_n(s)=\widetilde{x}_n.
\end{align}
Furthermore, we define 
\begin{align*}
	\widetilde{S}_n\equiv \{s\in [-\varepsilon/2,\varepsilon/2]:\widetilde{K}_n'(s)=\widetilde{x}_n\}.
\end{align*}
Then we write the solution $\widetilde{s}_n$ to the saddlepoint equation~\eqref{eq:saddlepoint-equation-negative-xn} according to the definition of $\hat s_n$ as in \eqref{eq:def_s_n}
\begin{align*}
	\widetilde{s}_n=
	\begin{cases}
	\text{the single element of }\widetilde{S}_n & \text{if } |\widetilde{S}_n|=1; \\
	\frac{\varepsilon}{2}\mathrm{sgn}(\widetilde{x}_n) & \text{otherwise}.
	\end{cases}
\end{align*}
Then we argue that $\widetilde{s}_n=-\hat s_n$. This is because given $\hat s_n$ uniquely solves \eqref{eq:saddlepoint-equation}, we know $-\hat s_n$ uniquely solves \eqref{eq:saddlepoint-equation-negative-xn}. Similarly, whenever $\widetilde{s}_n$ uniquely solves \eqref{eq:saddlepoint-equation-negative-xn}, we know $-\widetilde{s}_n$ uniquely solves \eqref{eq:saddlepoint-equation}. Therefore, we have $\widetilde{s}_n=-\hat s_n$. Then recall the definition 
\begin{align*}
	\widetilde{\lambda}_n\equiv \sqrt{n}\widetilde{s}_n\widetilde{K}_n''(\widetilde{s}_n),\ \widetilde{r}_n\equiv 
	\begin{cases}
		\sgn(\widetilde s_n) \sqrt{2n( \widetilde s_n \widetilde w_n - \widetilde K_n(\widetilde s_n))} & \text{if } \widetilde s_n \widetilde w_n - \widetilde K_n(\widetilde s_n)\geq 0;\\
		\mathrm{sgn}(\widetilde s_n) & \text{otherwise},
	  \end{cases}.
\end{align*}
Since $\widetilde{K}_n''(-s)=K_n''(s),\widetilde{K}_n(-s)=K_n(s)$ and $\widetilde{x}_n=-w_n$, we know $\widetilde{\lambda}_n=-\lambda_n$ and $\widetilde{r}_n=-r_n$. Therefore, we have
\begin{align}
&\nonumber
\indicator(\widetilde w_n > 0)\left(\frac{\P\left[\frac1n \sum_{i = 1}^n \widetilde W_{in} \geq \widetilde w_n \mid \mathcal F_n\right]}{1-\Phi(\widetilde r_n)+\phi(\widetilde r_n)\left\{\frac{1}{\widetilde \lambda_n}-\frac{1}{\widetilde r_n}\right\}}-1\right) \\
&\nonumber
\quad = \indicator(w_n < 0)\left(\frac{\P\left[\frac1n \sum_{i = 1}^n W_{in} \leq w_n \mid \mathcal F_n\right]}{1-\Phi(-r_n)+\phi(r_n)\left\{\frac{1}{r_n}-\frac{1}{\lambda_n}\right\}}-1\right) \\
&\label{eq:convergence_flip_sign}
\quad = \indicator(w_n < 0)\left(\frac{1 - \Phi(r_n)+\phi(r_n)\left\{\frac{1}{\lambda_n}-\frac{1}{r_n}\right\} - \P\left[\frac1n \sum_{i = 1}^n W_{in} > w_n \mid \mathcal F_n\right]}{\Phi(r_n)+\phi(r_n)\left\{\frac{1}{r_n}-\frac{1}{\lambda_n}\right\}}\right) \convp 0. 
\end{align}
Note the demoninator in \eqref{eq:convergence_flip_sign} is not what we want and we would like to change it to $1-\Phi(r_n)+\phi(r_n)\{\frac{1}{\lambda_n}-\frac{1}{r_n}\}$. Now, we need the following lemma to proceed.
\begin{lemma}\label{lem:upper_bound_ratio_spa}
	Suppose the assumptions of Theorem \ref{thm:unified_unnormalized_moment_conditions} hold. Then \eqref{eq:finitness_r_n_lambda_n}, \eqref{eq:sign_1}, \eqref{eq:sign_condition_r_lambda}, \eqref{eq:rate_1} conditions are true by Lemma \ref{lem:additional_properties_r_n_lambda_n}. Furthermore, we have 
	\begin{enumerate}
		\item 	
		\begin{align}\label{eq:multiplication-flip-sign}
			\indicator(w_n<0)\left|\frac{\Phi(r_n)+\phi(r_n)\left\{\frac{1}{r_n}-\frac{1}{\lambda_n}\right\}}{1 - \Phi(r_n)+\phi(r_n)\left\{\frac{1}{\lambda_n}-\frac{1}{r_n}\right\}}\right|\leq 1+o_\P(1);
		\end{align}
		\item 
		\begin{align}\label{eq:equality-corner-case}
			\indicator(w_n < 0)\frac{\P\left[\frac1n \sum_{i = 1}^n W_{in} = w_n \mid \mathcal F_n\right]}{1-\Phi(r_n)+\phi(r_n)\left\{\frac{1}{\lambda_n}-\frac{1}{r_n}\right\}}=o_\P(1).
		\end{align}
	\end{enumerate}
\end{lemma}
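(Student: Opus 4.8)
The plan is to reduce both claims to a single algebraic fact: the numerator $\Phi(r_n)+\phi(r_n)\{\tfrac{1}{r_n}-\tfrac{1}{\lambda_n}\}$ and the denominator $1-\Phi(r_n)+\phi(r_n)\{\tfrac{1}{\lambda_n}-\tfrac{1}{r_n}\}$ appearing in~\eqref{eq:multiplication-flip-sign} sum to $1$, together with the observation that the denominator is bounded below by $\tfrac12-o_{\P}(1)$ on $\{w_n<0\}$.

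First I would pin down the event to work on. On $\{w_n<0\}$ we have $\hat s_n<0$ by~\eqref{eq:sign_property_s_n}, hence $\lambda_n=\hat s_n\sqrt{nK_n''(\hat s_n)}\le 0$ and, by the definition~\eqref{eq:lam_n_r_n_def}, $r_n\le 0$. Moreover $K_n''(\hat s_n)=\Omega_{\P}(1)$ by~\eqref{eq:hat_s_n_second_derivative} and $\P[\hat s_n\neq 0,\ \lambda_n r_n=0]\to 0$ by~\eqref{eq:same_sign_condition_sn}, so in fact $r_n<0$ and $\lambda_n<0$ with probability tending to one on $\{w_n<0\}$. It therefore suffices to argue on the event $\mathcal G_n\equiv\{w_n<0,\ r_n<0,\ \lambda_n<0\}$, since the difference $\indicator(w_n<0)-\indicator(\mathcal G_n)$ is $o_{\P}(1)$ and on $\mathcal G_n$ both ratios are well defined. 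On $\mathcal G_n$ the correction term $\phi(r_n)\{\tfrac{1}{\lambda_n}-\tfrac{1}{r_n}\}$ is $o_{\P}(1)$, since $\phi\le(2\pi)^{-1/2}$ and $\tfrac{1}{\lambda_n}-\tfrac{1}{r_n}=o_{\P}(1)$ by~\eqref{eq:rate_1}; writing $x_n\equiv-r_n>0$, the denominator thus equals $\Phi(x_n)+o_{\P}(1)$, and since $\Phi(x_n)\in[\tfrac12,1]$ it lies in $[\tfrac12-o_{\P}(1),\,1+o_{\P}(1)]$, in particular it is positive with probability tending to one.

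For~\eqref{eq:multiplication-flip-sign}, using that the numerator equals one minus the denominator and then $1-\Phi(x_n)\le\Phi(x_n)$ and $\Phi(x_n)\ge\tfrac12$, the absolute value of the ratio is bounded by
\[
\frac{(1-\Phi(x_n))+o_{\P}(1)}{\Phi(x_n)-o_{\P}(1)}\ \le\ \frac{\Phi(x_n)+o_{\P}(1)}{\Phi(x_n)-o_{\P}(1)}\ =\ 1+\frac{2\,o_{\P}(1)}{\Phi(x_n)-o_{\P}(1)}\ =\ 1+o_{\P}(1),
\]
which is the desired conclusion. For~\eqref{eq:equality-corner-case}, the denominator is again bounded below by a positive constant with probability tending to one, so it suffices to show the numerator is $o_{\P}(1)$; this follows from the non-degeneracy statement~\eqref{eq:nondegeneracy} of Lemma~\ref{lem:conditional_CLT_W_n} applied with $y_n\equiv\sqrt{n/K_n''(0)}\,w_n\in\mathcal F_n$ (legitimate because $K_n''(0)=\tfrac1n\sum_{i=1}^n\E[W_{in}^2\mid\mathcal F_n]=\Omega_{\P}(1)$ by Lemma~\ref{lem:reduced_variance_condition}, hence positive with probability tending to one), which gives $\P[\tfrac1n\sum_{i=1}^n W_{in}=w_n\mid\mathcal F_n]\convp 0$; dividing an $o_{\P}(1)$ sequence by one bounded below by a positive constant with probability tending to one yields $o_{\P}(1)$.

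I expect the only genuinely delicate point to be the first step — establishing that $r_n$ and $\lambda_n$ are strictly negative on (asymptotically all of) $\{w_n<0\}$, so that the Lugannani--Rice expression is well defined and $\Phi(|r_n|)$ can be bounded below by $\tfrac12$. This matters because $r_n$ need neither diverge to infinity nor vanish (its order depends on the rate at which $w_n\to 0$), so Gaussian-tail expansions are unavailable and one must rely purely on $\Phi\ge\tfrac12$ and the $o_{\P}$-smallness of the $\phi$-correction from~\eqref{eq:rate_1}. Once the sign bookkeeping is in place, everything else reduces to the elementary identity that the numerator and denominator sum to one, together with the lower bound on the denominator.
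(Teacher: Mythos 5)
Your proposal is correct and follows essentially the same route as the paper's proof: both rest on the identity that the numerator and denominator of~\eqref{eq:multiplication-flip-sign} sum to one, the bound $1-\Phi(r_n)\geq \tfrac12$ available because $r_n\leq 0$ on $\{w_n<0\}$, the rate condition~\eqref{eq:rate_1} to absorb the $\phi$-correction, and the non-degeneracy statement~\eqref{eq:nondegeneracy} with $y_n=w_n\sqrt{n/K_n''(0)}$ for the point-mass term in~\eqref{eq:equality-corner-case}. The only difference is organizational: you restrict at the outset to the event $\{w_n<0,\,r_n<0,\,\lambda_n<0\}$ via~\eqref{eq:sign_property_s_n} and~\eqref{eq:same_sign_condition_sn}, whereas the paper splits on the sign of $r_n$ and handles degenerate denominators through~\eqref{eq:U_n_r_n}; both reductions are valid for the in-probability conclusions.
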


Now, guaranteed by \eqref{eq:multiplication-flip-sign} in Lemma \ref{lem:upper_bound_ratio_spa}, we multiply both sides of the last statement as in \eqref{eq:convergence_flip_sign} by $\indicator(w_n < 0)\frac{\Phi(r_n)+\phi(r_n)\left\{\frac{1}{r_n}-\frac{1}{\lambda_n}\right\}}{1 - \Phi(r_n)+\phi(r_n)\left\{\frac{1}{\lambda_n}-\frac{1}{r_n}\right\}}$ and rearrange to obtain that 
\begin{equation*}
\indicator(w_n < 0)\left(\frac{\P\left[\frac1n \sum_{i = 1}^n W_{in} > w_n \mid \mathcal F_n\right]}{1 - \Phi(r_n)+\phi(r_n)\left\{\frac{1}{\lambda_n}-\frac{1}{r_n}\right\}}-1\right) \convp 0.
\end{equation*}

This is almost what we want~\eqref{eq:negative_w_n}, except the inequality in the numerator is strict. To address this, we note we have proved \eqref{eq:equality-corner-case} in Lemma \ref{lem:upper_bound_ratio_spa} that 
\begin{equation*}
\indicator(w_n < 0)\frac{\P\left[\frac1n \sum_{i = 1}^n W_{in} = w_n \mid \mathcal F_n\right]}{1-\Phi(r_n)+\phi(r_n)\left\{\frac{1}{\lambda_n}-\frac{1}{r_n}\right\}} \convp 0.
\end{equation*}
Putting together the preceding two displays, we conclude that
\begin{equation*}
\begin{split}
&\indicator(w_n < 0)\left(\frac{\P\left[\frac1n \sum_{i = 1}^n W_{in} \geq w_n \mid \mathcal F_n\right]}{1-\Phi(r_n)+\phi(r_n)\left\{\frac{1}{\lambda_n}-\frac{1}{r_n}\right\}}-1\right) \\
&\quad = \indicator(w_n < 0)\left(\frac{\P\left[\frac1n \sum_{i = 1}^n W_{in} > w_n \mid \mathcal F_n\right]}{1-\Phi(r_n)+\phi(r_n)\left\{\frac{1}{\lambda_n}-\frac{1}{r_n}\right\}}-1 + \frac{\P\left[\frac1n \sum_{i = 1}^n W_{in} = w_n \mid \mathcal F_n\right]}{1-\Phi(r_n)+\phi(r_n)\left\{\frac{1}{\lambda_n}-\frac{1}{r_n}\right\}}\right) \\
&\quad \convp 0.
\end{split}
\end{equation*}

\subsection{Proof of Lemma \ref{lem:conditional_CLT_W_n}}

\begin{proof}[Proof of Lemma \ref{lem:conditional_CLT_W_n}]
	We first prove the first claim.  
	\paragraph{Proof of \eqref{eq:conditional_uniform_CLT}:}
	We apply Lemma \ref{lem:conditional-clt} to prove the result. It suffices to show 
	\begin{enumerate}
		\item  
		\begin{align*}
			\mathrm{Var}[W_{in}|\mathcal{F}_n]<\infty;
		\end{align*}
		\item 
		\begin{align*}
			\frac{1}{n}\sum_{i=1}^n \E[|W_{in}-\E[W_{in}|\mathcal{F}_n]|^{3}|\mathcal{F}_n]=O_\P(1),\ K_n''(0)=\Omega_{\P}(1).
		\end{align*}
	\end{enumerate}
	For the first claim, we know 
	\begin{align*}
		\mathrm{Var}[W_{in}|\mathcal{F}_n]=K_{in}''(0)<\infty
	\end{align*}
	almost surely by Lemma \ref{lem:finite_cgf}. For the second claim, we claim it suffices to prove 
	\begin{align*}
		\frac{1}{n}\sum_{i=1}^n \E[|W_{in}-\E[W_{in}|\mathcal{F}_n]|^{4}|\mathcal{F}_n]=O_\P(1),\ K_n''(0)=\Omega_\P(1).
	\end{align*}
	This is because, intuitively, we can upper bound the lower moment by the higher moment. We provide a formal result for such intuition.
	\begin{lemma}[Dominance of higher moment]\label{lem:moment_dominance}
		For any $1<p<q<\infty$, the following inequality is true:
		\begin{align*}
		  \frac{\sum_{i=1}^n \E[|W_{in}|^{p}|\mathcal{F}_n]}{n}\leq \left(\frac{\sum_{i=1}^n \E[|W_{in}|^{q}|\mathcal{F}_n]}{n}\right)^{p/q}.
		\end{align*}
	\end{lemma}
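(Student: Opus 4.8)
The plan is to obtain the bound by applying Jensen's inequality twice: once conditionally on $\mathcal{F}_n$, term by term, and once for the uniform average over the index $i$.

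First I would fix $i$ and exploit that $q/p > 1$, so that $t \mapsto t^{q/p}$ is convex on $[0,\infty)$. Applying the conditional Jensen inequality to the nonnegative random variable $|W_{in}|^p$ gives
\[
\left(\E[|W_{in}|^p \mid \mathcal{F}_n]\right)^{q/p} \leq \E\left[|W_{in}|^q \mid \mathcal{F}_n\right] \quad \text{almost surely},
\]
equivalently $\E[|W_{in}|^p \mid \mathcal{F}_n] \leq \left(\E[|W_{in}|^q \mid \mathcal{F}_n]\right)^{p/q}$ almost surely. To keep this consistent with the fixed version of conditional expectation used throughout the paper, one simply applies the ordinary Jensen inequality to the probability measure $\kappa_{in}(\omega, \cdot)$ for $\P$-almost every $\omega$, invoking Lemma~\ref{lem:Klenke_Thm_8.38} to identify the resulting integrals with the conditional expectations.

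Next I would average this inequality over $i = 1, \dots, n$, obtaining
\[
\frac{1}{n}\sum_{i=1}^n \E[|W_{in}|^p \mid \mathcal{F}_n] \leq \frac{1}{n}\sum_{i=1}^n \left(\E[|W_{in}|^q \mid \mathcal{F}_n]\right)^{p/q},
\]
and then apply Jensen's inequality a second time, now to the concave function $x \mapsto x^{p/q}$ (concave because $0 < p/q < 1$) and the uniform average over $i$ of the nonnegative numbers $a_i \equiv \E[|W_{in}|^q \mid \mathcal{F}_n]$:
\[
\frac{1}{n}\sum_{i=1}^n a_i^{p/q} \leq \left(\frac{1}{n}\sum_{i=1}^n a_i\right)^{p/q}.
\]
Chaining the two displays yields exactly the claimed inequality.

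There is no real obstacle here; the only mild subtlety is the measure-theoretic bookkeeping needed so that the conditional Jensen step is carried out in a manner compatible with the RCD-based definition of conditional expectation fixed in Appendix~\ref{sec:RCD_preliminary}, which is routine and handled exactly as in that appendix.
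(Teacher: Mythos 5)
Your proof is correct and uses essentially the same two ingredients as the paper: the conditional Jensen inequality with the convex power $t\mapsto t^{q/p}$ applied to $|W_{in}|^p$, and a power-mean step for the uniform average over $i$. The only difference is the order — the paper first applies H\"older's inequality over the index $i$ and then conditional Jensen term by term, whereas you apply conditional Jensen first and then the (equivalent) concavity of $x\mapsto x^{p/q}$ for the discrete average — which is an immaterial rearrangement.
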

	\noindent Applying Lemma \ref{lem:moment_dominance} with $p=3,q=4$ we know the claim is true. By the expression of the fourth central moment in terms of the second and fourth cumulant, we have 
	\begin{align*}
		\frac{1}{n}\sum_{i=1}^n \E[|W_{in}-\E[W_{in}|\mathcal{F}_n]|^{4}|\mathcal{F}_n]=\frac{1}{n}\sum_{i=1}^n \left\{K_{in}''''(0)+3(K_{in}''(0))^2\right\}=O_{\P}(1)
	\end{align*}
	guaranteed by assumptions \eqref{eq:second_cgf_derivative_bound} and \eqref{eq:fourth_cgf_derivative_bound}. $K_n''(0)=\Omega_\P(1)$ is guaranteed by assumption \eqref{eq:lower_bound_variance}. Thus by Lemma \ref{lem:conditional-clt}, we know 
	\begin{align*}
		\sup_{t\in\mathbb{R}}\left|\P\left[\frac{1}{\sqrt{nK_n''(0)}}\sum_{i=1}^n W_{in}\leq t|\mathcal{F}_n\right]-\Phi(t)\right|\convp 0.
	\end{align*}

	\paragraph{Proof of \eqref{eq:nondegeneracy}:}
		Fix $\delta>0$. Then we can bound 
		\begin{align*}
			\P\left[\frac{1}{\sqrt{nK_n''(0)}}\sum_{i=1}^n W_{in}=y_n|\mathcal{F}_n\right]
			&
			\leq \P\left[\frac{1}{\sqrt{nK_n''(0)}}\sum_{i=1}^n W_{in}\in (y_n-\delta,y_n+\delta]|\mathcal{F}_n\right]\\
			&
			\equiv P((y_n-\delta,y_n+\delta])
		\end{align*}
		where 
		\begin{align*}
			P(A)\equiv \P\left[\frac{1}{\sqrt{nK_n''(0)}}\sum_{i=1}^n W_{in}\in A|\mathcal{F}_n\right],\ A\subset \mathbb{R}.
		\end{align*}
		Furthermore we have 
		\begin{align*}
			P((y_n-\delta,y_n+\delta])
			&
			\leq 
			\left|P((-\infty, y_n+\delta])-\Phi(y_n+\delta)\right|+\left|P((-\infty, y_n-\delta])-\Phi(y_n-\delta)\right|\\
			&
			\quad  + |\Phi(y_n+\delta)-\Phi(y_n-\delta)|.
		\end{align*}
		By \eqref{eq:conditional_uniform_CLT} and the Lipschitz continuity of $\Phi(x)$, we can bound 
		\begin{align*}
			P((y_n-\delta,y_n+\delta])
			&
			\leq 2\sup_{t\in\mathbb{R}}\left|\P\left[\frac{1}{\sqrt{nK_n''(0)}}\sum_{i=1}^n W_{in}\leq t|\mathcal{F}_n\right]-\Phi(t)\right|+\sup_{x\in\mathbb{R}}\phi(x)2\delta\\
			&
			=\sup_{x\in\mathbb{R}}\phi(x)2\delta+o_\P(1).
		\end{align*}
		Since $\sup_{x\in\mathbb{R}}\phi(x)\leq 1/\sqrt{2\pi}$, we know 
		\begin{align*}
			\P\left[\frac{1}{\sqrt{nK_n''(0)}}\sum_{i=1}^n W_{in}=y_n|\mathcal{F}_n\right]\leq P((y_n-\delta,y_n+\delta])=o_\P(1)+\frac{2\delta}{\sqrt{2\pi}}.
		\end{align*}
		We can take $\delta$ arbitrarily small so that we obtain
		\begin{align*}
			\P\left[\frac{1}{\sqrt{nK_n''(0)}}\sum_{i=1}^n W_{in}=y_n|\mathcal{F}_n\right]=o_\P(1).
		\end{align*}
\end{proof}

\subsection{Proof of Lemma \ref{lem:tilted_measure_properties}}
  
\begin{proof}[Proof of Lemma \ref{lem:tilted_measure_properties}]

To prove the statement~\eqref{eq:preserving_measurable_events}, note that
\begin{equation*}
\begin{split}
	\P_{n, s_n}[A_n] &= \E\left[\indicator(A_n)\prod_{i = 1}^n \frac{\exp(s_n W_{in})}{\E[\exp(s_n W_{in}) \mid \mathcal F_n]}\right] \\
	&= \E\left[\E\left[\indicator(A_n)\prod_{i = 1}^n \frac{\exp(s_n W_{in})}{\E[\exp(s_n W_{in}) \mid \mathcal F_n]} \mid \mathcal F_n \right]\right]  \\
	&= \E\left[\indicator(A_n)\prod_{i = 1}^n \frac{\E[\exp(s_n W_{in}) \mid \mathcal F_n]}{\E[\exp(s_n W_{in}) \mid \mathcal F_n]}\right] = \P[A_n].
\end{split}
\end{equation*}
Next, we compute for each $A_n \in \mathcal F_n$ and $B_1, \dots, B_n \subseteq \mathcal B(\R)$ that
\begin{equation}
\begin{split}
&\P_{n, s_n}[W_{1n} \in B_1, \dots, W_{nn} \in B_n, A_n] \\
&= \E\left[\indicator(W_{1n} \in B_1, \dots, W_{nn} \in B_n, A_n)\prod_{i = 1}^n \frac{\exp(s_n W_{in})}{\E[\exp(s_n W_{in}) \mid \mathcal F_n]}\right] \\
&= \E\left[\indicator(A_n)\prod_{i = 1}^n \frac{\E[\indicator(W_{in} \in B_i)\exp(s_n W_{in}) \mid \mathcal F_n]}{\E[\exp(s_n W_{in}) \mid \mathcal F_n]}\right],
\end{split}
\label{eq:conditional_tilting}
\end{equation}
from which it follows that
\begin{equation*}
\P_{n, s_n}[W_{1n} \in B_1, \dots, W_{nn} \in B_n \mid \mathcal F_n] = \prod_{i = 1}^n \frac{\E[\indicator(W_{in} \in B_i)\exp(s_n W_{in}) \mid \mathcal F_n]}{\E[\exp(s_n W_{in}) \mid \mathcal F_n]}
\end{equation*}
This verifies the claim that under $\P_{n, s_n}$, $(W_{1n}, \dots, W_{nn})$ are still independent conditionally on $\mathcal F_n$. Furthermore, this shows that the marginal distribution of each $W_{in}$ is exponentially tilted by $s_n$, conditionally on $\mathcal F_n$. From this, we can derive the conditional mean and variance of $W_{in}$ under the measure $\P_{n,s_n}$. We write 
\begin{align*}
	\E_{n,s_n}[W_{in}\mid \mathcal{F}_n]=\E\left[W_{in}\prod_{i = 1}^n \frac{\exp(s_n W_{in})}{\E[\exp(s_n W_{in}) \mid \mathcal F_n]}\mid \mathcal{F}_n\right]=\frac{\E\left[W_{in}\exp(s_nW_{in})\mid \mathcal{F}_n\right]}{\E[\exp(s_n W_{in}) \mid \mathcal F_n]}.
\end{align*}
Then by Lemma \ref{lem:tilted_moment}, we have,
\begin{align*}
	\P\left[\mathcal{T}\right]=1,\ \mathcal{T}\equiv \left\{K_{in}'(s)=\E_{in,s}[W_{in}|\mathcal{F}_n],\ \forall s\in (-\varepsilon,\varepsilon)\right\}.
\end{align*}
Then we know $\forall \omega\in\mathcal{T}\cap \{|s_n|<\varepsilon\}$,
\begin{align*}
	K_{in}'(s_n)(\omega)=\E_{in,s_n}[W_{in}|\mathcal{F}_n](\omega)
	&
	=\int x\frac{\exp(s_nx)}{\int \exp(s_nx)\mathrm{d}\kappa_{in}(\omega,x)}\mathrm{d}\kappa_{in}(\omega,x)\\
	&
	=\E_{n,s_n}[W_{in}|\mathcal{F}_n](\omega),
\end{align*}
so that by the assumption $\P[s_n\in (-\varepsilon,\varepsilon)]=1$,
\begin{align*}
	\P\left[K_{in}'(s_n)=\E_{n,s_n}[W_{in}|\mathcal{F}_n]\right]=1.
\end{align*}
Similarly, we have 
\begin{align*}
	\P\left[K_{in}''(s_n)=\mathrm{Var}_{n,s_n}[W_{in}|\mathcal{F}_n]\right]=1.
\end{align*}
\end{proof}

	\subsection{Proof of Lemma \ref{lem:conditional-berry-esseen}}

	\begin{proof}[Proof of Lemma \ref{lem:conditional-berry-esseen}]
		Define 
		\begin{align*}
			F_n(t,\omega)\equiv \P\left[\frac{1}{S_n\sqrt{n}}\sum_{i=1}^n (W_{in}-\E[W_{in}|\mathcal{F}_n])\leq t|\mathcal{F}_n\right](\omega)
		\end{align*}
		and 
		\begin{align*}
			S_n(\omega)\equiv \left(\frac{1}{n}\sum_{i=1}^n \E[(W_{in}-\E[W_{in}|\mathcal{F}_n])^2|\mathcal{F}_n](\omega)\right)^{1/2}.
		\end{align*}
		We prove the result by recalling the notion of regular conditional distribution defined in Appendix \ref{sec:RCD_preliminary}. Define $W_n\equiv (W_{1n},\ldots,W_{nn})$. Suppose $\kappa_{W_n,\mathcal{F}_n}$ is a regular conditional distribution of $W_n$ given $\mathcal{F}_n$. Then for every $\omega\in\Omega$, we know $\kappa_{W_n,\mathcal{F}_n}(\omega,\cdot)$ is a probability measure. We draw $(\widetilde W_{1n}(\omega),\ldots,\widetilde{W}_{nn}(\omega))\sim \kappa_{W_n,\mathcal{F}_n}(\omega,\cdot)$. Define 
		\begin{align*}
			\tilde S_n(\omega)\equiv \left(\frac{1}{n}\sum_{i=1}^n \E[(\widetilde{W}_{in}(\omega)-\E[\widetilde W_{in}(\omega)])^2]\right)^{1/2},\ \mathcal{D}_n\equiv \left\{S_n>0\right\}.
		\end{align*}
		In order to apply Lemma \ref{lem:berry-esseen} to almost every $\omega\in \mathcal{D}_n\subset \Omega$, we need to verify that for those $\omega$ it is true that $\forall  i\in \{1,\ldots,n\},$
		\begin{align}\label{eq:RCD_moment_condition}
			\sum_{i=1}^n\E\left[\frac{(\widetilde{W}_{in}(w)-\E[\widetilde{W}_{in}(w)])^2}{\widetilde S_n^2(\omega)}\right]=1,\ \E\left[\frac{\widetilde{W}_{in}(w)-\E[\widetilde{W}_{in}(w)]}{\widetilde S_n(\omega)}\right]=0
		\end{align}
		and $\widetilde S_n(\omega)>0$. Both claims are true by applying Lemma \ref{lem:Klenke_Thm_8.38}, such that for almost every $\omega\in\Omega$, we have for any positive integer $p$,
		\begin{align*}
			\E[W^p_{in}|\mathcal{F}_n](\omega)=\E[\widetilde{W}^p_{in}(\omega)],\ \E[|W_{in}|^p|\mathcal{F}_n](\omega)=\E[|\widetilde{W}_{in}(\omega)|^p],\ \forall i\in \{1,\ldots,n\},\ n\geq 1.
		\end{align*}
		Together with the assumption imposed in the lemma, we know conditions in \eqref{eq:RCD_moment_condition} are statisfied for any $\omega\in \mathcal{D}_n\cap \mathcal{N}^{c}$, where $\mathcal{N}$ is a null set with probability measure $0$. Then we apply Lemma \ref{lem:berry-esseen} to obtain that for any fixed $t\in\mathbb{R}$ there exists a universal constant, that is independent of $\omega$, such that $\forall \omega\in \mathcal{D}_n\cap \mathcal{N}^{c}$
		\begin{align*}
			\left|\P\left[\frac{\sum_{i=1}^n (\widetilde W_{in}(\omega)-\E[\widetilde W_{in}(\omega)])}{\widetilde S_n(\omega)\sqrt{n}}\leq t\right]-\Phi(t)\right|\leq C\frac{\sum_{i=1}^n \E[|\widetilde W_{in}(\omega)-\E[\widetilde W_{in}(\omega)]|^3]}{\widetilde S_n^3(\omega)n^{3/2}}
		\end{align*}
		Then fixing any $t\in\mathbb{R}$, we again appy Lemma \ref{lem:Klenke_Thm_8.38} such that for almost every $\omega\in\mathcal{C}_n\cap \mathcal{D}_n$,
		\begin{align*}
			|F_n(t,\omega)-\Phi(t)|\leq C\frac{\sum_{i=1}^n \E[|W_{in}-\E[W_{in}|\mathcal{F}_n]|^3|\mathcal{F}_n](\omega)}{S_n(\omega)n^{3/2}}.
		\end{align*}
		Fix $k\in\mathbb{N}$. By the continuity of the normal CDF, there exists points $-\infty=x_0<x_1\cdots<x_k=\infty$ with $\Phi(x_i)=i/k$. By monotonicity, we have, for $x_{i-1}\leq t\leq x_i$,
		\begin{align*}
			F_n(t,\omega)-\Phi(t)\leq F_n(x_i,\omega)-\Phi(x_{i-1})=F_n(x_i,\omega)-\Phi(x_i)+\frac{1}{k}
		\end{align*}
		and 
		\begin{align*}
			F_n(t,\omega)-\Phi(t)\geq F_n(x_{i-1},\omega)-\Phi(x_i)=F_n(x_{i-1},\omega)-\Phi(x_{i-1})-\frac{1}{k}.
		\end{align*}
		Thus for fixed $x\in\mathbb{R}$, we can bound for almost every $\omega\in\mathcal{D}_n$ that
		\begin{align*}
			|F_n(t,\omega)-\Phi(t)|
			&
			\leq \sup_{i}|F_n(x_i,\omega)-\Phi(x_{i})|+\frac{1}{k}\\
			&
			\leq C\frac{\sum_{i=1}^n \E[|W_{in}-\E[W_{in}|\mathcal{F}_n]|^3|\mathcal{F}_n](\omega)}{S_n(\omega)n^{3/2}}+\frac{1}{k}.
		\end{align*}
		Then taking the supremum on $t\in\mathbb{R}$, we have almost every $\omega\in\mathcal{D}_n$,
		\begin{align*}
			\sup_{t\in\mathbb{R}}|F_n(t,\omega)-F(t)|\leq C\frac{\sum_{i=1}^n \E[|W_{in}-\E[W_{in}|\mathcal{F}_n]|^3|\mathcal{F}_n](\omega)}{S_n^3(\omega)n^{3/2}}+\frac{1}{k}.
		\end{align*}
		Letting $k$ go to infinity, we have
		\begin{align}\label{eq:Berry_Esseen_bound_on_C_n}
			\indicator(\omega\in\mathcal{D}_n)\sup_{t\in\mathbb{R}}|F_n(t,\omega)-F(t)|\leq C\indicator(\omega\in\mathcal{D}_n)\frac{\sum_{i=1}^n \E[|W_{in}-\E[W_{in}|\mathcal{F}_n]|^3|\mathcal{F}_n](\omega)}{S_n^3(\omega)n^{3/2}}.
		\end{align}
		Now we decompose 
		\begin{align*}
			&
			\sqrt{n}\sup_{t\in\mathbb{R}}|F_n(t,\omega)-F(t)|\\
			&
			=\sqrt{n}\indicator(\omega\notin \mathcal{D}_n)\sup_{t\in\mathbb{R}}|F_n(t,\omega)-F(t)|+\sqrt{n}\indicator(\omega\in\mathcal{D}_n)\sup_{t\in\mathbb{R}}|F_n(t,\omega)-F(t)|.
		\end{align*}
		We first show that $\indicator(\omega\notin\mathcal{D}_n)=o_{\P}(1)$. We only need to prove 
		\begin{align*}
			\P[\mathcal{D}_n^{c}]=\P[S_n\leq 0]\rightarrow0.
		\end{align*}
		This is obvious since $S_n=\Omega_{\P}(1)$. Then we have $\indicator(\omega\notin\mathcal{D}_n)=o_{\P}(1)$ such that 
		\begin{align}\label{eq:bound_on_Cn_Dn_complement}
			\sqrt{n}\indicator(\omega\notin\mathcal{D}_n)\sup_{t\in\mathbb{R}}|F_n(t,\omega)-F(t)|=o_{\P}(1).
		\end{align}
		By \eqref{eq:Berry_Esseen_bound_on_C_n} and 
		\begin{align*}
			\frac{\sum_{i=1}^n \E[|W_{in}-\E[W_{in}|\mathcal{F}_n]|^3|\mathcal{F}_n]}{n}=O_{\P}(1), S_n=\Omega_{\P}(1),
		\end{align*}
		we know there exists $M>0$ such that for any $m\geq M$, we have 
		\begin{align*}
			\P\left[\sqrt{n}\indicator(\omega\in\mathcal{D}_n)\sup_{t\in\mathbb{R}}|F_n(t,\omega)-F(t)|> m\right]\rightarrow0.
		\end{align*}
		This, together with \eqref{eq:bound_on_Cn_Dn_complement}, implies for any $m\geq M$ we have 
		\begin{align*}
			\P\left[\sqrt{n}\sup_{t\in\mathbb{R}}|F_n(t,\omega)-F(t)|> m\right]\rightarrow0.
		\end{align*}
	  \end{proof}

\subsection{Proof of Lemma~\ref{lem:conditional_clt_assumptions}}

The statement~\eqref{eq:variance-bounded-below} can be verified using a Taylor expansion, guaranteed by Lemma \ref{lem:finite_cgf}, of $K''_{n}(s)$ around $s = 0$:
\begin{equation*}
\begin{split}
\smash{\frac{1}{n}\sum_{i = 1}^n} \V_{n, \hat s_n}[W_{in} \mid \mathcal F_n] &= K''_n(\hat s_n) \\
&= K''_n(0) + \hat s_n K'''_n(0) + \frac12 \hat s_n^2 K''''_n(\tilde s_n) \\
&= \Omega_\P(1) + o_\P(1) + o_\P(1) \\
&= \Omega_\P(1),
\end{split}
\end{equation*}
where $|\tilde s_n| \leq |\hat s_n|$. The statement $K''_n(0) = \Omega_\P(1)$ is by assumption~\eqref{eq:lower_bound_variance}, the statement $\hat s_n K'''_n(0)$ is by the convergence $\hat s_n \convp 0$~\eqref{eq:hat_s_n_convergence} and the finiteness of $K'''_n(0)$~\eqref{eq:finite_cgf_derivatives}, and the statement $\hat s_n^2 K''''_n(\tilde s_n) = o_\P(1)$ is by the convergence $\hat s_n \convp 0$ and the assumption~\eqref{eq:fourth_cgf_derivative_bound}. 

To verify the moment condition~\eqref{eq:third-moment-bound}, by Lemma \ref{lem:moment_dominance} with $p=3,q=4$, it suffices to verify a stronger fourth moment statement
\begin{align}
	\frac{1}{n}\sum_{i = 1}^n \E_{n, \hat s_n}[(W_{in} - \E_{n, \hat s_n}[W_{in}])^4 \mid \mathcal F_n] = O_{\P_{n, \hat s_n}}(1). \label{eq:bounded_fourth_moment}
\end{align}
To this end, we combine an expression for the fourth central moment of $W_{in}$ in terms of the second and fourth cumulants and the assumptions~\eqref{eq:second_cgf_derivative_bound} and~\eqref{eq:fourth_cgf_derivative_bound}:
\begin{equation*}
\frac{1}{n}\sum_{i = 1}^n \E_{n, \hat s_n}[(W_{in} - \E_{n, \hat s_n}[W_{in}])^4 \mid \mathcal F_n] = \frac{1}{n}\sum_{i = 1}^n \left\{K''''_{in}(\hat s_n) + 3(K''_{in}(\hat s_n))^2\right\} = O_\P(1).
\end{equation*}

\subsection{Proof of Lemma~\ref{lem:tilting_back}}

For any $A_n \in \mathcal F_n$, we have
\begin{equation*}
\begin{split}
&\E\left[\indicator\left(\frac{1}{n}\sum_{i = 1}^n W_{in} \geq w_n\right)\indicator(A_n)\right] \\
&\quad = \E_{n, \hat s_n}\left[\indicator\left(\frac{1}{n}\sum_{i = 1}^n W_{in} \geq w_n\right)\indicator(A_n)\frac{d\P}{d\P_{n, \hat s_n}}\right] \\
&\quad = \E_{n, \hat s_n}\left[\E_{n, \hat s_n}\left[\indicator\left(\frac{1}{n}\sum_{i = 1}^n W_{in} \geq w_n\right)\indicator(A_n)\frac{d\P}{d\P_{n, \hat s_n}} \mid \mathcal F_n\right] \right] \\
&\quad = \E_{n, \hat s_n}\left[\E_{n, \hat s_n}\left[\indicator\left(\frac{1}{n}\sum_{i = 1}^n W_{in} \geq w_n\right)\frac{d\P}{d\P_{n, \hat s_n}} \mid \mathcal F_n\right] \indicator(A_n) \right] \\
&\quad = \E\left[\E_{n, \hat s_n}\left[\indicator\left(\frac{1}{n}\sum_{i = 1}^n W_{in} \geq w_n\right)\frac{d\P}{d\P_{n, \hat s_n}} \mid \mathcal F_n\right] \indicator(A_n) \right].
\end{split}
\end{equation*}
The equality is due to Lemma~\ref{lem:tilted_measure_properties}, since the random variable inside the expectation is measurable with respect to $\mathcal F_n$.

\subsection{Proof of Lemma \ref{lem:Gaussian_integral_approximation_additive_error}}

\begin{proof}[Proof of Lemma \ref{lem:Gaussian_integral_approximation_additive_error}]
	Recall the notion of regular conditional distribution introduced in Appendix \ref{sec:RCD_preliminary}. We know $Z_n|\mathcal{F}_n$ must admit the regular conditional distribution $\kappa_n(\omega,B)$ for $B\in\mathcal{B}(\mathbb{R}^n)$. We define $F_{n}(\cdot,\omega)$ to be the CDF of $Z_n|\mathcal{F}_n$ for the probability measure $\kappa_n(\omega,\cdot)$. We apply Lemma \ref{lem:Klenke_Thm_8.38} and the integration by parts formula to obtain, for almost every $\omega\in\Omega$,
	\begin{align}
		&\nonumber
		\indicator(\lambda_n\geq 0)\E\left[\indicator(Z_n \geq 0)\exp\left( - \lambda_n Z_n \right) \mid \mathcal{F}_n\right](\omega)\\
		&\nonumber
		=\indicator(\lambda_n\geq 0)\int_0^{\infty}\exp(-\lambda_n z)\mathrm{d}F_{n}(z,\omega)\\
		&\label{eq:G_n_1}
		=-\indicator(\lambda_n\geq 0)F_{n}(0,\omega)+\indicator(\lambda_n\geq 0)\int_0^{\infty}\lambda_n\exp(-\lambda_nz)F_{n}(z,\omega)\mathrm{d}z.
	\end{align}
	Similarly, apply integration by parts so that we have
	\begin{align}\label{eq:G_n_2}
		\indicator(\lambda_n\geq 0)\int_0^{\infty}\exp(-\lambda_n z)\phi(z)\mathrm{d}z=\indicator(\lambda_n\geq 0)\left(-\Phi(0)+\int_0^{\infty}\lambda_n\exp(-\lambda_nz)\Phi(z)\mathrm{d}z\right).
	\end{align}
	Then combining \eqref{eq:G_n_1} and \eqref{eq:G_n_2}, we can bound 
	\begin{align*}
		&
		\indicator(\lambda_n \geq 0)\left|\E\left[\indicator(Z_n \geq 0)\exp\left(- \lambda_n Z_n \right) \mid \mathcal{F}_n\right](\omega) - \int_0^\infty \exp(-\lambda_n z)\phi(z)dz\right|\\
		&
		= \indicator(\lambda_n\geq 0)\left|\Phi(0)-F_{n}(0,\omega)+\int_0^{\infty}\lambda_n\exp(-\lambda_nz)\left(F_{n}(z,\omega)-\Phi(z)\right)\mathrm{d}z\right|\\
		&
		\leq \indicator(\lambda_n\geq 0)\sup_{z\geq 0}|F_{n}(z,\omega)-\Phi(z)|\left(1+\int_{0}^{\infty}\lambda_n\exp(-\lambda_n z)\mathrm{d}z\right)\\
		&
		= 2\indicator(\lambda_n\geq 0)\sup_{z\geq 0}|F_{n}(z,\omega)-\Phi(z)|\\
		&
		\leq 2\indicator(\lambda_n\geq 0)\sup_{z\in\mathbb{R}}|F_{n}(z,\omega)-\Phi(z)|\\
		&
		= 2\indicator(\lambda_n\geq 0)\sup_{z\in\mathbb{R}}\left|\P\left[Z_n\leq z|\mathcal{F}_n \right](\omega)-\Phi(z)\right|
	\end{align*}
	almost surely. For the last equality, we use Lemma \ref{lem:Klenke_Thm_8.38} together with the density argument to prove the equality. Indeed, fixing any $k\in \mathbb{N}$, by the continuity of the normal CDF, there exists points $-\infty=x_0<x_1\cdots<x_k=\infty$ with $\Phi(x_i)=i/k$. By monotonicity, we have for $x_{i-1}\leq t\leq x_i$
	\begin{align*}
		F_n(t,\omega)-\Phi(t)\leq F_n(x_i,\omega)-\Phi(x_{i-1})=\P[Z_n\leq x_i|\mathcal{F}_n](\omega)-\Phi(x_i)+\frac{1}{k}
	\end{align*}
	and 
	\begin{align*}
		F_n(t,\omega)-\Phi(t)\geq F_n(x_{i-1},\omega)-\Phi(x_i)=\P[Z_n\leq x_{i-1}|\mathcal{F}_n](\omega)-\Phi(x_{i-1})-\frac{1}{k}
	\end{align*}
	for almost every $\omega\in\Omega$. Then we have for almost every $\omega\in\Omega$
	\begin{align*}
		|F_n(t,\omega)-\Phi(t)|
		&
		\leq \sup_{i}|\P[Z_n\leq x_{i}|\mathcal{F}_n](\omega)-\Phi(x_{i})|+\frac{1}{k}\\
		&
		\leq \sup_{t\in \mathbb{R}}|\P[Z_n\leq t|\mathcal{F}_n](\omega)-\Phi(t)|+\frac{1}{k}.
	\end{align*}
	Therefore by the arbitrary choice of $k$ so that we have 
	\begin{align*}
		\sup_{t\in\mathbb{R}}|F_n(t,\omega)-\Phi(t)|\leq \sup_{t\in \mathbb{R}}|\P[Z_n\leq t|\mathcal{F}_n](\omega)-\Phi(t)|
	\end{align*}
	almost surely. By interchanging the $F_n(t,\omega)$ and $\P[Z_n\leq t|\mathcal{F}_n](\omega)$, we have shown the desired result.
\end{proof}

\subsection{Proof of Lemma \ref{lem:relative_error_Berry_Esseen_bound}}

\begin{proof}[Proof of statement~\eqref{eq:U_n_r_n}]
	We consider the events $r_n = 0$ and $r_n > 0$ separately. First, define $\mathcal{U}_n\equiv \{U_n \neq 0\}$.

	\paragraph{On the event $r_n=0$:}

	We further divide this case into two cases.
	\begin{enumerate}
		\item When $\lambda_n=0$: this implies $|U_n|=1/2>0$;
		\item When $\lambda_n\neq 0$: this implies $|U_n|=\infty$.
	\end{enumerate}
	This implies $\mathcal{U}_n$ happens.

	\paragraph{On the event $r_n>0$:}
	
	By \eqref{eq:sign_condition_r_lambda} condition, this implies $\lambda_n\geq 0$. We divide the case to $\lambda_n>0$ and $\lambda_n=0$.
	\begin{enumerate}
		\item When $\lambda_n=0$: this implies $|U_n|=\infty$;
		\item When $\lambda_n>0$: we discuss when $r_n-\lambda_n>0,r_n-\lambda_n<0$ and $r_n-\lambda_n=0$. 

		\paragraph{When $r_n- \lambda_n\geq 0$:}
		By the formula of $U_n$, we have 
		\begin{align*}
			U_n
			&
			=\exp\left(\frac{r_n^2}{2}\right)(1-\Phi(r_n))+
			\frac{1}{\sqrt{2\pi}}\left\{\frac{1}{\lambda_n}-\frac{1}{r_n}\right\}\\
			&
			\geq \exp\left(\frac{r_n^2}{2}\right)(1-\Phi(r_n)).
		\end{align*}
		By \eqref{eq:finitness_r_n_lambda_n}, we know $r_n\in (-\infty,\infty)$ almost surely, so that $U_n>0$ almost surely.
		
		\paragraph{When $r_n-\lambda_n<0$:}

		In order to proceed the proof, we present a lemma to relate the $U_n$ with the Gaussian integral estimate via integration by parts.
		
		\begin{lemma}\label{lem:R_n_formula}
			Suppose \eqref{eq:finitness_r_n_lambda_n} condition is true. Define 
			\begin{align}\label{eq:R_n_def}
				R_n\equiv \int_{r_n}^{\lambda_n}
				y\exp(y^2/2)(1-\Phi(y))-\frac{1-y^{-2}}{\sqrt{2\pi}}
				\mathrm{d}y.
			\end{align}
			Recall the definition of $U_n$ as in \eqref{eq:Dn_Un_Def}. If $\lambda_n,r_n\neq 0$ almost surely, then we have
			\begin{align*}
				R_n=\int_{0}^{\infty}\exp(-\lambda_n y)\phi(y)\mathrm{d}y-U_n,\text{ almost surely}.
			\end{align*}
		\end{lemma}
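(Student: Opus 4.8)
The plan is to prove the identity by elementary calculus: evaluate the Gaussian integral in closed form, rewrite $U_n$ using the same closed form, and recognize the integrand defining $R_n$ as an exact derivative, so that the claim follows from the fundamental theorem of calculus.

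First I would record the closed form $\int_0^\infty \exp(-\lambda y)\phi(y)\,\mathrm{d}y = \exp(\lambda^2/2)\big(1-\Phi(\lambda)\big)$ for every $\lambda \in \mathbb R$, obtained by completing the square $-\lambda y - y^2/2 = \lambda^2/2 - (y+\lambda)^2/2$ and substituting $u = y+\lambda$; the integral converges for all real $\lambda$ because $\phi$ decays faster than any exponential. Writing $h(x) \equiv \exp(x^2/2)\big(1-\Phi(x)\big)$ --- the function already appearing in the proof of Proposition~\ref{prop:equivalence_spa_formula} --- this reads $\int_0^\infty \exp(-\lambda_n y)\phi(y)\,\mathrm{d}y = h(\lambda_n)$. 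Next, using $\phi(r_n) = (2\pi)^{-1/2}\exp(-r_n^2/2)$, the definition of $U_n$ in \eqref{eq:Dn_Un_Def} becomes $U_n = h(r_n) + \frac{1}{\sqrt{2\pi}}\big(\frac{1}{\lambda_n} - \frac{1}{r_n}\big)$, which is finite because $r_n,\lambda_n$ are finite and nonzero (condition \eqref{eq:finitness_r_n_lambda_n} and the hypothesis $r_n,\lambda_n\neq 0$).

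The heart of the argument is to identify the integrand of $R_n$. Differentiating $h$ gives $h'(y) = y\exp(y^2/2)\big(1-\Phi(y)\big) - \frac{1}{\sqrt{2\pi}}$, since $\exp(y^2/2)\phi(y) = (2\pi)^{-1/2}$. Hence the integrand in \eqref{eq:R_n_def} equals $h'(y) + \frac{1}{\sqrt{2\pi}\,y^2} = \frac{\mathrm{d}}{\mathrm{d}y}\big[h(y) - \frac{1}{\sqrt{2\pi}\,y}\big]$, i.e.\ it is the derivative of $g(y) \equiv h(y) - \frac{1}{\sqrt{2\pi}\,y}$. The function $g$ is continuously differentiable on $\mathbb R\setminus\{0\}$, and since $r_n$ and $\lambda_n$ are nonzero and of the same sign (this uses \eqref{eq:sign_condition_r_lambda}, which holds under the assumptions of Theorem~\ref{thm:unified_unnormalized_moment_conditions}), the closed interval with endpoints $r_n,\lambda_n$ avoids $0$, so the fundamental theorem of calculus applies and gives $R_n = g(\lambda_n) - g(r_n) = h(\lambda_n) - h(r_n) - \frac{1}{\sqrt{2\pi}}\big(\frac{1}{\lambda_n} - \frac{1}{r_n}\big)$ almost surely. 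Combining this with the two formulas from the previous paragraph yields $\int_0^\infty \exp(-\lambda_n y)\phi(y)\,\mathrm{d}y - U_n = h(\lambda_n) - U_n = h(\lambda_n) - h(r_n) - \frac{1}{\sqrt{2\pi}}\big(\frac{1}{\lambda_n} - \frac{1}{r_n}\big) = R_n$, which is the claim.

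The argument involves no real obstacle; the only point requiring attention is that the interval of integration defining $R_n$ must not contain $0$ --- otherwise the $y^{-2}$ contribution diverges while the right-hand side stays finite --- which is precisely guaranteed by $r_n,\lambda_n$ being nonzero and of matching sign. Everything else is bookkeeping with $h$ and $\phi$.
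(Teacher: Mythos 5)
Your proof is correct and follows essentially the same route as the paper's: both evaluate $\int_0^\infty e^{-\lambda_n y}\phi(y)\,\mathrm{d}y = \exp(\lambda_n^2/2)(1-\Phi(\lambda_n))$, recognize the integrand of $R_n$ as the derivative of $\exp(y^2/2)(1-\Phi(y)) - \frac{1}{\sqrt{2\pi}\,y}$, and conclude by the fundamental theorem of calculus together with the rewriting $U_n = \exp(r_n^2/2)(1-\Phi(r_n)) + \frac{1}{\sqrt{2\pi}}\left(\frac{1}{\lambda_n}-\frac{1}{r_n}\right)$. Your explicit observation that the interval between $r_n$ and $\lambda_n$ must avoid $0$ (guaranteed by their being nonzero and of matching sign via \eqref{eq:sign_condition_r_lambda}) is a detail the paper leaves implicit, but it does not alter the argument.
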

		Since in this case, $\lambda_n>r_n>0$ and \eqref{eq:finitness_r_n_lambda_n} is assumed in the lemma statement, then by Lemma \ref{lem:R_n_formula} we have 
		\begin{align*}
			U_n=\int_{0}^{\infty}\exp(-\lambda_n y)\phi(y)\mathrm{d}y-R_n,\text{ almost surely}.
		\end{align*}
		By Lemma \ref{lem:Gaussian_tail_estimate},
		we can write 
		\begin{align*}
			R_n = \int_{r_n}^{\lambda_n} \left(-y\int_{y}^{\infty}\frac{\phi(t)}{3t^4}\mathrm{d}t\right)\mathrm{d}y<0,
		\end{align*}
		which implies $U_n>0$. Thus $\mathcal{U}_n$ happens.
	\end{enumerate}

	\paragraph{On the event $r_n<0$:}
	By \eqref{eq:sign_condition_r_lambda} condition, we know $\lambda_n\leq 0$. We divide the case to $\lambda_n<0$ and $\lambda_n=0$.

	\begin{enumerate}
		\item When $\lambda_n<0$: we can lower bound 
		\begin{align*}
			U_n=\exp\left(\frac{r_n^2}{2}\right)(1-\Phi(r_n))+
			\frac{1}{\sqrt{2\pi}}\left\{\frac{1}{\lambda_n}-\frac{1}{r_n}\right\}\geq \frac{1}{2} -\frac{1}{\sqrt{2\pi}}\left|\frac{1}{\lambda_n}-\frac{1}{r_n}\right|
		\end{align*}
		By \eqref{eq:rate_1}, we have 
		\begin{align*}
			\frac{1}{\sqrt{2\pi}}\left|\frac{1}{\lambda_n}-\frac{1}{r_n}\right|=o_{\P}(1).
		\end{align*}
		Thus we have
		\begin{align*}
			\P\left[\mathcal{U}_n^c\text{ and } r_n<0\text{ and } \lambda_n<0\right]\leq \P\left[U_n\geq \frac{1}{4}\text{ and } r_n<0\right]\rightarrow0.
		\end{align*}

		\item 
		When $\lambda_n=0$: we know $|U_n|=\infty$. Thus $\mathcal{U}_n$ happens. This completes the proof.
	\end{enumerate}
\end{proof}

\begin{proof}[Proof of statement~\eqref{eq:U_n_rate}]
	We write
	\begin{align*}
		\indicator(r_n\geq 0)\frac{1}{\sqrt{n}U_n}=\indicator(r_n \geq 1)\frac{1}{r_nU_n}\frac{r_n}{\sqrt{n}}+\indicator(r_n\in [0,1))\frac{1}{\sqrt{n}U_n}.
	\end{align*}
	Now we present an auxiliary lemma.
	\begin{lemma}[Convergence rate of $1/U_n$]\label{lem:convergence_rate_denominator_relative_error}
		Suppose \eqref{eq:rate_1} and \eqref{eq:rate_2} hold. Then we have 
		\begin{align}
			\frac{\indicator(r_n\geq 1)}{r_nU_n}
			&\label{eq:relative_error_denominator_1}
			=O_{\P}(1)\\
			\indicator(r_n\in [0,1))\frac{1}{U_n}
			&
			\label{eq:relative_error_denominator_2}
			=O_{\P}(1).
		\end{align}
	\end{lemma}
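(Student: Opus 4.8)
The plan is to reduce both claims to elementary two-sided bounds on the Gaussian Mills ratio $h(x) \equiv \exp(x^2/2)(1-\Phi(x))$, combined with the rate conditions~\eqref{eq:rate_1} and~\eqref{eq:rate_2}. Using $\phi(r_n)\exp(r_n^2/2) = 1/\sqrt{2\pi}$, the quantity $U_n$ from~\eqref{eq:Dn_Un_Def} can be rewritten as
\begin{align*}
U_n = h(r_n) + \frac{1}{\sqrt{2\pi}}\left(\frac{1}{\lambda_n} - \frac{1}{r_n}\right).
\end{align*}
Here $h$ is strictly positive everywhere (since $1-\Phi>0$), and on $[0,1]$ it obeys $h(x) \geq 1-\Phi(x) \geq 1-\Phi(1) > 0$, because $\exp(x^2/2)\geq 1$ and $\Phi$ is increasing. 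With this decomposition, each bound amounts to showing that the leading term dominates the correction term $\frac{1}{\sqrt{2\pi}}(\frac{1}{\lambda_n}-\frac{1}{r_n})$, respectively $\frac{1}{\sqrt{2\pi}}(\frac{r_n}{\lambda_n}-1)$.

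For~\eqref{eq:relative_error_denominator_2}, I would work on the event $\{r_n\in[0,1)\}$. The sub-event $\{r_n=0,\lambda_n\neq 0\}$ has probability tending to zero by~\eqref{eq:rate_1} (otherwise $\frac1{\lambda_n}-\frac1{r_n}$ would fail to be $o_\P(1)$), so it can be discarded; on its complement the convention $1/0-1/0=0$ applies whenever $r_n=0$, and hence $U_n \geq (1-\Phi(1)) + \frac{1}{\sqrt{2\pi}}(\frac1{\lambda_n}-\frac1{r_n})$. Since~\eqref{eq:rate_1} makes the correction term $o_\P(1)$, with probability tending to one $U_n \geq (1-\Phi(1))/2$ on $\{r_n\in[0,1)\}$, and therefore $\indicator(r_n\in[0,1))/U_n \leq 2/(1-\Phi(1))$ with probability tending to one, which is~\eqref{eq:relative_error_denominator_2}.

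For~\eqref{eq:relative_error_denominator_1}, multiplying by $r_n$ on the event $\{r_n\geq 1\}$ gives $r_n U_n = r_n h(r_n) + \frac{1}{\sqrt{2\pi}}(\frac{r_n}{\lambda_n}-1)$. By Lemma~\ref{lem:lower_bound_Gaussian}, for $r_n\geq 1$ one has $r_n h(r_n) = r_n\exp(r_n^2/2)(1-\Phi(r_n)) > \frac{1}{\sqrt{2\pi}}\cdot\frac{r_n^2}{r_n^2+1} \geq \frac{1}{2\sqrt{2\pi}}$. Condition~\eqref{eq:rate_2} forces $\P[\lambda_n\leq 0,\,r_n\geq 1]\to 0$ (since otherwise $\lambda_n/r_n$ stays bounded away from $1$), and on the complementary good event $\lambda_n/r_n$ lies within $o_\P(1)$ of $1$, so inverting the ratio gives $\indicator(r_n\geq 1)(\frac{r_n}{\lambda_n}-1) = o_\P(1)$ (with the degenerate $\lambda_n=0$ part absorbed into the negligible event, where $U_n=\infty$ anyway). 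Hence with probability tending to one $r_n U_n \geq \frac{1}{4\sqrt{2\pi}}$ on $\{r_n\geq 1\}$, giving $\indicator(r_n\geq 1)/(r_n U_n) = O_\P(1)$.

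The main obstacle is bookkeeping rather than analysis: one must keep the $0$-conventions in the definition of $U_n$ consistent on the degenerate events $\{r_n=0\}$ and $\{\lambda_n=0\}$, check that these events are asymptotically negligible (which follows from~\eqref{eq:rate_1}--\eqref{eq:rate_2} together with the almost sure finiteness of $r_n,\lambda_n$ established earlier), and justify passing from $\lambda_n/r_n-1=o_\P(1)$ to $r_n/\lambda_n-1=o_\P(1)$, which is routine since a sequence converging in probability to $1$ is eventually bounded away from $0$ with probability tending to one. The remaining steps are just the Mills-ratio lower bound (Lemma~\ref{lem:lower_bound_Gaussian}) plugged into the decomposition above.
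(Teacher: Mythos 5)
Your proof is correct and follows essentially the same route as the paper's: rewrite $U_n$ (resp.\ $r_nU_n$) as the Mills-ratio term $\exp(r_n^2/2)(1-\Phi(r_n))$ (resp.\ $r_n\exp(r_n^2/2)(1-\Phi(r_n))$) plus the correction $\frac{1}{\sqrt{2\pi}}\{\frac{1}{\lambda_n}-\frac{1}{r_n}\}$ (resp.\ $\frac{1}{\sqrt{2\pi}}\{\frac{r_n}{\lambda_n}-1\}$), lower-bound the leading term by a universal constant via Lemma~\ref{lem:lower_bound_Gaussian} on $\{r_n\geq 1\}$ and via $\inf_{z\in[0,1]}\exp(z^2/2)(1-\Phi(z))>0$ on $\{r_n\in[0,1)\}$, and kill the correction with \eqref{eq:rate_1} and \eqref{eq:rate_2}. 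The only difference is that you spell out the inversion from $\lambda_n/r_n-1=o_\P(1)$ to $r_n/\lambda_n-1=o_\P(1)$ and the negligibility of the degenerate events, which the paper leaves implicit.
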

	\paragraph{Intuition of Lemma \ref{lem:convergence_rate_denominator_relative_error}:}
	The intuition behind this is when $r_n$ is small, we expect $|U_n|$, is lower bounded with high probability since $1/\lambda_n-1/r_n=o_{\P}(1)$ and thus the dominant term is $\exp(r_n^2/2)(1-\Phi(r_n))$, which is lower bounded when $r_n$ is small. When $r_n$ is large, $|U_n|$ will go to zero but with a rate that is slower than $1/r_n$. The latter case needs a finer analysis with \eqref{eq:rate_1} and \eqref{eq:rate_2} conditions involved. 
	
	Then by Lemma \ref{lem:convergence_rate_denominator_relative_error}, we know 
	\begin{align*}
		\indicator(r_n\geq 1)\frac{1}{r_nU_n}=O_{\P}(1),\ \indicator(r_n\in [0,1))\frac{1}{U_n}=O_{\P}(1).
	\end{align*}
	Since $r_n/\sqrt{n}=o_{\P}(1)$, we conclude 
	\begin{align*}
		\indicator(r_n\geq 0)\frac{1}{\sqrt{n}U_n}=o_{\P}(1).
	\end{align*}
	This completes the proof.
  \end{proof}

  \subsection{Proof of Lemma \ref{lem:final_result_except_lam_0}}

	\begin{proof}[Proof of Lemma \ref{lem:final_result_except_lam_0}]
		Define 
		\begin{align*}
			O_n\equiv \frac{|r_n-\lambda_n|}{\sqrt{2\pi}}\left(\frac{1}{r_n^2}+\frac{1}{\lambda_n^2}\right).
		\end{align*}
		We first present an auxiliary lemma.
		\begin{lemma}[Upper bound of Gaussian integral]\label{lem:upper_bound_Gaussian_integral}
			Under conditions \eqref{eq:finitness_r_n_lambda_n} and \eqref{eq:sign_condition_r_lambda}, the following inequality is true almost surely:
			 \begin{align*}
				\indicator(r_n> 0,\lambda_n> 0)\left|\frac{\int_0^\infty \exp(-\lambda_n z)\phi(z)dz}{U_n}-1\right|\leq \indicator(r_n > 0,\lambda_n > 0)\left|\frac{1}{U_n}\right|\cdot O_n.
			\end{align*}
		\end{lemma}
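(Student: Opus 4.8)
The plan is to use the factor $\indicator(r_n>0,\lambda_n>0)$ to reduce the claim to the deterministic estimate
\[
\indicator(r_n>0,\lambda_n>0)\left|\int_0^\infty \exp(-\lambda_n z)\phi(z)\,dz - U_n\right| \le \indicator(r_n>0,\lambda_n>0)\,O_n \quad\text{almost surely},
\]
from which the statement follows by dividing through by $|U_n|$. The first step is to identify the left-hand side with $|R_n|$: on the event $\{r_n>0,\lambda_n>0\}$ both $r_n$ and $\lambda_n$ are strictly positive, and are finite almost surely by condition~\eqref{eq:finitness_r_n_lambda_n}, so Lemma~\ref{lem:R_n_formula} (invoked on this event, where its hypotheses hold, with condition~\eqref{eq:sign_condition_r_lambda} guaranteeing that $r_n$ and $\lambda_n$ carry the same sign) yields $\int_0^\infty \exp(-\lambda_n z)\phi(z)\,dz - U_n = R_n$ almost surely, with $R_n$ as in~\eqref{eq:R_n_def}.

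It then remains to bound $|R_n|\le O_n$ on $\{r_n>0,\lambda_n>0\}$. The key manipulation is to rewrite the integrand of $R_n$ via the exact identity in Lemma~\ref{lem:Gaussian_tail_estimate}: multiplying $1-\Phi(y)-\frac{1}{\sqrt{2\pi}\,y}\exp(-y^2/2)(1-y^{-2}) = -\int_y^\infty \frac{\phi(t)}{3t^4}\,dt$ through by $y\exp(y^2/2)$ and using $y\exp(y^2/2)\phi(y)=y/\sqrt{2\pi}$ shows that, for every $y>0$,
\[
y\exp(y^2/2)(1-\Phi(y)) - \frac{1-y^{-2}}{\sqrt{2\pi}} = -\,y\exp(y^2/2)\int_y^\infty \frac{\phi(t)}{3t^4}\,dt .
\]
Since $\bigl|\int_y^\infty \frac{\phi(t)}{3t^4}\,dt\bigr|\le \phi(y)/y^3$ (again Lemma~\ref{lem:Gaussian_tail_estimate}), the integrand of $R_n$ is bounded in absolute value by $\frac{1}{\sqrt{2\pi}\,y^2}$ throughout the interval of integration, which lies in $(0,\infty)$. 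Integrating gives $|R_n|\le \frac{1}{\sqrt{2\pi}}\bigl|\frac{1}{r_n}-\frac{1}{\lambda_n}\bigr| = \frac{|r_n-\lambda_n|}{\sqrt{2\pi}\,r_n\lambda_n}$, and the elementary bound $\frac{1}{r_n\lambda_n}\le \frac12\bigl(\frac{1}{r_n^2}+\frac{1}{\lambda_n^2}\bigr)$ (arithmetic--geometric mean inequality applied to $r_n^{-2}$ and $\lambda_n^{-2}$) gives $|R_n|\le \frac{|r_n-\lambda_n|}{\sqrt{2\pi}}\bigl(\frac{1}{r_n^2}+\frac{1}{\lambda_n^2}\bigr)=O_n$, as required.

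I do not anticipate a serious obstacle; the argument is short once the $R_n$ integrand has been rewritten. The points requiring some care are the almost-sure bookkeeping when invoking Lemma~\ref{lem:R_n_formula} on the sub-event $\{r_n>0,\lambda_n>0\}$ rather than globally, the orientation of the integral defining $R_n$ (the absolute-value bound is unaffected by whether $\lambda_n\ge r_n$ or $\lambda_n<r_n$ since both endpoints are positive), and the final arithmetic--geometric mean step that recasts the $1/(r_n\lambda_n)$ bound in the symmetric form appearing in $O_n$.
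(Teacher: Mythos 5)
Your proposal is correct and follows essentially the same route as the paper: apply Lemma~\ref{lem:R_n_formula} on the event $\{r_n>0,\lambda_n>0\}$ to identify the discrepancy with $R_n$, then bound the integrand of $R_n$ by $\frac{1}{\sqrt{2\pi}y^2}$ via Lemma~\ref{lem:Gaussian_tail_estimate}; your final step (integrating $1/y^2$ exactly to get $\frac{|r_n-\lambda_n|}{\sqrt{2\pi}\,r_n\lambda_n}$ and then using AM--GM) is a slightly sharper variant of the paper's sup-times-interval-length bound and lands on the same $O_n$. The one bookkeeping point you gloss over is the division by $|U_n|$: this requires $U_n\neq 0$ on $\{r_n>0\}$, which the paper secures by invoking statement~\eqref{eq:U_n_r_n} of Lemma~\ref{lem:relative_error_Berry_Esseen_bound} (whose $r_n>0$ case uses only \eqref{eq:finitness_r_n_lambda_n} and \eqref{eq:sign_condition_r_lambda}, so it is available under the present hypotheses); adding that citation closes the argument.
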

		\noindent By Lemma \ref{lem:upper_bound_Gaussian_integral}, we can bound 
		\begin{align*}
			\indicator(r_n> 0,\lambda_n> 0)\left|\frac{\int_0^\infty \exp(-\lambda_n z)\phi(z)dz}{U_n}-1\right|\leq\left| \indicator(r_n> 0,\lambda_n> 0)\frac{O_n}{U_n} \right|
		\end{align*}
		almost surely. Then we can further decompose 
		\begin{align*}
			\indicator(r_n> 0,\lambda_n> 0)\frac{O_n}{U_n}=\frac{\indicator(r_n\geq 1,\lambda_n> 0)}{r_nU_n}\cdot r_n O_n+ \indicator(r_n\in (0,1),\lambda_n> 0)\frac{O_n}{U_n}
		\end{align*}
		By Lemma \ref{lem:convergence_rate_denominator_relative_error}, we know 
	\begin{align*}
		\indicator(r_n\geq 1)\frac{1}{r_nU_n}=O_{\P}(1),\ \indicator(r_n\in (0,1))\frac{1}{U_n}=O_{\P}(1).
	\end{align*}
	Thus it suffices to show 
	\begin{align}\label{eq:remainder_bound_1}
		r_n O_n =o_{\P}(1)
	\end{align}
	and 
	\begin{align}\label{eq:remainder_bound_2}
		\indicator(r_n>0,\lambda_n>0)O_n=o_{\P}(1).
	\end{align}
	
	\paragraph{Proof of \eqref{eq:remainder_bound_1}:}

	We compute 
	\begin{align*}
		r_n O_n=\frac{1}{\sqrt{2\pi}}\left|1-\frac{\lambda_n}{r_n}\right|\cdot\left|1+\frac{r_n^2}{\lambda_n^2}\right|.
	\end{align*}
	Thus by \eqref{eq:rate_2} we know 
	\begin{align*}
		\left|\frac{\lambda_n}{r_n}-1\right|=o_{\P}(1),\ \frac{r_n^2}{\lambda_n^2}=O_{\P}(1).
	\end{align*}
	Thus we have $\indicator(r_n\geq 1,\lambda_n>0)r_n O_n=o_{\P}(1)$.

	\paragraph{Proof of \eqref{eq:remainder_bound_2}:}

	We can write 
	\begin{align*}
		O_n = \frac{|r_n-\lambda_n|}{\sqrt{2\pi}}\left(\frac{1}{r_n^2}+\frac{1}{\lambda_n^2}\right)=\frac{1}{\sqrt{2\pi}}\left|\left(\frac{\lambda_n}{r_n}-1\right)\frac{1}{r_n}\right|\cdot\left|1+\frac{r_n^2}{\lambda_n^2}\right|.
	\end{align*}
	Then by \eqref{eq:rate_3}, we know 
	\begin{align*}
		\indicator(r_n>0,\lambda_n>0)\left|\left(\frac{\lambda_n}{r_n}-1\right)\frac{1}{r_n}\right|=o_{\P}(1)
	\end{align*}
	and by \eqref{eq:rate_2}, we have $r_n^2/\lambda_n^2=O_{\P}(1)$. Thus we have $O_n=o_{\P}(1)$.
	\end{proof}

\subsection{Proof of Lemma \ref{lem:ratio_convergence}}

\begin{proof}[Proof of Lemma \ref{lem:ratio_convergence}]
	By the rate condition \eqref{eq:rate_2}, we know 
	\begin{align*}
		1+\frac{\lambda_n^2}{r_n^2}=O_{\P}(1).
	\end{align*}
	Thus we only need to show 
	\begin{align*}
		\indicator(\lambda_n\neq 0)\frac{1-\frac{r_n}{\lambda_n}}{\lambda_nh(\lambda_n)}=o_{\P}(1).
	\end{align*}
	We decompose the magnitude of $|\lambda_n|$ to two parts: $|\lambda_n|> 1$ and $|\lambda_n|\in (0,1]$. It suffices to prove 
	\begin{align}\label{eq:ratio_decomposition}
		\indicator(|\lambda_n|\in (0,1])\frac{1-\frac{r_n}{\lambda_n}}{\lambda_nh(\lambda_n)}=o_{\P}(1),\ \indicator(|\lambda_n|>1)\frac{1-\frac{r_n}{\lambda_n}}{\lambda_nh(\lambda_n)}=o_{\P}(1).
	\end{align}
	For the first term in \eqref{eq:ratio_decomposition}, we know $h(x)$ is unformly lower bounded for $x\in[-1,1]$ so that $h(\lambda_n)$ is uniformly lower bounded for $|\lambda_n|\in (0,1]$. Then by the rate condition \eqref{eq:rate_4}, we know 
	\begin{align*}
		\indicator(|\lambda_n|\in (0,1])\frac{1-\frac{r_n}{\lambda_n}}{\lambda_nh(\lambda_n)}=o_{\P}(1).
	\end{align*}
	For the second term in \eqref{eq:ratio_decomposition}, we have by Lemma \ref{lem:lower_bound_Gaussian} that for $|\lambda_n|> 1$,
	\begin{align*}
		|\lambda_nh(\lambda_n)|=|\lambda_n|\exp(\lambda_n^2/2)(1-\Phi(\lambda_n))
		&
		\geq |\lambda_n|\exp(\lambda_n^2/2)(1-\Phi(|\lambda_n|))\\
		&
		\geq \frac{1}{\sqrt{2\pi}}\frac{\lambda_n^2}{\lambda_n^2+1}> \frac{1}{2\sqrt{2\pi}}.
	\end{align*}
	Then by the rate condition \eqref{eq:rate_2}, we know 
	\begin{align*}
		\indicator(|\lambda_n|>1)\frac{|1-\frac{r_n}{\lambda_n}|}{|\lambda_nh(\lambda_n)|}\leq 2\sqrt{2\pi}\indicator(|\lambda_n|>1)\left|1-\frac{r_n}{\lambda_n}\right|=o_{\P}(1).
	\end{align*}
\end{proof}

\subsection{Proof of Lemma \ref{lem:ratio_vanish}}

\begin{proof}[Proof of Lemma \ref{lem:ratio_vanish}]
	We decompose the magnitude of $|\lambda_n|$ to two parts $|\lambda_n|>1$ and $|\lambda_n|\in (0,1]$. It suffices to prove 
	\begin{align}\label{eq:ratio_vanish_decomposition}
		\indicator(|\lambda_n| \in(0,1])\frac{1-\frac{\lambda_n}{r_n}}{\lambda_nh(\lambda_n)}=o_{\P}(1),\ \indicator(|\lambda_n| >1)\frac{1-\frac{\lambda_n}{r_n}}{\lambda_nh(\lambda_n)}=o_{\P}(1).
	\end{align} 
	For the first term in \eqref{eq:ratio_vanish_decomposition}, we know $h(x)$ is uniformly bounded for $x\in[-1,1]$ so that $h(\lambda_n)$ is uniformly bounded for $|\lambda_n|\in (0,1]$. Then by the rate condition \eqref{eq:rate_1}, we know 
	\begin{align*}
		\indicator(|\lambda_n| \in(0,1])\frac{1-\frac{\lambda_n}{r_n}}{\lambda_nh(\lambda_n)}=\indicator(|\lambda_n| \in(0,1])\frac{\frac{1}{\lambda_n}-\frac{1}{r_n}}{h(\lambda_n)}=o_{\P}(1).
	\end{align*}
	For the second term in \eqref{eq:ratio_vanish_decomposition},  we have by Lemma \ref{lem:lower_bound_Gaussian} that for $|\lambda_n|> 1$,
	\begin{align*}
		|\lambda_nh(\lambda_n)|=|\lambda_n|\exp(\lambda_n^2/2)(1-\Phi(\lambda_n))
		&
		\geq |\lambda_n|\exp(\lambda_n^2/2)(1-\Phi(|\lambda_n|))\\
		&
		\geq \frac{1}{\sqrt{2\pi}}\frac{\lambda_n^2}{\lambda_n^2+1}> \frac{1}{2\sqrt{2\pi}}.
	\end{align*}
	Then by the rate condition \eqref{eq:rate_2}, we know
	\begin{align*}
		\indicator(|\lambda_n| >1)\frac{|1-\frac{\lambda_n}{r_n}|}{|\lambda_nh(\lambda_n)|}\leq 2\sqrt{2\pi}\indicator(|\lambda_n| >1)\left|1-\frac{\lambda_n}{r_n}\right|=o_{\P}(1).
	\end{align*}
\end{proof}

\subsection{Proof of Lemma \ref{lem:reduced_condition}}

\begin{proof}[Proof of Lemma \ref{lem:reduced_condition}]
	The following lemma states how the derivatives of $K_{in}(s)$ are related to the conditional moments of $W_{in}|\mathcal{F}_n$ under measure $\kappa_{in,s}$.
	\begin{lemma}\label{lem:tilted_moment}
		On the event $\mathcal{A}$ as in Lemma \ref{lem:finite_cgf}, we have 
		\begin{align}
			K_{in}'(s)=\E_{in,s}[W_{in}|\mathcal{F}_n],\ \forall s\in (-\varepsilon,\varepsilon),
			&\label{eq:first_moment_relationship}\\
			K_{in}''(s)=\V_{in,s}[W_{in}|\mathcal{F}_n],\ \forall s\in (-\varepsilon,\varepsilon),
			&\label{eq:second_moment_relationship}\\
			K_{in}^{(4)}(s)=\E_{in,s}[(W_{in}-\E_{in,s}[W_{in}|\mathcal{F}_n])^4|\mathcal{F}_n]-3\V^2_{in,s}[W_{in}|\mathcal{F}_n],\ \forall s\in (-\varepsilon,\varepsilon)
			&\label{eq:fourth_moment_relationship}.
		\end{align}
	\end{lemma}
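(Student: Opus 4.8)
The plan is to reduce the whole statement to ordinary calculus carried out pointwise on the probability-one event $\mathcal{A}$ of Lemma~\ref{lem:finite_cgf}. Write $H_{in}(s) \equiv \E[\exp(sW_{in})|\mathcal{F}_n] = \int \exp(sx)\,\mathrm{d}\kappa_{in}(\cdot,x)$, so that $K_{in} = \log H_{in}$. On $\mathcal{A}$ three facts are in hand: (i) $H_{in}(s) \in (0,\infty)$ for $s \in (-\varepsilon,\varepsilon)$ — positivity is immediate since $\exp(sx) > 0$ and $\kappa_{in}(\omega,\cdot)$ is a probability measure, finiteness is Lemma~\ref{lem:finite_cgf}; (ii) $H_{in}$ is infinitely differentiable on $(-\varepsilon,\varepsilon)$ with $H_{in}^{(r)}(s) = \E[W_{in}^r\exp(sW_{in})|\mathcal{F}_n]$ for every $r$, by Lemmas~\ref{lem:existence_derivative_CGF} and~\ref{lem:finite_cgf_moments}; (iii) consequently, by the definition~\eqref{eq:def_conditional_expectation_tilted} of the tilted conditional expectation, $\E_{in,s}[W_{in}^r|\mathcal{F}_n] = H_{in}^{(r)}(s)/H_{in}(s)$ for every $r$, and in particular every tilted moment of $W_{in}$ is finite. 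Since $\log$ is smooth on $(0,\infty)$, $K_{in}$ is smooth on $(-\varepsilon,\varepsilon)$, and its derivatives are computed by the chain and quotient rules applied to the deterministic functions $s \mapsto H_{in}(s)(\omega)$.

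For \eqref{eq:first_moment_relationship}, the chain rule gives $K_{in}'(s) = H_{in}'(s)/H_{in}(s) = \E_{in,s}[W_{in}|\mathcal{F}_n]$. For \eqref{eq:second_moment_relationship}, one more application of the quotient rule yields
\begin{align*}
K_{in}''(s) = \frac{H_{in}''(s)}{H_{in}(s)} - \left(\frac{H_{in}'(s)}{H_{in}(s)}\right)^2 = \E_{in,s}[W_{in}^2|\mathcal{F}_n] - \left(\E_{in,s}[W_{in}|\mathcal{F}_n]\right)^2 = \V_{in,s}[W_{in}|\mathcal{F}_n].
\end{align*}

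For \eqref{eq:fourth_moment_relationship}, rather than differentiate twice more and regroup, I would argue via the cumulant structure. Fix $\omega \in \mathcal{A}$ and $s \in (-\varepsilon,\varepsilon)$; for $t$ with $s+t \in (-\varepsilon,\varepsilon)$ one has $\int \exp(tx)\,\mathrm{d}\kappa_{in,s}(\omega,x) = H_{in}(s+t)(\omega)/H_{in}(s)(\omega)$, so under the tilted law $\kappa_{in,s}(\omega,\cdot)$ the variable $W_{in}$ has cumulant generating function $t \mapsto K_{in}(s+t)(\omega) - K_{in}(s)(\omega)$ near $t = 0$, hence its $r$-th cumulant equals $K_{in}^{(r)}(s)(\omega)$. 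Applying the classical identity $\kappa_4 = \mu_4 - 3\mu_2^2$ relating the fourth cumulant to the central moments (valid for any law with finite fourth moment, which holds here by (iii)) yields exactly \eqref{eq:fourth_moment_relationship}. The same identity can alternatively be extracted by differentiating $K_{in}'' = H_{in}''/H_{in} - (H_{in}'/H_{in})^2$ twice more and collecting terms by $H_{in}^{(r)}/H_{in}$; this is elementary but tedious, so I prefer the cumulant route.

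The proof is essentially bookkeeping: the one genuinely delicate ingredient — that differentiation may be taken inside the conditional expectation, simultaneously for all $s \in (-\varepsilon,\varepsilon)$ and on a single probability-one event — is already secured by Lemma~\ref{lem:existence_derivative_CGF}. The only further points needing a word of care are the strict positivity of $H_{in}$ (so $\log H_{in}$ and the quotients are well defined) and the finiteness of the tilted fourth moment (so the cumulant-to-moment identity applies), both of which are recorded above. I do not expect a substantive obstacle.
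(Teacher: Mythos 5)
Your proposal is correct and follows essentially the same route as the paper: invoke Lemma~\ref{lem:existence_derivative_CGF} together with Lemma~\ref{lem:finite_cgf_moments} to differentiate $H_{in}(s)=\E[\exp(sW_{in})|\mathcal{F}_n]$ under the conditional expectation pointwise on $\mathcal{A}$, then identify $K_{in}'=H_{in}'/H_{in}$ and $K_{in}''=H_{in}''/H_{in}-(H_{in}'/H_{in})^2$ with the tilted mean and variance via definition~\eqref{eq:def_conditional_expectation_tilted}. The paper dispatches the remaining claims with ``the other two follow similarly,'' and your cumulant-shift observation (the tilted CGF is $t\mapsto K_{in}(s+t)-K_{in}(s)$, so $\kappa_4=\mu_4-3\mu_2^2$ gives \eqref{eq:fourth_moment_relationship}) is a clean, correct way to carry out that step.
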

	\noindent We first show with Lemma \ref{lem:tilted_moment}, in order to show condition \eqref{eq:second_cgf_derivative_bound}-\eqref{eq:fourth_cgf_derivative_bound}, it suffices to show there exists $\varepsilon>0$ such that $\P[\mathcal{A}]=1$ and for the given $\varepsilon>0$,
	\begin{align}\label{eq:sufficient_condition_cumulant}
		\sup_{s\in (-\varepsilon,\varepsilon)}\frac{1}{n}\sum_{i=1}^n \E_{in,s}[W_{in}^4|\mathcal{F}_n]=O_{\P}(1).
	\end{align}
	Suppose $\P[\mathcal{A}]=1$ and the assumption \eqref{eq:sufficient_condition_cumulant} holds. Now we verify condition \eqref{eq:second_cgf_derivative_bound}-\eqref{eq:fourth_cgf_derivative_bound} subsequently. 

	\paragraph{Verification of condition \eqref{eq:second_cgf_derivative_bound}:}

	By conclusion \eqref{eq:second_moment_relationship}, Jensen's inequality and statement \eqref{eq:sufficient_condition_cumulant}, we have
	\begin{align*}
		\sup_{s\in (-\varepsilon,\varepsilon)}\frac{1}{n}\sum_{i=1}^n (K_{in}''(s))^2
		&
		\leq \sup_{s\in (-\varepsilon,\varepsilon)}\frac{1}{n}\sum_{i=1}^n (\E_{in,s}[W_{in}^2|\mathcal{F}_n])^2\\
		&
		=\sup_{s\in (-\varepsilon,\varepsilon)}\frac{1}{n}\sum_{i=1}^n \left(\int x^2\mathrm{d}\kappa_{in,s}(\omega,x)\right)^2\\
		&
		\leq \sup_{s\in (-\varepsilon,\varepsilon)}\frac{1}{n}\sum_{i=1}^n \int x^4\mathrm{d}\kappa_{in,s}(\omega,x)\\
		&
		= \sup_{s\in (-\varepsilon,\varepsilon)}\frac{1}{n}\sum_{i=1}^n \E_{in,s}[W_{in}^4|\mathcal{F}_n]\\
		&
		=O_{\P}(1).
	\end{align*}

	\paragraph{Verification of condition \eqref{eq:third_cgf_derivative_bound}:}

	It suffices to prove 
	\begin{align*}
		\frac{1}{n}\sum_{i=1}^n K_{in}'''(0)=\frac{1}{n}\sum_{i=1}^n \E[W_{in}^3|\mathcal{F}_n]=O_{\P}(1).
	\end{align*}
	By Lemma \ref{lem:moment_dominance} with $p=3,q=4$, we can bound 
	\begin{align*}
		\left|\frac{1}{n}\sum_{i=1}^n \E[W_{in}^3|\mathcal{F}_n]\right|\leq \frac{1}{n}\sum_{i=1}^n \E[|W_{in}|^3|\mathcal{F}_n]\leq \left(\frac{\sum_{i=1}^n \E[W_{in}^4|\mathcal{F}_n]}{n}\right)^{3/4}.
	\end{align*}
	Then by statement \eqref{eq:sufficient_condition_cumulant}, we have 
	\begin{align*}
		\left|\frac{1}{n}\sum_{i=1}^n K_{in}'''(0)\right|\leq \left(\frac{\sum_{i=1}^n \E[W_{in}^4|\mathcal{F}_n]}{n}\right)^{3/4}=O_{\P}(1).
	\end{align*}

	\paragraph{Verification of condition \eqref{eq:fourth_cgf_derivative_bound}:}

	By conclusion \eqref{eq:fourth_moment_relationship} and \eqref{eq:second_moment_relationship},
	\begin{align*}
		&
		\sup_{s\in (-\varepsilon,\varepsilon)}\left|\frac{1}{n}\sum_{i=1}^n K_{in}''''(s)\right|\\
		&
		\leq \sup_{s\in (-\varepsilon,\varepsilon)}\frac{1}{n}\sum_{i=1}^n\E_{in,s}[(W_{in}-\E_{in,s}[W_{in}|\mathcal{F}_n])^4|\mathcal{F}_n]+\sup_{s\in(-\varepsilon,\varepsilon)}\frac{3}{n}\sum_{i=1}^n (K_{in}''(s))^2\\
		&
		=\sup_{s\in (-\varepsilon,\varepsilon)}\frac{1}{n}\sum_{i=1}^n \int \left(x-\int x\mathrm{d}\kappa_{in,s}(\cdot,x)\right)^4\mathrm{d}\kappa_{in,s}(\cdot,x)+O_{\P}(1)\\
		&
		\leq \sup_{s\in (-\varepsilon,\varepsilon)}\frac{16}{n}\sum_{i=1}^n \left(\int x^4\mathrm{d}\kappa_{in,s}(\cdot,x)+\left(\int x\mathrm{d}\kappa_{in,s}(\cdot,x)\right)^4\right)+O_{\P}(1)\\
		&
		\leq \sup_{s\in (-\varepsilon,\varepsilon)}\frac{32}{n}\sum_{i=1}^n \int x^4\mathrm{d}\kappa_{in,s}(\cdot,x)+O_{\P}(1)\\
		&
		=\sup_{s\in (-\varepsilon,\varepsilon)}\frac{32}{n}\sum_{i=1}^n \E_{in,s}[W_{in}^4|\mathcal{F}_n]+O_{\P}(1)\\
		&
		=O_{\P}(1)
	\end{align*}
	where the third inequality is due to power inequality $(|a|+|b|)^p\leq 2^p (|a|^p+|b|^p)$ and the proved condition \eqref{eq:second_cgf_derivative_bound}, the fourth inequality is due to Jensen's inequality and the last equality is due to statement \eqref{eq:sufficient_condition_cumulant}. Now we show there exists $\varepsilon>0$ such that statement \eqref{eq:sufficient_condition_cumulant} holds for both cases.

	\paragraph{Case 1: CSE distribution}

	Consider the power-series expansion 
	\begin{align}
		\E[\exp(sW_{in})|\mathcal{F}_n]
		&\nonumber
		=\int \exp(sx)\mathrm{d}\kappa_{in}(\omega,x)\\
		&\nonumber
		=\int\left(1+\sum_{k=1}^{\infty}\frac{s^k}{k!}x^k\right)\mathrm{d}\kappa_{in}(\omega,x) \\
		&\nonumber
		=1+\sum_{k=1}^{\infty}\frac{s^k}{k!}\int x^k\mathrm{d}\kappa_{in}(\omega,x)\\ 
		&\label{eq:power_series}
		=1+\sum_{k=2}^{\infty}\frac{s^k}{k!}\E[W_{in}^k|\mathcal{F}_n]
	\end{align}
	where the second last inequality is due to Fubini's theorem and the last inequality is due to the definition of conditional expectation \eqref{eq:def_conditional_expectation}. We first can bound using Lemma \ref{lem:equivalence_CSE} and Assumption \ref{assu:cse} that
	\begin{align}\label{eq:cgf_bound_cse}
		\P\left[\E[\exp(sW_{in})|\mathcal{F}_n]\leq \exp(\lambda_n s^2),\forall s\in (-\beta/4,\beta/4)\right]=1
	\end{align}
	where 
	\begin{align*}
		\lambda_n=\frac{\sqrt{6!4^6}(1+\theta_{n})}{24\beta^2}+\frac{16(1+\theta_{n})}{\beta^2}.
	\end{align*}
	Then we can bound by setting $s=\beta/16$ in the identity \eqref{eq:power_series} and conclusion \eqref{eq:cgf_bound_cse} so that 
	\begin{align}\label{eq:eighth_moment_bound}
		\E[W_{in}^{12}|\mathcal{F}_n]\leq \frac{12!16^{12}}{\beta^{12}}\E\left[\exp\left(\frac{\beta}{16}W_{in}\right)|\mathcal{F}_n\right]\leq \frac{12!16^{12}}{\beta^{12}}\exp\left(\frac{\lambda_n \beta^2}{256}\right)
	\end{align}
	almost surely. Then we can bound for $|s|<\beta/8$:
	\begin{align*}
		\E_{in,s}[W_{in}^4|\mathcal{F}_n]
		&
		=\frac{\E[W_{in}^4\exp(sW_{in})|\mathcal{F}_n]}{\E[\exp(sW_{in})|\mathcal{F}_n]}\\
		&
		=\frac{\int x^4 \exp(sx)\mathrm{d}\kappa_{in}(\cdot,x)}{\int \exp(sx)\mathrm{d}\kappa_{in}(\cdot,x)}\\
		&
		\leq \int x^4 \exp(sx)\mathrm{d}\kappa_{in}(\cdot,x)\\
		&
		\leq \left(\int x^{12} \mathrm{d}\kappa_{in}(\cdot,x)\right)^{1/3}\left(\int \exp\left(\frac{3sx}{2}\right)\mathrm{d}\kappa_{in}(\cdot,x)\right)^{2/3}\\
		&
		= \left(\E[W_{in}^{12}|\mathcal{F}_n]\right)^{1/3}\left(\E[\exp(3sW_{in}/2)|\mathcal{F}_n]\right)^{2/3}\\
		&
		\leq \left(\frac{12!16^{12}}{\beta^{12}}\right)^{1/3}\exp\left(\frac{\lambda_n\beta^2}{768}\right)\cdot \exp\left(\frac{3\lambda_n\beta^2}{128}\right)
	\end{align*}
	where the third and fourth inequality is due to Jensen's inequality and H\"older's inequality, respectively and the last inequality is due to bound \eqref{eq:eighth_moment_bound} and bound \eqref{eq:cgf_bound_cse}. By the assumption $\lambda_n=O_{\P}(1)$, we have
	\begin{align*}
		\sup_{s\in (-\varepsilon,\varepsilon)}\frac{1}{n}\sum_{i=1}^n \E_{in,s}[W_{in}^4|\mathcal{F}_n]\leq \left(\frac{12!16^{12}}{\beta^{12}}\right)^{1/3}\exp\left(\frac{\lambda_n\beta^2}{768}\right)\cdot \exp\left(\frac{3\lambda_n\beta^2}{128}\right)=O_{\P}(1).
	\end{align*}

	\paragraph{Case 2: CCS distribution}

	By Lemma \ref{lem:finite_cgf}, we know $\P[\mathcal{A}]=1$ with $\varepsilon=1$. Since $\P[\mathrm{Supp}(\kappa_{in}(\omega,\cdot))\in [-\nu_{in}(\omega),\nu_{in}(\omega)]]=1$, we can bound, 
	\begin{align*}
		\E_{in,s}\left[W_{in}^4|\mathcal{F}_n\right]
		&
		=\int x^4\frac{\exp(sx)}{\int \exp(sx)\mathrm{d}\kappa_{in}(\cdot,x)}\mathrm{d}\kappa_{in}(\cdot,x)\\
		&
		\leq \frac{\nu_{in}^4\int \exp(sx)\mathrm{d}\kappa_{in}(\cdot,x)}{\int \exp(sx)\mathrm{d}\kappa_{in}(\cdot,x)}= \nu_{in}^4,\ \forall s\in (-\varepsilon,\varepsilon)=(-1,1)
	\end{align*}
	almost surely. Thus we have 
	\begin{align*}
		\sup_{s\in (-\varepsilon,\varepsilon)}\frac{1}{n}\sum_{i=1}^n \E_{in,s}\left[W_{in}^4|\mathcal{F}_n\right]\leq \frac{1}{n}\sum_{i=1}^n \nu_{in}^4=O_{\P}(1).
	\end{align*}
\end{proof}

\subsection{Proof of Lemma \ref{lem:reduced_variance_condition}}

\begin{proof}[Proof of Lemma \ref{lem:reduced_variance_condition}]
	By conclusion \eqref{eq:second_moment_relationship} in Lemma \ref{lem:tilted_moment}, we know on the event $\mathcal{A}$,
	\begin{align*}
		\frac{1}{n}\sum_{i=1}^n K_{in}''(0)=\frac{1}{n}\sum_{i=1}^n \E[W_{in}^2|\mathcal{F}_n].
	\end{align*}
	By Lemma \ref{lem:finite_cgf}, we know $\P[\mathcal{A}]=1$ and together with the condition \eqref{eq:lower_bound_conditional_variance}, the claim is true.
\end{proof}

\subsection{Proof of Lemma \ref{lem:finite_cgf_moments}}\label{sec:proof_finite_cgf_moments}

\begin{proof}[Proof of Lemma \ref{lem:finite_cgf_moments}]
	Define 
	\begin{align*}
		A_{in,s}\equiv \E[|W_{in}|^p\exp(sW_{in})|\mathcal{F}_n].
	\end{align*}
	Fix any $s_0\in (-\varepsilon,\varepsilon)$ and suppose $a_0\in (1, \varepsilon/|s_0|)$. We have by H\"older's inequality
	\begin{align*}
		A_{in,s_0}\leq \left\{\E\left[|W_{in}|^{\frac{pa_0}{a_0-1}}|\mathcal{F}_n\right]\right\}^{(a_0-1)/a_0}\left\{\E\left[\exp\left(a_0s_0W_{in}\right)|\mathcal{F}_n\right]\right\}^{1/a_0}.
	\end{align*}
	We first show that on the event $\mathcal{A}$, all the conditional moments for $W_{in}|\mathcal{F}_n$ are finite almost surely.
	\begin{lemma}\label{lem:finite_moment}
		On the event $\mathcal{A}$, 
		\begin{align*}
			\E[|W_{in}|^m|\mathcal{F}_n]<\infty,\ \forall i\in \{1,\ldots,n\},\ \forall m\in\mathbb{N}.
		\end{align*}
	\end{lemma}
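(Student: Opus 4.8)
The plan is to derive finiteness of every conditional polynomial moment directly from the finiteness of the conditional exponential moments that the event $\mathcal{A}$ already supplies through Lemma~\ref{lem:finite_cgf}. First I would observe that the event $\mathcal{A}$ and the constant $\varepsilon>0$ produced by Lemma~\ref{lem:finite_cgf} are the same under either Assumption~\ref{assu:cse} or Assumption~\ref{assu:ccs}, so no case distinction is needed here: on $\mathcal{A}$, by definition $\E[\exp(sW_{in})\mid\mathcal{F}_n](\omega)=\exp(K_{in}(s)(\omega))<\infty$ for every $s\in(-\varepsilon,\varepsilon)$, every $i\le n$, and every $n\ge1$.

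The elementary ingredient is that polynomials are dominated by exponentials: for fixed $m\in\mathbb{N}$ and $a>0$, the power series $e^{a|x|}=\sum_{k\ge0}(a|x|)^k/k!$ gives
\begin{align*}
|x|^m\le \frac{m!}{a^m}\,e^{a|x|}\le \frac{m!}{a^m}\bigl(e^{ax}+e^{-ax}\bigr),\qquad\text{for all }x\in\mathbb{R}.
\end{align*}
Taking $a=\varepsilon/2$ and working with the fixed version of the conditional expectation defined through the regular conditional distribution $\kappa_{in}$ (see~\eqref{eq:def_conditional_expectation}), for any $\omega\in\mathcal{A}$ monotonicity of the Lebesgue integral yields
\begin{align*}
\E[|W_{in}|^m\mid\mathcal{F}_n](\omega)=\int|x|^m\,\mathrm{d}\kappa_{in}(\omega,x)\le \frac{m!\,2^m}{\varepsilon^m}\left(e^{K_{in}(\varepsilon/2)(\omega)}+e^{K_{in}(-\varepsilon/2)(\omega)}\right),
\end{align*}
and the right-hand side is finite on $\mathcal{A}$ because $\pm\varepsilon/2\in(-\varepsilon,\varepsilon)$. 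This is exactly the assertion of Lemma~\ref{lem:finite_moment}; substituting it into the H\"older bound already displayed in the proof of Lemma~\ref{lem:finite_cgf_moments} (to control the factor $\{\E[|W_{in}|^{pa_0/(a_0-1)}\mid\mathcal{F}_n]\}^{(a_0-1)/a_0}$), and then invoking Lemma~\ref{lem:existence_derivative_CGF}, closes the remaining gap in the proof of claim~\eqref{eq:finite_cgf_derivatives}.

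I do not expect a genuine obstacle here; the statement is short. The only points that need care are bookkeeping ones: working throughout with the regular-conditional-distribution version of $\E[\,\cdot\mid\mathcal{F}_n]$ so that the displayed bound is a bona fide pathwise inequality between ordinary integrals rather than an a.s.\ identity between conditional expectations, and confirming that one and the same $\mathcal{A}$ and $\varepsilon$ serve both the CSE and CCS regimes, which is precisely how Lemma~\ref{lem:finite_cgf} is stated.
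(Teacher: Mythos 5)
Your proposal is correct and follows essentially the same route as the paper: both dominate the conditional polynomial moments by the conditional exponential moments at points $\pm a$ with $a\in(0,\varepsilon)$, which are finite on $\mathcal{A}$ by Lemma~\ref{lem:finite_cgf}, working pathwise with the regular conditional distribution $\kappa_{in}$. The only (cosmetic) difference is that you use the single termwise bound $|x|^m\le \frac{m!}{a^m}(e^{ax}+e^{-ax})$, whereas the paper expands $e^{|s||x|}$ as a full power series and interchanges sum and integral via Fubini--Tonelli to conclude that every term, hence every moment, is finite.
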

	\noindent By Lemma \ref{lem:finite_moment}, on the event $\mathcal{A}$, we have $\E\left[|W_{in}|^{\lceil pa_0 / (a_0-1)\rceil}|\mathcal{F}_n\right]<\infty$. We can show $a_0s_0<\varepsilon$ so that on the same event, $\E\left[\exp\left(a_0s_0W_{in}\right)|\mathcal{F}_n\right]<\infty$.
	Therefore we have proved on the event $\mathcal{A}$,
	\begin{align*}
		\E[|W_{in}|^p\exp(sW_{in})|\mathcal{F}_n]<\infty,\ \forall s\in (-\varepsilon,\varepsilon),\ \forall i\in\{1,\ldots,n\},\ \forall n,p\in \mathbb{N}.
	\end{align*}
\end{proof}

\subsection{Proof of Lemma \ref{lem:asym-estimate-lam-r}}

\begin{proof}[Proof of Lemma \ref{lem:asym-estimate-lam-r}]

	We first present several auxiliary results.
	\paragraph{Auxiliary results:}
	\begin{align}
		\P\left[\hat s_n w_n - K_n(\hat s_n)\leq  0\text{ and }\hat s_n\neq 0\right]\rightarrow 0;&\label{eq:sqrt_part_r_n}
		\\
		\P[\lambda_nr_n\leq  0\text{ and }\hat s_n\neq 0]\rightarrow0; & \label{eq:sign_property_r_n_lambda_n}
		\\
		r^2_n=2n( \hat s_n w_n - K_n(\hat s_n))=n\hat s_n^2 \left(K_n''(0)+\hat s_nO_{\P}(1)\right); & \label{eq:asym-estimate-r}\\
		\lambda_n^2=n\hat s_n^2K_n''(\hat s_n)
		=n\hat s_n^2 \left(K_n''(0)+\hat s_nO_{\P}(1)\right). &\label{eq:asym-estimate-lam}
	\end{align}

	\paragraph{Proofs of Auxiliary results:}

	\paragraph{Proof of \eqref{eq:sqrt_part_r_n}}

	Guaranteed by Lemma \ref{lem:finite_cgf}, we Taylor expand, for $s\in (-\varepsilon,\varepsilon)$,
	\begin{align}
	  K_n(s)
	  &\nonumber
	  =K_n(0)+ sK_n'(0)+\frac{1}{2}s^2
	  K_n''(0)+\frac{s^3}{6}K_n'''(0)+\frac{s^4}{24}K_n''''(\bar s)\\
	  &\label{eq:Taylor_expansion_K_n}
	  =\frac{1}{2}s^2
	  K_n''(0)+\frac{s^3}{6}K_n'''(0)+\frac{s^4}{24}K_n''''(\bar s(s)).
	\end{align}
	where $\bar s(s)\in (-\varepsilon,\varepsilon)$ and the last equality is due to $K_n(0)=0$ and $K_n'(0)=0$. Similarly, we obtain for $s\in(-\varepsilon,\varepsilon)$,
	\begin{align}
		sK_n'(s)
		&\nonumber
		=sK_n'(0)+K_n''(0)s^2+\frac{s^3}{2}K_n'''(0)+\frac{s^4}{6}K_n''''(\tilde s)\\
		&\label{eq:Taylor_expansion_s_K_n_p}
		=K_n''(0)s^2+\frac{s^3}{2}K_n'''(0)+\frac{s^4}{6}K_n''''(\tilde s(s))
	\end{align}
	where $\tilde s(s)\in [-s,s]\subset (-\varepsilon,\varepsilon)$. Then subtracting the expansion \eqref{eq:Taylor_expansion_s_K_n_p} from the expansion \eqref{eq:Taylor_expansion_K_n} and setting $s=\hat s_n$ since $\hat s_n\in [-\varepsilon/2,\varepsilon/2]$, we get 
	\begin{align}\label{eq:difference_Taylor_expansion}
		\hat s_n K_n'(\hat s_n)-K_n(\hat s_n)=\frac{\hat s_n^2}{2}K_n''(0)+\frac{\hat s_n^3}{3}K_n'''(0)+\frac{\hat s_n^4}{6}\left(K_n''''(\tilde s(\hat s_n))-\frac{1}{4}K_n''''(\bar s(\hat s_n))\right).
	\end{align}
	Notice \eqref{eq:difference_Taylor_expansion} is similar to our target but still differs. To account such difference, we consider
	\begin{align}
		&\nonumber
		\hat s_n w_n -K_n(\hat s_n)\\
		&\nonumber
		=(\hat s_n K_n'(\hat s_n)-K_n(\hat s_n))\indicator(K_n'(\hat s_n)=w_n)+(\hat s_n w_n-K_n(\hat s_n))\indicator(K_n'(\hat s_n)\neq w_n)\\
		&\nonumber
		=\frac{\hat s_n^2}{2}K_n''(0)-\frac{\hat s_n^2}{2}K_n''(0)\indicator(K_n'(\hat s_n)\neq w_n)+\frac{\hat s_n^3}{3}K_n'''(0)\indicator(K_n'(\hat s_n)=w_n)\\
		&\nonumber
		\ +\frac{\hat s_n^4}{6}(K_n''''(\tilde s(\hat s_n))-K_n''''(\bar s(\hat s_n))/4)\indicator(K_n'(\hat s_n)=w_n) +(\hat s_n w_n-K_n(\hat s_n))\indicator(K_n'(\hat s_n)\neq w_n)\\
		&\label{eq:rate_r_n}
		\equiv \frac{\hat s_n^2}{2}K_n''(0)+\hat s_n^3M_n
	\end{align}
	where $M_n$ is a random variable that is $O_{\P}(1)$. This is true because the following claims are true:
	\begin{align}
		\frac{\hat s_n^3}{3}K_n'''(0)\indicator(K_n'(\hat s_n)=w_n)
		&\label{eq:sn_power_3}
		=\hat s_n^3O_{\P}(1)\\
		\frac{\hat s_n^4}{6}(K_n''''(\tilde s(\hat s_n))-K_n''''(\bar s(\hat s_n))/4)\indicator(K_n'(\hat s_n)=w_n)
		&\label{eq:higher_order_estiamte_1}
		=\hat s_n^3O_{\P}(1)\\
		\frac{\hat s_n^2}{2}K_n''(0)\indicator(K_n'(\hat s_n)\neq w_n)
		&\label{eq:higher_order_estiamte_2}
		=\hat s_n^3O_{\P}(1)\\
		(\hat s_n w_n-K_n(\hat s_n))\indicator(K_n'(\hat s_n)\neq w_n)
		&\label{eq:higher_order_estiamte_3}
		=\hat s_n^3O_{\P}(1).
	\end{align}
	Now we prove the claims \eqref{eq:sn_power_3}-\eqref{eq:higher_order_estiamte_3}. For claim \eqref{eq:sn_power_3}, by condition \eqref{eq:third_cgf_derivative_bound}, we know it is true. For claim \eqref{eq:higher_order_estiamte_1}, from condition \eqref{eq:fourth_cgf_derivative_bound} and $\tilde s(\hat s_n),\bar s(\hat  s_n)\in (-\varepsilon,\varepsilon)$, we have
	\begin{align*}
		\left|K_n''''(\tilde s(\hat s_n))\right|=O_{\P}(1), \left|K_n''''(\bar s(\hat s_n))\right|=O_{\P}(1).
	\end{align*}
	Then together with $\hat s_n=o_{\P}(1)$
	For claim \eqref{eq:higher_order_estiamte_2}, we know it is true since $\indicator(K_n'(\hat s_n)\neq w_n)=o_{\P}(1)$. Similar argument applies to \eqref{eq:higher_order_estiamte_3}. Now define the event
	\begin{align}
		\mathcal{P}_n\equiv \{\hat s_n \neq 0 \}\cap \left\{ K_n''(0)\leq  -2\hat s_nM_n \right\}.
	\end{align}
	By condition \eqref{eq:lower_bound_variance}, we know there exists $\eta>0$ such that $\P[K_n''(0)>\eta]\rightarrow1$. For such $\eta$ since $M_n=O_{\P}(1)$ and $\hat s_n=o_{\P}(1)$ by Lemma \ref{lem:saddlepoint_properties}, we have $\P[-2\hat s_nM_n<\eta]\rightarrow 1$. Together we conclude $\P[K_n''(0)\leq  -2\hat s_nM_n ]\rightarrow0$. This implies $\P[\mathcal{P}_n]\rightarrow0$. Moreover, on the event $\mathcal{P}_n$ we know $\hat s_n w_n -K_n(\hat s_n)\leq 0$ and $\hat s_n\neq 0$ happen. Therefore we conclude the proof.

	\paragraph{Proof of \eqref{eq:sign_property_r_n_lambda_n}}

	We compute
	\begin{align*}
		r_n\lambda_n=
		\begin{cases}
			|\hat s_n|\sqrt{n K_n''(\hat s_n)}\sqrt{2n(\hat s_n w_n-K_n(\hat s_n))} & \text{ if }\hat s_n w_n-K_n(\hat s_n)\geq 0\\
			|\hat s_n|\sqrt{n K_n''(\hat s_n)} & \text{ otherwise.}
		\end{cases}
	\end{align*}
	Lemma \ref{lem:conditional_clt_assumptions} implies that $K_n''(\hat s_n)=\Omega_\P(1)$. By \eqref{eq:sqrt_part_r_n}, we know $\P[\hat s_n w_n-K_n(\hat s_n)\leq 0\text{ and }\hat s_n\neq 0]\rightarrow0$. Moreover, $K_n''(\hat s_n)=\Omega_{\P}(1)$ can further imply $\P[K_n''(\hat s_n)=0]\rightarrow0$. Collecting all these, we reach
	\begin{align*}
		\P[r_n\lambda_n\leq 0\text{ and }\hat s_n\neq 0]
		&
		=\P[r_n\lambda_n= 0\text{ and }\hat s_n\neq 0]\\
		&
		= \P[\{K_n''(\hat s_n)=0\text{ or }\hat s_nw_n-K_n(\hat s_n)=0\}\text{ and }\{\hat s_n \neq 0\}]\\
		&
		\leq \P[K_n''(\hat s_n)=0 \text{ and }\hat s_n\neq 0]+\P[\hat s_n w_n-K_n(\hat s_n)=0\text{ and }\hat s_n\neq 0]\\
		&
		\rightarrow0.
	\end{align*}

	\paragraph{Proof of \eqref{eq:asym-estimate-r}}

	For $r^2_n$, we can write, according to \eqref{eq:rate_r_n},
	\begin{align}\label{eq:r_n_formula}
		r_n^2=2n(\hat s_n w_n-K_n(\hat s_n))=2n\left(\frac{1}{2}\hat s_n^2K_n''(0)+\hat s_n^3M_n\right)=n\hat s_n^2K_n''(0)+n\hat s_n^3O_{\P}(1).
	\end{align}

	\paragraph{Proof of \eqref{eq:asym-estimate-lam}}

	We expand $K_n''(s)$ in the neighborhood $(-\varepsilon,\varepsilon)$:
	\begin{align*}
	  K_n''(s)=K_n''(0)+sK_n'''(0)+\frac{s^2}{2}K_n''''(\dot s(s)),\ \dot s(s)\in (-\varepsilon,\varepsilon).
	\end{align*}
	Then plugging $\hat s_n$ into above formula and observing $K_n''''(\dot s(\hat s_n))=O_{\P}(1)$ ensured by condition \eqref{eq:fourth_cgf_derivative_bound} and $\dot s(\hat s_n)\in (-\varepsilon,\varepsilon)$, we obtain 
	\begin{align}
		\lambda_n^2=n\hat s_n^2 K_n''(\hat s_n)
		&\nonumber
		=n\hat s_n^2 K_n''(0)+n\hat s_n^3 K_n'''(0)+n\frac{\hat s_n^4}{2}K_n''''(\dot s(\hat s_n))\\
		&\label{eq:lam_n_formula}
		=n\hat s_n^2 K_n''(0)+n\hat s_n^3 O_{\P}(1).
	\end{align}

	\paragraph{Proof of main results in Lemma \ref{lem:asym-estimate-lam-r}:}

	Now we come to prove the main results in Lemma \ref{lem:asym-estimate-lam-r} using the auxiliary results proved above.

	\paragraph{Proof of \eqref{eq:r_n_over_n_rate}}

	This can be directly obtained by \eqref{eq:asym-estimate-r} that
	\begin{align*}
		\frac{r_n^2}{n}=\hat s_n^2K_n''(0)+\hat s_n^3O_{\P}(1)=o_{\P}(1)
	\end{align*}
	since $\hat s_n=o_{\P}(1)$ and by condition \eqref{eq:second_cgf_derivative_bound} and Cauchy-Schwarz inequality,
	\begin{align*}
		K_n''(0)=\frac{1}{n}\sum_{i=1}^n K_{in}''(0)\leq \left(\frac{1}{n}\sum_{i=1}^n (K_{in}''(0))^2\right)^{1/2}=O_{\P}(1).
	\end{align*}
	
	\paragraph{Proof of \eqref{eq:asym-estimate-ratio-lam-r}}

	Since $\lambda_n,r_n\in (-\infty,\infty)$, we need to divide the proof into several cases. When $\hat s_n=0$, we know $\lambda_n=r_n=0$ so that $\lambda_n/r_n=1$ by convention. Now we consider when $\hat s_n\neq 0$.
	\begin{itemize}
		\item \textbf{When $\lambda_n r_n \leq 0$:} observe that
		\begin{align*}
			\P\left[\frac{\indicator(\hat s_n\neq 0 \text{ and }\lambda_nr_n\leq 0)}{|\hat s_n|}\left|\frac{\lambda_n}{r_n}-1\right|>\delta\right]
			&
			\leq \P[\lambda_nr_n\leq 0\text{ and }\hat s_n\neq 0]\\
			&
			\rightarrow0
		\end{align*}
		so that
		\begin{align*}
			\indicator(\hat s_n\neq 0 \text{ and }\lambda_nr_n\leq 0)\left|\frac{\lambda_n}{r_n}-1\right|=\hat s_n O_{\P}(1).
		\end{align*}
		\item \textbf{When $\lambda_n r_n > 0$:}
		It requires to compute $\lambda_n^2/r_n^2$. By \eqref{eq:r_n_formula} and \eqref{eq:lam_n_formula}, we get
		\begin{align*}
			\frac{\lambda_n^2}{r_n^2}
			&
			=\frac{\hat s_n^2 K_n''(0)+\hat s_n^3 K_n'''(0)+\frac{\hat s_n^4}{2}K_n''''(\dot s(\hat s_n))}{\hat s_n^2K_n''(0)+\hat s_n^3M_n}\\
			&
			=1+\hat s_n\frac{K_n'''(0)-\hat s_nM_n+\frac{\hat s_n}{2}K_n''''(\dot s(\hat s_n))}{K_n''(0)+\hat s_nM_n}\\
			&
			\equiv 1+\hat s_n \cdot F_n.
		\end{align*}
		Thus we know 
		\begin{align*}
			\indicator(r_n\lambda_n>0 \text{ and }\hat s_n\neq 0)\left(\frac{\lambda_n^2}{r_n^2}-1\right)= \indicator(r_n\lambda_n>0\text{ and }\hat s_n\neq 0)\hat s_n \cdot F_n.
		\end{align*}
		To further proceed the proof, we observe
		\begin{align*}
			F_n=O_{\P}(1)
		\end{align*}
		since $\hat s_n=o_{\P}(1),M_n=O_{\P}(1)$ and conditions \eqref{eq:lower_bound_variance}, \eqref{eq:third_cgf_derivative_bound} and \eqref{eq:fourth_cgf_derivative_bound} guarantee respectively $K_n''(0)=\Omega_\P(1),K_n'''(0)=O_{\P}(1)$ and $K_n''''(\dot s(\hat s_n))=O_{\P}(1)$. Thus 
		\begin{align*}
			\indicator(r_n\lambda_n>0 \text{ and }\hat s_n\neq 0)\left|\frac{\lambda_n}{r_n}-1\right|
			&
			\leq \indicator(r_n\lambda_n>0 \text{ and }\hat s_n\neq 0)\left|\frac{\lambda_n}{r_n}-1\right|\left|\frac{\lambda_n}{r_n}+1\right|\\
			&
			=\indicator(r_n\lambda_n>0 \text{ and }\hat s_n\neq 0)\left|\frac{\lambda_n^2}{r_n^2}-1\right|\\
			&
			\leq \indicator(r_n\lambda_n>0 \text{ and }\hat s_n\neq 0)|\hat s_n F_n|\\
			&
			=\hat s_n O_{\P}(1)
		\end{align*}
	\end{itemize}
	Collecting all the results, we have 
	\begin{align*}
		\left|\frac{\lambda_n}{r_n}-1\right|=\hat s_nO_{\P}(1).
	\end{align*}

	\paragraph{Proof of \eqref{eq:asym-estimate-ratio-r-lam}}

	The proof is similar to \eqref{eq:asym-estimate-ratio-lam-r} so we omit the proof.

	\paragraph{Proof of \eqref{eq:asym-estimate-diff-lam-r}}

	When $\hat s_n=0$, we know $\lambda_n=r_n=0$ so that $1/\lambda_n-1/r_n=0$ by the convetion $1/0-1/0=1$. Now we consider the case when $\hat s_n\neq 0$. We divide the proof into serveral cases. By result \eqref{eq:asym-estimate-ratio-lam-r} and claim \eqref{eq:lam_n_formula},
	\begin{align*}
		\left(\frac{1}{r_n}-\frac{1}{\lambda_n}\right)^2=\frac{1}{\lambda_n^2}\left(\frac{\lambda_n}{r_n}-1\right)^2=\frac{\hat s_n^2  O_{\P}(1)}{n\hat s_n^2 K_n''(0)+n\hat s_n^3O_{\P}(1)}=\frac{O_{\P}(1)}{nK_n''(0)+n\hat s_nO_{\P}(1)}=o_{\P}(1)
	\end{align*}
	where the last equality is due to $\hat s_n=o_{\P}(1)$ and $K_n''(0)=\Omega_{\P}(1)$. 

	\paragraph{Proof of \eqref{eq:asym-estimate-diff-lam-r-multiplication}}

	On the event $r_n>0,\lambda_n>0$, we know $\hat s_n>0$. Then by \eqref{eq:asym-estimate-ratio-lam-r} and \eqref{eq:r_n_formula} we can compute 
	\begin{align*}
		\indicator(r_n>0\text{ and }\lambda_n>0)\frac{1}{r_n^2}\left(\frac{\lambda_n}{r_n}-1\right)^2
		&
		=\frac{\indicator(r_n>0\text{ and }\lambda_n>0)\hat s_n^2O_{\P}(1)}{n\hat s_n^2K_n''(0)+n\hat s_n^3O_{\P}(1)}\\
		&
		=\frac{\indicator(r_n>0\text{ and }\lambda_n>0)O_{\P}(1)}{nK_n''(0)+n\hat s_n O_{\P}(1)}.
	\end{align*}
	Then since $\hat s_n=o_{\P}(1)$ and $K_n''(0)=\Omega_{\P}(1)$, we know 
	\begin{align*}
		\indicator(r_n>0\text{ and }\lambda_n>0)\frac{1}{r_n}\left(\frac{\lambda_n}{r_n}-1\right)=o_{\P}(1).
	\end{align*} 

	\paragraph{Proof of \eqref{eq:asym-estimate-diff-r-lam-multiplication}}

	The proof is similar to the proof of \eqref{eq:asym-estimate-diff-lam-r-multiplication} so we omit it.

	\paragraph{Proof of \eqref{eq:same_sign_condition_w_n}}

	Since $\P[\hat s_n>0 \text{ and }\lambda_n r_n\leq 0]\rightarrow0$ by \eqref{eq:sign_property_r_n_lambda_n} and $\mathrm{sgn}(w_n)=\mathrm{sgn}(\hat s_n)$, we have 
	\begin{align*}
		\P[w_n>0 \text{ and }\lambda_n r_n\leq 0]=\P[w_n>0 \text{ and } \hat s_n>0 \text{ and }\lambda_n r_n\leq 0]\rightarrow0
	\end{align*}

	\paragraph{Proof of \eqref{eq:same_sign_condition_s_n}}

	Since $\P[\hat s_n\neq 0 \text{ and }\lambda_n r_n \leq  0]\rightarrow0$ by \eqref{eq:sign_property_r_n_lambda_n}, we have 
	\begin{align*}
		\P[\hat s_n\neq 0 \text{ and }\lambda_n r_n= 0]\leq \P[\hat s_n\neq 0 \text{ and } \lambda_n r_n\leq 0]\rightarrow0.
	\end{align*}

  \end{proof}

\subsection{Proof of Lemma \ref{lem:upper_bound_ratio_spa}}

\begin{proof}[Proof of Lemma \ref{lem:upper_bound_ratio_spa}]
	We prove the statements \eqref{eq:multiplication-flip-sign}-\eqref{eq:equality-corner-case} in order.
	
	\paragraph{Proof of \eqref{eq:multiplication-flip-sign}:}

	It suffices to prove that for any $\kappa>0$, 
	\begin{align}
		&\nonumber
		\P\left[\indicator(w_n<0)\frac{\Phi(r_n)+\phi(r_n)\left\{\frac{1}{r_n}-\frac{1}{\lambda_n}\right\}}{1 - \Phi(r_n)+\phi(r_n)\left\{\frac{1}{\lambda_n}-\frac{1}{r_n}\right\}}\in [0,1 +\kappa)\right]\\
		&\nonumber
		\equiv \P\left[\indicator(w_n<0)A(r_n,\lambda_n)\in [0,1 +\kappa)\right]\\
		&\label{eq:A_rn_lamn_negative_xn}
		\rightarrow 1.
	\end{align}
	\noindent We decompose 
	\begin{align*}
		\indicator(w_n<0)A(r_n,\lambda_n)=\indicator(w_n<0,r_n>0)A(r_n,\lambda_n)+\indicator(w_n<0,r_n \leq 0)A(r_n,\lambda_n).
	\end{align*}
	By condition \eqref{eq:sign_1}, we know $\indicator(w_n<0,r_n > 0)=0$. Moreover, by the statement of \eqref{eq:U_n_r_n} in Lemma \ref{lem:relative_error_Berry_Esseen_bound}, we know
	\begin{align*}
		\P\left[1-\Phi(r_n)+\phi(r_n)\left\{\frac{1}{\lambda_n}-\frac{1}{r_n}\right\}=0\right]\rightarrow0.
	\end{align*}
	Therefore, for any $\delta>0$, 
	\begin{align*}
		\P\left[\left|\indicator(w_n<0,r_n>0)A(r_n,\lambda_n)\right|>\delta\right]\leq \P\left[1-\Phi(r_n)+\phi(r_n)\left\{\frac{1}{\lambda_n}-\frac{1}{r_n}\right\}=0\right]\rightarrow0.
	\end{align*}
	Thus we know $\indicator(w_n<0,r_n>0)A(r_n,\lambda_n)=o_{\P}(1)$. Then we only need to consider behavior of $\indicator(w_n<0,r_n \leq 0)A(r_n,\lambda_n)$. Then we know $1-\Phi(r_n)\geq 1/2$ when $r_n \leq 0$. Then by condition \eqref{eq:rate_1}, we know
	\begin{align*}
		|M_n|\equiv \left|\phi(r_n)\left\{\frac{1}{\lambda_n}-\frac{1}{r_n}\right\}\right|\leq\left|\frac{1}{\lambda_n}-\frac{1}{r_n}\right| =o_\P(1).
	\end{align*}
	Fix $\eta>0,\delta\in (0,0.1)$. Then for large enough $n,\P[|M_n|<\delta]\geq 1-\eta$. Then on the event $|M_n|<\delta$, we have 
	\begin{align}
		0<\frac{\delta}{1-\delta}
		=\frac{1}{1-\delta}-1 <A(r_n,\lambda_n)
		&\label{eq:A_r_lam_decomposition}
		=\frac{1}{1 - \Phi(r_n)+\phi(r_n)\left\{\frac{1}{\lambda_n}-\frac{1}{r_n}\right\}}-1\\
		&\nonumber
		\leq \frac{1}{\frac{1}{2}-\delta}-1=1+\frac{2\delta}{1-2\delta}<1+4\delta.
	\end{align}
	Thus we know 
	\begin{align*}
		\liminf_{n\rightarrow\infty}\P\left[\indicator(w_n<0,r_n \leq 0)A(r_n,\lambda_n)\in [0,1+4\delta)\right]\geq \liminf_{n\rightarrow\infty}\P[|M_n|<\delta]>1-\eta.
	\end{align*}
	Then by the arbitrary choice of $\eta$, we have 
	\begin{align}
		&\nonumber
		\lim_{n\rightarrow\infty}\P\left[\indicator(w_n < 0)A(r_n,\lambda_n)\in [0,1+4\delta)\right]\\
		&\label{eq:A_r_n_lam_n_convergence}
		=\lim_{n\rightarrow\infty}\P\left[\indicator(w_n<0\text{ and }r_n \leq 0)A(r_n,\lambda_n)\in [0, 1+4\delta)\right]=1.
	\end{align}
	Thus we complete the proof for claim \eqref{eq:A_rn_lamn_negative_xn} by choosing $\kappa=4\delta$.

	\paragraph{Proof of \eqref{eq:equality-corner-case}:}
	We have 
	\begin{align*}
		&
		\indicator(w_n < 0)\frac{\P\left[\frac1n \sum_{i = 1}^n W_{in} = w_n \mid \mathcal F_n\right]}{1-\Phi(r_n)+\phi(r_n)\{\frac{1}{\lambda_n}-\frac{1}{r_n}\}}\\
		&
		= \P\left[\frac1n \sum_{i = 1}^n W_{in} = w_n \mid \mathcal F_n\right]\cdot \frac{\indicator(w_n<0)}{1 - \Phi(r_n)+\phi(r_n)\left\{\frac{1}{\lambda_n}-\frac{1}{r_n}\right\}}\\
		&
		=\P\left[\frac1n \sum_{i = 1}^n W_{in} = w_n \mid \mathcal F_n\right]\cdot \left(\indicator(w_n<0)A(r_n,\lambda_n)+\indicator(w_n<0)\right)\\
		&
		=\P\left[\frac1n \sum_{i = 1}^n W_{in} = w_n \mid \mathcal F_n\right]\cdot O_{\P}(1)
	\end{align*}
	where the second equality is due to the decomposition of $A(r_n,\lambda_n)$ in \eqref{eq:A_r_lam_decomposition} and the last equality is due to result \eqref{eq:A_r_n_lam_n_convergence} that $\indicator(w_n<0)A(r_n,\lambda_n)=O_\P(1)$. Now applying claim \eqref{eq:nondegeneracy} with $y_n=w_n \sqrt{n/K_n''(0)}$, we know 
	\begin{align*}
		\P\left[\frac1n \sum_{i = 1}^n W_{in} = w_n \mid \mathcal F_n\right]=o_\P(1).
	\end{align*}
	Therefore we conclude 
	\begin{align*}
		\indicator(w_n < 0)\frac{\P\left[\frac1n \sum_{i = 1}^n W_{in} = w_n \mid \mathcal F_n\right]}{1-\Phi(r_n)+\phi(r_n)\{\frac{1}{\lambda_n}-\frac{1}{r_n}\}}=o_\P(1).
	\end{align*}
\end{proof}

\subsection{Proof of Lemma \ref{lem:moment_dominance}}\label{sec:proof_moment_dominance}

By H\"older's inequality, we have
\begin{align*}
	\frac{\sum_{i=1}^n \E[|W_{in}|^{p}|\mathcal{F}_n]}{n}
	&
	\leq 
	\frac{1}{n}\left(\sum_{i=1}^n \left(\E[|W_{in}|^{p}|\mathcal{F}_n]\right)^{q/p}\right)^{p/q}n^{1-p/q}\\
	&
	=\left(\frac{\sum_{i=1}^n \left(\E[|W_{in}|^{p}|\mathcal{F}_n]\right)^{q/p}}{n}\right)^{p/q}.
\end{align*}
We invoke Jensen's inequality for conditional expectation to conclude the proof.
\begin{lemma}[Conditional Jensen inequality, \cite{Davidson2003}, Theorem 10.18] \label{lem:conditional-jensen}
	Let $W$ be a random variable and let $\phi$ be a convex function, such that $W$ and $\phi(W)$ are integrable. For any $\sigma$-algebra $\mathcal F$, we have the inequality
	\begin{equation*}
		\phi(\E[W \mid \mathcal F]) \leq  \E[\phi(W) \mid \mathcal F].
	\end{equation*}
\end{lemma}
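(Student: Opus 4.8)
The plan is to reduce the statement to a countable family of affine inequalities, to which the linearity and monotonicity of conditional expectation apply directly. Since $\phi:\mathbb{R}\to\mathbb{R}$ is convex it is continuous and admits a subgradient at every point; in particular, for each rational $q$ there are constants $a_q,b_q\in\mathbb{R}$ with $\phi(x)\geq a_q x+b_q$ for all $x\in\mathbb{R}$ and $\phi(q)=a_q q+b_q$. A routine argument (take a sequence of rationals converging to $x$ and use continuity of $\phi$ together with local boundedness of its one-sided derivatives) shows that $\phi(x)=\sup_{q\in\mathbb{Q}}(a_q x+b_q)$ for every $x\in\mathbb{R}$. This is the only place convexity enters, and it is precisely what makes the relevant index set countable.

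Next I would apply the conditional expectation to each affine minorant. Fix $q\in\mathbb{Q}$. Since $\phi(W)\geq a_q W+b_q$ and, by the integrability hypotheses on $W$ and $\phi(W)$, both sides are integrable, monotonicity and linearity of conditional expectation give $\E[\phi(W)\mid\mathcal F]\geq a_q\,\E[W\mid\mathcal F]+b_q$ off some $\P$-null set $N_q$. Because $\mathbb{Q}$ is countable, $N\equiv\bigcup_{q\in\mathbb{Q}}N_q$ is $\P$-null, and on $N^c$ all of these inequalities hold simultaneously.

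On $N^c$ I would then take the supremum over $q$ and invoke the representation from the first step at the (random) value $\E[W\mid\mathcal F](\omega)$:
\begin{equation*}
\E[\phi(W)\mid\mathcal F]\ \geq\ \sup_{q\in\mathbb{Q}}\bigl(a_q\,\E[W\mid\mathcal F]+b_q\bigr)\ =\ \phi\bigl(\E[W\mid\mathcal F]\bigr)\qquad\text{a.s.},
\end{equation*}
which is the desired inequality; as a byproduct $\phi(\E[W\mid\mathcal F])$, being sandwiched between an affine function of $\E[W\mid\mathcal F]$ and the integrable $\E[\phi(W)\mid\mathcal F]$, is itself integrable, so both sides are a.s.\ finite. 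The only genuine obstacle is the interchange of the supremum with the conditional expectation: member-wise one obtains only countably many almost-sure inequalities, and it is exactly the countability of the affine representation of a convex function that allows these to be glued into a single almost-sure inequality for the supremum. Everything else --- continuity of convex functions on $\mathbb{R}$, existence of subgradients, and the elementary properties of conditional expectation --- is standard, so I would simply cite it.
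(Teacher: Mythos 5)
Your proof is correct. Note that the paper does not prove this statement at all: it is imported by citation (Davidson 2003, Theorem 10.18) and used as a black box in the proof of the moment-dominance lemma, so there is no in-paper argument to compare against. What you supply is the standard textbook proof — represent the convex $\phi$ as the supremum over a countable family of affine minorants $x \mapsto a_q x + b_q$ touching $\phi$ at rational points, apply monotonicity and linearity of conditional expectation to each minorant (legitimate since $W$ and $\phi(W)$ are integrable, hence so is $a_q W + b_q$), collect the countably many exceptional null sets into one, and take the supremum pointwise off that set. You correctly identify the only delicate point, namely that the supremum cannot be interchanged with conditional expectation directly and that countability of the affine representation is what glues the almost-sure inequalities together; the auxiliary facts you defer to (continuity of finite convex functions on $\R$, existence of subgradients, local boundedness of one-sided derivatives so that $\sup_{q \in \mathbb{Q}}(a_q x + b_q) = \phi(x)$ for every real $x$) are indeed standard, with the implicit understanding — consistent with the lemma statement — that $\phi$ is finite-valued on all of $\R$. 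The integrability/finiteness remark at the end (sandwiching $\phi(\E[W \mid \mathcal F])$ between an affine function of $\E[W \mid \mathcal F]$ and $\E[\phi(W) \mid \mathcal F]$) is a nice touch and closes the argument cleanly.
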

\noindent Applying Lemma \ref{lem:conditional-jensen} with $\phi(x)=x^{q/p}$, we obtain
\begin{align*}
	\frac{\sum_{i=1}^n \E[|W_{in}|^{p}|\mathcal{F}_n]}{n}\leq \left(\frac{\sum_{i=1}^n \left(\E[|W_{in}|^{p}|\mathcal{F}_n]\right)^{q/p}}{n}\right)^{p/q}\leq \left(\frac{\sum_{i=1}^n \E[|W_{in}|^{q}|\mathcal{F}_n]}{n}\right)^{p/q}.
\end{align*}

\subsection{Proof of Lemma \ref{lem:R_n_formula}}

\begin{proof}[Proof of Lemma \ref{lem:R_n_formula}]
	We apply integration by parts to the following integral on the event $\lambda_n,r_n\in (-\infty,\infty)$,
	\begin{align*}
		&
		\int_{0}^{\infty}\exp(-\lambda_n y)\phi(y)\mathrm{d}y\\
		&
		=\exp(\lambda_n^2/2)(1-\Phi(\lambda_n))\\
		&
		=\exp\left(\frac{r^2_n}{2}\right)(1-\Phi(r_n))
		+\int_{r_n}^{\lambda_n}
		y\exp(y^2/2)(1-\Phi(y))-\frac{1}{\sqrt{2\pi}}
		\mathrm{d}y\\
		&
		=\exp\left(\frac{r^2_n}{2}\right)(1-\Phi(r_n))
		-\int_{r_n}^{\lambda_n}\frac{1}{\sqrt{2\pi}
		y^2}\mathrm{d}y+R_n\\
		&
		=\exp\left(\frac{r^2_n}{2}\right)(1-\Phi(r_n))
		+\frac{1}{\sqrt{2\pi}}\left(\frac{1}{\lambda_n}
		-\frac{1}{r_n}\right)+R_n.
	  \end{align*}
	  This completes the proof.
\end{proof}

\subsection{Proof of Lemma \ref{lem:convergence_rate_denominator_relative_error}}

\begin{proof}[Proof of Lemma \ref{lem:convergence_rate_denominator_relative_error}]
	We prove the two claims separately. 

	\paragraph{Proof of \eqref{eq:relative_error_denominator_1}:}
	We can write \eqref{eq:relative_error_denominator_1} as 
	\begin{align*}
		\frac{\indicator(r_n \geq 1)}{r_nU_n}=\frac{\indicator(r_n\geq 1)}{\indicator(r_n\geq 1)r_n\exp(\frac{1}{2}r_n^2)(1-\Phi(r_n))+\frac{1}{\sqrt{2\pi}}\{\frac{r_n}{\lambda_n}-1\}}.
	\end{align*}
	Notice by \eqref{eq:rate_2},
	\begin{align*}
		\frac{1}{\sqrt{2\pi}}\left|\frac{r_n}{\lambda_n}-1\right|=o_{\P}(1).
	\end{align*}
	Then it suffices to prove there exists a universal constant $C>0$ such that
	\begin{align*}
		\indicator(r_n\geq 1)r_n\exp\left(\frac{r_n^2}{2}\right)(1-\Phi(r_n))\geq C\indicator(r_n\geq 1).
	\end{align*}
	To prove this, we apply Lemma \ref{lem:lower_bound_Gaussian} such that 
	\begin{align*}
		r_n\exp\left(\frac{r_n^2}{2}\right)(1-\Phi(r_n))\geq \frac{1}{\sqrt{2\pi}}\frac{r_n^2}{r_n^2+1}\geq \frac{1}{2},\ \text{ when }r_n\geq 1.
	\end{align*}
	Thus we have 
	\begin{align*}
		\indicator(r_n\geq 1) r_n\exp\left(\frac{r_n^2}{2}\right)(1-\Phi(r_n))\geq \frac{\indicator(r_n\geq 1)}{2}.
	\end{align*}
	Therefore we have proved \eqref{eq:relative_error_denominator_1}.

	\paragraph{Proof of \eqref{eq:relative_error_denominator_2}:}

	Similarly, we can write \eqref{eq:relative_error_denominator_2} as 
	\begin{align*}
		\indicator(r_n\in [0,1))\frac{1}{U_n}=\frac{\indicator(r_n\in [0,1))}{\indicator(r_n\in [0,1))\exp(\frac{1}{2}r_n^2)(1-\Phi(r_n))+\frac{1}{\sqrt{2\pi}}\{\frac{1}{\lambda_n}-\frac{1}{r_n}\}}.
	\end{align*}
	By \eqref{eq:rate_1},
	\begin{align*}
		\frac{1}{\sqrt{2\pi}}\left|\frac{1}{\lambda_n}-\frac{1}{r_n}\right|=o_{\P}(1).
	\end{align*}
	We only need to prove there exists a universal constant $C\geq 0$ such that
	\begin{align*}
		\indicator(r_n\in [0,1))\exp\left(\frac{r_n^2}{2}\right)(1-\Phi(r_n))\geq C\indicator(r_n\in [0,1)).
	\end{align*}
	Indeed, we can set $C$ to be 
	\begin{align*}
		\inf_{z\in [0,1]}\exp\left(\frac{z^2}{2}\right)(1-\Phi(z)).
	\end{align*}
	Therefore we proved claim \eqref{eq:relative_error_denominator_2}.	
\end{proof}

	\subsection{Proof of Lemma \ref{lem:upper_bound_Gaussian_integral}}

	\begin{proof}[Proof of Lemma \ref{lem:upper_bound_Gaussian_integral}]
		Using Lemma \ref{lem:R_n_formula}, we obtain for any $\lambda_n,r_n>0$,
		\begin{align}\label{eq:transform_Gaussian_integral}
			\int_{0}^{\infty}\exp(-\lambda_n y)\phi(y)\mathrm{d}y =R_n+U_n
		\end{align}
		almost surely. By statement~\eqref{eq:U_n_r_n} of Lemma \ref{lem:relative_error_Berry_Esseen_bound}, we know $\indicator(r_n>0)/U_n\in (-\infty,\infty)$ almost surely and thus $\indicator(r_n>0,\lambda_n>0)/U_n\in (-\infty,\infty)$ almost surely. Then multiplying both sides in \eqref{eq:transform_Gaussian_integral} with $\indicator(r_n>0,\lambda_n>0)/U_n$, we obtain
		\begin{align*}
			\indicator(r_n>0,\lambda_n>0)\frac{\int_{0}^{\infty}\exp(-\lambda_n y)\phi(y)\mathrm{d}y}{U_n}=
			\indicator(r_n>0,\lambda_n>0)(1+R_n/U_n)
		\end{align*}
		almost surely. This implies 
		\begin{align*}
			\indicator(r_n>0,\lambda_n>0)\left|\frac{\int_{0}^{\infty}\exp(-\lambda_n y)\phi(y)\mathrm{d}y}{U_n}-1\right|
			=\indicator(r_n>0,\lambda_n>0)\left|\frac{1}{U_n}\right|\cdot |R_n|
		\end{align*}
		almost surely. Thus it suffcies to bound $\indicator(r_n>0,\lambda_n>0)|R_n/U_n|$.
		Define 
		\begin{align*}
			R_{\min}\equiv \min\{r_n,\lambda_n\},\ R_{\max}\equiv \max\{r_n,\lambda_n\}.
		\end{align*}
	Therefore the absolute value of $R_n$ can be bounded as, using Lemma \ref{lem:Gaussian_tail_estimate},
	\begin{align*}
		&
		\indicator(r_n>0,\lambda_n>0)|R_n|\\
		&
		=\indicator(\lambda_n>0,r_n>0)|R_n|\\
		&
		\leq\indicator(\lambda_n>0,r_n>0)|r_n-\lambda_n|\sup_{y\in [R_{\min},R_{\max}]}\left|y\exp(y^2/2)(1-\Phi(y))-\frac{1-y^{-2}}{\sqrt{2\pi}}\right|\\
		&
		\leq \indicator(\lambda_n>0,r_n>0)|r_n-\lambda_n|\frac{1}{\sqrt{2\pi}}\sup_{y\in [R_{\min},R_{\max}]}\frac{1}{y^2}\\
		&
		\leq \frac{\indicator(\lambda_n>0,r_n>0)|r_n-\lambda_n|}{\sqrt{2\pi}}\left(\frac{1}{r_n^2}+\frac{1}{\lambda_n^2}\right).
	\end{align*}
	Then we have 
	\begin{align*}
		\indicator(r_n>0,\lambda_n>0)|R_n|\leq \frac{\indicator(r_n>0,\lambda_n>0)|r_n-\lambda_n|}{\sqrt{2\pi}}\left(\frac{1}{r_n^2}+\frac{1}{\lambda_n^2}\right).
	\end{align*}
	Then this implies
	\begin{align*}
		\indicator(r_n>0,\lambda_n>0)\left|\frac{R_n}{U_n}\right|\leq \left|\frac{1}{U_n}\right|\cdot \frac{\indicator(r_n>0,\lambda_n>0)|r_n-\lambda_n|}{\sqrt{2\pi}}\left(\frac{1}{r_n^2}+\frac{1}{\lambda_n^2}\right)
	\end{align*}
	almost surely. Therefore we complete the proof.
	\end{proof}

\subsection{Proof of Lemma \ref{lem:tilted_moment}}

\begin{proof}[Proof of Lemma \ref{lem:tilted_moment}]
	Then by Lemma \ref{lem:existence_derivative_CGF} and Lemma \ref{lem:finite_cgf_moments}, we have,
	\begin{align*}
		\P\left[\mathcal{T}\right]=1,\ \mathcal{T}\equiv \left\{K_{in}'(s)=\frac{\E[W_{in}\exp(sW_{in})|\mathcal{F}_n]}{\E[\exp(sW_{in})|\mathcal{F}_n]},\ \forall s\in (-\varepsilon,\varepsilon)\right\}.
	\end{align*}
	Then we know,
	\begin{align*}
		K_{in}'(s)(\omega)=\frac{\E[W_{in}\exp(sW_{in})|\mathcal{F}_n]}{\E[\exp(sW_{in})|\mathcal{F}_n]}(\omega)=\E_{n,s}[W_{in}|\mathcal{F}_n](\omega),\ \forall \omega\in\mathcal{T}.
	\end{align*}
	so that $\P\left[K_{in}'(s)=\E_{n,s}[W_{in}|\mathcal{F}_n],\ \forall s\in (-\varepsilon,\varepsilon)\right]=1$. The other two claims follow similarly.
\end{proof}

	\subsection{Proof of Lemma \ref{lem:finite_moment}}

	\begin{proof}[Proof of Lemma \ref{lem:finite_moment}]
		We use the definition of conditional expectation \eqref{eq:def_conditional_expectation} to prove the claim:
		\begin{align*}
			\sum_{m=0}^{\infty}\frac{\E[|W_{in}|^m|\mathcal{F}_n]}{m!}|s|^m
			&
			=\sum_{m=0}^{\infty}\frac{s^m}{m!}\int x^m \mathrm{d}\kappa_{in}(\cdot,x)\\
			&
			=\int \sum_{m=0}^{\infty}\frac{s^m}{m!} x^m \mathrm{d}\kappa_{in}(\cdot,x)\\
			&
			=\int \exp(sx)\mathrm{d}\kappa_{in}(\cdot,x)
		\end{align*}
		where the second equality is due to Fubini's theorem. Thus we have proved the claim. Then for the inequality $\E[\exp(|sW_{in}|)|\mathcal{F}_n]<\infty$, we can bound, on the event $\mathcal{A}$,
		\begin{align*}
			\E[\exp(|sW_{in}|)|\mathcal{F}_n]\leq \E[\exp(sW_{in})|\mathcal{F}_n]+\E[\exp(-sW_{in})|\mathcal{F}_n]<\infty, \forall s\in (-\varepsilon,\varepsilon).
		\end{align*}
		Thus we have proved the claim.
	\end{proof}

\end{document}